\documentclass[a4paper,reqno,11pt]{amsart}

\numberwithin{equation}{section}

\usepackage[abbrev]{amsrefs}

\usepackage{amssymb}

\usepackage{enumerate}

\usepackage[T1]{fontenc}

\usepackage{microtype}

\usepackage{libertine}

\newtheorem{proposition}{Proposition}[section]
\newtheorem{theorem}[proposition]{Theorem}
\newtheorem{lemma}[proposition]{Lemma}
\newtheorem{corollary}[proposition]{Corollary}
\theoremstyle{definition}
\newtheorem{definition}{Definition}[section]
\newtheorem{openproblem}{Open Problem}[section]
\theoremstyle{remark}
\newtheorem{remark}{Remark}[section]
\newtheorem{example}{Example}[section]
\newcommand{\abs}[1]{\lvert#1\rvert}
\newcommand{\bigabs}[1]{\bigl\lvert#1\bigr\rvert}

\newcommand{\dualprod}[2]{\langle #1 , #2 \rangle}
\newcommand{\st}{\: :\:}
\newcommand{\norm}[1]{\lVert#1\rVert}
\newcommand{\R}{\mathbf{R}}
\newcommand{\Lin}{\mathcal{L}}
\newcommand{\N}{\mathbf{N}}
\newcommand{\quat}{\mathbf{H}}

\DeclareMathOperator{\supp}{supp}
\DeclareMathOperator{\Div}{div}
\DeclareMathOperator{\Curl}{curl}
\DeclareMathOperator{\adj}{adj}
\DeclareMathOperator{\id}{id}
\DeclareMathOperator{\tr}{tr}
\DeclareMathOperator{\realpart}{Re}

\title[Limiting Sobolev inequalities and canceling operators]{Limiting Sobolev inequalities for vector fields and canceling linear differential operators}
\author{Jean Van Schaftingen}
\address{Universit\'e catholique de Louvain\\
Institut de Recherche en Math\'ematique et Physique (IRMP)\\
Chemin du Cyclotron 2 bte L7.01.01\\
1348 Louvain-la-Neuve\\
Belgium}
\email{Jean.VanSchaftingen@uclouvain.be}
\date{\today}
\subjclass[2010]{46E35 (26D10 42B20)}
\keywords{Sobolev embedding; overdetermined elliptic operator; compatibility conditions; homogeneous differential operator; canceling operator; cocanceling operator; exterior derivative; symmetric derivative; homogeneous Triebel--Lizorkin space;  homogeneous Besov space; Lorentz space; homogeneous fractional Sobolev--Slobodecki\u \i{} space; Korn--Sobolev inequality; Hodge inequality; Saint-Venant compatibility conditions} 
\setcounter{tocdepth}{1}

\begin{document}

\begin{abstract}
The estimate
\[
 \norm{D^{k-1}u}_{L^{n/(n-1)}} \le \norm{A(D)u}_{L^1}
\]
is shown to hold if and only if \(A(D)\) is elliptic and canceling.
Here  \(A(D)\) is a homogeneous linear differential operator \(A(D)\) of order \(k\) on \(\R^n\) from a vector space \(V\) to a vector space \(E\). The operator \(A(D)\) is defined to be canceling if 
\[
  \bigcap_{\xi \in \R^n \setminus \{0\}} A(\xi)[V]=\{0\}.
\]
This result implies in particular the classical Gagliardo--Nirenberg-Sobolev inequality, the Korn--Sobolev inequality and Hodge--Sobolev estimates for differential forms due to J.\thinspace Bourgain and H.\thinspace Brezis.
In the proof, the class of cocanceling homogeneous linear differential operator \(L(D)\) of order \(k\) on \(\R^n\) from a vector space \(E\) to a vector space \(F\) is introduced. It is proved that \(L(D)\) is cocanceling if and only if for every \(f \in L^1(\R^n; E)\) such that \(L(D)f=0\), one has \(f \in \dot{W}^{-1, n/(n-1)}(\R^n; E)\). The results extend to fractional and Lorentz spaces and can be strengthened using some tools of J.\thinspace Bourgain and H.\thinspace Brezis.
\end{abstract}

\maketitle

\tableofcontents

\section{Introduction}

\subsection{Norms on vector valued homogeneous Sobolev spaces}
Given \(n  \ge 1\), \(k \in \N\), \(p \ge 1\) and a finite-dimensional vector space \(V\), the homogeneous Sobolev space \(\dot{W}^{k, p}(\R^n; V)\) can be characterized as the completion of the space of smooth vector fields \(C^\infty_c(\R^n; V)\) under the norm defined for \(u \in C^\infty_c(\R^n; V)\) by 
\[
  \norm{D^ku}_{L^p}=\Bigl( \int_{\R^n} \abs{D^k u}^p \Bigr)^\frac{1}{p}.
\]

When \(\dim V > 1\), one can wonder whether the norm can be estimated by a quantity involving only some components of the derivative. More precisely, assume that \(A(D)\) is a homogeneous differential operator of order \(k\) on \(\R^n\) from \(V\) to another finite-dimensional vector space \(E\), that is there exist linear maps \(A_\alpha \in \Lin(V;E)\) with \(\alpha \in \N^n\) and \(\abs{\alpha}=k\) such that for every \( u \in C^\infty(\R^n; V)\), 
\[
  A(D) u = \sum_{\substack{\alpha \in \N^n\\ \abs{\alpha}=k}} A_\alpha (\partial^\alpha u) \in C^\infty(\R^n; E).
\]
One can ask the question whether the norms defined for \(u \in C^\infty_c(\R^n; V) \) by \(\norm{D^ku}_{L^p}\) and \(\norm{A(D)u}_{L^p}\) are equivalent.

When \(p > 1\), the answer is given by the classical result
\begin{theorem}[A.\thinspace P.\thinspace Calder\'on and A.\thinspace Zygmund, 1952 \cite{CZ1952}]	
\label{theoremEllipticLp}
Let \(1 < p < \infty\) and \(A(D)\) be a homogeneous differential operator of order \(k\) on \(\R^n\) from \(V\) to \(E\). The estimate
\[
  \norm{D^{k}u }_{L^p} \le C\norm{A(D) u}_{L^p},
\]
holds for every \(u \in C^\infty_c(\R^n; V)\)
if and only if \(A(D)\) is elliptic.
\end{theorem}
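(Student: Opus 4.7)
The proof will split into the standard two directions, with the harder one being sufficiency, delegated essentially to multiplier theory.

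For \emph{necessity}, I would argue by contraposition: if \(A(D)\) is not elliptic, pick \(\xi_0 \in \R^n \setminus \{0\}\) and \(v_0 \in V \setminus \{0\}\) with \(A(\xi_0) v_0 = 0\). Fix a cutoff \(\eta \in C^\infty_c(\R^n)\) with \(\eta \not\equiv 0\), and set
\[
  u_\lambda(x) = \eta(x) \cos(\lambda \xi_0 \cdot x) \, v_0, \qquad \lambda \to \infty.
\]
A direct expansion shows that the top-order term of \(D^k u_\lambda\) is \(\pm \lambda^k (\xi_0^{\otimes k} \otimes v_0) \eta(x) \cos(\lambda \xi_0 \cdot x) + O(\lambda^{k-1})\) in \(L^p\); using that the map \(x \mapsto \cos(\lambda \xi_0 \cdot x)\) has uniformly bounded \(L^p\)-norm against a fixed cutoff and averages to a strictly positive limit, one gets \(\norm{D^k u_\lambda}_{L^p} \gtrsim \lambda^k\). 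On the other hand, when one applies \(A(D)\) the leading symbol produces \(\lambda^k A(\xi_0) v_0 = 0\), so only derivatives which hit \(\eta\) contribute, yielding \(\norm{A(D) u_\lambda}_{L^p} = O(\lambda^{k-1})\). The ratio then blows up, contradicting the inequality.

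For \emph{sufficiency}, I would use the Fourier side. Ellipticity says \(A(\xi) : V \to E\) is injective for \(\xi \neq 0\), hence \(A(\xi)^* A(\xi) \in \Lin(V;V)\) is invertible for \(\xi \neq 0\), and
\[
  B(\xi) = \bigl(A(\xi)^* A(\xi)\bigr)^{-1} A(\xi)^*
\]
is a smooth left inverse of \(A(\xi)\) on \(\R^n \setminus \{0\}\), homogeneous of degree \(-k\). For \(u \in C^\infty_c(\R^n; V)\) and any multi-index \(\alpha\) with \(\abs{\alpha}=k\), taking Fourier transforms gives
\[
  \widehat{\partial^\alpha u}(\xi) \;=\; (i\xi)^\alpha \, \hat u(\xi) \;=\; \frac{(i\xi)^\alpha}{i^k}\, B(\xi)\,\widehat{A(D)u}(\xi),
\]
and the matrix symbol \(m_\alpha(\xi) := i^{-k}(i\xi)^\alpha B(\xi)\) is smooth on \(\R^n \setminus \{0\}\) and homogeneous of degree \(0\). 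Invoking the Mihlin--H\"ormander multiplier theorem componentwise yields \(\norm{\partial^\alpha u}_{L^p} \le C \norm{A(D) u}_{L^p}\) for \(1 < p < \infty\), and summing over \(\alpha\) finishes the proof.

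The only real obstacle is the sufficiency direction, and it is delegated in a black-box fashion to the multiplier theorem: the construction of \(B(\xi)\) is algebraic and straightforward, but the fact that order-\(0\) Fourier multipliers give rise to \(L^p\)-bounded operators when \(1 < p < \infty\) is precisely where the restriction on \(p\) enters and where the Calder\'on--Zygmund machinery does its work. This also foreshadows why the endpoint \(p=1\) is delicate and requires the canceling condition that is the main subject of the paper.
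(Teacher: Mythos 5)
Your proof is correct. The paper states Theorem~\ref{theoremEllipticLp} as a cited classical result of Calder\'on and Zygmund and does not supply its own proof, so there is no in-paper argument for this exact statement to compare against. That said, your sufficiency argument --- constructing the left-inverse symbol \(B(\xi)=(A(\xi)^*A(\xi))^{-1}A(\xi)^*\), observing that \((i\xi)^\alpha B(\xi)\) is a smooth, degree-zero homogeneous \(\Lin(E;V)\)-valued multiplier, and invoking Mihlin--H\"ormander --- is precisely the method the paper itself employs in the closely related proposition~\ref{propCaldZygmundWk1p}, which differs only in estimating \(\norm{D^{k-1}u}_{L^p}\) against \(\norm{A(D)u}_{\dot W^{-1,p}}\). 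Your necessity argument parallels the strategy of the paper's proposition~\ref{propositionEllipticitySobolev} but chooses a different scaling for the test family: the paper multiplies a fixed slab profile \(\varphi(\xi\cdot x)v\) by a spreading cutoff \(\psi(x/\lambda)\), which is the natural choice for the Sobolev-scaled inequality treated there, whereas you modulate a fixed cutoff \(\eta\) with a high-frequency carrier \(\cos(\lambda\xi_0\cdot x)\), which is the natural choice for an \(L^p\to L^p\) estimate of the same order. Both constructions exploit the identical algebraic mechanism: \(A(\xi_0)v_0=0\) annihilates the top-order \(\lambda^k\) contribution to \(A(D)u_\lambda\), while the top-order contribution to \(D^k u_\lambda\), proportional to \(\xi_0^{\otimes k}\otimes v_0\ne 0\), survives and forces the ratio to blow up.
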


Here and in the sequel the constant \(C\) is understood to be independent of the vector field \(u\). The ellipticity condition is the classical notion of ellipticity for overdetermined differential operators \citelist{\cite{Hormander1958}*{theorem 1}\cite{Spencer}*{definition 1.7.1}} (when \(\dim V = 1\), see also S.\thinspace Agmon \citelist{\cite{Agmon1959}*{\S 7}\cite{Agmon}*{definition 6.3}}):
\begin{definition}
A homogeneous linear differential operator \(A(D)\) on \(\R^n\) from \(V\) to \(E\) is \emph{elliptic} if for every \(\xi \in \R^n \setminus\{0\}\), 
\(A(\xi)\) is one-to-one.
\end{definition}

The restriction \(p > 1\) is essential in theorem~\ref{theoremEllipticLp}. Indeed, D.\thinspace Ornstein \cite{Ornstein1962} has shown that there are no nontrivial \(L^1\)-estimates of derivatives\footnote{Whereas D.\thinspace Ornstein's result does not include explicitely vector valued operators, his theorem and his method of proof remain valid in this case. B.\thinspace Kirchheim and J.\thinspace Kristensen \citelist{\cite{CKCRAS}\cite{CKPaper}} have given a proof that relies on the convexity of homogeneous rank-one convex functions; their result covers explicitely the vectorial case.}.

\begin{theorem}[D.\thinspace Ornstein, 1962]
\label{theoremOrnstein}
Let \(A(D)\) and \(B(D)\) be homogeneous linear differential operators of order \(k\) on \(\R^n\) from \(V\) to \(E\) and from \(V\) to \(\R\) respectively. If for every \(u \in C^\infty_c(\R^n; V)\), 
\[
 \norm{B(D) u}_{L^1} \le C \norm{A(D) u}_{L^1}, 
\]
then there exists \(T \in \Lin(E; \R)\) such that 
\[
 B(D) = T \circ A(D).
\]
\end{theorem}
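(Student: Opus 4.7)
The idea is to first extract a pointwise symbol inequality from the hypothesis by testing against highly oscillatory functions, and then to promote this inequality to the announced linear factorisation via a Hahn--Banach duality argument combined with the rigidity of \(0\)-homogeneous Fourier multipliers.

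\emph{Step 1: pointwise symbol bound.} For \(\xi \in \R^n \setminus \{0\}\), \(v \in V\), and a bump \(\phi \in C^\infty_c(\R^n;\R)\), I would test the hypothesis with
\[
 u_\lambda(x) = \lambda^{-k}\, \phi(x)\, \cos(\lambda\, \xi \cdot x)\, v.
\]
Leibniz's rule yields
\[
 A(D) u_\lambda(x) = \phi(x)\, A(\xi) v\, \cos\bigl(\lambda\, \xi \cdot x + \tfrac{\pi k}{2}\bigr) + r_\lambda(x),
\]
with \(\norm{r_\lambda}_{L^1}=O(\lambda^{-1})\), and the analogous expression for \(B(D)\). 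Since \(\abs{\cos(\lambda\, \xi \cdot x + \theta)}\) converges weakly-\(*\) to \(\tfrac{2}{\pi}\) in \(L^\infty(\R^n)\) as \(\lambda \to \infty\) (for \(\xi \ne 0\)), we get \(\lim_{\lambda \to \infty} \norm{A(D) u_\lambda}_{L^1} = \tfrac{2}{\pi}\, \abs{A(\xi) v}\, \norm{\phi}_{L^1}\) and likewise for \(B(D)\). Feeding these into the hypothesis produces the symbol inequality \(\abs{B(\xi) v} \le C\, \abs{A(\xi) v}\) for every \(\xi\) and \(v\).

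\emph{Step 2: upgrade to a constant divisor.} The pointwise bound alone is strictly weaker than the conclusion: for \(A(\xi)=(\xi_1^2,\xi_2^2)\) and \(B(\xi)=\xi_1\xi_2\) on \(\R^2\) it holds (by AM--GM) yet no constant \(T\colon\R^2\to\R\) satisfies \(B = T\circ A\); the theorem claims that for such \((A,B)\) the \(L^1\) inequality must fail. To bridge this gap I would apply Hahn--Banach: for each \(\psi \in L^\infty(\R^n;\R)\), the linear functional \(A(D)u \mapsto \int \psi\, B(D)u\) is defined on \(\{A(D)u : u \in C^\infty_c(\R^n;V)\} \subset L^1(\R^n;E)\) and has norm \(\le C\norm{\psi}_{L^\infty}\), so it extends to some \(\tilde\psi \in L^\infty(\R^n;E^*)\) satisfying \(\norm{\tilde\psi}_{L^\infty} \le C\, \norm{\psi}_{L^\infty}\) together with the distributional identity \(A(D)^* \tilde\psi = B(D)^* \psi\). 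On the Fourier side this reads \(A(\xi)^* \widehat{\tilde\psi}(\xi) = B(\xi)^* \widehat\psi(\xi)\), so \(\widehat{\tilde\psi}\) is obtained from \(\widehat\psi\) by multiplication by a \(0\)-homogeneous symbol \(T(\xi)^* \in \Lin(\R; E^*)\) implementing the pointwise factorisation of Step~1. The uniform \(L^\infty\)--\(L^\infty\) bound on \(\psi \mapsto \tilde\psi\) makes \(T(D)\) a bounded Fourier multiplier from \(L^\infty\) to \(L^\infty\) (equivalently, a convolution by a finite measure), and combined with \(0\)-homogeneity this forces \(T\) to be constant: a finite measure whose Fourier transform is \(0\)-homogeneous must be invariant under all dilations, hence supported at the origin. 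Thus \(T(\xi) \equiv T \in \Lin(E;\R)\) and \(B(D) = T \circ A(D)\).

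\emph{Main obstacle.} The decisive difficulty lies in Step~2: the pointwise bound of Step~1 is strictly weaker than the theorem's conclusion, and Ornstein's key insight is precisely the rigidity that promotes the \(\xi\)-dependent symbol divisors \(T_\xi\) into a single global \(T\). The technical subtleties --- arranging the Hahn--Banach extension \(\psi \mapsto \tilde\psi\) to be linear in \(\psi\), handling the inherent ambiguity of \(T(\xi)\) coming from \(\ker A(\xi)^*\), and converting the uniform \(L^\infty\)-boundedness of \(\psi \mapsto \tilde\psi\) into a rigorous multiplier statement --- are where the bulk of the work resides.
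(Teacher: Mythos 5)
The paper does not prove Ornstein's theorem: it cites it as known, attributing the scalar case to Ornstein's original 1962 argument and the vectorial case to Kirchheim and Kristensen's rank-one-convexity proof, and uses it only as a black box to explain why \(L^1\)-estimates of derivatives are nontrivial. So there is no internal proof to compare against; I am assessing your argument on its own.

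Your Step~1 is correct and standard: oscillatory testing gives the pointwise symbol bound \(\abs{B(\xi)v} \le C \abs{A(\xi)v}\), and you rightly observe (via the example \(A(\xi)=(\xi_1^2,\xi_2^2)\), \(B(\xi)=\xi_1\xi_2\)) that this is strictly weaker than the conclusion. But Step~2 has a genuine gap, not merely a subtlety to be tidied up. The Hahn--Banach extension \(\psi \mapsto \tilde\psi\) can in general not be chosen to be linear: the translation-invariant subspace \(\overline{\{A(D)u : u\in C^\infty_c\}} \subset L^1(\R^n;E)\) has no bounded linear projection, let alone a translation-invariant one (a translation-invariant bounded operator on \(L^1\) is convolution by a finite measure; if it were a projection onto a proper closed translation-invariant subspace, its symbol would be a \(\{0,1\}\)-valued continuous function, hence constant, hence trivial). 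So there is no mechanism producing a map \(\psi \mapsto \tilde\psi\) that is simultaneously linear and commutes with translations, which is exactly what is needed to interpret it as a Fourier multiplier \(T(D)\). The relation \(A(\xi)^*\widehat{\tilde\psi}(\xi)=B(\xi)^*\widehat\psi(\xi)\) merely records the defining constraint for one fixed \(\psi\); it does not couple the extensions for different \(\psi\)'s, and the bounded \(\xi\)-dependent \(T(\xi)\) with \(B(\xi)=T(\xi)\circ A(\xi)\) that you extract from Step~1 (e.g., the least-squares choice \(T(\xi)=B(\xi)\circ A(\xi)^+\)) is already all the information available at this level. Promoting that family to a single constant \(T\) is the entire content of the theorem, and it requires the kind of delicate combinatorial construction Ornstein devised, or the rank-one-convexity machinery of Kirchheim and Kristensen; the duality/multiplier route you sketch does not reach it.
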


Here \(\Lin(E; \R)\) denotes the set of linear maps from \(E\) to \(\R\). The derivatives \(B(D)u\) are then linear combinations of the derivatives \(A(D)u\) and the estimate is trivial in the sense that it follows immediately from the boundedness of linear maps defined on finite-dimensional vector spaces.

\subsection{A collection of known Sobolev inequalities and non-inequalities}
Whereas theorem~\ref{theoremEllipticLp} fails for \(p=1\), one can ask whether in some other estimates the quantity \(\norm{D^ku}_{L^1}\) can be replaced by some weaker quantity \(\norm{A(D)u}_{L^1}\).

Consider the classical Gagliardo--Nirenberg--Sobolev inequality \citelist{\cite{Gagliardo}\cite{Nirenberg1959}*{p.\thinspace 125}} which states that for every vector field \(u \in C^\infty_c(\R^n; V)\), one has
\begin{equation}
\label{ineqSobolevL1}
  \norm{u}_{L^{\frac{n}{n-1}}} \le C \norm{Du}_{L^1}.
\end{equation}
One can wonder whether all the components of the derivative \(Du\) are necessary in this estimate when \(u\) is a vector-field.

A first example of such a possibility is the Korn--Sobolev inequality of M.\thinspace J.\thinspace Strauss \cite{Strauss1973}*{theorem 1} (see also \citelist{\cite{BB2007}*{Corollary 26}\cite{VS2004ARB}*{theorem 6}}): for every \(u \in C^\infty_c(\R^n; \R^n)\), one has
\begin{equation}
\label{KornSobolevL1}
 \norm{u}_{L^{n/(n-1)}} \le C \norm{\nabla_s u}_{L^1},
\end{equation}
where \(\nabla_s u=\frac{1}{2}\bigl(Du+ (Du)^*\bigr)\) denotes the symmetric part of the derivative \(Du \in C^\infty(\R^n; \Lin(\R^n; \R^n)\). 
This inequality does not follow from \eqref{ineqSobolevL1}, as the norms \(\norm{\nabla_s u}_{L^1}\) and \(\norm{D u}_{L^1}\) are not equivalent by theorem~\ref{theoremOrnstein} (see also \cite{CFM2005}*{theorem 1}). 
In the three-dimensional space \(\R^3\), one can wonder whether an estimate of the kind
\begin{equation}
\label{NonHodgeSobolevL1R3}
  \norm{u}_{L^\frac{3}{2}} \le C\bigl(\norm{\Div u}_{L^1} + \norm{\Curl u}_{L^1} \bigr)
\end{equation}
holds for every \(u \in C^\infty_c(\R^3; \R^3)\). The answer is known to be negative even in the case where \(\Curl u=0\); a contradiction is obtained by taking suitable regularizations of the gradient of Newton's kernel \(x \in \R^3 \mapsto \frac{-x}{4\pi\abs{x}^3}\).
Surprisingly, J.\thinspace Bourgain and H.\thinspace Brezis \citelist{\cite{BB2004}*{theorem 2}\cite{BB2007}*{corollary 7}} have proved that for every vector field \(u \in C^\infty_c(\R^3; \R^3)\) such that \(\Div u=0\), one has
\begin{equation}
 \label{HodgeSobolevL1R3}
 \norm{u}_{L^\frac{3}{2}} \le C\norm{\Curl u}_{L^1}.
\end{equation}
J.\thinspace Bourgain and H.\thinspace Brezis \cite{BB2007}*{Corollary 17} have proved similarly that for every differential form \(u \in C^\infty_c(\R^n; \bigwedge^\ell \R^n)\), one has the Hodge--Sobolev inequality
\begin{equation}
 \label{HodgeSobolevL1}
 \norm{u}_{L^{n/(n-1)}} \le C \bigl(\norm{d u}_{L^1} + \norm{d^* u}_{L^1}\bigr).
\end{equation}
(see also L.\thinspace Lanzani and E.\thinspace M.\thinspace Stein \cite{LS2005}).

\subsection{Limiting Sobolev inequalities and canceling operators}
We would like to determine whether for a given first order homogeneous differential operator \(A(D)\) 
 an estimate of the form
\begin{equation}
\label{ineqA}
  \norm{u}_{L^\frac{n}{n-1}} \le C \norm{A(D)u}_{L^1}
\end{equation}
holds. The answer is given by

\begin{theorem}
\label{theoremOrderk}
Let \(A(D)\) be a homogeneous linear differential operator of order
\(k\) on \(\R^n\) from \(V\) to \(E\). The estimate
\[
  \norm{D^{k-1}u }_{L^{\frac{n}{n-1}}} \le C\norm{A(D) u}_{L^1},
\]
holds for every \(u \in C^\infty_c(\R^n; V)\)
if and only if \(A(D)\) is elliptic and canceling.
\end{theorem}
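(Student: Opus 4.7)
The plan is to translate the $L^1$--$L^{n/(n-1)}$ estimate into a dual statement about a companion operator that captures the symbol-level constraints on the image of $A(D)$. Given elliptic $A(D)$, I would construct a homogeneous linear differential operator $L(D)$ from $E$ into some finite-dimensional space $F$ such that $L(D)\circ A(D)=0$ and $\ker L(\xi)=A(\xi)[V]$ for every $\xi\neq 0$. Under this construction, the canceling hypothesis $\bigcap_{\xi\neq 0}A(\xi)[V]=\{0\}$ is exactly the cocanceling condition $\bigcap_{\xi\neq 0}\ker L(\xi)=\{0\}$ for $L$. The theorem is thus reduced to two building blocks: the embedding $L^1\cap\ker L(D)\hookrightarrow\dot{W}^{-1,n/(n-1)}$ for cocanceling $L$ (announced in the abstract), together with an elliptic-regularity Fourier-multiplier bound supplied by $A$.

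\textbf{Sufficiency.} For $u\in C^\infty_c(\R^n;V)$, set $f:=A(D)u\in L^1(\R^n;E)$; the identity $L\circ A=0$ gives $L(D)f=0$. The cocanceling embedding then yields $\|f\|_{\dot{W}^{-1,n/(n-1)}}\le C\|f\|_{L^1}$. Ellipticity of $A$ furnishes a left inverse $A(\xi)^\dagger$, smooth and homogeneous of degree $-k$ away from $\xi=0$; the Fourier multiplier $M(\xi):=(i\xi)^{\otimes(k-1)}A(\xi)^\dagger$ is then homogeneous of degree $-1$ and satisfies $D^{k-1}u=M(D)f$. Writing $f=\Div g$ with $g\in L^{n/(n-1)}(\R^n;E\otimes\R^n)$ of comparable norm, one has $M(D)f=N(D)g$ for the order-zero symbol $N(\xi):=M(\xi)(i\xi)$, and the Mihlin--Hörmander multiplier theorem yields
\[
 \|D^{k-1}u\|_{L^{n/(n-1)}}\le C'\|f\|_{\dot{W}^{-1,n/(n-1)}}\le C''\|A(D)u\|_{L^1}.
\]

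\textbf{Necessity.} Suppose the inequality holds. If ellipticity fails, pick $\xi_0,v_0\neq 0$ with $A(\xi_0)v_0=0$; the anisotropic test function $u(x):=\rho(\xi_0\cdot x)\phi_L(\xi_0\cdot x/L)\phi_T(x_\perp/T)v_0$ with $\rho$ a polynomial of degree $k-1$ makes $\|A(D)u\|_{L^1}$ bounded --- since $A(D)(\rho(\xi_0\cdot x)v_0)\equiv 0$, only derivatives of the cut-offs contribute --- while $\|D^{k-1}u\|_{L^{n/(n-1)}}$ blows up as the transverse scale $T\to 0$. If cancellation fails, pick $e_0\in\bigl(\bigcap_{\xi\neq 0}A(\xi)[V]\bigr)\setminus\{0\}$; solvability on the Fourier side produces a fundamental solution $G\in C^\infty(\R^n\setminus\{0\};V)$ of $A(D)G=\delta_0e_0$, and homogeneity forces $\bigabs{D^{k-1}G(x)}\sim\abs{x}^{1-n}$, so $D^{k-1}G\notin L^{n/(n-1)}(\R^n)$. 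Mollifying and truncating to $u_{R,\varepsilon}:=\eta_R(G*\rho_\varepsilon)$ with a radial cut-off $\eta_R$, the main term of $A(D)u_{R,\varepsilon}$ is $\rho_\varepsilon e_0$, and the commutator $[A(D),\eta_R]G$ falls in the critical annulus $R\le\abs{x}\le 2R$ with $L^1$-norm $O(1)$ uniformly in $R$; in contrast $\|D^{k-1}u_{R,\varepsilon}\|_{L^{n/(n-1)}}^{n/(n-1)}\gtrsim\log(R/\varepsilon)$ diverges, contradicting the estimate.

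\textbf{Main obstacle.} The analytic heart of the proof is the cocanceling embedding $L^1\cap\ker L(D)\hookrightarrow\dot{W}^{-1,n/(n-1)}$, a substantive generalization of the Bourgain--Brezis estimate for divergence-free $L^1$ fields. I would approach it by duality: the embedding is equivalent to the bound $\bigabs{\dualprod{f}{\phi}}\le C\|f\|_{L^1}\|D\phi\|_{L^n}$ for every smooth compactly supported $\phi\colon\R^n\to E^*$. The compatibility $L(D)f=0$ allows replacing $\phi$ by $\phi-L^*(D)\psi$ without altering the pairing, and the cocanceling condition is precisely what enables one to solve for $\psi$ so that the corrected $\phi-L^*(D)\psi$ lies in $L^\infty$ with norm controlled by $\|D\phi\|_{L^n}$; the pairing is then trivially bounded by $\|f\|_{L^1}$ times that $L^\infty$-norm. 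Carrying out this correction quantitatively, along the lines of Bourgain and Brezis's approximation scheme, is the essential new difficulty.
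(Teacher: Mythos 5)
Your architecture matches the paper's: reduce to a compatibility operator $L(D)$ with $\ker L(\xi)=A(\xi)[V]$, note canceling-for-$A$ equals cocanceling-for-$L$, apply the cocanceling $L^1\hookrightarrow\dot W^{-1,n/(n-1)}$ embedding, close with an elliptic Fourier-multiplier bound; and for necessity, test functions built respectively from the non-injectivity of $A(\xi_0)$ and from the fundamental solution with datum $e_0\in\bigcap_\xi A(\xi)[V]$. That is the paper's proof (propositions~\ref{propositionSufficient}, \ref{propCaldZygmundWk1p}, \ref{propositionConstructionL}, corollary~\ref{corollaryEllipticitySobolev}, proposition~\ref{propositionCancelingNecessary}). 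A few specific issues, though.

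In the ellipticity necessity, your scaling direction is reversed. With a degree-$(k-1)$ profile $\rho$ and a transverse cutoff of width $T$, the dominant contribution to $\norm{D^{k-1}u}_{L^{n/(n-1)}}$ is a constant on the slab, of size $\sim (LT^{n-1})^{(n-1)/n}$, while $\norm{A(D)u}_{L^1}$ picks up at worst $O(LT^{n-2})$ from the cutoff commutator. The ratio scales like $(T/L)^{1/n}$, so it blows up when the transverse scale is \emph{lengthened} ($T\to\infty$ with $L$ fixed), not shortened; the paper's proposition~\ref{propositionEllipticitySobolev} indeed sends $\lambda\to\infty$. As written, $T\to0$ lets the estimate survive.

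In the cancellation necessity, your physical-space construction $u_{R,\varepsilon}=\eta_R(G*\rho_\varepsilon)$ requires the fundamental solution $G$ with $\widehat G(\xi)\sim A(\xi)^\dagger e_0$ homogeneous of degree $-k$. When $k\ge n$ this is not a locally integrable symbol and $G$ only exists after a (non-canonical) renormalization, so the homogeneity you invoke for $|D^{k-1}G|\sim|x|^{1-n}$ is no longer clean. The paper sidesteps this entirely by truncating on the Fourier side ($\widehat{u_\lambda}=(\widehat{\psi_\lambda}-\widehat{\psi_{1/\lambda}})U$), which is well-defined for all $k$; it is worth adopting that if you want a uniform argument. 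You also assert $|D^{k-1}G|\sim|x|^{1-n}$ without ruling out $D^{k-1}G\equiv 0$; that needs a line (it follows because a degree-$(1-n)$ homogeneous distribution supported at the origin is impossible, and $\widehat{D^{k-1}G}=c\,\xi^{\otimes(k-1)}U(\xi)\ne 0$).

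Finally, in your ``main obstacle'' paragraph you propose to prove the cocanceling embedding by correcting $\varphi\mapsto \varphi-L(D)^*\psi$ to land in $L^\infty$; this is the Bourgain--Brezis approximation scheme, which the paper only invokes for the \emph{stronger} theorem~\ref{theoremOrderkBB} (with the $L^1+\dot W^{-1,n/(n-1)}$ norm). For theorem~\ref{theoremOrderk} itself the paper uses the much more elementary route: lemma~\ref{lemmaCharKalpha} writes $f=\sum_\alpha K_\alpha(L_\alpha f)$ with $\sum_\alpha\partial^\alpha(L_\alpha f)=0$, and then proposition~\ref{propVSHigherOrder} (a slicing-type argument) gives the estimate. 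That route avoids the deep Bourgain--Brezis construction entirely, so your sketch overestimates the difficulty of the sufficiency half.
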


The cancellation is a new condition that we introduce

\begin{definition}
A homogeneous linear differential operator \(A(D)\) on \(\R^n\) from \(V\) to \(E\) is \emph{canceling} if 
\[
  \bigcap_{\xi \in \R^n \setminus\{0\}} A(\xi)[V]=\{0\}.
\]
\end{definition}

In the well-known \(L^p\) counterpart of theorem~\ref{theoremOrderk} for \(1 < p < n\), the ellipticity alone is sufficient. One has for every \(u \in C^\infty_c(\R^n; \R^n)\)
\begin{equation}
\label{ineqOrderkLp}
  \norm{D^{k-1}u }_{L^{\frac{np}{n-p}}} \le C\norm{A(D) u}_{L^p},
\end{equation}
if and only if \(A(D)\) is elliptic\footnote{The sufficiency of the ellipticity is a consequence of the classical theorem~\ref{theoremEllipticLp} and the Sobolev embedding. The necessity of ellipticity in \eqref{ineqOrderkLp} was probably known to the experts; we shall prove in proposition~\ref{propositionEllipticitySobolev} that ellipticity is necessary in \eqref{ineqOrderkLp} for every \(p \in [1, n)\).}.

The cancellation condition for first-order operators is equivalent to a structural condition used by J.\thinspace Bourgain and H.\thinspace Brezis to prove \eqref{ineqA} \cite{BB2007}*{theorem 25} (see proposition~\ref{propBBCanceling} below).

The sufficiency part of theorem~\ref{theoremOrderk} will be proved in proposition~\ref{propositionSufficient}; the necessity of the ellipticity in corollary~\ref{corollaryEllipticitySobolev} and the necessity of the cancellation in proposition~\ref{propositionCancelingNecessary}. 

The estimates \eqref{ineqSobolevL1}, \eqref{KornSobolevL1} and \eqref{HodgeSobolevL1} will be derived from theorem~\ref{theoremOrderk} in section~\ref{sectionExamples} as well as the nonestimate~\eqref{NonHodgeSobolevL1R3}. The case of the Hodge--Sobolev inequality \eqref{HodgeSobolevL1R3} will be treated in section~\ref{sectionPartCanceling} in a generalization of theorem~\ref{theoremOrderk} to partially canceling operators.

Theorem~\ref{theoremOrderk} also remains valid for estimates in fractional Sobolev spaces and in Lorentz spaces (section~\ref{sectionFractional}).
Using the tools of J.\thinspace Bourgain and H.\thinspace Brezis, a counterpart of theorem~\ref{theoremOrderk} with a weaker norm is obtained (section \ref{sectionBB}). 

\subsection{Estimates for $L^1$ vector fields and cocanceling operators}
By the H\"older inequality and classical elliptic estimates, the estimate
\[
 \norm{D^{k-1} u}_{L^\frac{n}{n - 1}} \le C\norm{A(D)u}_{L^1}
\]
for every \(u \in \in C^\infty_c(\R^n; V)\) is equivalent to
\[
 \int_{\R^n} A(D)u \cdot \varphi \le C' \norm{A(D)u}_{L^1} \norm{D \varphi}_{L^n}.
\]
for every \(u \in \in C^\infty_c(\R^n; V)\) and \(\varphi \in C^\infty_c(\R^n; E)\).

This leads us to the related question to determine under which conditions does one have an estimate 
\begin{equation}
\label{ineqL1W1n}
   \int_{\R^n} f \cdot \varphi \le C \norm{f}_{L^1} \norm{D \varphi}_{L^n}.
\end{equation}
for every \(\varphi \in C^\infty_c(\R^n; E)\) and every \(f\) in some subset of \(L^1(\R^n; E)\). 
Without any restriction on \(f\), this estimate fails when \(n \ge 2\); it would be indeed equivalent with 
\[
  \norm{u}_{L^\infty} \le C \norm{D u}_{L^n},
\]
which is also known to be false. 
Surprisingly, J.\thinspace Bourgain and H.\thinspace Brezis \citelist{\cite{BB2004}*{p.\thinspace  541}\cite{BB2007}*{theorem 1\('\)}} have proved that when \(E=\R^n\) and \(f\) is taken in the class of divergence-free vector-fields, the above estimate holds. We want to determine for a given differential operator \(L(D)\) on \(\R^n\) from \(E\) to \(F\), whether an estimate of the type \eqref{ineqL1W1n} holds. The answer is given by 

\begin{theorem}
\label{theoremCocanceling}
Let \(n \ge 2\) and \(L(D)\) be a homogeneous differential operator on \(\R^n\) from \(E\) to \(F\).
The following conditions are equivalent
\begin{enumerate}[(i)]
\item 
\label{itemCocancelingEstimate} there exists \(C > 0\) such that for every \(f \in L^1(\R^n; E)\) such that \(L(D)f=0\) and \(\varphi \in C^\infty_c(\R^n; E)\),
\[
  \int_{\R^n} f \cdot \varphi \le C \norm{f}_{L^1} \norm{D \varphi}_{L^n},
\]
\item 
\label{itemCocanceling0} for every \(f \in L^1(\R^n; E)\) such that \(L(D)f=0\)
\[
  \int_{\R^n} f = 0,
\]
\item \label{itemCocancelingCocanceling} \(L(D)\) is cocanceling.
\end{enumerate}
\end{theorem}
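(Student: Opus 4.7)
The plan is to prove the three conditions cyclically as (i)$\Rightarrow$(ii)$\Rightarrow$(iii)$\Rightarrow$(i).

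For (i)$\Rightarrow$(ii), I would fix an arbitrary $e \in E$ and test the estimate against $\varphi_R = \rho_R e$, where $\rho_R \in C^\infty_c(\R^n;\R)$ is a (smoothed) logarithmic cut-off equal to $1$ on $B(0,1)$ and supported in $B(0,R)$. Building $\rho_R$ from the radial profile $1 - \log \abs{x}/\log R$ on $1 \le \abs{x} \le R$ gives $\norm{D\rho_R}_{L^n}^n = O((\log R)^{-(n-1)})$, which tends to $0$ since $n \ge 2$. Dominated convergence makes the left-hand side converge to $(\int_{\R^n} f)\cdot e$ while the right-hand side vanishes; since $e$ is arbitrary, $\int_{\R^n} f = 0$.

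For (ii)$\Rightarrow$(iii) I argue contrapositively. If $L(D)$ is not cocanceling, pick $0 \neq e \in \bigcap_{\xi \neq 0}\ker L(\xi)$. The homogeneous polynomial $\xi \mapsto L(\xi) e$ then vanishes identically, so each coefficient $L_\alpha e$ is zero and $L(D)(g e) = 0$ for every scalar $g$. Choosing $g \in C^\infty_c(\R^n;\R)$ with $\int_{\R^n} g \neq 0$ produces $f = g e \in L^1(\R^n;E) \cap \ker L(D)$ with $\int_{\R^n} f \neq 0$, contradicting (ii).

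The substantive implication is (iii)$\Rightarrow$(i). By duality, (i) is equivalent to the endpoint bound $\norm{f}_{\dot{W}^{-1,n/(n-1)}} \le C\norm{f}_{L^1}$ for $f \in L^1(\R^n;E) \cap \ker L(D)$. I would reduce by mollification (which preserves $L(D)f=0$ since convolution commutes with $L(D)$) to smooth $f$, then represent $f = -\Delta u$ via the Newtonian potential and integrate by parts: $\int_{\R^n} f\cdot\varphi = \int_{\R^n} Du\cdot D\varphi \le \norm{Du}_{L^{n/(n-1)}}\norm{D\varphi}_{L^n}$. The estimate thus reduces to $\norm{Du}_{L^{n/(n-1)}} \le C\norm{f}_{L^1}$. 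Since classical Calder\'on--Zygmund theory at $p=1$ only delivers weak $L^{n/(n-1)}$, the upgrade to strong $L^{n/(n-1)}$ must come entirely from the cocanceling condition.

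Executing this upgrade is the principal obstacle. My plan is to exploit the finite-dimensional consequence of cocanceling: by compactness there exist $\xi_1,\ldots,\xi_N \in \R^n\setminus\{0\}$ such that the assembled map $e \mapsto (L(\xi_j)e)_j : E \to F^N$ is injective and hence admits a linear left inverse. Using a Fourier partition of unity localized near $\pm\xi_j$, one decomposes $f$ into pieces each of which should be expressible as a derivative of an $L^{n/(n-1)}$ primitive with primitive-norm controlled by $\norm{f}_{L^1}$, in the spirit of the Bourgain--Brezis representation of $L^1$ divergence-free vector fields. Turning this formal decomposition into a genuine estimate with a uniform constant -- i.e.\ obtaining the strong rather than weak $L^{n/(n-1)}$ bound -- is the technical heart: it requires Bourgain--Brezis-style atomic/approximation ideas adapted to the general cocanceling framework, and is where the bulk of the work will lie.
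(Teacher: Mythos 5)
Your implications (i)\(\Rightarrow\)(ii) and (ii)\(\Rightarrow\)(iii) are correct and essentially match the paper. For (i)\(\Rightarrow\)(ii), the paper tests against \(\varphi_\lambda(x)=\psi(\abs{x}^\lambda)\) with \(\norm{D\varphi_\lambda}_{L^n}=\lambda^{1-1/n}\norm{D\varphi_1}_{L^n}\to 0\); your logarithmic cut-off achieves the same thing. For (ii)\(\Rightarrow\)(iii), the paper runs the contrapositive through a Fourier-side computation, but your cleaner observation --- that if \(e\in\bigcap_{\xi\ne 0}\ker L(\xi)\) then each coefficient \(L_\alpha e\) vanishes, so \(L(D)(ge)=0\) for any scalar \(g\) --- is a valid and arguably simpler route to the same counterexample.

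The genuine gap is in (iii)\(\Rightarrow\)(i), which is the only nontrivial implication and the one you leave unexecuted. Your representation \(f=-\Delta u\) followed by integration by parts just restates the goal: bounding \(\norm{Du}_{L^{n/(n-1)}}\) by \(\norm{f}_{L^1}\) \emph{is} the \(\dot W^{-1,n/(n-1)}\) estimate you are trying to prove, and you explicitly acknowledge that the ``technical heart'' --- obtaining the strong endpoint bound from the cocanceling structure via a Fourier partition of unity and Bourgain--Brezis-style atoms --- remains to be done. The finite family \(\xi_1,\dots,\xi_N\) you extract from cocancellation is a fact about the symbol's kernel and does not obviously translate into a useful frequency localization of \(f\), so the sketch does not yet point at a workable mechanism. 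The paper avoids this entirely by an algebraic reduction: writing \(L(D)=\sum_{\abs\alpha=k}\partial^\alpha L_\alpha\), cocancellation is shown (lemma~\ref{lemmaCharKalpha}) to be equivalent to the existence of \(K_\alpha\in\Lin(F;E)\) with \(\sum_\alpha K_\alpha\circ L_\alpha=\id\). Then \(L(D)f=0\) means \(\sum_\alpha\partial^\alpha\bigl(L_\alpha(f)\bigr)=0\), which is exactly the hypothesis of proposition~\ref{propVSHigherOrder} (the scalar higher-order condition estimate from \cite{VS2008}, itself proved by an elementary slicing argument). Applying that proposition to each \(L_\alpha(f)\) and reassembling \(f=\sum_\alpha K_\alpha\bigl(L_\alpha(f)\bigr)\) gives the estimate with no new harmonic analysis. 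This linear-algebra reduction to a single already-established building block is the key idea your sketch is missing.
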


The \emph{cocancellation} condition is a new condition that we introduce:
\begin{definition}
\label{definitionCocanceling}
Let \(L(D)\) be a homogeneous linear differential operator on \(\R^n\) from \(E\) to \(F\). The operator \(L(D)\) is \emph{cocanceling} if 
\[
  \bigcap_{\xi \in \R^n \setminus \{0\}} \ker L(\xi)=\{0\}.  
\]
\end{definition}

The equivalence between \eqref{itemCocanceling0} and \eqref{itemCocancelingCocanceling} in theorem~\ref{theoremCocanceling} will be the proved in proposition~\ref{propositionEquivalentCocanceling}; \eqref{itemCocanceling0} will be deduced from \eqref{itemCocancelingEstimate} in proposition~\ref{propositionCocancelingNecessary}; \eqref{itemCocancelingEstimate} will be proved assuming \eqref{itemCocancelingCocanceling} in proposition~\ref{propCocancelingSufficient} relying on results of J.\thinspace Bourgain and H.\thinspace Brezis \cite{BB2007} and the author \cite{VS2008}.

It is possible also to obtain some partial estimate when \(L(D)\) satisfies partially the cocancellation condition (see section~\ref{sectionPartCanceling}) and to obtain fractional estimates (see section~\ref{sectionFractional}).
Using the tools of J.\thinspace Bourgain and H.\thinspace Brezis, we show that if \(L(D)\) is a canceling homogenenous differential operator, it allows to characterize vector-fields \(f \in L^1(\R^n; E)\) that define linear functionals on the homogeneous Sobolev space \(\dot{W}^{1, n}(\R^n; E)\) (see section~\ref{sectionBB}).  

\section{Estimates on $L^1$ vector fields and cocanceling operators}

\label{sectionCocanceling}

\subsection{Characterization of cocanceling operators}
The following proposition characterizes cocanceling operators:

\begin{proposition}
\label{propositionEquivalentCocanceling}
Let \(L(D)\) be a homogeneous linear differential operator of order \(k\) on \(\R^n\) from \(E\) to \(F\).
The following are equivalent
\begin{enumerate}[(i)]
 \item\label{itCocanceling} \(L(D)\) is cocanceling, 
 \item\label{itVanishDirac} for every \(e \in E\), if \(L(D)\, (\delta_0 e) =0\), then \(e = 0\),
 \item\label{itVanishL1} for every \(f \in L^1(\R^n; E)\), if \(L(D)f = 0\), then \[\int_{\R^n} f = 0,\] 
 \item\label{itVanishCinfty} for every \(f \in C^\infty_c(\R^n; E)\), if \(L(D)f = 0\), then
\[
\int_{\R^n} f = 0.
\] 
\end{enumerate}
\end{proposition}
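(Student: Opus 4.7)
The plan is to establish the cycle $(\text{i}) \Rightarrow (\text{iii}) \Rightarrow (\text{iv}) \Rightarrow (\text{ii}) \Rightarrow (\text{i})$. The equivalence of (i) and (ii) is essentially just a Fourier-side reformulation (using $\widehat{\delta_0 e} = e$ as a constant), so the two parts that require any real work are the Fourier-analytic step $(\text{i}) \Rightarrow (\text{iii})$ and the explicit test-field constructions that close the cycle at $(\text{iv}) \Rightarrow (\text{ii})$ and $(\text{ii}) \Rightarrow (\text{i})$.

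For $(\text{i}) \Rightarrow (\text{iii})$, I would work on the Fourier side. Since $f \in L^1(\R^n; E)$, the Fourier transform $\hat f$ is continuous on $\R^n$, and $L(D) f = 0$ translates to the pointwise identity $L(\xi) \hat f(\xi) = 0$; in particular $\hat f(\xi) \in \ker L(\xi)$ for every $\xi \ne 0$. Exploiting $k$-homogeneity of $L$, which gives $\ker L(t\xi) = \ker L(\xi)$ for every $t > 0$, I fix $\xi \ne 0$ and let $t \to 0^+$ in $\hat f(t\xi) \in \ker L(\xi)$; the continuity of $\hat f$ at the origin, together with closedness of the kernel, yields $\hat f(0) \in \ker L(\xi)$. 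Since $\xi \ne 0$ was arbitrary, cocancellation forces $\hat f(0) = \int_{\R^n} f = 0$.

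The step $(\text{iii}) \Rightarrow (\text{iv})$ is immediate as $C^\infty_c(\R^n; E) \subset L^1(\R^n; E)$. For $(\text{iv}) \Rightarrow (\text{ii})$, expanding the action of $L(D)$ gives $L(D)(\delta_0 e) = \sum_{\abs{\alpha} = k} (L_\alpha e) \, \partial^\alpha \delta_0$, and the distributions $\{\partial^\alpha \delta_0\}_{\abs{\alpha} = k}$ are linearly independent (their Fourier transforms are distinct monomials in $\xi$), so the vanishing of the sum forces $L_\alpha e = 0$ for every multi-index $\alpha$ of length $k$. Choosing any scalar $\varphi \in C^\infty_c(\R^n)$ with $\int_{\R^n} \varphi \ne 0$ and setting $f = \varphi \, e \in C^\infty_c(\R^n; E)$, a direct computation gives $L(D) f = \sum_\alpha (L_\alpha e) \, \partial^\alpha \varphi = 0$ while $\int_{\R^n} f = \bigl(\int_{\R^n} \varphi\bigr) e$; hypothesis (iv) then yields $e = 0$.

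Finally, for $(\text{ii}) \Rightarrow (\text{i})$, if $e \in \bigcap_{\xi \ne 0} \ker L(\xi)$ then, the case $\xi = 0$ being trivial by $k$-homogeneity with $k \ge 1$, the polynomial $\xi \mapsto L(\xi) e$ vanishes identically, so each $L_\alpha e$ is zero and $L(D)(\delta_0 e) = 0$; hypothesis (ii) then gives $e = 0$. The only genuinely subtle step is $(\text{i}) \Rightarrow (\text{iii})$: the relation $L(\xi) \hat f(\xi) = 0$ only directly constrains $\hat f$ away from the origin, and transferring the kernel constraint across $\xi = 0$ relies essentially on the continuity of $\hat f$ at the origin, which is exactly what the hypothesis $f \in L^1$ supplies.
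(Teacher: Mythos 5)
Your proof is correct, and it runs the cycle in the opposite orientation from the paper: you prove $(\text{i}) \Rightarrow (\text{iii}) \Rightarrow (\text{iv}) \Rightarrow (\text{ii}) \Rightarrow (\text{i})$, while the paper proves $(\text{i}) \Rightarrow (\text{ii}) \Rightarrow (\text{iii}) \Rightarrow (\text{iv}) \Rightarrow (\text{i})$. The genuinely different step is your $(\text{i}) \Rightarrow (\text{iii})$: where the paper dilates an $L^1$ vector field to a Dirac mass ($f_\lambda \to \delta_0 e$ in the sense of distributions, so that the implication factors through the Dirac-mass criterion (ii)), you instead pass to the Fourier side, read off $\hat f(\xi) \in \ker L(\xi)$ for $\xi \ne 0$ pointwise, and use the $k$-homogeneity $\ker L(t\xi) = \ker L(\xi)$ together with continuity of $\hat f$ at the origin (which is exactly what $f \in L^1$ buys) to let $t \to 0^+$ along rays and deduce $\hat f(0) \in \bigcap_{\xi \ne 0} \ker L(\xi)$. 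This is a clean alternative: the paper's scaling argument is more geometric and works purely in physical space, whereas your argument is a direct Fourier-analytic one. Your $(\text{iv}) \Rightarrow (\text{ii})$ and $(\text{ii}) \Rightarrow (\text{i})$ both rest on the same algebraic observation — that $L(D)(\delta_0 e) = \sum_{\abs{\alpha}=k}(L_\alpha e)\,\partial^\alpha \delta_0$ vanishes iff $L_\alpha e = 0$ for every $\alpha$ iff $L(\xi) e = 0$ for every $\xi$ — which the paper spreads across its $(\text{i}) \Rightarrow (\text{ii})$ and $(\text{iv}) \Rightarrow (\text{i})$ steps via Fourier identities; your test field $f = \varphi\, e$ with $\int \varphi \ne 0$ is essentially the same construction as the paper's $\psi e$ with $\int \psi = 1$. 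Net effect: the logic is sound, the heavy lifting in $(\text{i}) \Rightarrow (\text{iii})$ uses a continuity-at-the-origin argument in place of the paper's approximate-identity scaling, and the remaining implications are algebraic reshufflings of the same ideas.
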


Here \(\delta_0\) denotes Dirac's measure at \(0\). In \eqref{itVanishDirac} and \eqref{itVanishL1}, the differential operator \(L(D)\) is taken in the sense of distributions.

\begin{proof}
Assume that \(L(D)\) is cocanceling. Fix \(e \in E\) such that \(L(D)(\delta_0 e)=0\). For every \(\varphi \in C^\infty_c(\R^n; E)\), by definition of the distributional derivative and properties of the Fourier transform \(\widehat{\varphi}\) of \(\varphi\),
\[ 
\begin{split}
  \dualprod{L(D) (\delta_0 e)}{\varphi}&=(-1)^ke \cdot \bigl(L(D)^*\varphi\bigr)(0)\\
&=\int_{\R^n} e \cdot \bigl((-2\pi i)^kL(\xi)^*[\widehat{\varphi}(\xi)]\bigr)\,d\xi\\
&=\int_{\R^n} \bigl((2\pi i)^kL(\xi)[e]\bigr) \cdot \widehat{\varphi}(\xi)\,d\xi.
\end{split}
\]
Since by hypothesis \(L(D)\,(\delta_0 e)=0\),  we have, for every \(\varphi \in C^\infty_c(\R^n; E)\),
\[
 \int_{\R^n} \bigl((2\pi i)^kL(\xi)[e]\bigl) \cdot \widehat{\varphi}(\xi)\,d\xi=0;
\]
hence for every \(\xi \in \R^n\)
\(
  L(\xi)[e]=0
\).
Since \(L(D)\) is cocanceling, we conclude that \(e=0\). We have proved that 
\eqref{itCocanceling} implies \eqref{itVanishDirac}.

Now assume that \eqref{itVanishDirac} holds and let \(f \in L^1(\R^n; E)\). If \(L(D)f=0\), define \(f_\lambda : \R^n \to E\) for \(\lambda >0\) and \(x \in \R^n\) by
\(
  f_\lambda(x) = \frac{1}{\lambda^n} f\bigl(\frac{x}{\lambda}\bigr)
\).
One has \(f_\lambda \to \delta_0 e\) in the sense of distributions as \(\lambda \to 0\),
where
\(
  e=\int_{\R^n} f.
\)
Therefore \(L(D)f_\lambda \to L(D) (\delta_0e)\) in the sense of distributions as \(\lambda \to 0\).
Since \(L(D)\) is homogeneous, \(L(D)f_\lambda=0\) and hence \(L(D)(\delta_0 e)=0\). Therefore by assumption, \(\int_{\R^n} f = e = 0\). We have proved \eqref{itVanishL1}.
It is clear that \eqref{itVanishL1} implies \eqref{itVanishCinfty}.

Finally assume that \eqref{itVanishCinfty} holds. Let \(e \in \bigcap_{\xi \in \R^n \setminus \{0\}} \ker L(\xi) \). Choose \(\psi \in C^\infty_c(\R^n)\) such that \(\int_{\R^n} \psi=1\). For every \(x \in \R^n\), 
\[
 \bigl(L(D) (\psi e)\bigr) (x)=\int_{\R^n} e^{2\pi i x \cdot \xi} (2\pi i)^k L(\xi)[e] \widehat{\psi}(\xi)\,d\xi=0.
\]
By \eqref{itVanishCinfty}, we conclude that \(e=\int_{\R^n} \psi e = 0\). We have proved that \(L(D)\) is cocanceling.
\end{proof}

In general, it is not clear whether there exists \(f \in C^\infty_c(\R^n; E)\setminus\{0\}\) such that \(L(D)f=0\). When \(L(D)\) is \emph{not cocanceling}, proposition~\ref{propositionCocancelingNecessary} shows that there exists \(f \in C^\infty_c(\R^n;  E)\setminus\{0\}\) such that \(L(D)f=0\).

\subsection{Necessity of the cocancellation}
Using a classical construction, we prove that \eqref{itemCocancelingEstimate} implies \eqref{itemCocanceling0} in theorem~\ref{theoremCocanceling} 

\begin{proposition}
\label{propositionCocancelingNecessary}
Let \(n \ge 2\) and \(f \in L^1(\R^n; E)\). If for every \(\varphi \in C^\infty_c(\R^n; E)\), 
\[
  \int_{\R^n} f \cdot \varphi \le C \norm{f}_{L^1} \norm{D\varphi}_{L^n},
\]
then 
\[
  \int_{\R^n} f = 0.
\]
\end{proposition}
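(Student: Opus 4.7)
The plan is to argue by contradiction: assuming $e := \int_{\R^n} f \neq 0$, I would rescale $f$ to concentrate at the origin, thereby reducing the inequality to a statement equivalent to the (false) critical Sobolev embedding $\dot W^{1,n}(\R^n) \hookrightarrow L^\infty(\R^n)$.

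The first step is to observe that the hypothesis is scale-invariant. For $\lambda > 0$, set $f_\lambda(x) := \lambda^n f(\lambda x)$, so that $\norm{f_\lambda}_{L^1} = \norm{f}_{L^1}$. Given $\varphi \in C^\infty_c(\R^n; E)$, the change of variables $y = \lambda x$ together with $\widetilde{\varphi}(y) := \varphi(y/\lambda)$ yields $\int f_\lambda \cdot \varphi = \int f \cdot \widetilde{\varphi}$ and $\norm{D\widetilde{\varphi}}_{L^n} = \norm{D\varphi}_{L^n}$. Applying the hypothesis to $\widetilde{\varphi}$ therefore gives, for every $\lambda > 0$,
\[
  \int_{\R^n} f_\lambda \cdot \varphi \le C \norm{f}_{L^1} \norm{D\varphi}_{L^n}.
\]

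The second step is to send $\lambda \to \infty$. Since $\varphi$ is continuous and bounded, $\varphi(y/\lambda) \to \varphi(0)$ pointwise while staying dominated by $\norm{\varphi}_{L^\infty}$, so dominated convergence gives $\int f_\lambda \cdot \varphi \to e \cdot \varphi(0)$. Thus
\[
  e \cdot \varphi(0) \le C \norm{f}_{L^1} \norm{D\varphi}_{L^n} \quad \text{for every } \varphi \in C^\infty_c(\R^n; E).
\]
Specialising to $\varphi(x) = \psi(x)\, e$ with $\psi \in C^\infty_c(\R^n; \R)$ scalar, this reduces to
\[
  \abs{e} \, \psi(0) \le C \norm{f}_{L^1} \norm{D\psi}_{L^n}.
\]

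The final step is to exploit the failure of the critical embedding when $n \ge 2$: I would exhibit a sequence $\psi_j \in C^\infty_c(\R^n; \R)$ with $\psi_j(0) = 1$ and $\norm{D\psi_j}_{L^n} \to 0$. A concrete choice is a smoothed and compactly supported truncation of $\log\log(1 + 1/\abs{x})$ normalized so that its value at the origin equals $1$; for $n \ge 2$ the integral $\int_\varepsilon^{1/2} r^{-1} \log^{-n}(1/r)\,dr$ stays bounded as $\varepsilon \to 0$, while the central value diverges, so dividing out gives the required sequence. Plugging $\psi_j$ into the previous inequality yields $\abs{e} \le C \norm{f}_{L^1} \norm{D\psi_j}_{L^n} \to 0$, contradicting $e \neq 0$.

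The only real obstacle is the construction in the last step, but this is a classical computation showing the non-embedding of $\dot W^{1,n}$ into $L^\infty$; everything else is a clean rescaling and limiting argument that turns the proposition into this standard fact, and it is exactly at this point that the hypothesis $n \ge 2$ enters.
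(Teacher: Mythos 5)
Your proof is correct, and both arguments ultimately exploit the same phenomenon -- the failure of the borderline embedding $\dot W^{1,n}(\R^n) \hookrightarrow L^\infty(\R^n)$ when $n \ge 2$ -- but the implementations differ in a meaningful way. The paper dispenses with the rescaling step entirely: it fixes a radial cutoff $\psi$ equal to $1$ on $[0,1]$ and supported in $[0,2]$, tests against the single family $\varphi_\lambda(x) = \psi(\abs{x}^\lambda)$, and lets $\lambda \to 0$. Since $\varphi_\lambda \equiv 1$ near the origin, it is smooth, and a clean change of variables gives the exact identity $\norm{D\varphi_\lambda}_{L^n} = \lambda^{1-1/n}\norm{D\varphi_1}_{L^n} \to 0$, while $\varphi_\lambda \to 1$ pointwise and $\abs{\varphi_\lambda} \le 1$, so dominated convergence forces $\int f = 0$ in one stroke. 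You instead run two limits: first $\tilde\varphi(y) = \varphi(y/\lambda)$ with $\lambda \to \infty$ (using the scale invariance of $\norm{D\varphi}_{L^n}$) to reduce to $\abs{e}\,\psi(0) \le C\norm{f}_{L^1}\norm{D\psi}_{L^n}$, and then a separate iterated-logarithm construction to drive the right-hand side to zero while keeping $\psi(0)=1$. This is fine, but requires some care with the truncation and smoothing that the paper avoids -- and note that $\psi(\abs{x}^\lambda) = \psi(e^{\lambda\log\abs{x}})$ is itself essentially a smooth version of the normalized logarithmic profile underlying your $\psi_j$, so the paper's family does both of your steps simultaneously. Your observation that the gradient $L^n$-norm is invariant under dilation correctly explains why the rescaling alone cannot finish the job and why a genuinely logarithmic witness is needed.
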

\begin{proof}
Let \(\psi \in C^\infty(\R^+)\) be such that \(\psi=1\) on \( [0, 1] \), \(\psi \in [0, 1]\) on \([1, 2]\) and \(\psi=0\) on \([2, \infty)\). For \(\lambda > 0\) define \(\varphi_\lambda : \R^n \to \R\) for \(x \in \R^n\) by 
\(
 \varphi_\lambda(x) =\psi(\abs{x}^\lambda).
\)
One has for every \(x \in \R^n\), \(\lim_{\lambda \to 0} \varphi_\lambda(x)=1\) and 
\(
  \norm{D \varphi_\lambda}_{L^n}=\lambda^{1-\frac{1}{n}} \norm{D \varphi_1}_{L^n}.
\)
By Lebesgue's dominated convergence theorem and the estimate, \(\int_{\R^n} f=0\).
\end{proof}

\subsection{Estimates on $L^1$ vector fields}
We shall now prove that the cocancellation condition implies the estimate in  theorem~\ref{theoremCocanceling}
\begin{proposition}
\label{propCocancelingSufficient}
Let \(L(D)\) be a homogeneous differential operator from \(E\) to \(F\). If \(L(D)\) is cocanceling,  \(f \in L^1(\R^n; E)\) and \(L(D)f=0\) in the sense of distributions, then for every \(\varphi \in C^\infty_c(\R^n; E)\), 
\[
  \int_{\R^n} f \cdot \varphi \le C \norm{f}_{L^1}\norm{D \varphi}_{L^n}.
\]
\end{proposition}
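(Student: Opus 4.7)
The plan is to reduce the desired estimate to the $L^{1} \to \dot{W}^{-1, n/(n-1)}$ bound established by J.~Bourgain and H.~Brezis \cite{BB2007} and extended by the author in \cite{VS2008}: for every divergence-free $g \in L^{1}(\R^{n}; \R^{n})$ and every $\psi \in C^{\infty}_{c}(\R^{n}; \R^{n})$ one has
\[
  \Bigabs{\int_{\R^{n}} g \cdot \psi} \le C \norm{g}_{L^{1}} \norm{D\psi}_{L^{n}},
\]
with generalizations of this inequality to a broader class of first-order cocanceling constraints provided in \cite{VS2008}. The task is thus to show that a general $k$-th order cocanceling constraint $L(D) f = 0$ can be recast, modulo bounded zero-order Fourier multipliers acting on the test function, as such a first-order constraint.

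The first step is algebraic. Because $E$ is finite dimensional, the descending chain of intersections of subspaces $\ker L(\xi)$ must stabilize, so one extracts finitely many unit vectors $\xi_{1}, \ldots, \xi_{m} \in \R^{n} \setminus \{0\}$ with $\bigcap_{j=1}^{m} \ker L(\xi_{j}) = \{0\}$. This says that $e \mapsto \bigl(L(\xi_{j}) e\bigr)_{j=1}^{m}$ is an injection $E \hookrightarrow F^{m}$, and finite-dimensional duality then furnishes $T_{1}, \ldots, T_{m} \in \Lin(F; E)$ with
\[
  \sum_{j=1}^{m} T_{j}\, L(\xi_{j}) = \id_{E}.
\]

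The second step promotes this pointwise identity to a global decomposition of the pairing $\int f \cdot \varphi$. I would fix a smooth homogeneous-of-degree-zero partition of unity $\chi_{1}, \ldots, \chi_{m}$ on $\R^{n} \setminus \{0\}$ with $\chi_{j}$ supported in a small conical neighborhood of $\xi_{j}$, and split $\varphi$ in frequency accordingly. On the support of each $\chi_{j}$ the symbol $L(\xi)$ agrees with $\abs{\xi}^{k} L(\xi/\abs{\xi})$, a zero-order multiplier applied to $L(\xi_{j})$ modulo a Calder\'on--Zygmund-controlled perturbation. Combined with the algebraic identity $\sum_{j} T_{j} L(\xi_{j}) = \id_{E}$ and the vanishing $L(D) f = 0$, this rewrites each localized contribution as a pairing between an $L^{1}$ vector field annihilated by a first-order cocanceling operator and an $L^{n}$ derivative of a test function, which is exactly the setting of the Bourgain--Brezis and \cite{VS2008} estimates recalled above.

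The main obstacle I anticipate is that positive-order Fourier multipliers are not bounded on $L^{1}$, so one cannot produce an $L^{1}$ potential for $f$ by inverting $L(D)$. The bookkeeping must be arranged so that every positive-order or Riesz-type operator acts on the test function $\varphi$ (where $L^{n}$ estimates on $D \varphi$ follow from Calder\'on--Zygmund theory), while the $L^{1}$ hypothesis on $f$ is consumed only through the native form of the BB/VS estimate on the final localized pieces; the zero-order multipliers that remain on both sides are then standard, and the proof closes by summing the $m$ localized contributions.
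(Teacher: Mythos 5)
Your algebraic first step is sound: because \(E\) is finite-dimensional and \(L(D)\) is cocanceling, one can find finitely many \(\xi_1, \ldots, \xi_m\) with \(\bigcap_j \ker L(\xi_j) = \{0\}\) and obtain a left inverse \(\sum_j T_j L(\xi_j) = \id_E\). But this is not the algebraic decomposition the argument needs, and the second step does not close.

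The paper's proof hinges on a different choice. Write \(L(D) = \sum_{\abs{\alpha}=k} \partial^\alpha L_\alpha\) in terms of the constant coefficient maps \(L_\alpha \in \Lin(E; F)\). Lemma~\ref{lemmaCharKalpha} shows that cocancellation is precisely the statement that \(e \mapsto (L_\alpha(e))_{\abs{\alpha}=k}\) is injective, hence left-invertible: \(\sum_\alpha K_\alpha \circ L_\alpha = \id_E\). The crucial gain from using the \(L_\alpha\) rather than \(L(\xi_j)\) is that the tuple \(g_\alpha := L_\alpha(f)\) is obtained from \(f\) by fixed pointwise linear maps, so \(g_\alpha \in L^1\) with \(\norm{g_\alpha}_{L^1} \le C\norm{f}_{L^1}\), and moreover it satisfies \emph{by construction} the \(k\)-th order differential constraint \(\sum_\alpha \partial^\alpha g_\alpha = L(D)f = 0\). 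That is exactly the hypothesis of proposition~\ref{propVSHigherOrder} (the \(k\)-th order result from \cite{VS2008}), which is then applied directly and transferred back to \(f\) via the \(K_\alpha\), with the gradient landing on \(K_\alpha^* \varphi\). By contrast, \(L(\xi_j) f\) for a fixed point \(\xi_j\) satisfies no useful differential constraint, so there is nothing to feed into a Bourgain--Brezis type estimate.

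Your second step also rests on an unsubstantiated claim. You acknowledge that one cannot invert \(L(D)\) on \(L^1\), but you then write ``the zero-order multipliers that remain on both sides are then standard'' -- yet zero-order Fourier multipliers (such as Riesz transforms) are \emph{not} bounded on \(L^1\). The only operations the \(f\) side tolerates are constant-coefficient linear maps. Moreover, the conical frequency localization cannot convert the global \(k\)-th order constraint \(L(D)f=0\) into a first-order one: near the cone around \(\xi_j\), the symbolic relation \(L(\xi)\widehat{f}(\xi)=0\) is an algebraic condition on \(\widehat{f}\), not a first-order differential equation on \(L(\xi_j)f\), and cutting \(\varphi\) in frequency does not change what constraint \(f\) satisfies. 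Finally, \cite{VS2008} is precisely the higher-order result (proposition~\ref{propVSHigherOrder}), not a collection of first-order cocanceling estimates -- and it is by invoking that higher-order result directly, without attempting any reduction to first order, that the paper's argument works. The missing idea in your proposal is thus the coefficient decomposition \(L(D)=\sum_\alpha \partial^\alpha L_\alpha\) together with the direct use of the \(k\)-th order estimate.
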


The first ingredient of the proof of proposition~\ref{propCocancelingSufficient} is a similar result in which the vector condition is replaced by a single scalar condition. It will be shown in proposition~\ref{propositionCocancelingHigherOrder} that this is a particular case of proposition~\ref{propCocancelingSufficient}.

\begin{proposition}[Van Schaftingen, 2008 \cite{VS2008}*{theorem 4}]
\label{propVSHigherOrder}
Let \(k \in \N\) and \(f_\alpha \in L^1(\R^n)\) for \(\alpha \in \N^n\) with \(\abs{\alpha}=k\). If 
\[
 \sum_{\substack{\alpha \in \N^n\\ \abs{\alpha}=k}} \partial^\alpha f_\alpha=0
\]
in the sense of distributions, then for every \(\alpha \in \N^n\) with \(\abs{\alpha}=k\) and \(\varphi \in C^\infty_c(\R^n)\)
\[
 \int_{\R^n} f_\alpha \varphi \le C \norm{f}_{L^1} \norm{D \varphi}_{L^n}.
\]
\end{proposition}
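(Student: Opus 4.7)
The plan is to prove Proposition~\ref{propVSHigherOrder} by induction on the order \(k\), using the case \(k=1\) as the foundation.

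For the base case \(k = 1\), the constraint \(\sum_{i=1}^n \partial_i f_i = 0\) says precisely that \(f = (f_1, \ldots, f_n) \in L^1(\R^n; \R^n)\) is a divergence-free vector field. The Bourgain--Brezis estimate \cite{BB2007} for such vector fields asserts that \(\int f \cdot \vec\varphi \le C\,\norm{f}_{L^1}\,\norm{D\vec\varphi}_{L^n}\) for every \(\vec\varphi \in C^\infty_c(\R^n;\R^n)\); choosing \(\vec\varphi = \varphi e_i\) immediately yields
\[
 \int_{\R^n} f_i \varphi \le C\,\norm{f}_{L^1}\,\norm{D\varphi}_{L^n},
\]
which is the desired scalar estimate.

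For the inductive step with \(k \ge 2\), fix a multi-index \(\alpha_0\) with \(\abs{\alpha_0} = k\). On the Fourier side, the hypothesis reads \(\sum_{\abs{\alpha} = k} \xi^\alpha \widehat{f_\alpha}(\xi) = 0\), which for \(\xi \ne 0\) permits the algebraic identity
\[
 \widehat{f_{\alpha_0}}(\xi) = \sum_{\abs{\alpha} = k} \Bigl( \delta_{\alpha\alpha_0} - \frac{\xi^{\alpha_0 + \alpha}}{\abs{\xi}^{2k}} \Bigr)\, \widehat{f_\alpha}(\xi).
\]
Each multiplier here is smooth and homogeneous of degree zero on \(\R^n\setminus\{0\}\); factoring a single Riesz-type derivative out of each term and regrouping gives a representation \(f_{\alpha_0} = \sum_{j=1}^n \partial_j h_j\), where the auxiliary vector field \((h_j)\) is defined from the \(f_\alpha\) via Fourier multipliers of degree \(-(k-1)\) and inherits a first-order divergence relation from the original constraint. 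Integrating by parts against \(\varphi\), the base case then produces the required estimate.

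The principal obstacle is that the Riesz-type operators involved are not bounded on \(L^1\), so the vector field \((h_j)\) cannot be defined component-wise as an unrestricted Fourier multiplier acting on the individual \(f_\alpha\). The decomposition must instead be organized so that every unbounded multiplier acts only on a combination of the \(f_\alpha\) that, by virtue of \(\sum \xi^\alpha \widehat{f_\alpha} = 0\), collapses to a genuine \(L^1\) object. Achieving this bookkeeping while simultaneously maintaining the requisite first-order divergence-free structure on \((h_j)\) is the technical heart of the argument; once accomplished, each contribution is controlled by the base case against \(\norm{f}_{L^1}\norm{D\varphi}_{L^n}\).
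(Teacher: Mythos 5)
The inductive step does not close. Your Fourier identity \(\widehat{f_{\alpha_0}} = \sum_\alpha\bigl(\delta_{\alpha\alpha_0} - \xi^{\alpha_0+\alpha}/\abs{\xi}^{2k}\bigr)\widehat{f_\alpha}\) holds trivially under the constraint, but all the multipliers \(\xi^{\alpha_0+\alpha}/\abs{\xi}^{2k}\) are homogeneous of degree zero and bounded on \(L^p\) only for \(1<p<\infty\), not on \(L^1\); the single scalar constraint \(\sum\xi^\alpha\widehat{f_\alpha}=0\) does not supply the extra cancellation needed to make such operators land in \(L^1\) when applied to the \(f_\alpha\). More fundamentally, the structure you ask \((h_j)\) to satisfy is self-contradictory: you need simultaneously \(f_{\alpha_0}=\sum_j\partial_j h_j\) and a ``first-order divergence relation'' on \(h\) so the \(k=1\) case applies, but the \(k=1\) base case requires \(\sum_j\partial_j h_j=0\), which forces \(f_{\alpha_0}=0\). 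Splitting \(h\) into divergence-free and gradient parts by Helmholtz does not help, since the Leray projection is again unbounded on \(L^1\). You acknowledge this as the ``technical heart'' but do not resolve it, so the proposal remains a plan rather than a proof.

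Independently of the gap, your route inverts the paper's logic. The paper does not prove Proposition~\ref{propVSHigherOrder} here; it cites \cite{VS2008} and describes the method as an elementary slicing argument in the style of Gagliardo--Nirenberg, and later emphasizes explicitly that this proposition ``is proved by elementary methods'' in contrast to the difficult Bourgain--Brezis construction. Within the present paper the Bourgain--Brezis divergence-free estimate is recovered as a \emph{corollary} of this proposition (via cocancellation of the divergence), so using that estimate as the base of an induction runs opposite to the intended chain of implications. A correct blind proof should mimic the slicing argument: integrate along coordinate directions, use the constraint \(\sum_{\abs{\alpha}=k}\partial^\alpha f_\alpha=0\) through repeated one-dimensional integrations by parts, and estimate the resulting traces by mixed-norm inequalities — no Fourier multipliers or Leray-type projections are needed.
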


The proof of proposition~\ref{propVSHigherOrder} relies on a slicing argument which is reminiscent of that used for the proof of the Gagliardo--Nirenberg embedding \citelist{\cite{Gagliardo}\cite{Nirenberg1959}*{pp.\thinspace 128-129}}, the Korn--Sobolev inequality \cite{Strauss1973} and which is a modification of an argument for estimates of circulation along closed curves \cite{VS2004BBM}, divergence-free vector fields \cite{VS2004Divf}, closed differential forms \cite{LS2005} and vector fields that satisfy a second-order condition \cite{VS2004ARB}.
This was adapted to fractional spaces \citelist{\cite{BB2004}*{remark 1}\cite{BB2007}*{remark 11}\cite{VS2004Divf}*{remark~5}\cite{VS2006BMO}*{remark~4.2}\cite{VS2008}*{remark 2}\cite{VS2010}} and noncommutative settings \citelist{\cite{CVS2009}\cite{Yung2010}}.
A stronger version of Proposition~\ref{propVSHigherOrder} can also be obtained by the methods of J.\thinspace Bourgain and H.\thinspace Brezis \cite{BB2007} (see theorem~\ref{theoremVectorL1BB}).

The second ingredient is an algebraic lemma:
\begin{lemma}
\label{lemmaCharKalpha}
Let \(L(D)=\sum_{\abs{\alpha}=k} \partial_\alpha L_\alpha\) be a homogeneous differential operator of order \(k\) on \(\R^n\) from \(E\) to \(F\). The operator \(L(D)\) is cocanceling if and only if there exist \(K_\alpha \in \Lin(F; E)\) for every \(\alpha \in \N^n\) with \(\abs{\alpha}=k\) such that 
\begin{equation}
\label{eqCharKalpha}
 \sum_{\substack{\alpha \in \N^n\\ \abs{\alpha}=k}}K_\alpha \circ L_\alpha=\id.
\end{equation}
\end{lemma}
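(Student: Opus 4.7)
My plan is to reduce the lemma to a very standard linear-algebra fact about left inverses, after first re-expressing the cocancellation condition in terms of the coefficient maps $L_\alpha$ rather than the symbol $L(\xi)$.

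The key preliminary observation is that $L(D)$ is cocanceling if and only if $\bigcap_{\abs{\alpha}=k} \ker L_\alpha = \{0\}$. One inclusion is obvious: if $L_\alpha e = 0$ for every $\alpha$, then $L(\xi) e = \sum_{\abs{\alpha}=k} \xi^\alpha L_\alpha e = 0$ for every $\xi$. Conversely, if $e \in \bigcap_{\xi \ne 0} \ker L(\xi)$, then $\xi \mapsto \sum_{\abs{\alpha}=k} \xi^\alpha L_\alpha e$ is a polynomial $\R^n \to F$ vanishing on the dense set $\R^n \setminus \{0\}$, hence identically zero; linear independence of the degree-$k$ monomials $\xi^\alpha$ then forces $L_\alpha e = 0$ for each $\alpha$.

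With this in hand, the "if" direction of the lemma is immediate: assuming \eqref{eqCharKalpha}, any $e \in \bigcap_\alpha \ker L_\alpha$ satisfies $e = \sum_\alpha K_\alpha L_\alpha e = 0$, so $L(D)$ is cocanceling.

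For the "only if" direction, cocancellation rephrased as $\bigcap_\alpha \ker L_\alpha = \{0\}$ says exactly that the linear map
\[
 T : E \to \bigoplus_{\abs{\alpha}=k} F, \qquad e \mapsto (L_\alpha e)_{\abs{\alpha}=k},
\]
is injective. Since all spaces are finite-dimensional, $T$ admits a left inverse $S : \bigoplus_\alpha F \to E$; writing $S\bigl((f_\alpha)_\alpha\bigr) = \sum_{\abs{\alpha}=k} K_\alpha f_\alpha$ with $K_\alpha \in \Lin(F;E)$, the identity $S \circ T = \id_E$ becomes exactly \eqref{eqCharKalpha}. There is no real obstacle here; the only substantive point is the polynomial/linear-independence argument that translates the geometric cocancellation hypothesis into the algebraic statement that $T$ is injective.
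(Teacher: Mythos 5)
Your proof is correct and follows essentially the same route as the paper: both arguments reduce cocancellation to the injectivity of the map $e \mapsto (L_\alpha e)_{\abs{\alpha}=k}$ via the linear independence of the degree-$k$ monomials $\xi^\alpha$, and then invoke the existence of a left inverse in finite dimensions. You merely spell out the intermediate equivalence $\bigcap_{\xi\ne 0}\ker L(\xi) = \bigcap_\alpha \ker L_\alpha$ more explicitly than the paper does.
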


A key consequence of lemma~\ref{lemmaCharKalpha} is that given \(f \in L^1(\R^n; E)\) such that  \(L(D)f=0\), \(f\) is the composition of a linear map with a vector field that satisfies the assumptions of proposition~\ref{propVSHigherOrder}. Indeed by taking \(g_\alpha=L_\alpha(f)\), one can write \(f=\sum_{\alpha \in \N^n, \abs{\alpha}=k} K_\alpha (g_\alpha)\) with \(\sum_{\alpha \in \N^n, \abs{\alpha}=k} \partial^\alpha g_\alpha=0\).

\begin{proof}[Proof of lemma~\ref{lemmaCharKalpha}]
Since \((\xi_{\alpha})_{\abs{\alpha}=k}\) is a basis of the vector space of homogeneous polynomials of degree \(k\), the operator \(e \in E \mapsto \bigl(L_\alpha(e)\bigr)_{\abs{\alpha}=k} \in F^{\binom{n+k-1}{k}}\) is one-to-one if and only if \(L(D)\) is cocanceling. This is equivalent with this map being invertible on the left, which is  \eqref{eqCharKalpha}. 
\end{proof}

Proposition~\ref{propCocancelingSufficient} will now be a consequence of proposition~\ref{propVSHigherOrder} and lemma~\ref{lemmaCharKalpha}.

\begin{proof}[Proof of proposition~\ref{propCocancelingSufficient}]
By assumption \(\sum_{\abs{\alpha}=k} \partial^\alpha L_\alpha(f) = 0\). 
By proposition~\ref{propVSHigherOrder}, for every \(\alpha \in \N^n\) with \(\abs{\alpha}=k\) and \(\psi \in C^\infty_c(\R^n; V)\), 
\begin{equation}
\label{ineqLalpha}
   \int_{\R^n} L_\alpha(f)\cdot \psi \le C \norm{f}_{L^1} \norm{D \psi}_{L^n}.
\end{equation}
For \(\varphi \in C^\infty_c(\R^n; E)\), in view of \eqref{eqCharKalpha} and \eqref{ineqLalpha}
\[
 \begin{split}
\int_{\R^n} f \cdot \varphi&=\sum_{\substack{\alpha \in \N^n\\ \abs{\alpha} = k}} \int_{\R^n} L_\alpha(f) \cdot K_\alpha{}^*(\varphi)\\
& \le C \sum_{\substack{\alpha \in \N^n\\ \abs{\alpha}=k}} \norm{L_\alpha(f)}_{L^1} \norm{D K_\alpha{}^*(\varphi)}_{L^n}
  \le C' \norm{f}_{L^1} \norm{D \varphi}_{L^n}.\qedhere
\end{split}
\]
\end{proof}

\section{Examples of cocanceling operators}
\label{sectionCocancelingExamples}

\subsection{Divergence}
A first example of cocanceling operator is the divergence operator.

\begin{proposition}
\label{propositionCocancelingDivergence}
Let \(L(D)\) be the homogeneous linear differential operator of order \(1\) on \(\R^n\) from \(\R^n\) to \(\R\) defined for \(\xi \in \R^n\) and \(e \in \R^n\) by 
\[
  L(\xi)[e]=\xi \cdot e.
\]
The operator \(L(D)\) is cocanceling.
\end{proposition}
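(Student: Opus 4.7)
The plan is to verify the cocancellation condition directly from the definition, which for this operator reduces to a one-line linear algebra argument. For each $\xi \in \R^n \setminus \{0\}$, the map $L(\xi) \colon \R^n \to \R$ is the linear functional $e \mapsto \xi \cdot e$, so its kernel is the hyperplane $\xi^\perp$ orthogonal to $\xi$. The cocancellation condition then amounts to showing that
\[
  \bigcap_{\xi \in \R^n \setminus \{0\}} \xi^\perp = \{0\}.
\]

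To see this, suppose $e$ lies in the intersection. Then $\xi \cdot e = 0$ for every $\xi \in \R^n \setminus \{0\}$; if $e \neq 0$, specializing to $\xi = e$ gives $\abs{e}^2 = 0$, a contradiction. Hence $e = 0$ and $L(D)$ is cocanceling. I do not anticipate any obstacle here: the whole content of the proposition is a direct unpacking of the definition. The interest of the statement lies rather in combining it with Theorem~\ref{theoremCocanceling}, which then recovers the Bourgain--Brezis estimate $\int_{\R^n} f \cdot \varphi \le C \norm{f}_{L^1}\norm{D\varphi}_{L^n}$ for divergence-free vector fields $f \in L^1(\R^n; \R^n)$.
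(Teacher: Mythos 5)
Your proof is correct and follows exactly the paper's argument: identify $\ker L(\xi) = \xi^\perp$ and note that the intersection of all these hyperplanes is trivial. The only difference is that you spell out the final step (specializing $\xi = e$), which the paper leaves implicit.
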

\begin{proof}
For every \( \xi \in \R^n\), \(\ker L(\xi)=\xi^\perp\). Hence, \(\bigcap_{\xi \in \R^n \setminus \{0\}}=\{0\}\).
\end{proof}

As a consequence of theorem~\ref{theoremCocanceling}, we recover the estimate
\begin{corollary}[J.\thinspace Bourgain and H.\thinspace Brezis, 2004 \citelist{\cite{BB2004}*{p. 541}\cite{BB2007}*{theorem \(1^\prime\)}\cite{VS2004Divf}*{theorem 1.5}}]
For every \(f \in L^1(\R^n; \R^n)\) such that \(\Div f = 0\) and every \(\varphi \in C^\infty_c(\R^n)\), 
\[
  \int_{\R^n} f \cdot \varphi \le C \norm{f}_{L^1} \norm{D \varphi}_{L^n}.
\]
\end{corollary}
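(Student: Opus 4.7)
The plan is to apply Theorem~\ref{theoremCocanceling} directly, using Proposition~\ref{propositionCocancelingDivergence} to verify its hypothesis for the divergence operator. Concretely, I would first invoke Proposition~\ref{propositionCocancelingDivergence} to conclude that the first-order homogeneous differential operator $L(D) = \Div$ from $\R^n$ to $\R$ is cocanceling, which is the implication \eqref{itemCocancelingCocanceling} in Theorem~\ref{theoremCocanceling} applied to $E = \R^n$ and $F = \R$.

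Then I would use the implication \eqref{itemCocancelingCocanceling} $\Rightarrow$ \eqref{itemCocancelingEstimate} of Theorem~\ref{theoremCocanceling} (which, as noted in the introduction, rests on proposition~\ref{propCocancelingSufficient}). Specifically, for any $f \in L^1(\R^n; \R^n)$ satisfying $\Div f = 0$ in the sense of distributions, the theorem supplies a constant $C > 0$ such that
\[
\int_{\R^n} f \cdot \varphi \le C \norm{f}_{L^1} \norm{D \varphi}_{L^n}
\]
for every test field $\varphi \in C^\infty_c(\R^n; \R^n)$. This is precisely the desired estimate.

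Since both the cocancellation of the divergence and the general equivalence theorem are already in place, there is no real obstacle; the corollary is a direct specialization. The only thing worth flagging is a minor consistency point about the test function space: in the statement $\varphi$ should take values in $\R^n$ so that the pairing $f \cdot \varphi$ is meaningful, matching the vectorial form of the estimate in Theorem~\ref{theoremCocanceling}\eqref{itemCocancelingEstimate}.
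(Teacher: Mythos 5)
Your proposal is correct and is exactly the paper's intended derivation: apply Proposition~\ref{propositionCocancelingDivergence} to see that $\Div$ is cocanceling, then invoke the implication \eqref{itemCocancelingCocanceling}~$\Rightarrow$~\eqref{itemCocancelingEstimate} of Theorem~\ref{theoremCocanceling}. Your observation that $\varphi$ should be $\R^n$-valued for the pairing $f \cdot \varphi$ to make sense is also apt; the statement's $C^\infty_c(\R^n)$ is evidently a typographical slip for $C^\infty_c(\R^n;\R^n)$.
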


\subsection{Exterior derivative}
The construction for the divergence operator generalizes to differentials forms

\begin{proposition}\label{propositionCocancelingd}
Let \(\ell \in \{0, \dotsc, n-1\}\) and \(L(D)\) be the homogeneous linear differential operator of order \(1\) on \(\R^n\) from \(\bigwedge^\ell \R^n\) to \(\bigwedge^{\ell+1}\R^n\) defined for \(\xi \in \R^n \simeq \bigwedge^1 \R^n\) and \(e \in \bigwedge^\ell \R^n\) by 
\[
  L(\xi)[e]=\xi \wedge e.
\]
The operator \(L(D)\) is cocanceling.
\end{proposition}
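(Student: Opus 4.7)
The plan is to verify directly from the definition that $\bigcap_{\xi \neq 0} \ker L(\xi) = \{0\}$. Since cocancellation only requires information at finitely many directions, I will restrict attention to $\xi = e_1,\dotsc, e_n$, the standard basis of $\R^n$, which is enough and keeps the combinatorics transparent.

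First, I dispose of the trivial case $\ell = 0$: here $L(\xi)\colon \R \to \R^n$ is $e \mapsto \xi e$, which is injective for any $\xi \neq 0$, so $\ker L(\xi) = \{0\}$ already at a single $\xi$. For $\ell \geq 1$, the plan is to take an arbitrary $e \in \bigwedge^\ell \R^n$ belonging to $\ker L(e_i)$ for every $i \in \{1,\dotsc,n\}$ and show $e = 0$. Writing
\[
  e = \sum_{\substack{I \subset \{1,\dotsc,n\} \\ \abs{I}=\ell}} c_I \, e_I,
\]
with $e_I = e_{i_1}\wedge\dotsb\wedge e_{i_\ell}$ for $I = \{i_1 < \dotsb < i_\ell\}$, the condition $e_i \wedge e = 0$ expands to $\sum_{I \not\ni i} \pm c_I\, e_{\{i\}\cup I} = 0$. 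The family $\{e_{\{i\}\cup I} : I \not\ni i,\ \abs{I}=\ell\}$ is linearly independent in $\bigwedge^{\ell+1}\R^n$, so $c_I = 0$ for every $I$ with $i \notin I$.

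The final step is a simple counting remark: since $\ell \leq n - 1$, for every $\ell$-subset $I$ there exists some index $i \in \{1,\dotsc,n\} \setminus I$; applying the conclusion of the previous paragraph to that $i$ yields $c_I = 0$. Hence every coefficient vanishes and $e = 0$, which proves cocancellation. I do not anticipate a genuine obstacle: the crux is merely the observation that $\ell < n$ guarantees each $\ell$-index misses some basis direction, which is exactly what makes the exterior derivative cocanceling and which would fail at top degree $\ell = n$ (consistent with the excluded case, where $L(D) \equiv 0$ and the operator is trivially not cocanceling).
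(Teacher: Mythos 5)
Your proof is correct and follows the same underlying idea as the paper: the paper simply asserts that if $\xi \wedge e = 0$ for every $\xi$ and $\ell \le n-1$, then $e = 0$, and you supply the explicit verification by expanding $e$ in the basis $\{e_I\}$ and testing against $\xi = e_1, \dotsc, e_n$. The observation that the $n$ standard basis directions already suffice, and the clean case split at $\ell = 0$, are nice bookkeeping but do not constitute a genuinely different route.
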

\begin{proof}
If \(e \in \bigwedge^\ell \R^n\) with \(\ell \le n-1\), one checks that if \(\xi \wedge e=0\) for every \(\xi \in \R^n\), then \(e=0\).
\end{proof}

As a consequence we recover from theorem~\ref{theoremCocanceling} the estimate
\begin{corollary}[J.\thinspace Bourgain and H.\thinspace Brezis \cite{BB2007}*{Corollary 17}, 2004 and L. \thinspace Lanzani and E.\thinspace Stein, 2005 \cite{LS2005}]
Let \(\ell \in \{0, \dotsc, n-1\}\). For every \(f \in L^1(\R^n; \bigwedge^\ell \R^n)\) such that \(d f = 0\) and every \(\varphi \in C^\infty_c(\R^n; \bigwedge^{n-\ell} \R^n)\), 
\[
  \int_{\R^n} f \wedge \varphi \le C \norm{f}_{L^1} \norm{D \varphi}_{L^n}.
\]
\end{corollary}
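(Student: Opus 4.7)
The plan is to deduce this corollary directly from theorem~\ref{theoremCocanceling} combined with proposition~\ref{propositionCocancelingd}. The latter shows that the exterior derivative \(d\), viewed as a first-order operator from \(\bigwedge^\ell \R^n\) to \(\bigwedge^{\ell+1} \R^n\), is cocanceling for \(\ell \in \{0, \dotsc, n-1\}\). Since \(f \in L^1(\R^n; \bigwedge^\ell \R^n)\) satisfies \(df = 0\) in the sense of distributions, the implication \eqref{itemCocancelingCocanceling} \(\Rightarrow\) \eqref{itemCocancelingEstimate} of theorem~\ref{theoremCocanceling} yields
\[
  \int_{\R^n} f \cdot \psi \le C \norm{f}_{L^1} \norm{D\psi}_{L^n}
\]
for every \(\psi \in C^\infty_c(\R^n; \bigwedge^\ell \R^n)\), where \(\cdot\) denotes the standard inner product on \(\bigwedge^\ell \R^n\).

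The only remaining task is to translate this inner-product estimate into the wedge form of the statement, which I would do via the Hodge star \(\star \colon \bigwedge^{n-\ell} \R^n \to \bigwedge^\ell \R^n\). Given \(\varphi \in C^\infty_c(\R^n; \bigwedge^{n-\ell}\R^n)\), set \(\psi = \star \varphi\); this produces a test form in \(C^\infty_c(\R^n; \bigwedge^\ell \R^n)\) satisfying
\[
  f \wedge \varphi = \pm\, (f \cdot \psi)\, dx^1 \wedge \dotsb \wedge dx^n,
\]
the global sign depending only on \(\ell\) and \(n\) through \(\star\star = (-1)^{\ell(n-\ell)}\) on \(\ell\)-forms. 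Integrating and applying the previous display delivers the stated inequality, because \(\star\) acts pointwise as a linear isometry on coefficients and commutes with partial derivatives, so \(\norm{D\psi}_{L^n} = \norm{D\varphi}_{L^n}\).

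There is no real obstacle here: once theorem~\ref{theoremCocanceling} and proposition~\ref{propositionCocancelingd} are in hand, the corollary is essentially a renaming of the test field via Hodge duality, which identifies the inner product on \(\bigwedge^\ell \R^n\) with the wedge pairing with \(\bigwedge^{n-\ell} \R^n\). The only minor points of care are verifying that \(\star\) does not disturb the Sobolev norm on the right-hand side and that the cocancellation hypothesis of theorem~\ref{theoremCocanceling} is exactly the conclusion of proposition~\ref{propositionCocancelingd}.
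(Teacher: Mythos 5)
Your proof is correct and matches the paper's approach: the paper simply cites theorem~\ref{theoremCocanceling} together with proposition~\ref{propositionCocancelingd} (cocancellation of $d$), leaving the Hodge-star translation from the inner-product pairing to the wedge pairing implicit, which is exactly the step you spell out. Your observation that $\star$ is a constant-coefficient linear isometry commuting with $D$, so $\norm{D(\star\varphi)}_{L^n} = \norm{D\varphi}_{L^n}$, is the right way to close the argument.
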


\subsection{Higher order condition}
One can also replace the divergence with a similar higher-order condition

\begin{proposition}
\label{propositionCocancelingHigherOrder}
Let \(k \in \N_*\) and \(L(D)\) be the homogeneous linear differential operator of order \(k\) on \(\R^n\) from \(\R^{\binom{n+k-1}{k}}\) to \(\R\) defined for \(\xi \in \R^n\) and \(e \in  \R^{\binom{n+k-1}{k}}\) by 
\[
  L(\xi)[e]=\sum_{\substack{\alpha \in \N^n\\\abs{\alpha}=k}} \xi^\alpha e_\alpha.
\]
The operator \(L(D)\) is cocanceling.
\end{proposition}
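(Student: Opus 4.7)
The plan is to unwind the cocanceling definition directly using linear independence of monomials. Suppose $e \in \bigcap_{\xi \in \R^n \setminus \{0\}} \ker L(\xi)$. Then by the very definition of $L(\xi)$, the polynomial
\[
 P(\xi) = \sum_{\substack{\alpha \in \N^n \\ \abs{\alpha}=k}} e_\alpha \xi^\alpha
\]
vanishes on $\R^n \setminus \{0\}$, hence (by continuity) on all of $\R^n$.

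The key algebraic fact is then that the family of monomials $\{\xi^\alpha : \alpha \in \N^n,\ \abs{\alpha}=k\}$ is linearly independent in the space of polynomials on $\R^n$; in fact it is precisely a basis of the space of homogeneous polynomials of degree $k$, whose dimension is $\binom{n+k-1}{k}$. This is exactly the observation already used in the proof of lemma~\ref{lemmaCharKalpha}, so I would simply invoke it. The conclusion is that $e_\alpha = 0$ for every $\alpha$, that is $e = 0$, proving that $\bigcap_{\xi \neq 0} \ker L(\xi) = \{0\}$.

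There is essentially no obstacle here: the statement is really just a restatement of the linear independence of the monomials of degree $k$ on $\R^n$. The only thing to be careful about is the indexing convention (the components of $e \in \R^{\binom{n+k-1}{k}}$ are identified with coefficients indexed by multi-indices $\alpha$ with $\abs{\alpha}=k$), which makes the identification of $P$ with a generic homogeneous polynomial of degree $k$ unambiguous. The whole proof should fit in two or three lines.
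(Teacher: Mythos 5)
Your proof is correct and follows essentially the same route as the paper's: pass from $L(\xi)[e]=0$ for all $\xi\neq 0$ to the vanishing of the homogeneous polynomial $\sum_{\abs{\alpha}=k}e_\alpha\xi^\alpha$, then invoke linear independence of the degree-$k$ monomials. The paper compresses the last step into the phrase ``by the properties of multivariate polynomials,'' so your version is just slightly more explicit.
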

\begin{proof}
Assume that \(e \in \bigcap_{\xi \in \R^n \setminus \{0\}} \ker L (\xi)\). One has then for every \(\xi \in \R^n\), \(\sum_{\alpha \in \N^n, \abs{\alpha}=k} \xi^\alpha e_\alpha=0\). By the properties of multivariate polynomials, one concludes that \(e=0\).
\end{proof}

As a corollary, one recovers proposition~\ref{propVSHigherOrder} from theorem~\ref{theoremCocanceling}.

\subsection{Saint-Venant compatibility conditions}
The Saint-Venant compatibility conditions are an example of cocanceling operator. In order to define it, denote by \(S^2\R^n\) the space of symmetric bilinear forms on \(\R^n\).

\begin{proposition}
\label{propositionSaintVenantCocanceling}
Let  \(W(D)\) be the homogeneous linear differential operator of order \(2\) on \(\R^n\) from \(S^2\R^n\) to \(S^2\R^n \otimes S^2\R^n\) defined for \(\xi \in \R^n\), \(e \in S^2 \R^n\),  and \(u, v, w, z \in \R^n\) by
\begin{multline*}
  \bigl(W(\xi)[e]\bigr)[u, v, w, z]=e(u,v) (\xi \cdot w) (\xi \cdot z)+e(w,z) (\xi \cdot u) (\xi \cdot v) \\
-e(u, z) (\xi \cdot w) (\xi \cdot v)-e(w, v) (\xi \cdot u) (\xi \cdot z).
\end{multline*}
The operator \(W(D)\) is cocanceling if and only if \(n\ge 2\).
\end{proposition}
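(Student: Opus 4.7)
The plan is to handle the two directions separately by a direct algebraic calculation on $(W(\xi)[e])[u,v,w,z]$.

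For the ``only if'' direction, suppose $n=1$. Then $S^2\R^n$ is one-dimensional and every $\xi \in \R^n$ is a scalar. A routine expansion of the four terms in the formula defining $W(\xi)$ shows that they cancel in pairs (each monomial is of the form $e \cdot \xi^2 \cdot uvwz$ with the same sign count on each side), so $W(\xi) = 0$ identically. Hence $\bigcap_{\xi \neq 0} \ker W(\xi) = S^2\R^1 \neq \{0\}$, and $W(D)$ is not cocanceling.

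For the ``if'' direction, assume $n \ge 2$ and let $e \in \bigcap_{\xi \in \R^n \setminus \{0\}} \ker W(\xi)$. I want to show $e(\eta,\eta) = 0$ for every $\eta \in \R^n$, which will give $e = 0$ by polarization. Fix an arbitrary $\eta \in \R^n$; since $n \ge 2$, I can pick some $\xi \in \R^n \setminus \{0\}$ with $\xi \cdot \eta = 0$. Now plug the test quadruple $u = v = \xi$, $w = z = \eta$ into the identity $(W(\xi)[e])[u,v,w,z] = 0$. Three of the four terms carry a factor $\xi \cdot \eta = 0$ and therefore vanish, while the surviving term is $e(\eta,\eta)(\xi \cdot \xi)^2 = e(\eta,\eta)\abs{\xi}^4$. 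Since $\xi \ne 0$, this forces $e(\eta,\eta) = 0$.

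With $e(\eta,\eta) = 0$ for every $\eta \in \R^n$, the polarization identity for symmetric bilinear forms,
\[
2 e(\eta_1, \eta_2) = e(\eta_1+\eta_2, \eta_1+\eta_2) - e(\eta_1,\eta_1) - e(\eta_2,\eta_2),
\]
yields $e \equiv 0$, and therefore $W(D)$ is cocanceling.

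The whole argument is essentially a clever choice of test vectors, so there is no serious obstacle; the only subtle point is that the existence of a nonzero $\xi$ orthogonal to an arbitrarily prescribed $\eta$ is exactly the place where the hypothesis $n \ge 2$ is used, which nicely matches the requirement in the statement.
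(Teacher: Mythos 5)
Your proof is correct and takes essentially the same approach as the paper: the $n=1$ case is handled by noting $W(\xi)$ vanishes identically, and the $n\ge 2$ case by plugging in a pair of orthogonal vectors so that only one of the four terms survives, giving $e(\eta,\eta)=0$ for all $\eta$ (your test quadruple $(\xi,\xi,\eta,\eta)$ with $\xi\perp\eta$ is the same, up to the symmetry of $W$ under $(u,v)\leftrightarrow(w,z)$, as the paper's $(u,u,w,w)$ with $\xi=w\perp u$).
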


\begin{proof}
First note that if \(n=1\), \(L(D)=0\). 

Assume that \(n \ge 2\) and let \(e \in S^2 \R^n\) be such that for every \(u, v, w, z \in \R^n\) and \(\xi \in \R^n\),
\begin{equation}
\label{eqSaintVenantW}
  \bigl(W(\xi)[e]\bigr)[u, v, w, z]=0.
\end{equation}
Let \(u \in \R^n\). Since \(n \ge 2\), one can choose \(w \in \R^n \setminus\{0\}\) such that \(w \cdot u = 0\). One has then
\[
  \bigl(W(w)[e]\bigr)[u, u, w, w]=e(u, u) \norm{w}^4,
\]
from which one deduces by \eqref{eqSaintVenantW} that for every \(u \in \R^n\), \(e(u, u)=0\). Since \(e\) is symmetric, \(e=0\). 
\end{proof}

\begin{corollary}
\label{corollarySaintVenant}
Let \(n \ge 2\). For every \(f \in L^1(\R^n; S^2 \R^n)\) such that \(W(D) f = 0\) and every \(\varphi \in C^\infty_c(\R^n; S^2 \R^n)\), 
\[
  \int_{\R^n} f : \varphi \le C \norm{f}_{L^1} \norm{D \varphi}_{L^n}.
\]
\end{corollary}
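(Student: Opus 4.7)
The corollary is essentially immediate from the machinery already assembled. The plan is simply to identify the operator in the statement with an instance of a general cocanceling operator, and then invoke the corresponding estimate from Theorem~\ref{theoremCocanceling}.

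Concretely, I would take the vector space $E = S^2 \R^n$, the target space $F = S^2 \R^n \otimes S^2 \R^n$, and the homogeneous second-order linear differential operator $L(D) = W(D)$. By Proposition~\ref{propositionSaintVenantCocanceling}, $W(D)$ is cocanceling as soon as $n \ge 2$, which is exactly the hypothesis of the corollary. The pairing $f : \varphi$ is the natural scalar product on $S^2 \R^n$ making it a finite-dimensional inner product space, so $f \cdot \varphi$ in the generic statement of Theorem~\ref{theoremCocanceling} specializes precisely to $f : \varphi$ here.

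Therefore the implication \eqref{itemCocancelingCocanceling} $\Rightarrow$ \eqref{itemCocancelingEstimate} of Theorem~\ref{theoremCocanceling}, applied to $L(D) = W(D)$, yields the existence of a constant $C > 0$ such that for every $f \in L^1(\R^n; S^2\R^n)$ with $W(D) f = 0$ and every $\varphi \in C^\infty_c(\R^n; S^2\R^n)$,
\[
  \int_{\R^n} f : \varphi \le C \norm{f}_{L^1} \norm{D \varphi}_{L^n}.
\]
There is no genuine obstacle: the only non-routine input, namely the cocancellation of $W(D)$, was already verified in Proposition~\ref{propositionSaintVenantCocanceling}, and the rest is a direct specialization of the general theorem.
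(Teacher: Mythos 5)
Your proposal is correct and matches the route the paper intends: Corollary~\ref{corollarySaintVenant} is a direct specialization of the implication \eqref{itemCocancelingCocanceling} $\Rightarrow$ \eqref{itemCocancelingEstimate} in Theorem~\ref{theoremCocanceling} to $L(D)=W(D)$ with $E=S^2\R^n$, using Proposition~\ref{propositionSaintVenantCocanceling} to supply the cocancellation hypothesis, exactly as the paper does for the divergence and exterior-derivative corollaries in the same section.
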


Here \( : \) denotes the scalar product in \(S^2\R^n\). Corollary~\ref{corollarySaintVenant} is the core of the argument of the proof of the Korn--Sobolev inequality by estimates under second order conditions \cite{VS2004ARB}*{theorem 6}.

\smallbreak

We can also consider higher-order Saint-Venant operators  \cite{Sharafutdinov}*{(2.1.9)}. We denote by \(S^k\R^n\) the space of symmetric \(k\)-linear forms on \(\R^n\).

\begin{proposition}
\label{propositionSaintVenantCocancelingHigherOrder}
Let \(W(D)\) be the homogeneous linear differential operator of order \(k\) on \(\R^n\) from \(S^k\R^n\) to \(S^k\R^n \otimes S^k\R^n\) defined for \(\xi \in \R^n\), \(e \in S^k \R^n\),  and \(v^0_1, \dotsc, v^0_k, v^1_1, \dotsc, v^1_k \in \R^n\) by
\begin{multline*}
 \bigl(W(\xi)[e]\bigr)[v^0_1, \dotsc, v^0_k, v^1_1, \dotsc, v^1_k]\\
=\sum_{\alpha \in \{0, 1\}^n}(-1)^{\abs{\alpha}} e(v_1^{\alpha_1}, \dotsc, v_k^{\alpha_k})(\xi \cdot v_1^{1-\alpha_1}) \dotsm (\xi \cdot v_k^{1-\alpha_k}).
\end{multline*}
The operator \(W(D)\) is cocanceling if and only if \(n \ge 2\), .
\end{proposition}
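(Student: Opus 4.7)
The plan is to follow the same strategy as the proof of proposition~\ref{propositionSaintVenantCocanceling}. In the case \(n=1\) the operator \(W(D)\) should vanish identically and thus fail to be cocanceling (since \(S^k\R \simeq \R\) is nontrivial); in the case \(n \ge 2\) the idea is to test \(W(\xi)[e]\) against a carefully chosen pair of vectors \(u, \xi \in \R^n\) with \(\xi \cdot u = 0\) so that the alternating sum defining \(W\) collapses to a single diagonal term \(e(u, \dotsc, u)\), and then invoke the polarization identity for symmetric \(k\)-linear forms to conclude \(e = 0\).

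For \(n = 1\), with \(\xi\) and all the \(v_j^i\) seen as scalars and \(e \in S^k \R \simeq \R\), I would factor \(e\) and \(\xi^k\) out of the sum and observe that, for every choice of \(\alpha\), the product \(v_j^{\alpha_j} v_j^{1-\alpha_j}\) is simply \(v_j^0 v_j^1\), so
\[
 \bigl(W(\xi)[e]\bigr)[v^0_1, \dotsc, v^1_k] = e\, \xi^k \Bigl(\prod_{j=1}^k v_j^0 v_j^1\Bigr)\sum_{\alpha \in \{0,1\}^k}(-1)^{\abs{\alpha}} = 0,
\]
since \(\sum_{\alpha \in \{0,1\}^k}(-1)^{\abs{\alpha}}=(1-1)^k = 0\) for \(k \ge 1\). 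Hence \(W(D) \equiv 0\), the intersection of the kernels of \(W(\xi)\) is all of \(S^k\R\), and \(W(D)\) is not cocanceling.

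For \(n \ge 2\), assume \(W(\xi)[e]=0\) for every \(\xi \in \R^n \setminus \{0\}\). Given \(u \in \R^n\), the assumption \(n \ge 2\) allows me to pick \(\xi \in \R^n \setminus \{0\}\) with \(\xi \cdot u = 0\). Testing \(W(\xi)[e]\) at the special argument \(v_j^0 = u\) and \(v_j^1 = \xi\) for \(j=1, \dotsc, k\), the factor \(\xi \cdot v_j^{1-\alpha_j}\) equals \(\norm{\xi}^2\) when \(\alpha_j = 0\) and vanishes when \(\alpha_j = 1\). The entire sum therefore collapses to the single term corresponding to \(\alpha = 0\), giving
\[
 \bigl(W(\xi)[e]\bigr)[u, \dotsc, u, \xi, \dotsc, \xi] = e(u, \dotsc, u)\,\norm{\xi}^{2k}.
\]
Thus \(e(u, \dotsc, u) = 0\) for every \(u \in \R^n\), and the polarization identity for symmetric \(k\)-linear forms yields \(e = 0\).

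The only step that needs any thought is the collapse of the alternating sum to a single term; this hinges on the existence of \(\xi \ne 0\) orthogonal to \(u\), which is precisely where the hypothesis \(n \ge 2\) is used. Otherwise, the argument is a direct generalization of the \(k=2\) case treated in proposition~\ref{propositionSaintVenantCocanceling}, and I anticipate no substantive obstacle.
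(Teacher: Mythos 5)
Your proof is correct and follows the same strategy as the paper's own sketch: given \(u\), choose \(\xi \ne 0\) orthogonal to \(u\), plug in \(v_j^0 = u\) and \(v_j^1 = \xi\), observe that the alternating sum collapses to the single \(\alpha = 0\) term \(e(u, \dotsc, u)\,\norm{\xi}^{2k}\), and then polarize to obtain \(e = 0\). You additionally spell out the \(n = 1\) case explicitly (showing \(W(D) \equiv 0\) via \(\sum_\alpha (-1)^{\abs{\alpha}} = 0\)), which the paper's sketch leaves to the reader by analogy with the order-two Saint-Venant case; this is a welcome completion, not a divergence in method.
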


The condition \(W(D)f=0\) is satisfied by the symmetric derivative of a field of symmetric \(k-1\)-linear forms.

\begin{proof}[Sketch of the proof of proposition~\ref{propositionSaintVenantCocancelingHigherOrder}]
Assume that \(e \in \bigcap_{\xi \in \R^n \setminus \{0\}} \ker L(\xi)\). Given \(u \in \R^n\), one chooses \(w \in \R^n \setminus \{0\} \) such that \(w \cdot u = 0\). One has then 
\[
  \bigl(W(w)[e]\bigr)[u, \dotsc, u, w,\dotsc, w]=e(u, \dotsc, u) \norm{w}^{2k},
\]
from which one concludes that \(e=0\).
\end{proof}

\section{Proof of the Sobolev estimate}

\label{sectionSobolev}

In this section we prove a Sobolev estimate for elliptic canceling operator. We proceed in several steps. First we recall in section~\ref{subsectionClassicalElliptic} a classical elliptic estimate for elliptic operators. Next in section~\ref{subsectionCompatibility} we  recall how the range of a given linear differential operator can be characterized as the kernel of another linear differential operator of compatibility conditions and we study when this operator is cocanceling. Finally, in section~\ref{subsectionSobolev}, we prove the estimate by combining the previous ingredients with theorem~\ref{theoremCocanceling} proved in section~\ref{sectionCocanceling}. 

\subsection{Classical elliptic estimates}
\label{subsectionClassicalElliptic}
In order to prove theorem~\ref{theoremOrderk}, we shall use a classical variant of theorem~\ref{theoremEllipticLp}

\begin{proposition}
\label{propCaldZygmundWk1p}
Let \(A(D)\) be a linear homogeneous differential operator of order \(k\) on \(\R^n\) from \(V\) to \(E\).
If \(A(D)\) is elliptic and \(p > 1\), then for every \(u \in C^\infty_c(\R^n; V)\), 
\[
  \norm{D^{k-1} u}_{L^p} \le C \norm{A(D) u}_{\dot{W}^{-1, p}}.
\]
\end{proposition}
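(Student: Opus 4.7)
The plan is to reduce the statement to a Mikhlin--H\"ormander multiplier bound by exploiting the fact that ellipticity gives a smooth left inverse of the symbol \(A(\xi)\) away from the origin. The key point is that the symbol which converts \(A(D)u\) back to \(D^{k-1}u\), once one compensates by a single \((-\Delta)^{-1/2}\), is a \(0\)-homogeneous \(C^\infty(\R^n \setminus \{0\})\) matrix, which is a bounded Fourier multiplier on \(L^p\) for every \(1 < p < \infty\).

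First, I would use ellipticity to construct a left inverse of the principal symbol. Since \(A(\xi) : V \to E\) is injective for every \(\xi \in \R^n \setminus \{0\}\), the square matrix \(A(\xi)^* A(\xi) \in \Lin(V;V)\) is invertible there, and the map
\[
  B(\xi) := \bigl(A(\xi)^* A(\xi)\bigr)^{-1} A(\xi)^* \in \Lin(E;V)
\]
is \(C^\infty\) on \(\R^n \setminus \{0\}\), homogeneous of degree \(-k\), and satisfies \(B(\xi) A(\xi) = \id_V\). Hence, viewing everything on the Fourier side for \(u \in C^\infty_c(\R^n;V)\), one has the identity \(u = B(D) A(D) u\), and therefore for any multi-index \(\beta\) of length \(k-1\),
\[
  \partial^\beta u = M_\beta(D) \bigl( (-\Delta)^{1/2} \bigr)^{-1} A(D) u,
\]
where \(M_\beta(\xi) := (2\pi i \xi)^\beta \,\abs{\xi}\, B(\xi)\).

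Next I would check that \(M_\beta\) is a Mikhlin--H\"ormander multiplier. By construction \(M_\beta\) is \(C^\infty\) away from the origin and homogeneous of degree \((k-1) + 1 + (-k) = 0\). In particular, every derivative \(\partial^\gamma M_\beta(\xi)\) is homogeneous of degree \(-\abs{\gamma}\) and therefore satisfies \(\bigabs{\partial^\gamma M_\beta(\xi)} \le C_\gamma \abs{\xi}^{-\abs{\gamma}}\). The classical Mikhlin--H\"ormander theorem then yields, for every \(1 < p < \infty\),
\[
  \norm{M_\beta(D) g}_{L^p} \le C_p \norm{g}_{L^p} \quad \text{for } g \in \mathcal{S}(\R^n; E).
\]

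Finally I would combine these two ingredients. Applying the multiplier bound to \(g = (-\Delta)^{-1/2} A(D) u\), summing over \(\abs{\beta}= k-1\), and using that, by definition of the homogeneous negative Sobolev norm,
\[
  \norm{(-\Delta)^{-1/2} A(D) u}_{L^p} \simeq \norm{A(D) u}_{\dot W^{-1,p}},
\]
one obtains \(\norm{D^{k-1} u}_{L^p} \le C \norm{A(D) u}_{\dot W^{-1,p}}\), as required. The only step with any subtlety is the Mikhlin verification, and this is routine because the \(0\)-homogeneity of \(M_\beta\) is forced by dimension counting; everything else is formal manipulation of Fourier multipliers on Schwartz functions and then density.
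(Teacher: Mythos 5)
Your proposal is correct and follows essentially the same route as the paper: both write the Fourier-side identity $\widehat{\partial^\beta u}(\xi) = (2\pi i)^{-1}\xi^\beta\bigl(A(\xi)^*A(\xi)\bigr)^{-1}A(\xi)^*\widehat{A(D)u}(\xi)$, pull out a factor $\abs{\xi}^{-1}$ to produce a degree-zero smooth symbol, and then apply an $L^p$ multiplier theorem (the paper cites Stein's singular integral theory, you cite Mikhlin--H\"ormander; for smooth homogeneous degree-zero symbols these give the same conclusion).
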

\begin{proof}
One has for every  \(\alpha \in \N^n\) with \(\abs{\alpha}=k-1\) and for every \(\xi \in \R^n \setminus \{0\}\),  
\[
  \widehat{\partial^\alpha u}(\xi)=\frac{1}{2\pi i} \xi^{\alpha}  \bigl(A(\xi)^* \circ A(\xi)\bigr)^{-1}\circ A(\xi)^*\bigl(\widehat{A(D)u}(\xi)\bigr).
\]
Recall that \(\norm{A(D) u}_{\dot{W}^{-1, p}}=\norm{(-\Delta)^{-\frac{1}{2}} A(D) u}_{L^p}\).
By the theory of singular integrals on \(L^p\) (see for example E.\thinspace Stein \cite{Stein1970SIDPF}*{theorem 6 in Chapter 3, \S{} 3.5 together with theorem 3 in Chapter 2, \S{} 4.2}), one has the desired estimate. 
\end{proof}

In general \(A(D)\) is an overdetermined elliptic operator; as a consequence, there are many possible choices for a singular integral operator that inverts \(A(D)\). In the proof of proposition~\ref{propCaldZygmundWk1p}, a change of the Euclidean structure on \(E\) would result in a different singular integral operator that would have the same properties.

\subsection{Compatibility conditions}
\label{subsectionCompatibility}

The last tool in the proof of the sufficiency part in theorem~\ref{theoremOrderk} is 

\begin{proposition}
\label{propositionConstructionL}
Let \(A(D)\) be a homogeneous differential operator on \(\R^n\) from \(V\) to \(E\). If \(A(D)\) is elliptic, then there exists a finite-dimensional vector space \(F\) and a homogeneous differential operator \(L(D)\) on \(\R^n\) from \(E\) to \(F\) such that for every \( \xi \in \R^n \setminus \{0\}\), 
\[
 \ker L(\xi) = A(\xi)[V].
\]
\end{proposition}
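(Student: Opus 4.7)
The plan is to cut out the linear subspace $A(\xi)[V] \subset E$ as the kernel of a map that is polynomial in $\xi$; such a polynomial map is precisely the symbol of a homogeneous linear differential operator. Ellipticity will play two roles: it guarantees that $A(\xi)$ is injective, so that $A(\xi)[V]$ is a subspace of $E$ of constant dimension $r := \dim V$ as $\xi$ ranges over $\R^n \setminus \{0\}$, and it lets us distinguish that subspace by a list of vectors obtained by pushing forward a fixed basis of $V$ through $A(\xi)$.

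Concretely, I would fix a basis $v_1, \dotsc, v_r$ of $V$ and set $F := \bigwedge^{r+1} E$. Since $\xi \mapsto A(\xi)$ is polynomial of degree $k$ with values in $\Lin(V;E)$, each $\xi \mapsto A(\xi)[v_i]$ is polynomial of degree $k$ with values in $E$, and therefore
\[
  L(\xi)[e] \: :\: = e \wedge A(\xi)[v_1] \wedge \dotsb \wedge A(\xi)[v_r] \in F
\]
is linear in $e$ and polynomial of degree $kr$ in $\xi$. Hence $L(\xi)$ is the symbol of a homogeneous linear differential operator $L(D)$ of order $kr$ on $\R^n$ from $E$ to $F$.

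It remains to check that $\ker L(\xi) = A(\xi)[V]$ for $\xi \in \R^n \setminus \{0\}$. The inclusion $A(\xi)[V] \subset \ker L(\xi)$ is immediate, since any $e = A(\xi)[w]$ with $w = \sum_i c_i v_i$ satisfies $e = \sum_i c_i A(\xi)[v_i]$, so the wedge product duplicates a factor. For the reverse inclusion, ellipticity gives that $A(\xi)$ is injective, so the family $A(\xi)[v_1], \dotsc, A(\xi)[v_r]$ is linearly independent in $E$; the standard exterior-algebra criterion then says that $e \wedge A(\xi)[v_1] \wedge \dotsb \wedge A(\xi)[v_r]$ vanishes in $\bigwedge^{r+1} E$ if and only if $e$ lies in the linear span of these vectors, which is exactly $A(\xi)[V]$. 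The edge case $r = \dim E$ is trivial: then $A(\xi)$ is already surjective, so $A(\xi)[V] = E$ and one simply takes $F = \{0\}$, $L = 0$.

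The argument has no real obstacle; the content is the observation that exterior products give a polynomial test for membership in the span of a list of vectors, which converts the pointwise linear-algebraic description of $A(\xi)[V]$ into a genuine symbol. Two aesthetic drawbacks, irrelevant to the use of $L(D)$ in the sequel, are that the resulting order $kr$ is typically far from optimal and that the construction depends on an arbitrary basis of $V$; a more canonical and lower-order choice of compatibility operator could presumably be given, but is not needed here.
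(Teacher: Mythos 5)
Your construction is correct and takes a genuinely different route from the paper's. The paper goes through the Ehrenpreis compatibility complex: it realizes the module of relations $\{K(D) : K(D)\circ A(D)=0\}$ as a finitely generated module over the polynomial ring via Hilbert's basis theorem, obtains a (non-homogeneous) compatibility operator $J(D)$ from a finite generating set, proves a degree-control lemma (lemma~\ref{lemmaEllipticDegree}) using ellipticity to show that $J(\xi)$ cuts out exactly $A(\xi)[V]$ on constant exponential polynomials, and finally homogenizes the graded pieces of $J(D)$ to produce $L(D)$. Your wedge-product operator $L(\xi)[e] = e \wedge A(\xi)[v_1] \wedge \dotsb \wedge A(\xi)[v_r]$ bypasses all of this: it converts the linear-algebraic test ``$e \in \operatorname{span}\{A(\xi)[v_1], \dotsc, A(\xi)[v_r]\}$'' directly into a homogeneous polynomial symbol of order $kr$, with ellipticity serving only to guarantee the linear independence needed for the exterior-algebra criterion, and the edge cases ($\dim V = \dim E$, or $\dim V = 0$) handled automatically since $\bigwedge^{r+1}E$ degenerates appropriately. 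The paper is aware that explicit constructions exist --- the remark after the proof offers $L(\xi) = \det(A(\xi)^*A(\xi))\id - A(\xi)\circ\adj(A(\xi)^*A(\xi))\circ A(\xi)^*$, a rescaled orthogonal projector, of order $2kr$ --- but prefers the Ehrenpreis route in the main text because it connects to the general theory of compatibility complexes for overdetermined systems and, with computational commutative algebra, can in principle produce a compatibility operator of minimal order, whereas both your construction and the adjugate one are typically far from minimal. Your version is arguably the cleanest and lowest-order of the explicit constructions (half the order of the adjugate one, and the basis dependence only rescales $L$ by a nonzero constant, leaving $\ker L(\xi)$ unchanged); since the sequel uses only the existence of some $L(D)$ with the stated kernel, any of these serves equally well.
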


In the language of homological algebra, for every \( \xi \in \R^n \setminus \{0\}\), 
\[ 
  V \xrightarrow{A(\xi)} E  \xrightarrow{L(\xi)} F
\]
forms an exact sequence.

The proof will be done in two steps. First we will recall the construction due to L.\thinspace Ehrenpreis \citelist{\cite{Ehrenpreis}\cite{Komatsu}*{theorem 2}\cite{Spencer}*{theorem 1.5.5}} of compatibility condition for an overdetermined linear differential operator that does not need to be elliptic. We then show that under the ellipticity condition, this operator has the required property.

Let \(\mathcal{P}^\ell_\xi(\R^n; V)\) be the space of exponential polynomials of degree at most \(\ell\), that is the set of functions \(u : \R^n \to V\) that can be written for every \(x \in \R^n\) as
\[
  u(x)=\sum_{\substack{\alpha \in \N^n \\ \abs{\alpha}\le \ell}} x^\alpha e^{\xi \cdot x} v_\alpha.
\]
where \(v_\alpha \in V\) for each \(\alpha \in \N^n\) with \(\abs{\alpha}\le \ell\).
We also set \(\mathcal{P}_\xi(\R^n; V)=\bigcup_{\ell \in \N} \mathcal{P}^\ell_\xi(\R^n; V)\).
If we define for \(\xi \in \R^n\) the function \(e_\xi : \R^n \to \R\) by \(e_\xi(x)=e^{\xi \cdot x}\) for every \(x \in \R^n\), one has \( \mathcal{P}^\ell_\xi(\R^n; V)=e_\xi \mathcal{P}^\ell_0(\R^n; V)\).

Finally, \(K(D)\) is a linear differential operator on \(\R^n\) from \(E\) to \(F\) of order at most \(\ell\) if it can be written for \(u \in C^\infty\) as 
\(
  K(D)u=\sum_{\alpha \in \N^n, \abs{\alpha} \le \ell} K_\alpha(\partial^\alpha u).
\)

The next lemma gives a necessary and sufficient condition for the solvability of the equation \(A(D)u=f\) in the framework of exponential polynomials.

\begin{lemma}
\label{lemmaExponentialPolynomials}
Let \(A(D)\) be a linear differential operator of order at most \(k\) on \(\R^n\) from \(V\) to \(E\) and let \(\xi \in \R^n\).
For every \(f \in \mathcal{P}^\ell_\xi(\R^n; E)\), there exists \(u \in \mathcal{P}^{\ell+k}_\xi(\R^n; V)\) such that \(A(D)u=f\) if and only if for every linear differential operator \(K(D)\) on \(\R^n\) of order at most \(\ell\) from \(E\) to \(\R\) such that \(K(D) \circ A(D)=0\), one has \(K(D)f=0\).
\end{lemma}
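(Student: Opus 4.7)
The ``only if'' direction is immediate: if $u \in \mathcal{P}^{\ell+k}_\xi(\R^n; V)$ satisfies $A(D) u = f$ and $K(D) \circ A(D) = 0$, then $K(D) f = K(D) A(D) u = (K(D) \circ A(D)) u = 0$.

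For the converse, the plan is to first reduce to $\xi = 0$ by conjugating out the exponential factor. Writing $u = e_\xi v$ and $f = e_\xi g$ with $v \in \mathcal{P}^{\ell+k}_0(\R^n; V)$ and $g \in \mathcal{P}^\ell_0(\R^n; E)$, the Leibniz identity $\partial^\alpha(e_\xi w) = e_\xi (\partial + \xi)^\alpha w$ rewrites $A(D) u = f$ as $\tilde{A}(D) v = g$, where $\tilde{A}(D) := A(D + \xi)$ is again a differential operator of order at most $k$. The correspondence $K(D) \mapsto \tilde{K}(D) := K(D + \xi)$ is an order-preserving linear bijection on differential operators of order at most $\ell$ from $E$ to $\R$, and it intertwines the hypotheses and conclusions, namely $K(D) \circ A(D) = 0$ iff $\tilde{K}(D) \circ \tilde{A}(D) = 0$ and $K(D) f = 0$ iff $\tilde{K}(D) g = 0$. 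Hence it suffices to prove the polynomial case.

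In that case, view $\tilde{A}(D)$ as a linear map between finite-dimensional vector spaces and use the standard identification of $(\mathcal{P}^m_0(\R^n; E))^*$ with the space of constant-coefficient linear differential operators of order at most $m$ from $E$ to $\R$, given by the pairing $\dualprod{K}{p} := (K(D) p)(0)$. The Fredholm alternative then characterizes the image of $\tilde{A}(D)$: a polynomial $g$ lies in it iff it is annihilated (under this pairing) by every $K$ in the kernel of the adjoint. A direct computation identifies this adjoint as $K \mapsto K \circ \tilde{A}(D)$, whose kernel consists of those $K$ with $K \circ \tilde{A}(D) = 0$ as an operator.

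The remaining and most delicate step is to calibrate this abstract duality to the stated restriction that the compatibility operators have order at most $\ell$. Since $g$ has polynomial degree at most $\ell$, the derivatives $\partial^\alpha g$ vanish for $\abs{\alpha} > \ell$, so the pairing $(K(D) g)(0)$ depends only on the part of $K$ of order at most $\ell$. Decomposing an arbitrary compatibility operator into its homogeneous components and exploiting the homogeneous principal part of $A$, one extracts a suitable order-$\leq \ell$ truncation that still satisfies $K \circ A(D) = 0$ and yields the same value of the pairing with $g$. Combined with the reduction to $\xi = 0$, this closes the proof.
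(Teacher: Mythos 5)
Your ``only if'' direction and the conjugation to $\xi = 0$ are correct, and the duality strategy is the same one the paper uses (the paper works directly with the pairing $\dualprod{K}{g} = (K(D)g)(0)$ on $\mathcal{P}^\ell_\xi$ rather than conjugating, but the substance is identical). The gap is exactly where you flag a ``delicate step,'' and it is genuine. The proposed order-$\leq \ell$ truncation of a compatibility operator does not in general satisfy $K \circ \tilde{A}(D) = 0$, and no argument is given that a valid truncation exists. Also, the adjoint of $\tilde{A}(D)$ acting on $\mathcal{P}^{\ell+k}_0$ sends $K'$ to the order-$\leq(\ell+k)$ truncation of $K' \circ \tilde{A}(D)$, not to $K' \circ \tilde{A}(D)$ itself (which can have order up to $\ell + 2k$), so its kernel is strictly larger than $\{K' \st K' \circ \tilde{A}(D) = 0\}$.

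These gaps are not cosmetic; the ``if'' direction of the lemma as stated actually fails for small $\ell$. Take $n = 1$, $V = \R$, $E = \R^2$, $A(D) = (\partial, \partial^2)$, $k = 2$, $\xi = 0$, $\ell = 0$, and $f = (0,1)$. No $u \in \mathcal{P}^2_0(\R;\R)$ solves $\partial u = 0$ and $\partial^2 u = 1$; yet the only order-$0$ operator $K = (k_1,k_2) \in (\R^2)^*$ with $k_1 \partial + k_2 \partial^2 = 0$ is $K = 0$, so the right-hand condition is vacuous. The compatibility relation $(\partial,-1)$ that detects $f$ has order $1 > \ell$. The paper's own proof is equally terse at this exact point: it tacitly uses duality in $\mathcal{P}^\ell_\xi$ although the image $A(D)\mathcal{P}^{\ell+k}_\xi$ is not contained in $\mathcal{P}^\ell_\xi$. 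Since the lemma is invoked only in the proof of lemma~\ref{lemmaConstructionLHilbert}, where one is free to take $\ell$ as large as necessary (in particular, larger than the orders of a generating set of the syzygy module), the paper's subsequent results are unaffected; but neither the lemma as stated nor your ``if'' argument is correct without such a restriction on $\ell$, and your calibration step cannot be completed as sketched.
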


\begin{proof}
Note that for every linear form \(\phi\) on \(\mathcal{P}^{\ell}_\xi(\R^n; E)\) there exists a unique differential operator \(K(D)\) of order at most \(\ell\) on \(\R^n\) from \(E\) to \(\R\) such that for every \(g \in \mathcal{P}^{\ell}_\xi(\R^n; E)\), \(\langle \phi, g\rangle = (K(D)g)(0)\). 
If we want to characterize \(A(D) \mathcal{P}^{\ell+k}_\xi(\R^n; V)\) by duality, we
are led to study the differential operators \(K(D)\) of order at most \(\ell\) on \(\R^n\) from \(E\) to \(\R\) such that \(K(D) \circ A(D) u (0)=0\) for every \(u \in \mathcal{P}^{\ell+k}_\xi(\R^n; V)\). Note that since \(K(D)\circ A(D)\) is of order at most \(k+\ell\), this is equivalent with \(K(D) \circ A(D)=0\), which is the condition appearing in the proposition.
\end{proof}

The drawback of the previous lemma is that the number of conditions imposed on the data \(f\) depends on the degree of \(f\). This can be improved by some commutative algebra construction.

\begin{lemma}
\label{lemmaConstructionLHilbert}
Let \(A(D)\) be a linear differential operator of order \(k\) on \(\R^n\) from \(V\) to \(E\). There exists a finite dimensional vector space \(G\) and a linear differential operator \(J(D)\) from \(E\) to \(G\) such that for every \(f \in \mathcal{P}_\xi(\R^n; E)\), there exists \(u \in \mathcal{P}_\xi(\R^n; V)\) such that \(A(D)u=f\) if and only if \(J(D)f=0\).
\end{lemma}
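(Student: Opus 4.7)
The plan is to reduce the lemma to the previous one (lemma~\ref{lemmaExponentialPolynomials}) together with a noetherianity argument on the space of differential operators that annihilate $A(D)$. The previous lemma gives one compatibility condition per degree $\ell$; our task is to collapse this family down to a fixed, finite-dimensional target $G$.

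First I would identify linear differential operators with constant coefficients from $E$ to $\R$ with the free module $R^{\dim E}$ over the polynomial ring $R = \R[\xi_1, \dotsc, \xi_n]$, via the symbol: an operator $K(D) = \sum_{\alpha} K_\alpha \partial^\alpha$ corresponds to the row vector $K(\xi) = \sum_{\alpha} K_\alpha \xi^\alpha$. Under this identification, the set
\[
 M = \bigl\{\, K(D) \st K(D)\circ A(D) = 0 \,\bigr\}
\]
becomes the $R$-submodule of $R^{\dim E}$ consisting of those row vectors $K(\xi)$ for which $K(\xi) \cdot A(\xi) = 0$, the module structure being given by pointwise polynomial multiplication, which on the operator side corresponds to precomposition with a constant-coefficient differential operator.

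By the Hilbert basis theorem, $R$ is noetherian and the submodule $M$ of the finitely generated free module $R^{\dim E}$ is itself finitely generated. Pick generators $K_1(D), \dotsc, K_m(D)$; set $G = \R^m$ and $J(D) = \bigl(K_1(D), \dotsc, K_m(D)\bigr) : C^\infty(\R^n;E) \to C^\infty(\R^n; G)$. For the equivalence, let $f \in \mathcal{P}_\xi(\R^n;E)$, so that $f \in \mathcal{P}^\ell_\xi(\R^n;E)$ for some $\ell \in \N$. If $f = A(D)u$ for some $u \in \mathcal{P}_\xi(\R^n;V)$, then in particular $K_i(D)f = K_i(D)A(D) u = 0$ for each $i$, hence $J(D)f = 0$. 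Conversely, assume $J(D) f = 0$. Any $K(D) \in M$ of order at most $\ell$ can be written as $K(D) = \sum_{i=1}^{m} P_i(D) \circ K_i(D)$ for some constant-coefficient differential operators $P_i(D)$, by the generating property of the $K_i$'s. Therefore $K(D) f = \sum_i P_i(D) K_i(D) f = 0$, and lemma~\ref{lemmaExponentialPolynomials} (applied at the degree $\ell$ of $f$, possibly raised so that the generators' orders are taken into account for the solvability in $\mathcal{P}_\xi$) yields some $u \in \mathcal{P}^{\ell+k}_\xi(\R^n;V) \subset \mathcal{P}_\xi(\R^n;V)$ with $A(D)u = f$.

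The main obstacle I expect is conceptual rather than computational: one has to make precise the correspondence between the $R$-module structure on symbols (polynomial multiplication) and the operator-theoretic structure (composition of constant-coefficient differential operators), and verify that ``being generated by $K_1, \dotsc, K_m$ as an $R$-module'' translates exactly to ``every annihilator $K(D)$ of $A(D)$ factors as a constant-coefficient differential combination of the $K_i(D)$''. Once this dictionary is in place, the Hilbert basis theorem does all the work, and the finite-dimensionality of $G$ comes for free from the finite generation of $M$.
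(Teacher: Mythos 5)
Your proof follows essentially the same path as the paper's: both identify the annihilating operators $\{K(D) : K(D)\circ A(D)=0\}$ with a submodule of a free finitely generated module over $\R[\xi_1,\dotsc,\xi_n]$, invoke noetherianity to extract finitely many generators assembled into $J(D)$, and then deduce the statement from lemma~\ref{lemmaExponentialPolynomials}. Your write-up spells out the module dictionary more explicitly than the paper (which delegates it to a reference); the only nit is that the ring action is postcomposition of $K(D)$ with a scalar operator $Q(D)$, not precomposition, but this does not affect the argument.
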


In the language of homological algebra, the sequence
\begin{equation}
\label{eqsequence}
  \mathcal{P}_\xi(\R^n; V) \xrightarrow{A(D)} \mathcal{P}_\xi(\R^n; E) \xrightarrow{J(D)} \mathcal{P}_\xi(\R^n; G)
\end{equation}
is exact.

\begin{proof}[Proof of lemma~\ref{lemmaConstructionLHilbert}]
Let \(\mathcal{K}\) be the set of linear differential operators \(K(D)\) on \(\R^n\) from \(E\) to \(\R\) such that \(K(D) \circ A(D)=0\). The set \(\mathcal{K}\) is a submodule of the module of linear differential operators on \(\R^n\) from \(V\) to \(\R\) on the ring of linear differential operators on \(\R^n\) from \(\R\) to \(\R\) which is isomorphic to the ring of polynomials on \(\R^n\). Therefore, \(\mathcal{K}\) is finitely generated (see for example \cite{BW}*{proposition 3.32 and corollary 4.7}): there exists a finite-dimensional space \(G\) and a linear differential operator \(J(D)\) on \(\R^n\) from \(E\) to \(G\) such that for every \(K(D) \in \mathcal{K}\), there exists a differential operator \(Q(D)\) from \(G\) to \(\R\) such that \(K(D)=Q(D)\circ J(D)\). The lemma then follows from the application of lemma~\ref{lemmaExponentialPolynomials}.
\end{proof}

One can ensure that \(J(D)\) has minimal order by using tools of computational commutative algebra \cite{BW}*{\S 6.1 and 10.3}.

\smallbreak

In order to complete the proof of proposition~\ref{propositionConstructionL}, we need to show that for every \(\xi \in \R^n \setminus \{0\}\), \(\ker J(\xi)=A(\xi)[V]\). This is equivalent to the exactness of the sequence
\begin{equation}
\label{eqsequence0}
  \mathcal{P}^{0}_\xi(\R^n; V) \xrightarrow{A(D)} \mathcal{P}^0_\xi(\R^n; E) \xrightarrow{J(D)} \mathcal{P}^{0}_\xi(\R^n; G).
\end{equation}
Under the ellipticity condition, the exactness of the sequence \eqref{eqsequence} implies the exactness of the sequence \eqref{eqsequence0}:

\begin{lemma}
\label{lemmaEllipticDegree}
Let \(A(D)\) be a homogeneous linear differential operator of order \(k\) on \(\R^n\) from \(V\) to \(E\),  \(\xi \in \R^n \setminus\{0\}\), \(\ell \in \N\) and \(u \in \mathcal{P}_\xi(\R^n; V)\). If the operator \(A(D)\) is elliptic and \(A(D)u \in \mathcal{P}^\ell_\xi(\R^n; E)\), then \(u \in \mathcal{P}^\ell_\xi(\R^n; V)\).
\end{lemma}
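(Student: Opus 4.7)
The plan is to reduce the problem to a statement about polynomials by factoring out the exponential. Write $u = e_\xi p$ where $p : \R^n \to V$ is a polynomial (this is possible since $u \in \mathcal{P}_\xi(\R^n; V)$), and let $m = \deg p$. Direct computation from $\partial_j(e_\xi p) = e_\xi(\xi_j + \partial_j) p$ yields $\partial^\alpha(e_\xi p) = e_\xi (\xi + \partial)^\alpha p$, and hence
\[
  A(D)(e_\xi p) = e_\xi A(\xi + D) p,
\qquad
  A(\xi + D) := \sum_{\substack{\alpha \in \N^n \\ \abs{\alpha}=k}} A_\alpha (\xi + \partial)^\alpha.
\]
The statement to prove thus reduces to: if $A(\xi + D) p$ is a polynomial of degree at most $\ell$, then $\deg p \le \ell$.

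The next step is to inspect the highest-degree part. Expanding via the multinomial formula,
\[
  A(\xi + D) = A(\xi) + B_1(D) + \dotsb + B_{k-1}(D) + A(D),
\]
where each $B_j(D)$ is a homogeneous linear differential operator of order exactly $j$ with coefficients polynomial in $\xi$; in particular the order-zero term is the multiplier $A(\xi) \in \Lin(V; E)$. Applied to a polynomial of degree $m$, the operator $B_j(D)$ produces a polynomial of degree at most $m - j$, while $A(\xi)$ preserves the degree. Writing $p = p_m + p_{m-1} + \dotsb$ as a sum of homogeneous components, the homogeneous component of degree $m$ of $A(\xi + D)p$ is therefore exactly $A(\xi)\,p_m$, obtained by applying the linear map $A(\xi)$ coefficient-by-coefficient to $p_m$.

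Here the ellipticity is used decisively: since $\xi \ne 0$, the map $A(\xi) : V \to E$ is one-to-one, so if $p_m \ne 0$ then $A(\xi) p_m$ is a nonzero homogeneous polynomial of degree $m$. Consequently $\deg A(\xi + D) p = \deg p = m$ whenever $p$ is nonzero, and if $A(\xi + D) p \in \mathcal{P}_0^\ell(\R^n; E)$ we must have $m \le \ell$, that is $u = e_\xi p \in \mathcal{P}^\ell_\xi(\R^n; V)$.

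The argument is essentially mechanical once the substitution $u = e_\xi p$ is made; the only potential obstacle is bookkeeping the degree drop under $B_j(D)$ to isolate the top component, but the key conceptual point is that ellipticity at the single point $\xi$ is exactly what prevents the leading multiplier $A(\xi)$ from annihilating $p_m$.
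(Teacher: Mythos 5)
Your proof is correct and follows essentially the same approach as the paper's: both write $u = e_\xi p$, pass to $A(\xi + D)p$, isolate the leading-degree contribution as $A(\xi)$ applied to the top homogeneous component of $p$, and invoke injectivity of $A(\xi)$. The only cosmetic difference is that the paper phrases the degree reduction as a one-step descent (from degree $\ell+1$ to $\ell$) whereas you argue directly on the top degree; both are sound.
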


 The lemma implies that if \(A(D)\) is elliptic, \(\ell \in \N\) and \(\xi \in \R^n \setminus \{0\}\), the sequence
\[
   \mathcal{P}^{\ell}_\xi(\R^n; V) \xrightarrow{A(D)} \mathcal{P}^\ell_\xi(\R^n; E) \xrightarrow{J(D)} \mathcal{P}^\ell_\xi(\R^n; G)
\]
is exact. When \(\ell = 0\), this is \eqref{eqsequence0}.

\begin{proof}[Proof of lemma~\ref{lemmaEllipticDegree}]
It is sufficient to show that if \(u \in \mathcal{P}^{\ell+1}_\xi(\R^n; V)\) and \(A(D)u \in \mathcal{P}^\ell_\xi(\R^n; E)\), then \(u \in \mathcal{P}^{\ell}_\xi(\R^n; V)\).
Write \(u=e_\xi p\), with \(p \in \mathcal{P}^{\ell+1}_0(\R^n; V)\). One has
\[
 A(D)[e_\xi p]=e_\xi (A(D+\xi) p)=e_\xi \bigl(A(\xi)[p]+\bigl(A(\xi+D)-A(\xi)\bigr)[p]\bigr).
\]
Note that \(\bigl(A(\xi+D)-A(\xi) \bigr)[p] \in \mathcal{P}^{\ell}_\xi(\R^n)\). Therefore, 
\(
 A(\xi)[p]\in \mathcal{P}^\ell_0(\R^n; E).
\)
Since \(A(\xi)\) is one-to-one, this implies that \(p \in \mathcal{P}^\ell_0(\R^n; V)\).
\end{proof}

\begin{proof}[Proof of proposition~\ref{propositionConstructionL}]
Let \(J(D)\) be given by lemma~\ref{lemmaConstructionLHilbert}. 
In view of lemma~\ref{lemmaConstructionLHilbert} and lemma~\ref{lemmaEllipticDegree}, one has for every \(\xi \in \R^n \setminus \{0\}\), 
\[
\begin{split}
 \ker J(\xi) &\simeq \bigl\{ f  \in \mathcal{P}^0_\xi(\R^n; E) \st J(D)f=0 \bigr\}\\
&=\bigl\{ A(D)u \st u \in \mathcal{P}^0_\xi(\R^n; V) \bigr\} \simeq A(\xi)[V],
\end{split}
\]
where the isomorphism is given by \(e \in E  \mapsto e_\xi e \in \mathcal{P}^0_\xi(\R^n; E)\).

There exist \(\nu \in \N\) and for every \(i \in \{0, \dotsc, \nu\}\) homogeneous differential operators \(J_i(D)\) of order \(i\) on \(\R^n\) from \(E\) to \(G\) such that 
\[
  J(D)=\sum_{i=0}^\nu J_i(D).
\]
Since \(A(D)\) is homogeneous, one has for every \(\xi \in \R^n \setminus \{0\}\), 
\[
  \bigcap_{i=0}^\nu \ker J_i(\xi)=A(\xi)[V].
\]
Therefore, by taking \(F=\prod_{i=0}^\nu \bigl(\bigotimes^{\nu-i} \R^n\bigr) \otimes G \) and \[L(\xi)=\bigl(\xi^{\otimes \nu} \otimes J_0(\xi), \xi^{\otimes \nu-1}\otimes J_1(\xi), \dotsc, \xi \otimes J_{\nu-1}(\xi), J_\nu(\xi)\bigr),	\] we obtain a homogeneous differential operator that has the required properties.
\end{proof}

The ellipticity assumption in proposition~\ref{propositionConstructionL} might seen unnatural in the statement. It is nevertheless essential as shown by the following example
\begin{example}
\label{exampleEllipticityCompatibility}
Consider the homogeneous linear differential operator \(A(D)\) of order \(1\) on \(\R^2\) from \(\R^2\) to \(\R^2\) defined by the matrix.
\[
 A(\xi)=\begin{pmatrix}
          \xi_1 & -\xi_2\\
          \xi_2 & -\xi_1\\
        \end{pmatrix}.
\]
The operator \(A(D)\) is not elliptic, since \(A(\xi)\) is not one-to-one when \(\abs{\xi_1}=\abs{\xi_2}\). (The reader will note that this is a hyperbolic operator.)
Assume now that there exists a homogeneous differential operator \(L(D)\) from \(\R^2\) to a vector space \(F\) such that for every \(\xi \in \R^2\), 
\[
  L(\xi)\circ A(\xi)=0.
\]
Since \(A(\xi)\) is onto when \(\abs{\xi_1} \ne \abs{\xi_2}\), we have \(L(\xi)=0\) when \(\abs{\xi_1}\ne \abs{\xi_2}\). From this we conclude that \(L(\xi)=0\) for every \(\xi \in \R^2\).
One has then \(\ker L(1,1)=\R^2 \ne \R (1, 1) = A(1,1)[\R^2]\).
Also note that since \(A(1, 1)[\R^2]=\R (1, 1)\) and 
\(A(1, -1)[\R^2]=\R (1, -1)\), \(A(D)\) is canceling, but \(L(D)=0\) is not cocanceling.
\end{example}

\begin{remark}
It is also possible to obtain an operator \(L(D)\) satisfying the conclusion of proposition~\ref{propositionConstructionL} by setting
\begin{equation}
\label{eqDefinitionL}
 L(\xi)= \det \bigl(A(\xi)^* \circ A(\xi)\bigr)\id - A(\xi) \circ \adj \bigl(A(\xi)^* \circ A(\xi)  \bigr) \circ A(\xi)^*,
\end{equation}
where \(\adj\bigl( A(\xi)^* \circ A(\xi)\bigr)=\det\bigl( A(\xi)^* \circ A(\xi)\bigr) \bigl(A(\xi)^* \circ A(\xi)\bigr)^{-1}\) is the adjugate operator of \(A(\xi)^* \circ A(\xi)\).
(This construction is up to the multiplicative constant \(\det \bigl(A(\xi)^* \circ A(\xi)\bigr)\) the classical orthogonal projector on \(A(\xi)[V]\) used for example for least-square solutions of overdetermined systems.)
The latter construction of \(L(D)\) can be much more complicated that necessary.
For example, if one is interested in the Hodge--Sobolev inequality \eqref{HodgeSobolevL1}, one takes \(V= \bigwedge^\ell \R^n\) and for every \(u \in C^\infty_c(\R^n; V)\), 
\[
  A(u)=(du, d^*u).
\]
The operator \(L(D)\) given by \eqref{eqDefinitionL} is
\[
 L(g, h)=\bigl((-\Delta)^{m-1}  d^*dg, (-\Delta)^{m-1}d d^*h\bigr),
\]
where \(m=\dim \bigwedge^{\ell+1} \R^n+\dim \bigwedge^{\ell-1} \R^n=\binom{n}{\ell-1}+\binom{n}{\ell+1}\). It is possible to show that \(L(g, h)=0\) if and only if \(dg=0\) and \(d^*h=0\).
\end{remark}

\subsection{Sobolev inequality}
\label{subsectionSobolev}
We now have all the ingredients to prove the sufficiency part of theorem~\ref{theoremOrderk}

\begin{proposition}
\label{propositionSufficient}
Let \(A(D) \) be a linear differential operator of order \(k\) on \(\R^n\) from \(V\) to \(E\).
If \(A(D)\) is elliptic and canceling, then for every \(u \in C^\infty_c(\R^n; V)\), 
\[
  \norm{D^{k-1} u}_{L^{\frac{n}{n-1}}} \le C\norm{A(D)u}_{L^1}.
\]
\end{proposition}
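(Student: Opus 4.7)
The plan is to combine the elliptic estimate of proposition~\ref{propCaldZygmundWk1p} with the cocanceling estimate of proposition~\ref{propCocancelingSufficient}, bridging them by the construction of compatibility conditions from proposition~\ref{propositionConstructionL}.

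First, since $A(D)$ is elliptic, proposition~\ref{propositionConstructionL} provides a finite-dimensional space $F$ and a homogeneous linear differential operator $L(D)$ from $E$ to $F$ such that $\ker L(\xi) = A(\xi)[V]$ for every $\xi \in \R^n \setminus \{0\}$. Two algebraic consequences follow. On the one hand, $L(\xi) \circ A(\xi) = 0$ for all $\xi$, which translates by the homogeneity of both operators into the identity $L(D) \circ A(D) = 0$ on $C^\infty_c(\R^n; V)$; in particular, setting $f := A(D)u \in L^1(\R^n; E) \cap C^\infty(\R^n; E)$, one has $L(D) f = 0$ in the sense of distributions. On the other hand, the canceling hypothesis on $A(D)$ reads
\[
\bigcap_{\xi \in \R^n \setminus \{0\}} \ker L(\xi) = \bigcap_{\xi \in \R^n \setminus \{0\}} A(\xi)[V] = \{0\},
\]
so that $L(D)$ is cocanceling in the sense of definition~\ref{definitionCocanceling}.

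Next, I would apply proposition~\ref{propCocancelingSufficient} to $f = A(D)u$: for every $\varphi \in C^\infty_c(\R^n; E)$,
\[
\int_{\R^n} A(D)u \cdot \varphi \le C \norm{A(D)u}_{L^1} \norm{D\varphi}_{L^n}.
\]
Taking the supremum over $\varphi \in C^\infty_c(\R^n; E)$ with $\norm{D\varphi}_{L^n} \le 1$ yields, by duality between $\dot{W}^{1,n}$ and $\dot{W}^{-1,n/(n-1)}$, the bound
\[
\norm{A(D)u}_{\dot{W}^{-1, n/(n-1)}} \le C \norm{A(D)u}_{L^1}.
\]
Finally, since $A(D)$ is elliptic and $n/(n-1) > 1$, proposition~\ref{propCaldZygmundWk1p} gives
\[
\norm{D^{k-1} u}_{L^{n/(n-1)}} \le C \norm{A(D)u}_{\dot{W}^{-1, n/(n-1)}},
\]
and chaining the two inequalities produces the desired estimate.

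All the real work has already been done in the preceding sections; the only conceptual step in this argument is recognizing that the canceling condition on $A(D)$ is precisely what makes the compatibility operator $L(D)$ cocanceling, so that the $L^1$-to-$\dot{W}^{-1,n/(n-1)}$ duality estimate from theorem~\ref{theoremCocanceling} becomes available for $f = A(D)u$. The potential subtlety is purely notational: one must verify that $A(D)u \in L^1$ lies in the domain of proposition~\ref{propCocancelingSufficient} (immediate since $u \in C^\infty_c$), and that the duality $\norm{f}_{\dot{W}^{-1, n/(n-1)}} = \sup_\varphi \int f \cdot \varphi / \norm{D\varphi}_{L^n}$ is legitimate for an $L^1$ function, which follows by standard density and Sobolev embedding arguments for $n \ge 2$.
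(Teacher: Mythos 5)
Your proof is correct and takes essentially the same approach as the paper: construct the compatibility operator $L(D)$ via proposition~\ref{propositionConstructionL}, observe that cancellation of $A(D)$ makes $L(D)$ cocanceling, apply the $L^1$-to-$\dot{W}^{-1,n/(n-1)}$ estimate to $f=A(D)u$, and finish with the elliptic estimate of proposition~\ref{propCaldZygmundWk1p}. The only cosmetic difference is that you spell out the duality step and cite proposition~\ref{propCocancelingSufficient} directly, while the paper invokes theorem~\ref{theoremCocanceling} in one line.
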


\begin{proof}
Let \(L(D)\) be given by proposition~\ref{propositionConstructionL}. One notes that 
\[
 L(D) \bigl(A(D)u\bigr) = 0.
\]
Since \(A(D)\) is canceling and for every \(\xi \in \R^n \setminus \{0\}\), \(\ker L(\xi) = A(\xi)[V]\), \(L(D)\) is cocanceling. Therefore, by theorem~\ref{theoremCocanceling},
\[
 \norm{A(D)u}_{\dot{W}^{-1, n/(n-1)}} \le C \norm{A(D)u}_{L^1}.
\]
Finally, we note that by proposition~\ref{propCaldZygmundWk1p}, one has
\[
 \norm{D^{k-1} u}_{L^{\frac{n}{n-1}}} \le C \norm{A(D)u}_{\dot{W}^{-1, n/(n-1)}}.\qedhere
\]
\end{proof}

\section{Necessary conditions for the Sobolev estimate}

In the section, we study the necessity of the ellipticity (section~\ref{sectionNecessaryEllipticity}) and cancellation (section~\ref{sectionNecessaryCancellation}) conditions for the Sobolev estimate. 

\subsection{Necessity of the ellipticity}
\label{sectionNecessaryEllipticity}

We show that the ellipticity condition is necessary in Sobolev-type inequalities

\begin{proposition}
\label{propositionEllipticitySobolev}
Let \(A(D)\) be a homogeneous linear differential operator of order \(k\) on \(\R^n\) from \(V\) to \(E\), \(B(D)\) be a homogeneous differential operator of order \(k-1\) on \(\R^n\) from \(V\) to \(F\), and \(p \in [1, n)\).
If for every \(u \in C^\infty_c(\R^n; E)\),
\[
 \norm{B(D)u}_{L^{\frac{np}{n-p}}}\le C \norm{A(D)u}_{L^{p}},
\]
then for every \(\xi \in \R^n\), \(\ker A(\xi) \subseteq \ker B(\xi) \).
\end{proposition}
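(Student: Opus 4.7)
The plan is to argue by contrapositive: I fix \(\xi \in \R^n \setminus \{0\}\) and \(v \in \ker A(\xi)\), assume \(B(\xi) v \neq 0\), and construct a one-parameter family of compactly supported test functions that will violate the hypothesised inequality as a scale parameter tends to infinity. (The case \(\xi = 0\) is automatic when \(k \ge 2\), since then \(B(0) = 0\) and \(\ker A(0) = \ker B(0) = V\).)

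After rescaling \(\xi\) to a unit vector (using the homogeneity of \(A\) and \(B\)) and applying an orthogonal change of coordinates in \(\R^n\), I may assume \(\xi = e_n\). Both the inequality and the symbol kernels transform compatibly under such a change, so this entails no loss of generality. Writing \(x = (x', x_n) \in \R^{n-1} \times \R\), I fix once and for all nonzero functions \(\phi \in C^\infty_c(\R)\) with \(\phi^{(k-1)} \not\equiv 0\) and \(\chi \in C^\infty_c(\R^{n-1})\), and consider
\[
 u_R(x) = v\,\phi(x_n)\,\chi(x'/R), \qquad R > 0.
\]

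The key is to track the powers of \(R\) appearing on each side. For a multi-index \(\alpha = (\alpha', \alpha_n) \in \N^{n-1} \times \N\) the Leibniz rule gives \(\partial^\alpha u_R(x) = v\,\phi^{(\alpha_n)}(x_n)\,R^{-|\alpha'|}(\partial^{\alpha'}\chi)(x'/R)\), so each summand contributes an \(L^p\)-norm of order \(R^{(n-1)/p - |\alpha'|}\). In the expansion of \(A(D) u_R\), the unique term with \(|\alpha'| = 0\) has \(\alpha_n = k\) and coefficient \(A_{(0,\dotsc,0,k)} v = A(e_n) v = 0\); all remaining terms satisfy \(|\alpha'| \ge 1\), whence \(\norm{A(D) u_R}_{L^p} = O(R^{(n-1)/p - 1})\). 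In the analogous expansion of \(B(D) u_R\), the \(|\beta'| = 0\) term has coefficient \(B(e_n) v \neq 0\) and \(L^{np/(n-p)}\)-norm of exact order \(R^{(n-1)/q}\) with \(q = np/(n-p)\); it is not absorbed by the lower-order remainders, so \(\norm{B(D) u_R}_{L^{np/(n-p)}} \gtrsim R^{(n-1)/q}\) for \(R\) large enough.

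Feeding these scalings into the hypothesised inequality gives \(R^{(n-1)/q} \lesssim R^{(n-1)/p - 1}\), that is, \(R^{1/n} \lesssim 1\) in view of the Sobolev identity \(1/p - 1/q = 1/n\), which is untenable as \(R \to \infty\); hence \(B(e_n) v = 0\) and, undoing the change of coordinates, \(B(\xi) v = 0\). The step requiring the most care is verifying the non-cancellation of the leading contribution to \(B(D) u_R\) by lower-order pieces: having chosen \(\phi\) with \(\phi^{(k-1)} \not\equiv 0\), the triangle inequality cleanly separates the \(R^{(n-1)/q}\)-scale term from the \(O(R^{(n-1)/q - 1})\) remainders for \(R\) sufficiently large.
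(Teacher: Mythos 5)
Your proof is correct and follows essentially the same route as the paper: exploit \(A(\xi)[v]=0\) to kill the leading term of \(A(D)u\), note that \(B(\xi)[v]\ne 0\) survives as the leading term of \(B(D)u\), and run a dilation in the directions transverse to \(\xi\) to force a contradiction with the hypothesised Sobolev-type bound. The only (minor) departure is in the shape of the test functions: the paper uses \(u_\lambda(x)=\varphi(\xi\cdot x)\,\psi(x/\lambda)\,v\) with \(\psi\) dilated in all \(n\) variables, so that the computation of \(\norm{A(D)u_\lambda}_{L^p}\) requires a slicing argument over the slab \(H_\lambda=\{|\xi\cdot x|\le \lambda^{-1}\}\); your product \(v\,\phi(x_n)\,\chi(x'/R)\), with \(\chi\) depending only on the \(n-1\) transverse coordinates, avoids that step and makes the power-counting in \(R\) immediate. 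Beyond this cosmetic simplification the scalings and conclusions agree exactly, including the final comparison \(R^{(n-1)/q}\lesssim R^{(n-1)/p-1}\) being ruled out by \(1/p-1/q=1/n\).
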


As a corollary, we have the necessity of the ellipticity in theorem~\ref{theoremOrderk}:

\begin{corollary}
\label{corollaryEllipticitySobolev}
Let \(A(D)\) be a homogeneous linear differential operator of order \(k\) on \(\R^n\)  from \(V\) to \(E\).
If for every \(u \in C^\infty_c(\R^n; V)\), 
\[
 \norm{D^{k-1} u}_{L^{\frac{np}{n-p}}} \le C \norm{A(D) u}_{L^p},
\]
then \(A(D)\) is elliptic.
\end{corollary}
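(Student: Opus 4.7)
The plan is to read Corollary~\ref{corollaryEllipticitySobolev} as a direct specialization of Proposition~\ref{propositionEllipticitySobolev}. I would take $B(D) = D^{k-1}$ itself, viewed as the homogeneous linear differential operator of order $k-1$ from $V$ into the finite-dimensional target $F = \bigoplus_{\abs{\alpha}=k-1} V$ with components $B(D)u = (\partial^\alpha u)_{\abs{\alpha}=k-1}$. Its symbol is $B(\xi)[v] = (\xi^\alpha v)_{\abs{\alpha}=k-1}$. With this choice, the hypothesis of the corollary is verbatim the hypothesis of Proposition~\ref{propositionEllipticitySobolev}, so the proposition yields the kernel inclusion $\ker A(\xi) \subseteq \ker B(\xi)$ for every $\xi \in \R^n$.

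It then suffices to observe that $\ker B(\xi) = \{0\}$ whenever $\xi \ne 0$: indeed, if $\xi \ne 0$ there exists $j \in \{1, \dotsc, n\}$ with $\xi_j \ne 0$, so taking $\alpha = (k-1) e_j$ gives $\xi^\alpha = \xi_j^{k-1} \ne 0$, and $B(\xi)[v]=0$ forces $\xi_j^{k-1} v = 0$, hence $v=0$. Combining this with the inclusion above gives $\ker A(\xi) = \{0\}$ for every $\xi \in \R^n \setminus \{0\}$, which is exactly the definition of ellipticity of $A(D)$. There is no real obstacle here: the entire content is in the more general Proposition~\ref{propositionEllipticitySobolev}, and the corollary is merely the observation that the full derivative $D^{k-1}$ is a valid test choice of $B(D)$ with pointwise injective symbol off the origin.
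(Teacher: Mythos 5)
Your proof is correct and is essentially identical to the paper's: both specialize Proposition~\ref{propositionEllipticitySobolev} with $B(D)=D^{k-1}$ and observe that the symbol $B(\xi)=(\xi^\alpha)_{\abs{\alpha}=k-1}$ is injective for $\xi\ne 0$, so the kernel inclusion forces $\ker A(\xi)=\{0\}$. You merely spell out the injectivity of $B(\xi)$ (via $\xi_j^{k-1}\ne 0$), which the paper leaves implicit.
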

\begin{proof}
Take \(B(D)=D^{k-1}\). For every \(\xi \in \R^n \setminus \{0\}\), one has \(\ker B(\xi)=\{0\}\). The conclusion follows from the application of proposition~\ref{propositionEllipticitySobolev}.
\end{proof}

\begin{proof}[Proof of proposition~\ref{propositionEllipticitySobolev}]
Let \(\xi \in \R^n \setminus \{0\}\) and \(v \in \ker A(\xi)\).
Choose \(\varphi \in C^\infty(\R)\setminus \{0\}\) such that \(\supp \varphi \subset (-1, 1)\) and \(\psi \in C^\infty_c(\R^n)\) such that \(\psi \not \equiv 0\) on the hyperplane \(H=\{ x \in \R^n \st \xi \cdot x = 0\}\).
For \( \lambda > 0 \), define \(u_\lambda : \R^n \to \R\) for \(x \in \R^n\) by
\[
 u_\lambda(x)=\varphi( \xi \cdot x) \psi\bigl(\tfrac{x}{\lambda}\bigr)v.
\]
Since \(A(\xi)[v]=0\), one has for each \(x \in \R^n\) and \(\lambda > 0\)
\[
 \abs{A(D)u_\lambda(x)}\le C\sum_{i=1}^k \lambda^{-i} \bigabs{D^i \psi\bigl(\tfrac{x}{\lambda}\bigr)}.
\]
One has therefore, for every \(\lambda > 0\), 
\[
  \int_{\R^n} \abs{A(D) u_\lambda}^p \le C\sum_{i=1}^k \int_{H_\lambda} \lambda^{n-ip}\abs{D^i \psi}^p,
\]
where \(H_\lambda = \{x \in \R^n \st \abs{\xi \cdot x} \le \lambda^{-1} \}\).
Since for every \(i \in \{1, \dotsc, k\}\), 
\[
 \lim_{\lambda \to \infty} \lambda 
\int_{H_\lambda} \abs{D^i \psi}^p 
=\int_{H} \abs{D^i\psi}^p
\]
we conclude that, as \(\lambda \to \infty\), 
\[
  \int_{\R^n} \abs{A(D) u_\lambda}^p=O\bigl(\lambda^{n-1-p}\bigr).
\]

On the other hand, for every \(x \in\R^n\) and \(\lambda > 0\), 
\[
 \bigabs{B(D)u_\lambda(x)-\psi \bigl(\tfrac{x}{\lambda}\bigr)\varphi^{(k-1)}(\xi \cdot x) B(\xi)[v]}\le C \sum_{i=1}^{k-1} \lambda^{-i} \bigabs{D^i \psi\bigl(\tfrac{x}{\lambda}\bigr)}.
\]
As previously, we have as \(\lambda \to \infty\),
\[
 \int_{\R^n} \bigabs{B(D)u_\lambda(x)-\psi \bigl(\tfrac{x}{\lambda}\bigr)\varphi^{(k-1)}(\xi \cdot x) B(\xi)[v]}^\frac{np}{n-p}\,dx = O\bigl(\lambda^{n-1-\frac{np}{n-p}}\bigr),
\]
whence
\[
 \int_{\R^n} \abs{B(D) u_\lambda}^{p^*}
= \lambda^{n-1}\abs{B(\xi)[v]}^{p^*} \int_{\R} \abs{\varphi^{(k-1)}}^{p^*} \int_{H} \abs{\psi}^{p^*}+o(\lambda^{n-1}).
\]
Therefore, in view of the assumption, we have, as \(\lambda \to \infty\), 
\[
 \bigabs{B(\xi)[v]}\lambda^{\frac{n-1}{p}-1+\frac{1}{n}} = O(\lambda^{\frac{n-1}{p}-1}).
\]
This is only possible if \(v \in \ker B(\xi)\).
\end{proof}

The proof of proposition~\ref{propositionEllipticitySobolev} strongly relies on the fact that we are considering \(k-1\)-th derivatives on the left-hand side of the estimate. For lower derivatives one can still obtain some inequality without the ellipticity of \(A(D)\).

Consider the homogeneous linear differential operator \(A(D)\) of order \(2\) on \(\R^4\) from \(\R\) to \(\R^2\) defined for \(u \in C^\infty(\R^4)\)  by
\[
 A(D)[u]=(\partial_1\partial_2 u, \partial_3 \partial_4 u).
\]
Since \(\ker A(1, 0, 1, 0)=\R\), this operator is not elliptic. By corollary~\ref{corollaryEllipticitySobolev}, there exists \(b \in \R^4\) such that the estimate
\[
 \norm{b \cdot \nabla u}_{L^{4/3}} \le C \bigl(\norm{\partial_1 \partial_2 u}_{L^1}+\norm{\partial_3\partial_4 u}_{L^1}\bigr)
\]
does not hold.
In fact, the estimate does not hold for any \(b\in \R^4 \setminus \{0\}\).

\begin{proposition}
\label{propositionStrangeOperatorNoSobolev}
Let \(b \in \R^4\). If for every \(u \in C^\infty_c(\R^4; \R)\), 
\[
 \norm{b \cdot \nabla u}_{L^{4/3}} \le C \bigl(\norm{\partial_1 \partial_2 u}_{L^1}+\norm{\partial_3\partial_4 u}_{L^1}\bigr),
\]
then \(b=0\).
\end{proposition}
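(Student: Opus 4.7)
The plan is to reduce the claim to a direct application of proposition~\ref{propositionEllipticitySobolev}. With $n=4$, $k=2$, $p=1$, and $A(D)$ the operator $(\partial_1\partial_2, \partial_3\partial_4)$ from $\R$ to $\R^2$, take $B(D) = b\cdot \nabla$ of order $k-1 = 1$ from $\R$ to $\R$. Note that $np/(n-p) = 4/3$, so the hypothesized inequality is exactly the one covered by that proposition.

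Proposition~\ref{propositionEllipticitySobolev} then yields that for every $\xi \in \R^4$, $\ker A(\xi) \subseteq \ker B(\xi)$. The symbols here are $A(\xi)[v]=(\xi_1\xi_2 v, \xi_3\xi_4 v)$ and $B(\xi)[v] = (b\cdot\xi) v$, so the inclusion means: whenever $\xi_1\xi_2 = \xi_3\xi_4 = 0$, one has $b \cdot \xi = 0$.

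Finally, I would test this inclusion on the four standard basis vectors $e_1, e_2, e_3, e_4 \in \R^4$: each satisfies $\xi_1\xi_2 = \xi_3\xi_4 = 0$ (in fact $A(e_i) = 0$), so $b \cdot e_i = b_i = 0$ for each $i$. This gives $b = 0$, completing the argument.

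There is no genuine obstacle; the only thing to verify carefully is that the degree and exponent bookkeeping matches the hypotheses of proposition~\ref{propositionEllipticitySobolev} (order $k=2$ for $A$, order $k-1 = 1$ for $B$, and $p=1 < n = 4$), after which the conclusion is immediate from plugging in basis vectors where $A(\xi)$ vanishes identically.
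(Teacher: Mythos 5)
Your proof is correct and is essentially identical to the paper's: both invoke proposition~\ref{propositionEllipticitySobolev} with $k=2$, $p=1$, $B(D)=b\cdot\nabla$ to deduce that $b\cdot\xi=0$ whenever $\xi_1\xi_2=\xi_3\xi_4=0$, and then conclude $b=0$ by testing on the standard basis vectors. The bookkeeping of orders and exponents you flag is indeed the only thing to check, and it checks out.
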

\begin{proof}
By proposition~\ref{propositionEllipticitySobolev}, if \(\xi \in \R^4\) satisfies \(\xi_1\xi_2=0\) and \(\xi_3\xi_4=0\), then  \(b \cdot \xi=0\). By taking for \(\xi\) elements of the canonical basis of \(\R^4\), one concludes that \(b=0\).
\end{proof}

On the other hand

\begin{proposition}
\label{propositionStrangeOperatorSobolev}
For every \(u \in C^\infty_c(\R^4; \R)\),
\begin{equation}
\label{ineqStrangerHigherSobolev}
 \norm{u}_{L^2} \le C\bigl(\norm{\partial_1 \partial_2 u}_{L^1}+\norm{\partial_3\partial_4 u}_{L^1}\bigr).
\end{equation}
\end{proposition}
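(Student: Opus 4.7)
The plan is to exploit the tensor-product structure of the operator by a Loomis--Whitney-type slicing in the variables \((x_1, x_2)\) and \((x_3, x_4)\). The key observation is that for a function \(\phi \in C^\infty_c(\R^2)\) one has the pointwise bound
\[
 \abs{\phi(y_1, y_2)} = \Bigabs{\int_{-\infty}^{y_1}\int_{-\infty}^{y_2} \partial_1\partial_2 \phi(t_1, t_2)\,dt_1\,dt_2} \le \norm{\partial_1\partial_2 \phi}_{L^1(\R^2)}.
\]

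First I would apply this bound on each two-dimensional slice. Fixing \((x_3, x_4) \in \R^2\) and applying the above to \(\phi = u(\cdot, \cdot, x_3, x_4)\), we get
\[
 \abs{u(x_1, x_2, x_3, x_4)} \le F(x_3, x_4), \quad \text{where } F(x_3, x_4) = \norm{\partial_1\partial_2 u(\cdot, \cdot, x_3, x_4)}_{L^1(\R^2)}.
\]
Symmetrically, fixing \((x_1, x_2)\) and using \(\partial_3\partial_4\), we get
\[
 \abs{u(x_1, x_2, x_3, x_4)} \le G(x_1, x_2), \quad \text{where } G(x_1, x_2) = \norm{\partial_3\partial_4 u(x_1, x_2, \cdot, \cdot)}_{L^1(\R^2)}.
\]

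Next I would multiply the two estimates to obtain
\[
 \abs{u(x_1, x_2, x_3, x_4)}^2 \le F(x_3, x_4)\, G(x_1, x_2),
\]
and integrate over \(\R^4\). Tonelli's theorem gives
\[
 \int_{\R^4} \abs{u}^2 \le \int_{\R^2} F(x_3, x_4)\,dx_3\,dx_4 \cdot \int_{\R^2} G(x_1, x_2)\,dx_1\,dx_2 = \norm{\partial_1\partial_2 u}_{L^1(\R^4)}\,\norm{\partial_3\partial_4 u}_{L^1(\R^4)}.
\]
Finally, the arithmetic--geometric mean inequality \(2\sqrt{ab} \le a+b\) yields
\[
 \norm{u}_{L^2} \le \sqrt{\norm{\partial_1\partial_2 u}_{L^1}\, \norm{\partial_3\partial_4 u}_{L^1}} \le \tfrac{1}{2}\bigl(\norm{\partial_1\partial_2 u}_{L^1} + \norm{\partial_3\partial_4 u}_{L^1}\bigr),
\]
which is \eqref{ineqStrangerHigherSobolev}.

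There is no significant obstacle here: the argument is a direct two-variable Loomis--Whitney slicing made possible by the separation of variables in the two second-order operators \(\partial_1\partial_2\) and \(\partial_3\partial_4\). The only point worth emphasizing is that, although \(A(D)\) fails to be elliptic, the product structure compensates exactly at the critical exponent \(L^{n/(n-2)} = L^2\) in dimension \(n = 4\).
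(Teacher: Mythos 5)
Your proof is correct and follows essentially the same route as the paper: the same two-dimensional slicing giving pointwise bounds $\abs{u(x)}\le F(x_3,x_4)$ and $\abs{u(x)}\le G(x_1,x_2)$, multiplication, integration over $\R^4$, and then an arithmetic--geometric (Young-type) inequality to pass from the product of $L^1$ norms to their sum. The paper compresses the Tonelli step and cites ``Young's inequality'' where you invoke AM--GM, but these are cosmetic differences.
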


\begin{proof}
The proof is a direct adaptation of a proof of E.\thinspace Gagliardo \cite{Gagliardo}*{teorema 5.I} and L.\thinspace Nirenberg \cite{Nirenberg1959}*{128--129}. The proof goes as follows: for every \(x=(x_1, x_2, x_3, x_4) \in \R^4\)
\[
 u(x)=\int_{-\infty}^{x_1}\int_{\infty}^{x_2} \partial_1 \partial_2 u(s, t, x_3, x_4)\,dt\,ds.
\]
Hence, for every \(x \in \R^4\), 
\[
 \abs{u(x)} \le \int_{\R^2} \abs{\partial_1 \partial_2 u(s, t, x_3, x_4)}\,ds\,dt.
\]
Similarly, one has for every \(x \in \R^4\), 
\[
 \abs{u(x)} \le \int_{\R^2} \abs{\partial_3 \partial_4 u(x_1, x_2, s, t)}\,ds\,dt.
\]
Therefore, for every \(x\in \R^4\)
\[
 \abs{u(x)}^2 \le \int_{\R^2} \abs{\partial_1 \partial_2 u(s, t, x_3, x_4)}\,ds\,dt \int_{\R^2} \abs{\partial_3 \partial_4 u(x_1, x_2, s, t)}\,ds\,dt.
\]
The integration of this inequality with respect to \(x\) on \(\R^4\) and the application of Young's inequality yields \eqref{ineqStrangerHigherSobolev}.
\end{proof}

We have thus an operator which is not elliptic. By proposition~\ref{propositionStrangeOperatorNoSobolev}, there is no first-order Sobolev inequality, but there is a second-order Sobolev inequality of proposition~\ref{propositionStrangeOperatorSobolev}.

\subsection{Necessity of the cancellation} 
\label{sectionNecessaryCancellation}

The necessity of the cancellation property for Sobolev-type estimates is given by the following

\begin{proposition}
\label{propositionCancelingNecessary}
Assume that \(A(D)\) is an elliptic homogeneous linear differential operator of order \(k\) on \(\R^n\) from \(V\) to \(E\). Let \(\ell \in \{1, \dotsc, k-1\}\) be such that \(\ell > k - n\). If for every \(u \in C^\infty_c(\R^n; V)\)
\[
  \norm{D^{\ell}u}_{L^\frac{n}{n-(k-\ell)}} \le C \norm{A(D) u}_{L^1},
\]
then 
\(A(D)\) is canceling.
\end{proposition}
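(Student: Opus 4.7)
The plan is to argue by contraposition. Assume $A(D)$ is not canceling, so there exists $e \in E \setminus \{0\}$ with $e \in A(\xi)[V]$ for every $\xi \in \R^n \setminus \{0\}$, and exhibit a sequence $(u_R)_{R > 0}$ in $C^\infty_c(\R^n; V)$ violating the assumed estimate. The underlying idea is the classical endpoint obstruction: build $u_R$ as a large truncation of a fundamental solution of $A(D)$ associated with $e$; the $(k-n)$-homogeneity of this fundamental solution at infinity produces a logarithmic divergence of $\norm{D^\ell u_R}_{L^{n/(n-(k-\ell))}}$, while $\norm{A(D) u_R}_{L^1}$ remains bounded.

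The fundamental solution is built from the Moore--Penrose pseudo-inverse $A(\xi)^+ = (A(\xi)^* A(\xi))^{-1} A(\xi)^*$, which ellipticity makes smooth and $(-k)$-homogeneous on $\R^n \setminus \{0\}$. Since $e \in A(\xi)[V]$, one has $A(\xi) A(\xi)^+ e = e$; let $G$ be a tempered distribution with $\widehat{G}(\xi) = (2\pi i)^{-k} A(\xi)^+ e$ on $\R^n \setminus \{0\}$, so that $A(D) G = \delta_0 e$. By elliptic regularity $G \in C^\infty(\R^n \setminus \{0\}; V)$, and one can choose $G$ to be positively $(k-n)$-homogeneous on $\R^n \setminus \{0\}$ (with a possible additional logarithmic factor when $k - n \in \N$, which does not affect the estimates below). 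Fix $\eta \in C^\infty_c(\R^n)$ with $\int \eta = 1$ and $\zeta \in C^\infty_c(\R^n)$ equal to $1$ near the origin, and set $u_0 = (1-\zeta)(\eta * G) \in C^\infty(\R^n; V)$. For $R > 0$ large and a cutoff $\chi_R(x) = \chi(x/R)$ with $\chi \in C^\infty_c(\R^n)$ equal to $1$ on $B_1$ and supported in $B_2$, define $u_R = \chi_R u_0 \in C^\infty_c(\R^n; V)$.

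It remains to do Leibniz bookkeeping. For $R$ large enough that $\chi_R \equiv 1$ on $\supp \eta \cup \supp \zeta$, the expansion $A(D) u_R = A(D) u_0 + \sum_{|\alpha|+|\beta|=k,\, |\alpha| \ge 1} c_{\alpha \beta}\,(D^\alpha \chi_R) \otimes (D^\beta u_0)$ produces a fixed compactly supported piece $A(D) u_0$ plus commutator terms supported in the annulus $\{|x| \sim R\}$. On that annulus, the bounds $|D^\alpha \chi_R| \lesssim R^{-|\alpha|}$ and the asymptotic $|D^\beta u_0(x)| \lesssim |x|^{k-n-|\beta|}$ yield a pointwise bound $R^{-n}$, giving $L^1$-norm $O(1)$ after integration over volume $\sim R^n$; hence $\norm{A(D) u_R}_{L^1} = O(1)$. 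The analogous expansion of $D^\ell u_R$ gives a main term $\chi_R D^\ell u_0$ plus commutator terms of pointwise size $R^{k-\ell-n}$, whose $L^{n/(n-(k-\ell))}$-norm is only $O(1)$. For the main term, the asymptotic $D^\ell u_0(x) = D^\ell G(x) + O(|x|^{k-\ell-n-1})$ together with the $(k-\ell-n)$-homogeneity of $D^\ell G$ and the identity $|D^\ell G(x)|^{n/(n-(k-\ell))} = |x|^{-n} \bigl|D^\ell G(x/|x|)\bigr|^{n/(n-(k-\ell))}$ yield
\[
  \int_{1 \le |x| \le R} |D^\ell u_0|^{n/(n-(k-\ell))}\,dx \gtrsim \log R \cdot \int_{S^{n-1}} |D^\ell G|^{n/(n-(k-\ell))}.
\]

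The main obstacle is to show the angular integral is strictly positive, i.e., $D^\ell G \not\equiv 0$ on $\R^n \setminus \{0\}$. If it did vanish identically, then the connectedness of $\R^n \setminus \{0\}$ (valid since $n \ge 2$; the case $n = 1$ is vacuous because $\ell > k - n$ and $\ell \le k - 1$ cannot coexist) would force $G$ to be a polynomial of degree at most $\ell - 1$; positive $(k-n)$-homogeneity would then make $G$ a homogeneous polynomial of degree $k - n$. But any polynomial has Fourier transform supported at the origin, contradicting that $\widehat{G}(\xi) = (2\pi i)^{-k} A(\xi)^+ e$ is smooth and non-vanishing on $\R^n \setminus \{0\}$ (non-vanishing because $A(\xi)^+ e = 0$ would imply $e = A(\xi) A(\xi)^+ e = 0$). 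Hence $\norm{D^\ell u_R}_{L^{n/(n-(k-\ell))}} \gtrsim (\log R)^{(n-(k-\ell))/n} \to \infty$ as $R \to \infty$, while $\norm{A(D) u_R}_{L^1}$ stays bounded, contradicting the assumed estimate and proving $A(D)$ is canceling.
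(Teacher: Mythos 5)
Your proof is correct in its main lines, and it takes a genuinely different route from the paper's.

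The paper mollifies in Fourier space: with $\psi_\lambda$ a rescaled mollifier, it sets $\widehat{u_\lambda}=(2\pi i)^{-k}(\widehat{\psi_\lambda}-\widehat{\psi_{1/\lambda}})U$, so that $A(D)u_\lambda=(\psi_\lambda-\psi_{1/\lambda})e$ is trivially bounded in $L^1$, and it represents $\partial^\alpha u_\lambda$ by a Calder\'on-type integral $\int_{1/\lambda}^\lambda w^\alpha(tx)\,t^{n-(k-|\alpha|)}\,\frac{dt}{t}$, whose pointwise limit $u^\alpha$ is $(k-\ell-n)$--homogeneous. The paper then bifurcates: if some $u^\alpha\neq 0$, Fatou's lemma gives the contradiction; if all $u^\alpha\equiv 0$, a dominated-convergence and integration-by-parts argument forces $e=0$. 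You instead work in physical space: you build a distributional fundamental solution $G$ with $A(D)G=\delta_0 e$, truncate it at scale $R$, and control commutators by Leibniz. The crucial non-vanishing $D^\ell G\not\equiv 0$ you obtain by a clean observation that is not in the paper: if $D^\ell G\equiv 0$ on the connected set $\R^n\setminus\{0\}$ then $G$ is a polynomial, but $\widehat{G}$ agrees with the nonzero smooth function $(2\pi i)^{-k}A(\xi)^+e$ off the origin while a polynomial's transform is supported at $\{0\}$. This replaces the paper's two-branch weak-convergence argument with a single algebraic contradiction, which is conceptually tighter. The price is that the existence, homogeneity, and regularity of $G$ away from the origin---particularly in the resonant range $k-n\in\N_0$, where the homogeneous extension of the degree-$(-k)$ multiplier is not unique and the inverse transform may acquire a $\log$---are invoked rather than proved, and the asymptotic $D^\ell u_0(x)=D^\ell G(x)+O(|x|^{k-\ell-n-1})$ at infinity is asserted rather than derived. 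None of these are gaps in the argument's logic, but they are precisely the technical burdens the paper's Fourier-side construction is designed to sidestep, since $(\widehat{\psi_\lambda}-\widehat{\psi_{1/\lambda}})U$ is automatically an honest Schwartz function for every $\lambda>2$. One small point you should make explicit: when $G$ has the logarithmic term $p(x)\log|x|$ with $p$ a homogeneous polynomial of degree $k-n$, the hypothesis $\ell>k-n$ guarantees that the Leibniz terms $(D^j p)(D^{\ell-j}\log|x|)$ all have $j\le k-n<\ell$, so $D^\ell G$ is still exactly $(k-\ell-n)$--homogeneous and the $\log$ never reaches the angular integral; and if $D^\ell G\equiv 0$ then $G$ being a polynomial already excludes the $\log$ term, so the homogeneity argument applies without circularity.
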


In this statement the operator is assumed to be elliptic, which is not necessary for the estimate when \(\ell< k-1\). We do not have any examples that show that this assumption is necessary:
\begin{openproblem}
Does proposition~\ref{propositionCancelingNecessary} remain true without the ellipticity assumption?
\end{openproblem}

\begin{remark}
\label{remarkCancelingLinfty}
Proposition~\ref{propositionCancelingNecessary} does not cover the case \(\ell=n-k\). 
In the case \(n = 1\), for every \(k \in \N_*\) the homogeneous linear differential operator \(A(D)\) defined for \(\xi \in \R\) by \(A(\xi)=\xi^k\) is elliptic but not canceling. Nonetheless, for every \(u \in C^\infty_c(\R)\), 
\[
 \norm{u^{(k-1)}}_{L^\infty} \le \norm{u^{(k)}}_{L^1}.
\]
We did not find higher-dimensional examples.
\end{remark}

\begin{proof}[Proof of proposition~\ref{propositionCancelingNecessary}]
Let \(e \in \bigcap_{\xi \in \R^n \setminus \{0\}} A(\xi)[V]\). Since for every \(\xi \in \R^n \setminus \{0\}\),  \(A(\xi)\) is one-to-one,
the function \(U : \R^n \setminus \{0\} \to V\) defined for each \(\xi \in \R^n \setminus \{0\}\)
\[
  A(\xi)\bigl[U(\xi)\bigr]=e
\]
is smooth. This can be seen by the implicit function theorem or by the formula \(U(\xi)=\bigl(A(\xi)^*\circ A(\xi)\bigr)^{-1} \circ A(\xi)^*[e]\). Since \(A(\xi)\) is homogeneous of degree \(k\), for every \(\xi \in \R^n \setminus \{0\}\) and \(t \in \R \setminus\{0\}\), 
\[
  U(t\xi)=t^{-k} U(\xi).
\]
Choose now a function \(\psi \in C^\infty( \R^n) \) such that \(\supp \widehat{\psi} \subset B_2(0)\) and \(\widehat{\psi}=1\) on \(B_{1/2}(0)\). For \(\lambda > 0\), define \(\psi_\lambda : \R^n \to \R\) for \(x \in \R^n\) by \(\psi_\lambda(x)=\lambda^n \psi(\lambda x)\), and define \(u_\lambda : \R^n \to V\) such that for each \(\xi \in \R^n\), 
\[
  \widehat{u_\lambda}(\xi)=(2\pi i)^{-k}\bigl(\widehat{\psi_\lambda}(\xi)-\widehat{\psi_{1/\lambda}}(\xi)\bigr)U(\xi).
\]
If \(\lambda > 2\), \(\supp (\widehat{\psi_\lambda}-\widehat{\psi_{1/\lambda}}) \subset B_{2\lambda}(0) \setminus B_{1/(2\lambda)}(0)\). Hence, \(u_\lambda\) is well-defined and belongs to the Schwartz class of fast decaying smooth functions.

We now claim that for every \(\lambda > 2\),
\begin{equation}
\label{inequlambda}
  \norm{D^{\ell} u_\lambda }_{L^\frac{n}{n - (k - \ell)}} \le C \norm{A(D) u_\lambda}_{L^1}.
\end{equation}
To see this, consider a function \(\varphi \in C^\infty_c(\R^n)\) such that \(\varphi=1\) on \(B_1(0)\). For \(R > 0\), define \(\varphi_R : \R^n \to \R\) for \(x \in \R^n\) by \(\varphi_R(x)=\varphi(x/R)\). By hypothesis, for every \(R > 0\), 
\[
  \norm{D^{\ell} (\varphi_R u_\lambda) }_{L^\frac{n}{n - (k - \ell)}} \le C \norm{A(D) (\varphi_R u_\lambda)}_{L^1}.
\]
By letting \(R \to \infty\), we obtain \eqref{inequlambda}.

Now, by definition of \(u_\lambda\) and the choice of \(e\), one has
\begin{equation}
\label{eqADulambda}
  A(D)u_\lambda=(\psi_\lambda-\psi_{1/\lambda})e,
\end{equation}
and therefore, 
\begin{equation}
\label{inequlambdaL1}
  \norm{A(D)u_\lambda}_{L^1} \le 2 \norm{\psi}_{L^1}. 
\end{equation}
On the other hand, for every \(\alpha \in \N^n\), \(\lambda >2\) and \(x \in \R^n\)
\[
  \partial^\alpha u_\lambda(x)
  =\int_{\R^n} e^{2\pi i\xi \cdot x}  \bigl(\widehat{\psi}(\xi/\lambda)-\widehat{\psi}(\lambda \xi)\bigr)
\,(2\pi i)^{\abs{\alpha}-k}\xi^\alpha U(\xi)\,d\xi.
\]
By writing for every \(\xi \in \R^n\)
\[
 \widehat{\psi}(\xi/\lambda)-\widehat{\psi}(\lambda \xi)=-\int_{1/\lambda}^\lambda \frac{\xi}{t} \cdot \nabla \widehat \psi \Bigl(\frac{\xi}{t}\Bigr)\,\frac{dt}{t},
\]
we have, by Fubini's theorem, 
\[
  \partial^\alpha u_\lambda(x)=\int_{1/\lambda}^\lambda w^\alpha(tx) t^{n-(k-\abs{\alpha})} \frac{dt}{t}.
\]
where \(w^\alpha : \R^n \to V\) is defined for \(x \in \R^n\) by 
\[
w^\alpha(x)=-(2\pi i)^{\abs{\alpha}-k}\int_{\R^n} e^{2\pi i \xi \cdot x}  \xi \cdot \nabla \widehat \psi (\xi)\,\xi^\alpha U(\xi)\, d\xi,  
\]
Since \(w_\alpha\) decays fast at infinity, if \(\abs{\alpha} > k - n \) and \(x \in \R^n \setminus \{0\}\), the limit
\begin{equation}
\label{defualpha}
  u^\alpha(x)=\lim_{\lambda \to \infty} \partial^\alpha u_\lambda(x)=\int_{0}^\infty w^\alpha(tx) t^{n-(k-\abs{\alpha})} \frac{dt}{t}
\end{equation}
is well-defined.

Assume by contradiction that there exists \(\alpha \in \N^n\) such that \(\abs{\alpha}=\ell\) and \(u_\alpha \not \equiv 0\). For every \(x \in \R^n\) and \(t > 0\), one has by \eqref{defualpha}
\begin{equation}
\label{equalphahomog}
 u^\alpha(t x)=\frac{u^\alpha(x)}{t^{n-(k-\ell)}}.
\end{equation}
Since \(u^\alpha \not \equiv 0\), this implies that
\[
\label{equalphainfty}
\int_{\R^n} \abs{u^\alpha}^\frac{n}{n-(k-\ell)}=\infty.
\]
By Fatou's lemma we have
\[
  \liminf_{\lambda \to \infty} \int_{\R^n} \abs{\partial^\alpha u_\lambda}^\frac{n}{n-(k-\ell)} \ge \int_{\R^n} \abs{u^\alpha}^\frac{n}{n-(k-\ell)}=\infty,
\]
in contradiction with \eqref{inequlambda} and \eqref{inequlambdaL1}.

We have thus \(u^\alpha\equiv 0\) for every \(\alpha \in \N^n\) with \(\abs{\alpha}=\ell\).
For each \( x \in \R^n \setminus \{0\}\), \(\lambda > 0\) and \(\alpha \in \N^n\) with \(\abs{\alpha}=\ell\), we have by \eqref{defualpha}
\[
\begin{split}
 \abs{\partial^\alpha u_\lambda(x)} &\le \int_0^\infty \abs{w^\alpha(tx)} t^{n-(k-\ell)}\frac{dt}{t}\\
&=\frac{1}{\abs{x}^{n-(k-\ell)}} \int_0^\infty \Bigl\lvert w^\alpha\Bigl(t\frac{x}{\abs{x}}\Bigr)\Bigr\rvert t^{n-(k-\ell)}\frac{dt}{t},
\end{split}
\]
and therefore 
\[
 \abs{\partial^\alpha u_\lambda(x)} \le \frac{C}{\abs{x}^{n-(k-\ell)}}.
\]
By Lebesgue's dominated convergence theorem, \(D^\ell u_\lambda \to 0 \) in \(L^1_{\mathrm{loc}}(\R^n)\).
Taking now \(\zeta \in C^\infty_c(\R^n)\), we obtain by a suitable integration by parts that 
\begin{equation}
\label{limzetae}
 \lim_{\lambda \to \infty} \int_{\R^n} \zeta A(D)u_\lambda=0.
\end{equation}
On the other hand, in view of \eqref{eqADulambda}, one has
\[
  \int_{\R^n} \zeta A(D)u_\lambda=\int_{\R^n} (\psi_\lambda-\psi_{1/\lambda})\zeta e,
\]
whence
\[  
  \lim_{\lambda \to \infty} \int_{\R^n} \zeta A(D)u_\lambda=\zeta(0) e. 
\]
Since this should hold for every \(\zeta \in C^\infty_c(\R^n)\), this implies in view of \eqref{limzetae} that \(e=0\).
\end{proof}

\section{Characterization and examples of canceling operators}

\label{sectionExamples}

\subsection{Analytic characterization of elliptic canceling operators}
We have seen in proposition~\ref{propositionEquivalentCocanceling} that the cocanceling condition is equivalent with a property of the vector fields that are in its kernel. For elliptic canceling operators, the same methods allow to characterize canceling operators by properties of the image of vector fields.

\begin{proposition}
\label{propositionEquivalentCanceling}
Let \(A(D)\) be a homogeneous differential operator of order \(k\) on \(\R^n\) from \(V\) to \(E\).
If \(A(D)\) is elliptic, the following are equivalent
\begin{enumerate}[(i)]
 \item\label{itCanceling} \(A(D)\) is canceling, 
 \item\label{itCancelingVanishL1} for every \(u \in L^1_{\mathrm{loc}}(\R^n; V)\), if \(A(D)u \in L^1(\R^n; E)\), then 
\[\int_{\R^n} A(D)u = 0,
\]
\item\label{itCancelingVanishCinfty} for every \(u \in C^\infty(\R^n; V)\), if 
\(\supp A(D)u\) is compact, then 
\[
  \int_{\R^n} A(D) u = 0,
\]
\end{enumerate}
\end{proposition}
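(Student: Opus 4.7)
The plan is to deduce the three equivalences from the corresponding characterization of cocanceling operators proved in proposition~\ref{propositionEquivalentCocanceling}, using as a bridge the compatibility operator \(L(D)\) provided by proposition~\ref{propositionConstructionL}. Two observations are central: since \(\ker L(\xi) = A(\xi)[V]\) for every \(\xi \in \R^n \setminus \{0\}\), the operator \(A(D)\) is canceling if and only if \(L(D)\) is cocanceling; and since \(L(\xi) \circ A(\xi) = 0\) for every \(\xi \in \R^n\), the composition \(L(D) \circ A(D)\) has zero symbol and therefore vanishes identically as a linear differential operator.

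The implication \eqref{itCanceling}\(\,\Rightarrow\,\)\eqref{itCancelingVanishL1} is then straightforward: given \(u \in L^1_{\mathrm{loc}}(\R^n; V)\) with \(f := A(D)u \in L^1(\R^n; E)\), the second observation yields \(L(D) f = 0\) in the sense of distributions, the first makes \(L(D)\) cocanceling, and proposition~\ref{propositionEquivalentCocanceling}\,\eqref{itVanishL1} applied to \(L(D)\) delivers \(\int_{\R^n} A(D) u = \int_{\R^n} f = 0\). The implication \eqref{itCancelingVanishL1}\(\,\Rightarrow\,\)\eqref{itCancelingVanishCinfty} is immediate, since any \(u \in C^\infty\) with \(\supp A(D)u\) compact trivially satisfies \(u \in L^1_{\mathrm{loc}}\) and \(A(D)u \in L^1\).

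The main point is the reverse implication \eqref{itCancelingVanishCinfty}\(\,\Rightarrow\,\)\eqref{itCanceling}, which I would prove by contraposition. Given a nonzero \(e \in \bigcap_{\xi \in \R^n \setminus \{0\}} A(\xi)[V]\) and \(\psi \in C^\infty_c(\R^n)\) with \(\int_{\R^n} \psi = 1\), the goal is to produce \(u \in C^\infty(\R^n; V)\) solving \(A(D) u = \psi e\); then \(\supp A(D)u\) is compact while \(\int_{\R^n} A(D) u = e \ne 0\) contradicts \eqref{itCancelingVanishCinfty}. A natural candidate is \(u := G * A(D)^*(\psi e)\), where \(G\) is a fundamental solution of the elliptic operator \(A(D)^* \circ A(D)\) of order \(2k\) on \(V\)-valued functions, smooth outside the origin by classical elliptic theory; since \(A(D)^*(\psi e) \in C^\infty_c(\R^n; V)\), convolution with \(G\) yields \(u \in C^\infty(\R^n; V)\). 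By construction \(A(D)^* \bigl(A(D) u - \psi e\bigr) = 0\), and the stronger identity \(A(D) u = \psi e\) can be verified on the Fourier side: it reduces to checking that the orthogonal projector \(A(\xi)\, (A(\xi)^* A(\xi))^{-1} A(\xi)^*\) onto \(A(\xi)[V]\) fixes \(\widehat{\psi}(\xi)\, e\), which holds because \(e \in A(\xi)[V]\) for every \(\xi \ne 0\).

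The main obstacle I expect is the careful construction of \(G\) when \(2k \ge n\), where the fundamental solution of an elliptic operator typically requires logarithmic or polynomial corrections, and the verification that such corrections neither spoil the smoothness of \(u\) nor invalidate the Fourier identification above; these can be handled by standard tempered-distribution computations, but require some care.
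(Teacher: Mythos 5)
Your handling of \eqref{itCanceling}\(\Rightarrow\)\eqref{itCancelingVanishL1} and \eqref{itCancelingVanishL1}\(\Rightarrow\)\eqref{itCancelingVanishCinfty} is exactly the paper's, and the real divergence is in \eqref{itCancelingVanishCinfty}\(\Rightarrow\)\eqref{itCanceling}. You solve \(A(D)u = \psi e\) directly by convolving \(A(D)^*(\psi e)\) with a fundamental solution \(G\) of \(A(D)^*A(D)\), which amounts to inverting the symbol \((A(\xi)^*A(\xi))^{-1}A(\xi)^*\) of homogeneity \(-k\); this is not locally integrable near \(\xi=0\) once \(k\ge n\), which is precisely the regularization issue you flag. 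The paper avoids the singularity altogether: for \(f\in C^\infty_c\) with \(L(D)f=0\) it first builds the prospective \(k\)-th derivative \(w\) through \(\widehat{w}(\xi)[v_1,\dotsc,v_k]=(A(\xi)^*A(\xi))^{-1}A(\xi)^*\bigl[(\xi\cdot v_1)\dotsm(\xi\cdot v_k)\widehat{f}(\xi)\bigr]\), a bounded (degree-\(0\)) multiplier applied to a Schwartz function and hence smooth, and only then recovers \(u\) by the Taylor-type integral \(u(x)=\int_0^1 w(tx)[x,\dotsc,x]\,\frac{(1-t)^{k-1}}{(k-1)!}\,dt\), yielding \(D^ku=w\), \(A(D)u=f\), \(\int f=0\), and cocancellation of \(L(D)\) via proposition~\ref{propositionEquivalentCocanceling}\eqref{itVanishCinfty}. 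Your route does go through, but the deferred \textquotedblleft tempered-distribution computations\textquotedblright{} are a genuine step: one must observe that any tempered \(\widehat{G}\) differs from \((2\pi i)^{-2k}(A(\xi)^*A(\xi))^{-1}\) by a distribution \(T_0\) supported at \(\{0\}\) of order strictly less than \(2k\) (because \(A(\xi)^*A(\xi)\) vanishes to order \(2k\) at the origin and annihilates such corrections), and that the resulting correction \((2\pi i)^{2k}A(\xi)\,T_0\,A(\xi)^*\widehat{\psi}(\xi)e\) to \(\widehat{A(D)u}-\widehat{\psi}e\) vanishes, since the two factors \(A(\xi)\), \(A(\xi)^*\), each vanishing to order \(k\), drive the order of \(T_0\) below zero. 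So your argument is correct once that order-counting is carried out; the paper's \textquotedblleft derivative-first, then Taylor-integrate\textquotedblright{} device buys you an exposition with no regularization at all, and a formulation that directly certifies cocancellation of \(L(D)\) for arbitrary compactly supported \(f\) rather than only for \(\psi e\).
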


If for every \(j \in \{0, \dotsc, k - 1\}\), 
\[\lim_{\abs{x} \to \infty} \abs{D^j u (x)} \abs{x}^{n - j} = 0 \] then 
\[
  \int_{\R^n} A(D) u = 0,
\]
for any operator \(A(D)\). It is thus crucial that no decay assumption is imposed on \(u\) in \eqref{itCancelingVanishCinfty}.

\begin{proof}
First note that since \(A(D)\) is elliptic, proposition~\ref{propositionConstructionL} applies and yields a homogeneous differential operator \(L(D)\) on \(\R^n\) from \(E\) to \(F\). This operator \(L(D)\) is cocanceling if and only if \(A(D)\) is canceling.

Let us now prove that \eqref{itCanceling} implies \eqref{itCancelingVanishL1}. Let \(u \in L^1_{\mathrm{loc}}(\R^n; E)\) be such that \(A(D)u \in L^1(\R^n; E)\). By construction of \(L(D)\) 
\[
  L(D) \bigl(A(D)u\bigr)=0.
\]
Since by assumption \(L(D)\) is cocanceling, in view of proposition~\ref{propositionEquivalentCocanceling} \eqref{itVanishL1},
\[
  \int_{\R^n} A(D)u=0.
\]

It is clear that \eqref{itCancelingVanishL1} implies \eqref{itCancelingVanishCinfty}. Assume now that \eqref{itCancelingVanishCinfty} holds. Let \(f \in C^\infty_c(\R^n; E)\) be such that \(L(D)f=0\). 

This latter condition allows to define \(w : \R^n \to \Lin^k(\R^n; V)\) such that its Fourier transform \(\widehat{w}\) satisfies for every \(\xi \in \R^n\)
\[
  A(\xi)\bigl[\widehat{w}(\xi)[v_1, \dotsc, v_k] \bigr]=(\xi \cdot v_1) \dotsm (\xi \cdot v_k) \widehat{f}(\xi).
\]
Since \(A(D)\) is elliptic and \(f\) is smooth, \(w\) is smooth. 
Write now
\[
 u (x) = \int_0^1 w (t x) [x, \dotsc, x] \frac{(1 - t)^{k - 1}}{(k - 1)!} \, d t,
\]
so that \(D^k u = w\) and hence \(A(D)u=f\).
By assumption we have that 
\[
  \int_{\R^n}f=\int_{\R^n} A(D)u=0.
\]
In view of  proposition~\ref{propositionEquivalentCocanceling}, we have proved that \(L(D)\) is cocanceling. This allows to conclude that \(A(D)\) is canceling.
\end{proof}

\subsection{Equivalence between cancellation and the Bourgain--Brezis algebraic condition}

J.\thinspace Bourgain and H.\thinspace Brezis \cite{BB2007}*{theorem 25}
have proved the estimate 
\[
  \norm{u}_{L^\frac{n}{n-1}} \le C \norm{A (D) u}_{L^1}
\]
for an elliptic operator \(A(D)\) under the structural condition that there exist a basis \(e_1, \dotsc, e_\ell\) of \(E\) and vectors \(\xi_1, \dotsc, \xi_\ell \in \R^n \setminus\{0\}\) such that for every \(i \in \{1, \dotsc, \ell\}\), 
\(
  e_i \perp A(\xi_i)[V]
\)
\footnote{The statement and the proof \cite{BB2007}*{theorem 25} are written for \(\dim V=n\);  the arguments adapt straightforwardly when \(\dim V \ne n\).}. This condition is in fact equivalent with the cancellation condition

\begin{proposition}
\label{propBBCanceling}
Let \(A(D)\) be a homogeneous differential operator on \(\R^n\) from \(V\) to \(E\).
The operator \(A(D)\) is canceling if and only if 
\[
  \operatorname{span} \bigcup_{\xi \in \R^n \setminus \{0\}} \bigl(A(\xi)[V]^\perp\bigr)=E.
\]
\end{proposition}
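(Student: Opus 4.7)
The plan is to reduce the statement to a purely linear-algebraic identity on a finite-dimensional inner product space, namely that for any family $\{W_\xi\}_{\xi \in \R^n \setminus \{0\}}$ of linear subspaces of $E$,
\[
\Bigl(\bigcap_{\xi \in \R^n \setminus \{0\}} W_\xi \Bigr)^\perp = \operatorname{span} \bigcup_{\xi \in \R^n \setminus \{0\}} W_\xi^\perp.
\]
Once this identity is established with $W_\xi = A(\xi)[V]$, the proposition follows immediately: $A(D)$ is canceling precisely when $\bigcap_\xi W_\xi = \{0\}$, which in a finite-dimensional space is equivalent to its orthogonal complement being all of $E$, and by the identity this is the same as $\operatorname{span} \bigcup_\xi W_\xi^\perp = E$.

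First I would verify the easy inclusion $\operatorname{span} \bigcup_\xi W_\xi^\perp \subseteq (\bigcap_\xi W_\xi)^\perp$: for each fixed $\eta$, the inclusion $\bigcap_\xi W_\xi \subseteq W_\eta$ gives, after taking orthogonal complements, $W_\eta^\perp \subseteq (\bigcap_\xi W_\xi)^\perp$, and the right-hand side is a subspace, so it contains the span of the union.

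The converse inclusion is the main point and will be handled via orthogonal decomposition. Setting $U = \operatorname{span} \bigcup_\xi W_\xi^\perp$, I would decompose an arbitrary $v \in (\bigcap_\xi W_\xi)^\perp$ as $v = v_U + v_{U^\perp}$ with $v_U \in U$ and $v_{U^\perp} \in U^\perp$. Because $v_{U^\perp}$ is orthogonal to every $W_\xi^\perp$, and since $(W_\xi^\perp)^\perp = W_\xi$ in finite dimension, one gets $v_{U^\perp} \in \bigcap_\xi W_\xi$. Pairing with $v \in (\bigcap_\xi W_\xi)^\perp$ yields $\abs{v_{U^\perp}}^2 = v \cdot v_{U^\perp} = 0$, so $v = v_U \in U$.

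No serious obstacle is expected here; the argument is elementary and uses only that $E$ is finite-dimensional and equipped with an inner product (which is the context in which $A(\xi)[V]^\perp$ is defined). The only point to be mildly careful about is that the family $\{W_\xi\}$ is indexed by the infinite set $\R^n \setminus \{0\}$, but this causes no trouble because intersections of arbitrary families of subspaces are subspaces and the identity above holds for arbitrary families.
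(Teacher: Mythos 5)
Your proof is correct and is essentially the same as the paper's: the paper verifies the equivalent identity $\bigcap_\xi A(\xi)[V] = \bigl(\bigcup_\xi A(\xi)[V]^\perp\bigr)^\perp$ directly from the definition of orthogonal complement, whereas you prove the dual identity $\bigl(\bigcap_\xi A(\xi)[V]\bigr)^\perp = \operatorname{span}\bigcup_\xi A(\xi)[V]^\perp$ via an orthogonal decomposition — the two are related by biduality $(X^\perp)^\perp = X$ and lead to the same conclusion. Your route for the harder inclusion is slightly longer than the paper's one-line observation, but both are elementary finite-dimensional linear algebra and both are valid.
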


\begin{proof}
For every \(\xi \in \R^n\), since \(A(\xi)\) is a linear operator, one has \(e \in A(\xi)[V]\) if and only if for every \(f \in A(\xi)[V]^\perp\), \(f \cdot e=0\). Therefore, \(e \in \bigcap_{\xi \in \R^n \setminus\{0\}} A(\xi)[V]\)
if and only if for every \(\xi \in \R^n \setminus \{0\}\) and for every \(f \in A(\xi)[V]^\perp \), \( f \cdot e = 0\). We have thus
\[
  \bigcap_{\xi \in \R^n\setminus\{0\}} A(\xi)[V]=\Bigl(\bigcup_{\xi \in \R^n \setminus \{0\} } \bigl(A(\xi)[V]^\perp\bigr)\Bigr)^\perp.
\]
Hence, one has that 
\[
  \bigcap_{\xi \in \R^n\setminus\{0\}} A(\xi)[V]=\{0\}
\]
if and only if 
\[
\operatorname{span} \biggl(\bigcup_{\xi \in \R^n \setminus \{0\}} \bigl(A(\xi)[V]^\perp\bigr)\biggr)=E,
\]
which is the statement that we wanted to prove.
\end{proof}

\begin{remark}
The same argument shows that a linear homogeneous differential operator \(L(D)\) on \(\R^n\) from \(V\) to \(E\) is cocanceling if and only if
\[
  \operatorname{span} \biggl(\bigcup_{\xi \in \R^n \setminus\{0\}} \bigl(\ker L(\xi)^\perp\bigr)\biggr)=E, 
\]
or equivalently
\[
  \operatorname{span} \biggl(\bigcup_{\xi \in \R^n \setminus\{0\}}  L(\xi)^*[V]\biggr)=E.
\]
\end{remark}

\subsection{First-order canceling operators}
We shall now give explicit examples of canceling operators.

\subsubsection{Gradient operator}
The simplest example is the gradient operator:

\begin{proposition}
\label{propositionGradientCanceling}
Let \(A(D)\) be the homogeneous linear differential operator of order \(1\) on \(\R^n\) from \(\R\) to \(\R^n\) defined for \(\xi \in \R^n\) by 
\[
  A(\xi)=\xi.
\] 
The operator \(A(D)\) is elliptic.\\
The operator \(A(D)\) is canceling if and only if \(n \ge 2\).
\end{proposition}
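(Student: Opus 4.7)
The plan is to verify directly the two defining conditions, since the operator $A(D)$ is simple enough that no heavy machinery is needed.

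For ellipticity, I would observe that for each $\xi \in \R^n \setminus \{0\}$ the map $A(\xi) : \R \to \R^n$ sends $t \mapsto t\xi$, which is one-to-one precisely because $\xi \neq 0$. This holds regardless of the dimension $n$, so $A(D)$ is elliptic in every dimension.

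For the cancellation condition, the key observation is that $A(\xi)[\R] = \R\xi$, the line in $\R^n$ spanned by $\xi$. Thus
\[
  \bigcap_{\xi \in \R^n \setminus \{0\}} A(\xi)[\R] = \bigcap_{\xi \in \R^n \setminus \{0\}} \R\xi.
\]
If $n = 1$, every nonzero $\xi$ gives $\R\xi = \R$, so the intersection equals $\R \neq \{0\}$, and $A(D)$ fails to be canceling. If $n \ge 2$, I would pick any two linearly independent vectors $\xi_1, \xi_2 \in \R^n$; then $\R\xi_1 \cap \R\xi_2 = \{0\}$, so a fortiori the full intersection is $\{0\}$, and $A(D)$ is canceling.

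There is no real obstacle here; the whole argument is a one-line unpacking of the definitions, and the dimension dichotomy is forced by the geometric fact that the lines through the origin in $\R^n$ have trivial common intersection exactly when $n \ge 2$.
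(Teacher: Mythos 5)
Your proof is correct and follows essentially the same route as the paper: both observe that $A(\xi)[\R] = \R\xi$ and reduce the cancellation question to whether the lines $\R\xi$ have trivial common intersection, which happens exactly when $n \ge 2$. You additionally spell out the ellipticity, which the paper leaves implicit; otherwise the arguments coincide.
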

\begin{proof}
For every \(\xi \in \R^n\) \(A(\xi)[\R]=\R \xi\), therefore \(\bigcap_{\xi \in \R^n \setminus \{0\}} A(\xi)[\R]=\{0\}\) if \(n \ge 2\) and \(\bigcap_{\xi \in \R^n \setminus \{0\}} A(\xi)[\R]=\R\) if \(n=1\).
\end{proof}

\subsubsection{Symmetric derivative}
The symmetric derivative operator appearing in the Korn--Sobolev inequality \eqref{KornSobolevL1} is also an elliptic canceling operator.
Recall that \(S^2\R^n\) is the space of symmetric bilinear forms on \(\R^n\).

\begin{proposition}
\label{propositionCancelingSymmetricDerivative}
Let \(A(D)\) be the homogeneous linear differential operator of order \(1\) on \(\R^n\) from \(\R^n\) to \(S^2\R^n\) defined for \(\xi \in \R^n\), \(v \in \R^n\) and \(w, z \in \R^n\) by 
\[
  A(\xi)[v](w, z)=\frac{1}{2}\bigl((\xi \cdot w)(v \cdot z)+(\xi \cdot z)(v \cdot w) \bigr).
\] 
The operator \(A(D)\) is elliptic.\\
The operator \(A(D)\) is canceling if and only if \(n \ge 2\).
\end{proposition}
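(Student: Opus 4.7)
The plan is to verify ellipticity and cancellation by direct evaluation on well-chosen vectors, in the same style as the treatment of the Saint--Venant operator in proposition~\ref{propositionSaintVenantCocanceling}. No appeal to proposition~\ref{propBBCanceling} is needed, though the argument can equivalently be phrased as showing that the rank-one symmetric forms \(w \otimes w\) all lie in \(A(\xi)[\R^n]^\perp\) for a suitable \(\xi\).

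For ellipticity, I fix \(\xi \in \R^n \setminus \{0\}\) and suppose that \(A(\xi)[v]=0\). Testing at \((w,z)=(\xi,\xi)\) yields \(\abs{\xi}^2 (v\cdot\xi)=0\), hence \(v\cdot\xi=0\); substituting this into the identity \(\bigl(A(\xi)[v]\bigr)(\xi,z)=\frac{1}{2}(\abs{\xi}^2 (v\cdot z)+(\xi\cdot z)(v\cdot\xi))=0\) gives \(v\cdot z=0\) for every \(z\in\R^n\), so \(v=0\). This proves that \(A(\xi)\) is one-to-one whenever \(\xi\ne 0\).

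If \(n=1\), then \(S^2\R \simeq \R\) and \(A(\xi)[v]=\xi v\) is scalar multiplication, so \(A(\xi)[\R]=\R\) for every \(\xi\ne 0\), and \(\bigcap_{\xi\in\R\setminus\{0\}} A(\xi)[\R]=\R\ne\{0\}\); the operator is not canceling. Assume now \(n\ge 2\) and let \(e\in\bigcap_{\xi\in\R^n\setminus\{0\}} A(\xi)[\R^n]\). Given any \(w\in\R^n\setminus\{0\}\), the hypothesis \(n\ge 2\) provides some \(\xi\in\R^n\setminus\{0\}\) with \(\xi\cdot w=0\); by assumption there exists \(v\in\R^n\) with \(e=A(\xi)[v]\), and then
\[
 e(w,w)=\bigl(A(\xi)[v]\bigr)(w,w)=(\xi\cdot w)(v\cdot w)=0.
\]
Thus the quadratic form \(w\mapsto e(w,w)\) vanishes on \(\R^n\setminus\{0\}\), hence on all of \(\R^n\), and by polarization the symmetric bilinear form \(e\) itself vanishes. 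This shows that \(A(D)\) is canceling whenever \(n\ge 2\).

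The potential obstacle is really the cancellation step, but it dissolves once one spots the diagonal-vanishing trick: it suffices to evaluate \(A(\xi)[v](w,w)=(\xi\cdot w)(v\cdot w)\) and arrange \(\xi\perp w\), which is possible precisely because \(n\ge 2\). Polarization then finishes the argument for free.
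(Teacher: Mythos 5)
Your proof is correct and follows essentially the same approach as the paper: both use the identity \(A(\xi)[v](w,w)=(\xi\cdot w)(v\cdot w)\), and for cancellation choose \(\xi\perp w\) (possible when \(n\ge 2\)) and conclude by polarization. Your ellipticity step is a minor computational variant (testing at \((\xi,\xi)\) then \((\xi,z)\) rather than invoking that the \(w\) with \(\xi\cdot w\ne 0\) span), and you add the explicit \(n=1\) check that the paper leaves implicit; both are fine.
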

\begin{proof}
The operator \(A(D)\) is elliptic: assume that \(v \in \R^n\) and \(\xi \in \R^n \setminus \{0\}\) are such that for every \(w, z \in \R^n\), \(A(\xi)[v](w, z)=0\). In particular, for every \( w \in \R^n \), 
\[
 A(\xi)[v](w, w)=(\xi \cdot w)(v \cdot w).
\]
We have thus for every \(w \in \R^n\) such that \( \xi \cdot w \ne 0\), \(v \cdot w=0\). Since such \(w \) span \(\R^n\), we have proved that \(A(D)\) is elliptic.

Now we prove that \(A(D)\) is canceling when \(n \ge 2\). Let \(e \in  \bigcap_{\xi \in \R^n \setminus \{0\}} A(\xi)[\R^n]\). For every \(w \in \R^n\), choosing \(\xi \in \R^n \setminus \{0\}\) such that \(\xi \cdot w = 0\), one has for every \(v \in \R^n\), 
\[
  A(\xi)[v](w, w)=0
\]
and therefore, \(e(w, w)=0\). Since \(w \in \R^n\) is arbitrary and \(e\) is symmetric, we conclude that \(e=0\).
\end{proof}

The application of theorem~\ref{theoremOrderk} yields the Korn--Sobolev inequality \eqref{KornSobolevL1}. The application of theorem~\ref{theoremOrderkFractional} would yield fractional Korn--Sobolev inequalities.

\medbreak

This example has a counterpart for the symmetric (or inner) derivative of a symmetric multilinear forms
\cite{Sharafutdinov}*{p.\thinspace 25}

\begin{proposition}
Let \(A(D)\) be the homogeneous linear differential operator of order \(1\) on \(\R^n\) from \(S^k\R^n\) to \(S^{k+1}\R^n\) defined for \(v\in S^k \R^n\), \(\xi \in \R^n\) and \(w_1, \dotsc, w_{k+1} \in \R^n\) by 
\begin{multline*}
  A(\xi)[v](w_1, \dotsc, w_{k+1})\\=\tfrac{1}{k+1} \bigl((\xi \cdot w_1)v(w_2, \dotsc, w_{k+1})
+(\xi \cdot w_2)v(w_1, w_3, \dotsc, w_{k+1})\\
+\dotsb+(\xi \cdot w_{k+1})v(w_1, \dotsc, w_{k})\bigr)
\end{multline*}
The operator \(A(D)\) is elliptic.\\
The operator \(A(D)\) is canceling if and only if \(n \ge 2\).
\end{proposition}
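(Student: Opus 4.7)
The plan is to mirror closely the proof of proposition~\ref{propositionCancelingSymmetricDerivative}, which is the case $k=1$. The key observation that makes everything work is the diagonal evaluation: when $w_1 = \dotsb = w_{k+1} = w$, every term in the symmetrizing sum collapses to the same expression, so
\[
  A(\xi)[v](w,\dotsc,w) = (\xi \cdot w)\, v(w,\dotsc,w).
\]
This identity, combined with the standard fact that a symmetric multilinear form is determined by its values on the diagonal (polarization), is all one really needs.

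For ellipticity, I would fix $\xi \in \R^n \setminus\{0\}$ and $v \in S^k\R^n$ with $A(\xi)[v] = 0$. The diagonal identity gives $(\xi\cdot w)\,v(w,\dotsc,w) = 0$ for every $w \in \R^n$. Hence $v(w,\dotsc,w) = 0$ on the open half-spaces where $\xi \cdot w \ne 0$, and by continuity on all of $\R^n$. Since a symmetric $k$-linear form is recovered from its diagonal values, $v = 0$.

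For the canceling condition when $n \ge 2$, I would take $e \in \bigcap_{\xi \in \R^n \setminus\{0\}} A(\xi)[S^k\R^n]$. Fix an arbitrary $w \in \R^n$. Because $n \ge 2$, the hyperplane $w^\perp$ is nontrivial, so one can pick $\xi \in \R^n \setminus\{0\}$ with $\xi \cdot w = 0$. By the diagonal identity, $A(\xi)[v](w,\dotsc,w) = 0$ for every $v \in S^k\R^n$, and thus $e(w,\dotsc,w) = 0$. Since $w$ was arbitrary, polarization forces $e = 0$.

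For the converse direction, when $n = 1$ the spaces $S^k\R$ and $S^{k+1}\R$ are both one-dimensional and $A(\xi)$ is multiplication by $\xi$; hence $A(1)[S^k\R] = S^{k+1}\R$, so the intersection defining the canceling condition is the whole target, and $A(D)$ fails to be canceling. The only step that requires even a moment's thought is the diagonal identity (a simple expansion) and the appeal to polarization; there is no real obstacle, and the argument is a line-by-line generalization of the $k=1$ case already treated in proposition~\ref{propositionCancelingSymmetricDerivative}.
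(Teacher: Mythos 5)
Your proof is correct and follows the paper's argument essentially line by line: the diagonal identity $A(\xi)[v](w,\dotsc,w)=(\xi\cdot w)\,v(w,\dotsc,w)$, the choice $\xi\perp w$ for cancellation when $n\ge 2$, and polarization to recover $v$ or $e$ from diagonal values. You merely make explicit two points the paper leaves implicit (the continuity/density step in the ellipticity argument and the $n=1$ converse), but the approach is identical to the sketch in the paper.
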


\begin{proof}
For the ellipticity,  assume that \(v \in S^k\R^n\) and \(\xi \in \R^n \setminus \{0\}\) are such that for every \(w_1, \dotsc, w_{k+1} \in \R^n\), \(L(\xi)[v](w_1, \dotsc, w_{k+1})=0\). In particular, for every \(\xi \in \R^n\), 
\[
 A(\xi)[v](w, \dotsc, w)=(\xi \cdot w) v(w, \dotsc, w)=0.
\]
Therefore, for every \(w \in \R^n\) such that \(\xi \cdot w \ne 0\), \(v(w, \dotsc, w)=0\). This implies that \(v=0\).

Now we prove that \(A(D)\) is canceling when \(n \ge 2\). Let \(e \in  \bigcap_{\xi \in \R^n \setminus \{0\}} A(\xi)[S^k\R^n]\). For every \(w \in \R^n\), choosing \(\xi \in \R^n \setminus \{0\}\) such that \(\xi \cdot w = 0\), one has for every \(v \in S^k\R^n\), 
\[
  A(\xi)[v](w, w, \dotsc, w) = 0
\]
and therefore, \(e(w, \dotsc, w)=0\). Since \(w \in \R^n\) is arbitrary and \(e\) is symmetric, we conclude that \(e=0\). 
\end{proof}

\subsubsection{Exterior derivative}
We now turn to the study of canceling operators appearing in the framework of exterior differential calculus.

\begin{proposition}
\label{propositionCancelingHodge}
Let \(\ell \in \{1, \dotsc, n-1\}\) and let \(A(D)=(d, d^*)\) be the homogeneous linear differential operator of order \(1\) on \(\R^n\) from \(\bigwedge^\ell \R^n\) to \(\bigwedge^{\ell+1}\R^n \times \bigwedge^{\ell-1}\R^n\) such that for every \(\xi \in \R^n\) and \(v \in \bigwedge^\ell \R^n\)
\[
  A(\xi)[v]=\bigl(\xi \wedge v, * (\xi \wedge * v)\bigr).
\]
The operator \(A(D)\) is elliptic.\\
The operator \(A(D)\) is canceling if and only if \(\ell \in \{2, \dotsc, n-2\}\).
\end{proposition}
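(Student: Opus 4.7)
The plan is to prove ellipticity unconditionally, then handle the cancellation equivalence by showing sufficiency when \(2 \le \ell \le n-2\) and producing explicit obstructions when \(\ell = 1\) or \(\ell = n - 1\).

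For ellipticity, I would fix \(\xi \in \R^n \setminus \{0\}\) and decompose every \(v \in \bigwedge^\ell \R^n\) as \(v = \xi \wedge a + b\) with \(\iota_\xi a = \iota_\xi b = 0\). A short calculation gives \(\xi \wedge v = \xi \wedge b\) and \(\iota_\xi v = \abs{\xi}^2 a\), so the identity \(*(\xi \wedge *v) = \pm \iota_\xi v\) reduces \(A(\xi)[v] = 0\) to \(a = 0\) together with \(b \in \bigwedge^\ell \xi^\perp\) and \(\xi \wedge b = 0\). The latter forces \(b = 0\), since the wedge map \(\bigwedge^\ell \xi^\perp \to \bigwedge^{\ell+1}\R^n\) is injective when \(\ell \le n - 1\).

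For cancellation when \(2 \le \ell \le n - 2\), starting from an arbitrary pair \((\alpha, \beta) \in \bigcap_{\xi \ne 0} A(\xi)[\bigwedge^\ell \R^n]\) together with a preimage \(v_\xi\) for each \(\xi\), one extracts two identities valid for all \(\xi \in \R^n \setminus \{0\}\): from the first component \(\xi \wedge \alpha = \xi \wedge \xi \wedge v_\xi = 0\), and from the second \(\xi \wedge (*\beta) = 0\) (using \(** = \pm \id\) and \(\xi \wedge \xi = 0\)). The argument then closes with two applications of Proposition~\ref{propositionCocancelingd}: to \(\alpha \in \bigwedge^{\ell+1}\R^n\), which is admissible because \(\ell + 1 \le n - 1\) and forces \(\alpha = 0\); and to \(*\beta \in \bigwedge^{n-\ell+1}\R^n\), which is admissible because \(n - \ell + 1 \le n - 1\), i.e.\ \(\ell \ge 2\), and forces \(\beta = 0\).

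For the two remaining cases I would exhibit explicit nonzero elements common to all the images \(A(\xi)\bigl[\bigwedge^\ell \R^n\bigr]\). When \(\ell = 1\) the choice \(v_\xi = \xi / \abs{\xi}^2\) yields \(\xi \wedge v_\xi = 0\) and \(\iota_\xi v_\xi = 1\), placing \((0, 1)\) in every \(A(\xi)[\R^n]\). The case \(\ell = n - 1\) is Hodge-dual: taking \(v_\xi = *\xi / \abs{\xi}^2\) gives \(\iota_\xi v_\xi = 0\) because \(\xi \wedge \xi = 0\), and \(\xi \wedge v_\xi = e_1 \wedge \dotsm \wedge e_n\) via the standard identity \(\xi \wedge *\xi = \abs{\xi}^2\, e_1 \wedge \dotsm \wedge e_n\), so the pair \((e_1 \wedge \dotsm \wedge e_n, 0)\) lies in every image. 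The only mildly delicate aspect will be the sign bookkeeping in \(** = \pm \id\) and \(*(\xi \wedge *v) = \pm \iota_\xi v\), but since Proposition~\ref{propositionCocancelingd} is insensitive to nonzero scalar multiples, the signs do not affect any conclusion.
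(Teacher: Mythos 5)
Your proof is correct and a bit more careful than the paper's. For ellipticity, the paper simply invokes the Lagrange identity \(\abs{v}^2\abs{\xi}^2 = \abs{\xi \wedge v}^2 + \abs{*(\xi \wedge *v)}^2\) in one line; your decomposition \(v = \xi \wedge a + b\) with \(\iota_\xi a = \iota_\xi b = 0\) yields the same conclusion by a short computation and is perhaps more self-contained, at the price of a few more lines. For the cancellation when \(2 \le \ell \le n-2\) the two arguments coincide: both reduce to the observations \(\xi \wedge \alpha = 0\) and \(\xi \wedge *\beta = 0\) for every \(\xi\), then quote Proposition~\ref{propositionCocancelingd}. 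Where you go genuinely beyond the written proof is the \emph{only if} direction: the paper's proof stops after the sufficiency and does not exhibit nonzero elements of \(\bigcap_{\xi\ne 0}A(\xi)[\bigwedge^\ell\R^n]\) in the boundary cases. Your choices \(v_\xi = \xi/\abs{\xi}^2\) for \(\ell=1\) (producing the pair \((0,1)\), up to sign) and its Hodge dual \(v_\xi = *\xi/\abs{\xi}^2\) for \(\ell=n-1\) (producing \((\omega,0)\) with \(\omega\) the volume form) are exactly the right witnesses, and your remark that the sign ambiguities from \(**=\pm\id\) are immaterial is well taken since a one-dimensional nonzero subspace is all that is needed to defeat cancellation. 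So the approach is essentially the paper's for the main direction, a different but equivalent route for ellipticity, and a welcome completion of the converse that the paper leaves implicit.
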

\begin{proof}
The ellipticity follows from the Lagrange identity \( \abs{v}^2 \abs{\xi}^2 = \abs{\xi \wedge v}^2+\abs{*(\xi \wedge * v)}^2\).

For the cancellation, if \((f, g) \in \bigcap_{\xi \in \R^n \setminus \{0\}} A(\xi)[\bigwedge^\ell \R^n]\), one should have for every \(\xi \in \R^n\), \(\xi \wedge f  =0\) and \(\xi \wedge * g=0\). Since \(2 \le \ell \le n-2\), this implies that \(f=0\) and \(g=0\). 
\end{proof}

As a consequence of proposition~\ref{propositionCancelingHodge}, one gets the Hodge--Sobolev inequality \eqref{HodgeSobolevL1}.

\subsubsection{Directional derivatives of vector fields}

One has also a general construction to control a vector field by directional derivatives of some components

\begin{proposition}
\label{propositionDerivativesSpan}
Let \(m=\dim V\).
Consider a family of \(n+m-1\) \(n\)--wise linearly independent vectors \((\eta_i)_{1 \le i \le n+m-1}\) of\/ \(\R^n\) and \(m\)--wise linearly independent vectors \((w_i)_{1 \le i \le n+m-1}\) of \(V\) and define for \(\xi \in \R^n\) and \(v \in V\), 
\[
  A(\xi)[v]=\bigl( (\eta_1 \cdot \xi)(w_1 \cdot v), \dotsc, (\eta_{m+n-1} \cdot \xi)(w_{m+n-1} \cdot v)\bigr).
\]
The operator \(A(D)\) is elliptic.\\
The operator \(A(D)\) is canceling if and only if \(n \ge 2\).
\end{proposition}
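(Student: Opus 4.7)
My plan is to treat the three assertions separately, exploiting the fact that each coordinate of \(A(\xi)[v]\) factors as a product of two scalars, together with a pigeonhole-style count based on the \(n\)-wise and \(m\)-wise linear independence hypotheses. The key algebraic observation is that \(n\)-wise linear independence in \(\R^n\) means any \(n\) of the \(\eta_i\) form a basis of \(\R^n\), so at most \(n-1\) of them can lie inside any proper subspace; similarly, any \(m\) of the \(w_i\) form a basis of \(V\).

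For ellipticity, I would fix \(\xi \in \R^n \setminus \{0\}\) and \(v \in V\) with \(A(\xi)[v] = 0\). Each of the \(n+m-1\) components vanishing forces, for every index \(i\), either \(\eta_i \cdot \xi = 0\) or \(w_i \cdot v = 0\). The hyperplane \(\xi^\perp\) has dimension \(n-1\), so it can contain at most \(n-1\) of the \(\eta_i\); otherwise \(n\) of them would lie in a proper subspace and be linearly dependent, contradicting the \(n\)-wise independence. Therefore at least \((n+m-1) - (n-1) = m\) indices \(i\) satisfy \(w_i \cdot v = 0\), and since any \(m\) of the \(w_i\) form a basis of \(V\), this forces \(v = 0\).

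For the cancellation in the case \(n \ge 2\), let \(e = (e_1, \dotsc, e_{n+m-1}) \in \bigcap_{\xi \in \R^n \setminus \{0\}} A(\xi)[V]\) and fix an index \(i_0\). Because \(n \ge 2\) and \(\eta_{i_0} \ne 0\), the hyperplane \(\eta_{i_0}^\perp\) contains a nonzero vector, so I can pick \(\xi \in \R^n \setminus \{0\}\) with \(\eta_{i_0} \cdot \xi = 0\). Any \(v \in V\) with \(A(\xi)[v] = e\) then yields \(e_{i_0} = (\eta_{i_0} \cdot \xi)(w_{i_0} \cdot v) = 0\), so \(e = 0\). Conversely, in the case \(n = 1\) one has \(n + m - 1 = m = \dim V\), and the ellipticity just proved provides an injective linear map \(A(\xi) \colon V \to \R^m\) between spaces of the same dimension; hence \(A(\xi)\) is a bijection for every \(\xi \in \R \setminus \{0\}\), so \(\bigcap_{\xi \ne 0} A(\xi)[V] = \R^m \ne \{0\}\) and \(A(D)\) is not canceling.

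I do not anticipate any real obstacle; the only point requiring care is the pigeonhole step in the ellipticity argument, namely correctly turning \(n\)-wise linear independence into a bound on the number of \(\eta_i\) contained in \(\xi^\perp\). Everything else reduces to routine linear algebra.
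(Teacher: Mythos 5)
Your proof is correct and follows essentially the same route as the paper's: the same pigeonhole count (at most $n-1$ of the $\eta_i$ can lie in $\xi^\perp$) for ellipticity, and the same choice of $\xi \perp \eta_{i_0}$ to kill each coordinate $e_{i_0}$ for cancellation. You add one useful detail the paper leaves implicit, namely the verification that when $n=1$ the map $A(\xi)$ is a bijection onto $\R^m$ so the operator genuinely fails to be canceling, which completes the ``only if'' direction of the equivalence.
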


This construction is due to D.\thinspace G.\thinspace de Figueiredo \cite{F1963}*{inequality (K)} in the framework of \(L^2\) estimates. It was introduced by the author in the context of generalized Korn--Sobolev inequalities \cite{BB2007}*{remark 16}.

\begin{proof}[Proof of proposition~\ref{propositionDerivativesSpan}]
Let us first show that \(v\) is elliptic. 
Let \(\xi \in \R^n \setminus \{0\}\) and \(v\in V\) be such that \(A(\xi)[v]=0\).
Since the vectors \((\eta_i)_{1 \le i \le n+m-1}\) are \(n\)--wise linearly independent, there is an increasing sequence of indices \(i_1, \dotsc, i_m\) such that for every \(j \in \{1, \dotsc, m\}\),
\(
 (\eta_{i_j} \cdot \xi)\ne 0.
\)
Therefore, for every \(j \in \{1, \dotsc, m\}\), 
\(
 (w_{i_j} \cdot v) = 0.
\)
Since the vectors \(w_{j_1}, \dotsc, w_{j_m}\) form a basis of \(V\), we conclude that \(v=0\).

For the cancellation, assume that \(e \in \bigcap_{\xi \in \R^n \setminus \{0\}} A(\xi)[V]\). By taking \(\xi_i \in \R^n \setminus\{0\}\) such that \(\xi_i \cdot \eta_i=0\), we have that for every \(e \in A(\xi_i)[V]\), \(e_i=0\). Since \(e \in \bigcap_{i=1}^{n+m-1} A(\xi_i)[V]\), we conclude that \(e=0\). We have thus proved that \(A(D)\) is canceling.
\end{proof}

By theorem~\ref{theoremOrderk}, this yields

\begin{proposition}
Let \(m=\dim V\). Consider a family of \(n+m-1\) \(n\)--wise linearly independent vectors \((\eta_i)_{1 \le i \le n+m-1}\) of\/ \(\R^n\) and \(m\)--wise linearly independent vectors \((w_i)_{1 \le i \le n+m-1}\) of \(V\). For every \(u \in C^\infty_c(\R^n; V)\)
\[
  \norm{u}_{L^{n/(n-1)}} \le C \sum_{i=1}^{m+n-1}\norm{w_i \cdot Du[\eta_i]}_{L^1}. 
\]
\end{proposition}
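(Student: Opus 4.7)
The plan is to recognize this proposition as an immediate application of Theorem~\ref{theoremOrderk} (the sufficiency direction, i.e.\ Proposition~\ref{propositionSufficient}) to the first-order homogeneous differential operator $A(D)$ on $\R^n$ from $V$ to $\R^{n+m-1}$ whose symbol is exactly the one constructed in Proposition~\ref{propositionDerivativesSpan}, namely
\[
  A(\xi)[v]=\bigl( (\eta_1 \cdot \xi)(w_1 \cdot v), \dotsc, (\eta_{n+m-1} \cdot \xi)(w_{n+m-1} \cdot v)\bigr).
\]
Indeed, for $u \in C^\infty_c(\R^n; V)$ one has $A(D)u = \bigl(w_i \cdot Du[\eta_i]\bigr)_{1 \le i \le n+m-1}$, so the right-hand side of the claimed inequality is (equivalent to) $\norm{A(D)u}_{L^1}$, while the left-hand side is $\norm{D^{k-1}u}_{L^{n/(n-1)}}$ with $k=1$.

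First, I invoke Proposition~\ref{propositionDerivativesSpan}, which has just been proved: it states that this operator $A(D)$ is elliptic, and that it is canceling (assuming the ambient hypothesis $n \ge 2$ under which the problem is nontrivial). Next, I apply Theorem~\ref{theoremOrderk} (equivalently Proposition~\ref{propositionSufficient}) with $k=1$ to obtain the estimate
\[
  \norm{u}_{L^{n/(n-1)}} \le C \norm{A(D) u}_{L^1}.
\]
Finally, I use the equivalence of norms on the finite-dimensional target space $\R^{n+m-1}$ to bound $\norm{A(D)u}_{L^1}$ by a constant multiple of $\sum_{i=1}^{n+m-1} \norm{w_i \cdot Du[\eta_i]}_{L^1}$, yielding the desired inequality.

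There is essentially no obstacle: the substantive work (ellipticity, cancellation, and the Sobolev estimate from these two algebraic conditions) has already been done in Proposition~\ref{propositionDerivativesSpan} and Theorem~\ref{theoremOrderk}. The only point worth noting is that the case $n=1$ is excluded, since there Proposition~\ref{propositionDerivativesSpan} shows $A(D)$ fails to be canceling and, by the necessity direction of Theorem~\ref{theoremOrderk} (Proposition~\ref{propositionCancelingNecessary}), the inequality cannot hold; this is consistent with the statement, whose right-hand side involves only $n+m-1$ directional derivatives, which is strictly fewer than what would be required for a one-dimensional Sobolev-type bound to have any chance when $n=1$.
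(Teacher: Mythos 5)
Your proof is correct and follows exactly the paper's route: the paper introduces the proposition with the single phrase ``By theorem~\ref{theoremOrderk}, this yields,'' which is precisely the application of Proposition~\ref{propositionDerivativesSpan} (ellipticity and cancellation) together with Theorem~\ref{theoremOrderk} that you spell out. Your remark about the implicit hypothesis $n \ge 2$ is a sensible clarification but not a deviation.
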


\subsubsection{Minimizing the number of components of the derivative}

The previous example shows that a vector field \(u \in C^\infty_c(\R^n; \R^m)\) can be estimated by \(n+m-1\) directional derivatives of components. One may wonder whether it is possible to use less derivatives \cite{BB2007}*{open problem 3}.

For a lower bound we have
\begin{proposition}
\label{propositionBoundsDimE}
Assume that \(A(D)\) is a differential operator of order \(1\) on \(\R^n\) from \(V\) to \(E\) that is canceling and elliptic. Then 
\(\dim E > \dim V\) and \(\dim E \ge n\).
\end{proposition}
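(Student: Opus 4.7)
The plan is to establish both lower bounds by short linear-algebra arguments directly from the definitions, under the tacit assumption that $V\ne\{0\}$ (otherwise the zero operator is vacuously both elliptic and canceling and the stated bounds fail).

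For the bound $\dim E\ge n$, I would pick any nonzero $v\in V$ and observe that, since $A(D)$ is first-order and homogeneous, the assignment
\[
  T_v\colon \R^n\to E,\qquad \xi\mapsto A(\xi)[v],
\]
is linear in $\xi$. Ellipticity says that $A(\xi)$ is injective whenever $\xi\ne 0$, so if $T_v(\xi)=A(\xi)[v]=0$ for some $\xi\ne 0$ then $v\in\ker A(\xi)=\{0\}$, contradicting the choice of $v$. Hence $T_v$ is injective and its image, which is a subspace of $E$, has dimension $n$, yielding $\dim E\ge n$.

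For the strict inequality $\dim E>\dim V$, I would argue by contradiction. Suppose that $\dim E=\dim V$. For every $\xi\ne 0$, ellipticity makes $A(\xi)\colon V\to E$ injective; between two spaces of equal finite dimension, injectivity is equivalent to surjectivity, so $A(\xi)[V]=E$ for each such $\xi$. Intersecting over $\xi\in\R^n\setminus\{0\}$ then gives
\[
  \bigcap_{\xi\in\R^n\setminus\{0\}} A(\xi)[V]=E,
\]
and the cancellation hypothesis forces this to equal $\{0\}$. Thus $E=V=\{0\}$, contradicting $V\ne\{0\}$.

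There is no serious obstacle: both steps are one-line applications of injectivity combined with a dimension count. The only subtlety is the degenerate case $V=\{0\}$, which is implicitly excluded, and the observation that ellipticity of a first-order homogeneous operator is sharp enough to make the directional map $\xi\mapsto A(\xi)[v]$ (for fixed $v\ne 0$) itself injective on $\R^n$.
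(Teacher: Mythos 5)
Your proof is correct and follows essentially the same two arguments as the paper: the $\dim E \ge n$ bound is obtained identically by showing $\xi \mapsto A(\xi)[v]$ is injective for fixed $v \ne 0$, and the $\dim E > \dim V$ bound differs only in framing (the paper argues directly that cancellation produces some $\xi$ with $A(\xi)[V]$ a proper subspace, while you contrapose by assuming equal dimensions and deriving $\bigcap_\xi A(\xi)[V] = E$), which is logically the same dimension count. You also correctly note the tacit nondegeneracy assumption $V \ne \{0\}$ that the paper leaves implicit.
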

\begin{proof}
Since \(A(D)\) is canceling, there exists \(\xi \in \R^n\) such that \(A(\xi)[V] \ne E\). Since \(A(D)\) is elliptic, this implies that \(\dim E > \dim V\).

Next fix \(v \in V\) and consider the linear map \(T : \R^n \to E\) defined by \(T(\xi)=A[\xi](v)\). Since \(A(D)\) is elliptic, \(\ker T=\{0\}\). Therefore, \(n=\dim T(\R^n) \le \dim E\).
\end{proof}

If we define \(l^*(n, m)\) to be the minimal dimension \(l\) such that there is a canceling elliptic linear differential operator on \(\R^n\) from \(\R^m\) to \( \R^l\), we have by propositions~\ref{propositionDerivativesSpan} and \ref{propositionBoundsDimE}
\begin{equation}
\label{boundtriviallstar}
 \max(n, m+1)  \le l^*(n, m) \le m+n-1.
\end{equation}
In particular, the construction of proposition~\ref{propositionDerivativesSpan} is optimal if \(m=1\) (the scalar case) or \(n=2\).

The Hodge--Sobolev estimate for \(n=4\) and \(\ell=2\) uses less components: one has \(V=\bigwedge^2 \R^4\), and thus \(m=\dim V = 6\) whereas \(E=\bigwedge^1 \R^4 \times \bigwedge^3 \R^4\), so that \(\dim E = 8 < 9 = n+m-1\). We have thus \(7 \le l^*(4, 6) \le 8\).
In all the other cases the Hodge--Sobolev inequality does not allow to estimate with less components than \(n + \dim V - 1\). Indeed, one has
\(
  \dim \Bigl( \bigwedge^{\ell-1} \R^n \times \bigwedge^{\ell+1} \R^n\Bigr)=\dim \bigwedge^\ell \R^n + \binom{n-1}{\ell-2}+\binom{n-1}{\ell+1}.
\)
The condition to have the Hodge--Sobolev inequality is \(2 \le \ell \le n-2\). If we want to use less components than \(n+m-1\), we need to have
\(
  \binom{n-1}{\ell-2}+\binom{n-1}{\ell+1} < n-1.
\)
This is only possible if \(n=4\) and \(\ell=2\).

The Korn--Sobolev uses \(\dim E= \frac{n(n+1)}{2}\) components, which is always larger or equal to \(2n-1\).

There are now specific constructions that work in some cases. Let \(\quat \simeq \R^4\) be the algebra of quaternions

\begin{proposition}
Let \(A(D)\) the homogeneous linear differential operator of order \(1\) on \(\R^4\) from 
\(V=\{x \in \quat \st \realpart x = 0\}\) to \(\quat\), defined for every \(v \in V\) and \(\xi \in \R^4\) by
\[
  A(\xi)[v]=\xi v.
\]
The operator \(A(D)\) is canceling and elliptic.
\end{proposition}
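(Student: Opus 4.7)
The plan is to exploit the fact that $\mathbf{H}$ is a normed division algebra; both ellipticity and cancellation will reduce to elementary facts about left multiplication by a nonzero quaternion.

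For ellipticity, identify $\xi \in \mathbf{R}^4 \setminus \{0\}$ with the corresponding nonzero quaternion. If $v \in V$ satisfies $A(\xi)[v] = \xi v = 0$, then multiplying on the left by $\xi^{-1} = \bar\xi/\abs{\xi}^2$ yields $v = 0$; hence $A(\xi)\colon V \to \quat$ is one-to-one.

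For cancellation, the key observation is that left quaternionic multiplication $L_\xi \colon \quat \to \quat$, $v \mapsto \xi v$, is a similarity, i.e.\ $\abs{\xi v} = \abs{\xi} \abs{v}$ for every $v \in \quat$. This follows from the multiplicativity of the quaternionic norm. Consequently $L_\xi$ preserves orthogonality with respect to the standard Euclidean inner product on $\quat \simeq \mathbf{R}^4$. Since $V$ is precisely the orthogonal complement of $1$ in $\quat$ and $\xi = L_\xi(1)$, we obtain
\[
A(\xi)[V] = L_\xi(\{1\}^\perp) = \xi^\perp.
\]
The cancellation condition then reads $\bigcap_{\xi \in \mathbf{R}^4 \setminus \{0\}} \xi^\perp = \{0\}$, which is immediate: any nonzero $e \in \quat$ fails to lie in $e^\perp$.

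There is no genuine obstacle; the only point that requires a moment of thought is the identification $A(\xi)[V] = \xi^\perp$, for which one must invoke the conformality of $L_\xi$ coming from the multiplicativity of the quaternionic norm. Once this is in place, both the ellipticity and the cancellation are one-line verifications.
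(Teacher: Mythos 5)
Your proof is correct and is substantively the same as the paper's. The paper verifies cancellation by observing that $\realpart(\xi^{-1}A(\xi)[v]) = \realpart v = 0$, which is just the inner-product identity $\realpart(\xi^{-1}e) = \langle \xi, e\rangle/\abs{\xi}^2$ in disguise, so the paper's condition $\realpart(\xi^{-1}e)=0$ is precisely your $e \in \xi^\perp$; you simply make the geometric picture $A(\xi)[V]=\xi^\perp$ explicit via the conformality of left multiplication, while the paper leaves it in the algebraic form.
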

Alternatively, writing \(\xi=(\xi_1, \xi'')\in \R^1 \times \R^3\), one has \(A(\xi)[v]=(-\xi'' \cdot v, \xi_1 v+\xi'' \times v)\).

\begin{proof}
Since the multiplication of quaternions is invertible, \(A(D)\) is elliptic. 

For the cancellation property, for every \(v \in V\) and \(\xi \in \R^4 \setminus \{0\}\), one has
\(
  \realpart \bigl(\xi^{-1} A(\xi)[v]\bigr)=\realpart v =0.
\)
Hence, if \(e \in A(\xi)[V]\) for every \(\xi \in \R^4 \setminus \{0\}\), one has for every \(\xi \in \R^4 \setminus \{0\}\), 
\(
  \realpart \xi^{-1} e = 0,
\)
whence \(e=0\).
\end{proof}

This gives the estimate for every \(u \in C^\infty_c(\R^4; \R^3)\), 
\[
  \norm{u}_{L^{4/3}} \le C\bigl(\norm{\Div'' u}_{L^1}+\norm{\partial_1u + \Curl'' u}_{L^1}\bigr),
\]
where \(\Div'' u\) and \(\Curl'' u\) denote respectively the divergence and the curl with respect to the last three variables.

The previous example shows that \(l^*(4, 3)=4\). The same construction can be made with the octonions and allows to control a vector field from \(\R^8\) to \(\R^7\), showing that \(l^*(8, 7) = 8\). If the same construction is made with complex numbers instead of the octonions, one recovers the limiting Sobolev inequality for scalar functions on \(\R^2\).

The previous construction also allows to show again that \(l^*(4, 6) \le 8\) and to show that that \(l^*(8, 7j) \le 8j\); which is an improvement of the previous bound \eqref{boundtriviallstar} when \(j \le 6\).

\subsection{Second-order estimates}
We now give example of second-order canceling elliptic operators and of application of theorem~\ref{theoremOrderk}.

\subsubsection{Splitting the Laplace--Beltrami operator}
The Laplacian is never a canceling operator. However, when split into two parts, it might become canceling

\begin{proposition}
Let \(n \ge 2\), \(\ell \in \{1, \dotsc, n-1\}\) and let \(A(D)\) be the homogeneous linear differential operator of order \(2\) from \(\bigwedge^\ell \R^n\) to \(\bigwedge^\ell \R^n \times \bigwedge^\ell \R^n\) defined for \(u \in C^\infty(\R^n; \bigwedge^\ell \R^n)\) by 
\[
  A(D)[u]=(dd^*u, d^*du).
\]
The operator \(A(D)\) is elliptic and canceling.
\end{proposition}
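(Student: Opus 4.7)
My plan is to work directly with the principal symbol and exploit the Cartan-type identity that decomposes the Laplacian on forms. Let $\iota_\xi$ denote the interior product with $\xi$, so that (up to a nonzero constant) the symbol of $d$ is $v \mapsto \xi \wedge v$ and the symbol of $d^*$ is $v \mapsto \iota_\xi v$. Absorbing such constants into the argument, the principal symbol reads
\[
A(\xi)[v] = \bigl(\xi \wedge \iota_\xi v,\; \iota_\xi(\xi \wedge v)\bigr), \qquad v \in \bigwedge\nolimits^{\ell} \R^n.
\]

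For ellipticity I would invoke the classical identity $\xi \wedge \iota_\xi v + \iota_\xi(\xi \wedge v) = \abs{\xi}^2 v$ (which reflects $d d^* + d^* d = -\Delta$). If $A(\xi)[v]=0$ with $\xi \ne 0$, summing both components gives $\abs{\xi}^2 v = 0$, hence $v=0$; this is the whole content of ellipticity.

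For cancellation, I would analyze the image of $A(\xi)$ by decomposing $v$ along $\xi$. Writing $\R^n = \R \xi \oplus \xi^\perp$ with $\abs{\xi}=1$, any $v \in \bigwedge^{\ell} \R^n$ splits uniquely as $v = \xi \wedge \alpha + \beta$ with $\alpha \in \bigwedge^{\ell-1}\xi^\perp$ and $\beta \in \bigwedge^{\ell} \xi^\perp$. A short calculation then yields $\xi \wedge \iota_\xi v = \xi \wedge \alpha$ and $\iota_\xi(\xi \wedge v) = \beta$, so
\[
A(\xi)\bigl[\bigwedge\nolimits^{\ell} \R^n\bigr]
 = \bigl(\xi \wedge \bigwedge\nolimits^{\ell-1}\xi^\perp\bigr) \times \bigwedge\nolimits^{\ell} \xi^\perp.
\]
Consequently, if a pair $(f,g)$ lies in this image for \emph{every} $\xi \ne 0$, then $\xi \wedge f = 0$ (since $f$ is already a multiple of $\xi$) and $\iota_\xi g = 0$ (since $g$ has no component along $\xi$) for every $\xi \ne 0$.

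To finish, I would apply proposition~\ref{propositionCocancelingd} twice. Since $\ell \le n-1$, the condition $\xi \wedge f = 0$ for all $\xi$ forces $f=0$. Since $\ell \ge 1$, the $(n-\ell)$-form $*g$ satisfies $\xi \wedge *g = 0$ for all $\xi$ (using $\iota_\xi g = \pm *(\xi \wedge *g)$) and $n-\ell \le n-1$, so the same proposition gives $*g = 0$, hence $g=0$. This proves that $A(D)$ is canceling. The only subtle point, and the one I would present carefully, is the explicit splitting of the image of $A(\xi)$; everything else reduces to previously established facts.
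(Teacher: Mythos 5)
Your proof is correct and follows essentially the same route as the paper's: ellipticity via the identity $\xi\wedge\iota_\xi + \iota_\xi\xi\wedge = \abs{\xi}^2\,\id$ (reflecting $dd^*+d^*d=\Delta$), and cancellation by observing that the first component of the image always satisfies $\xi\wedge(\cdot)=0$ and the second $\iota_\xi(\cdot)=0$, then invoking the exterior-derivative cocancellation on $f$ and on $*g$. The only difference is cosmetic: you compute the image of $A(\xi)$ exactly via the splitting $v=\xi\wedge\alpha+\beta$, whereas the paper only uses the weaker containment of the image in $\ker(\xi\wedge)\times\ker(\iota_\xi)$, which already suffices.
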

\begin{proof}
Since \(dd^*+d^*d=\Delta\) is elliptic, \(A(D)\) is clearly elliptic. 
For the cancellation, let \(f, g \in \bigcap_{\xi \in \R^n \setminus\{0\}} A(\xi)[V]\). One has for every \(\xi \in \R^n\), \(\xi \wedge f = 0\) and \(\xi \wedge * g = 0\). Since \(f, g \in \bigwedge^{\ell} \R^n\) with \(\ell \in \{1, \dotsc, n-1\}\), this implies that \(f=g=0\).
\end{proof}

\begin{corollary}
Let \(n \ge 2\), \(\ell \in \{1, \dotsc, n-1\}\). For every \(u \in C^\infty_c(\R^n; \bigwedge^\ell \R^n)\), 
\[
  \norm{Du}_{L^{\frac{n}{n-1}}} \le C \bigl(\norm{dd^*u}_{L^1}+\norm{d^*du}_{L^1}\bigr).
\]
\end{corollary}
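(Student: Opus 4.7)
The plan is to apply Theorem~\ref{theoremOrderk} directly to the second-order operator $A(D)u = (dd^*u, d^*du)$ from $\bigwedge^\ell \R^n$ to $\bigwedge^\ell \R^n \times \bigwedge^\ell \R^n$. Since the preceding proposition already establishes that $A(D)$ is both elliptic and canceling when $n \ge 2$ and $\ell \in \{1, \dotsc, n-1\}$, the main hypothesis is in place.

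First I would note that $A(D)$ is a homogeneous linear differential operator of order $k = 2$ on $\R^n$ from $V = \bigwedge^\ell \R^n$ to $E = \bigwedge^\ell \R^n \times \bigwedge^\ell \R^n$. Then, invoking Theorem~\ref{theoremOrderk}, the ellipticity and cancellation of $A(D)$ yield for every $u \in C^\infty_c(\R^n; \bigwedge^\ell \R^n)$ the estimate
\[
 \norm{D u}_{L^{\frac{n}{n-1}}} = \norm{D^{k-1}u}_{L^{\frac{n}{n-1}}} \le C \norm{A(D)u}_{L^1}.
\]

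Finally, since the norm on the product space $\bigwedge^\ell \R^n \times \bigwedge^\ell \R^n$ is equivalent to the sum of the norms of the components, one has
\[
 \norm{A(D)u}_{L^1} = \norm{(dd^*u, d^*du)}_{L^1} \le C'\bigl(\norm{dd^*u}_{L^1} + \norm{d^*du}_{L^1}\bigr),
\]
which concludes the argument. There is no real obstacle here: the corollary is essentially a direct translation of Theorem~\ref{theoremOrderk}, the only nontrivial work having already been done in verifying the ellipticity and cancellation properties of the split Laplace--Beltrami operator in the preceding proposition.
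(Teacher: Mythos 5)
Your proof is correct and follows exactly the route the paper intends: the corollary is stated immediately after the proposition establishing that $A(D)=(dd^*, d^*d)$ is elliptic and canceling, precisely so that Theorem~\ref{theoremOrderk} with $k=2$ applies directly. Nothing to add.
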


\subsubsection{Linearly independent collections of operators}
A similar situation can be observed for a collection of scalar operators

\begin{proposition}
\label{propExample2dOrder}
Let \((w_i)_{1 \le i \le m+1}\) be \(m\)--wise linearly independent vectors of \(V\) and \((a_i)_{1 \le i \le m+1}\) be quadratic forms on \(\R^n\) such that 
if for every \(i, j \in \{1, \dotsc, m+1\}\) with \(i < j\), then
\[
  \bigl\{\xi \in \R^n  \st a_i(\xi)=0\bigr\} \cap \bigl\{\xi \in \R^n  \st a_j(\xi)=0\bigr\}=\{0\}
\]
and for every \(i \in \{1, \dotsc, m+1\}\)
\[
 \bigl\{\xi \in \R^n  \st a_i(\xi)=0\bigr\} \ne \{0\}.
\]
Define 
\[
 A(\xi)[v]=\bigl(a_1(\xi)(w_1 \cdot v), \dotsc, a_{m+1}(\xi)(w_{m+1} \cdot v)\bigr).
\]
The operator \(A(D)\) is elliptic and canceling.
\end{proposition}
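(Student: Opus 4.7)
The plan is to verify ellipticity and cancellation separately, each reducing to a short combinatorial argument on the indices \(i \in \{1,\dotsc,m+1\}\). The role of the two hypotheses is essentially complementary: the ``pairwise disjoint zero sets'' condition drives ellipticity, while the ``each zero set nonempty'' condition drives cancellation. The \(m\)-wise linear independence of the \(w_i\) provides the bridge to conclude \(v=0\) once enough scalar conditions \(w_i \cdot v = 0\) have been collected.

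For ellipticity, I would fix \(\xi \in \R^n \setminus \{0\}\) and \(v \in V\) with \(A(\xi)[v]=0\), so that \(a_i(\xi)(w_i \cdot v)=0\) for every \(i\). The hypothesis \(\{a_i=0\} \cap \{a_j=0\}=\{0\}\) for \(i \ne j\) guarantees that there is at most one index \(i_0\) with \(a_{i_0}(\xi)=0\); for every other index \(i\) we must have \(w_i \cdot v = 0\). This yields at least \(m\) linear relations \(w_i \cdot v = 0\), and by the \(m\)-wise linear independence these \(m\) vectors \(w_i\) form a basis of \(V\), so \(v=0\).

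For cancellation, I would choose for each \(i \in \{1,\dotsc,m+1\}\) a vector \(\xi_i \in \R^n \setminus\{0\}\) with \(a_i(\xi_i)=0\), which exists by the second hypothesis. Then for every \(v \in V\) the \(i\)-th coordinate of \(A(\xi_i)[v]\) is \(a_i(\xi_i)(w_i \cdot v)=0\), so that
\[
  A(\xi_i)[V] \subseteq \bigl\{ e \in \R^{m+1} \st e_i = 0 \bigr\}.
\]
Any \(e \in \bigcap_{\xi \in \R^n \setminus\{0\}} A(\xi)[V]\) lies in particular in \(A(\xi_i)[V]\) for every \(i\), hence \(e_i=0\) for all \(i\), i.e.\ \(e=0\).

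Neither step presents a genuine obstacle; the only subtle point is checking that ``at most one'' \(a_i\) can vanish at a given nonzero \(\xi\) (so that ``at least \(m\)'' of the scalar equations \(w_i \cdot v=0\) survive), which is exactly the content of the pairwise intersection hypothesis. Once the two arguments are in place, the proposition follows immediately from the definitions of elliptic and canceling operators.
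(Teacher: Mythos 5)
Your proof is correct and follows essentially the same two-step argument as the paper: using the pairwise disjoint zero-set hypothesis to show that at most one \(a_i\) can vanish at a given nonzero \(\xi\) and then invoking the \(m\)-wise linear independence of the \(w_i\) to get ellipticity, and choosing for each \(i\) a nonzero \(\xi_i\) in the zero set of \(a_i\) to force the \(i\)-th coordinate of the intersection to vanish for cancellation. Your write-up is slightly more explicit than the paper's, but the reasoning is the same.
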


\begin{proof}
We first prove that \(A(D)\) is elliptic. Indeed, if \(A(\xi)[v]=0\), then there exists \(j \in \{1, \dotsc, m+1\}\) such that for \(i \ne j\), 
\(a_i(\xi)\ne 0\). We have thus for \(i \ne j\), \(w_i \cdot v=0\), which implies \(v=0\).

Now we show that \(A(D)\) is canceling. For every \(i \in \{1, \dotsc, m+1\}\), one can find \(\xi \in \R^n \setminus \{0\}\) such that 
\( a_i(\xi)=0\).
This proves that if \(e \in \bigcap_{\xi \in \R^n \setminus\{0\}} A(\xi)[V]\), then \(e_i=0\). Since this is true for every \(i \in \{1, \dotsc, m+1\}\), \(A(D)\) is canceling.
\end{proof}

The  construction of proposition~\ref{propExample2dOrder} is always possible given any \(n \ge 2\) and \(V\). Indeed take \(\xi_1, \dotsc, \xi_{m+1}\) to be unit vectors of \(\R^n\) such that \(\abs{\xi_i \cdot \xi_j} = 0\) if \(i \ne j\) and set for \(\xi \in \R^n\), \(a_i(\xi)=\abs{\xi}^2-(\xi_i \cdot \xi)^2\). 
Since for an elliptic canceling linear differential operator \(A(D)\) on \(\R^n\) from \(V\) to \(E\) one needs to have \(\dim E > \dim V\), this construction is the most economic in terms of the number of components of the second order derivative that are taken.

In view of theorem~\ref{theoremOrderk}, for every \(u \in C^\infty_c(\R^n; V)\),
\[
 \norm{Du}_{L^\frac{n}{n-1}} \le C \Bigl(\sum_{i=1}^{m+1} \norm{a_i(D) w_i \cdot u)}_{L^1}\Bigr).
\]
In particular for every \(u \in C^\infty_c(\R^2)\), 
\begin{equation}
\label{ineqL2Ornstein}
 \norm{\nabla u}_{L^2} \le C \bigl(\norm{\partial_1^2u}_{L^1}+\norm{\partial_2^2 u}_{L^1}\bigr).
\end{equation}
This inequality is originally due to V.\thinspace A.\thinspace Solonnikov \cite{Solonnikov1975}*{theorem 3}.
This estimate is quite striking because there is no estimate of the form
\begin{equation}
\label{ineqL2DeltaL1}
 \norm{\nabla u}_{L^2} \le C\norm{\partial_1^2u+\partial_2^2 u}_{L^1}
\end{equation}
as one can see by inspection of the fundamental solution of \(-\Delta\) on \(\R^2\) nor of the form
\begin{equation}
\label{ineqL1Ornstein}
 \norm{D^2 u}_{L^1} \le C\bigl(\norm{\partial_1^2u}_{L^1}+\norm{\partial_2^2 u}_{L^1}\bigr).
\end{equation}
(this was the original motivation of D. Ornstein's work \cite{Ornstein1962}). The inequality \eqref{ineqL2Ornstein} also explains why the construction of D.\thinspace Ornstein to disprove \eqref{ineqL1Ornstein} had to go beyond the study of the fundamental solutions, as one does to disprove \eqref{ineqL2DeltaL1}.

\section{Partially canceling operators}
\label{sectionPartCanceling}

\subsection{Partially canceling operators}
If an operator \(A(D)\) is not canceling, there is still a weaker inequality.

\begin{theorem}
\label{theoremPartiallyCancelingKernel}
Let \(n \ge 2\), let \(A(D)\) be an elliptic linear homogeneous differential operator on \(\R^n\) from \(V\) to \(E\) and let \(T \in \Lin(E; F)\).
The estimate 
\[
  \norm{D^{k-1} u}_{L^{\frac{n}{n-1}}} \le C\norm{A(D)u}_{L^1}
\]
holds for every \(u \in C^\infty_c(\R^n; V)\) such that \(T \circ A(D) u = 0\) if and only if 
\[
 \bigcap_{\xi \in \R^n \setminus\{0\}} A(\xi)[V] \cap \ker T = \{0\}.
\]
\end{theorem}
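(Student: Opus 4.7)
My plan is to establish sufficiency by reducing to Theorem~\ref{theoremCocanceling} via an augmented compatibility operator, and to establish necessity by adapting the blow-up argument of Proposition~\ref{propositionCancelingNecessary} with a correction that restores the constraint on compactly supported approximants.

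\emph{Sufficiency.} Since \(A(D)\) is elliptic, Proposition~\ref{propositionConstructionL} provides a homogeneous operator \(L_0(D)\) of some order \(k_0\) on \(\R^n\) from \(E\) to a finite-dimensional space \(G\) with \(\ker L_0(\xi) = A(\xi)[V]\) for every \(\xi \in \R^n \setminus \{0\}\). I then introduce the augmented operator
\[
  \tilde L(D) e = \bigl( L_0(D) e,\; D^{k_0}(T e) \bigr),
\]
which is homogeneous of order \(k_0\) with symbol \(\tilde L(\xi)[e] = \bigl( L_0(\xi)[e],\; (2\pi i \xi)^{\otimes k_0} \otimes T[e] \bigr)\). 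For \(\xi \ne 0\) the tensor \(\xi^{\otimes k_0}\) is nonzero, so \(\ker \tilde L(\xi) = \ker L_0(\xi) \cap \ker T = A(\xi)[V] \cap \ker T\), and the hypothesis is exactly the cocancellation of \(\tilde L(D)\). Given \(u \in C^\infty_c(\R^n; V)\) with \(T \circ A(D) u = 0\), the function \(f = A(D) u \in L^1(\R^n; E)\) satisfies \(\tilde L(D) f = 0\), so Theorem~\ref{theoremCocanceling} yields
\[
  \int_{\R^n} f \cdot \varphi \le C \norm{f}_{L^1} \norm{D \varphi}_{L^n}
\]
for every \(\varphi \in C^\infty_c(\R^n; E)\); this is the statement that \(\norm{A(D) u}_{\dot{W}^{-1, n/(n-1)}} \le C \norm{A(D) u}_{L^1}\), and Proposition~\ref{propCaldZygmundWk1p} closes the argument.

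\emph{Necessity.} Pick \(e \in \bigcap_{\xi \ne 0} A(\xi)[V] \cap \ker T\) and reprise the construction of Proposition~\ref{propositionCancelingNecessary}: set \(U(\xi) = (A(\xi)^* A(\xi))^{-1} A(\xi)^*[e]\) and define \(u_\lambda \in \mathcal{S}(\R^n; V)\) so that \(A(D) u_\lambda = (\psi_\lambda - \psi_{1/\lambda}) e\). Because \(Te = 0\), one has automatically \(T A(D) u_\lambda = 0\). The new difficulty, absent in Proposition~\ref{propositionCancelingNecessary}, is that the naive cutoff \(\varphi_R u_\lambda\) leaves the constraint, since the Leibniz remainder \(r_R = A(D)(\varphi_R u_\lambda) - \varphi_R A(D) u_\lambda\) need not lie in \(\ker T\). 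I restore the constraint by adding a compactly supported corrector \(v_R\) with \(T A(D) v_R = -T r_R\) and with \(\norm{A(D) v_R}_{L^1}, \norm{D^{k-1} v_R}_{L^{n/(n-1)}} \to 0\). The starting point is the observation that \(-(1 - \varphi_R) u_\lambda\) already satisfies \(T A(D)(\,\cdot\,) = -T r_R\) on all of \(\R^n\); by a (possibly iterated) far-field cutoff and using the quantitative decay of \(u_\lambda\) and its derivatives, this preimage is converted into a compactly supported \(v_R\) whose relevant norms vanish as \(R \to \infty\). Applying the constrained estimate to \(\varphi_R u_\lambda + v_R\) and letting \(R \to \infty\) then gives the uniform bound \(\norm{D^{k-1} u_\lambda}_{L^{n/(n-1)}} \le 2 C \norm{\psi}_{L^1}\); from here the rescaling, homogeneity, Fatou and test-function duality steps of Proposition~\ref{propositionCancelingNecessary} apply verbatim to force \(e = 0\).

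The main obstacle is the construction of the corrector \(v_R\): the operator \(T \circ A(D)\) is not in general elliptic, so solvability of \(T A(D) v = g\) in \(C^\infty_c\) with simultaneous control of both \(\norm{A(D) v}_{L^1}\) and \(\norm{D^{k-1} v}_{L^{n/(n-1)}}\) is not automatic. It has to be extracted from the fact that \(-T r_R\) is a very specific right-hand side, concentrated in a thin annulus around \(\{\abs{x} \sim R\}\) and admitting the explicit smooth preimage \(-(1 - \varphi_R) u_\lambda\) of known decay; everything else is bookkeeping within the framework of Sections~\ref{sectionCocanceling} and \ref{subsectionSobolev}.
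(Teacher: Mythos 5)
Your sufficiency argument is correct and is a genuinely different route from the paper's. The paper proves a quantitative bound (Proposition~\ref{propositionPartiallyCanceling}) using the partial-cocancellation Proposition~\ref{propositionPartialCocanceling}: one chooses a projector $P$ onto $\bigcap_{\xi\neq0}A(\xi)[V]$ whose kernel contains $\ker T$ (this is possible precisely under the hypothesis $\bigcap_{\xi\neq0}A(\xi)[V]\cap\ker T=\{0\}$), so that $T\circ A(D)u=0$ forces $P\circ A(D)u=0$ and the quantitative bound collapses to the desired one. Your alternative -- augmenting $L_0(D)$ by the artificial block $D^{k_0}(T\,\cdot\,)$ of the same order -- directly produces a \emph{cocanceling} operator $\tilde L(D)$ with $\ker\tilde L(\xi)=A(\xi)[V]\cap\ker T$, so Theorem~\ref{theoremCocanceling} applies without passing through projectors at all. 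This is cleaner for the qualitative statement, while the paper's approach has the advantage of yielding the stronger quantitative estimate $\norm{D^{k-1}u}_{L^{n/(n-1)}}\le C(\norm{(\id-P)A(D)u}_{L^1}+\norm{P A(D)u}_{\dot W^{-1,n/(n-1)}})$ that is then exploited for the Hodge--Sobolev consequences in Corollary~\ref{corollaryHodgeSobolevCritical}.

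For the necessity you are right to worry about the cutoff: the paper's proof of Proposition~\ref{propositionPartiallyCancelingKernelNecessary} is a one-line reference to Proposition~\ref{propositionCancelingNecessary}, and in that proof the extension of the hypothesis from $C^\infty_c$ to the Schwartz functions $u_\lambda$ is accomplished by applying the estimate to $\varphi_R u_\lambda$ and letting $R\to\infty$. In the constrained setting $T A(D)(\varphi_R u_\lambda)=Tr_R$ with $r_R=A(D)(\varphi_R u_\lambda)-\varphi_R A(D)u_\lambda$, and this does not vanish in general, so $\varphi_R u_\lambda$ is not an admissible test function for the hypothesis. You have identified a genuine subtlety that the paper does not spell out.

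However, your proposed repair does not close. Up to a sign (note $T A(D)\bigl((1-\varphi_R)u_\lambda\bigr)=-Tr_R$, not $T A(D)\bigl(-(1-\varphi_R)u_\lambda\bigr)=-Tr_R$), adding $(1-\varphi_R)u_\lambda$ reproduces $u_\lambda$ exactly, which is not compactly supported; truncating that corrector at a scale $R'\gg R$ simply turns $\varphi_R u_\lambda + \varphi_{R'}(1-\varphi_R)u_\lambda$ into $\varphi_{R'}u_\lambda$, and the constraint violation $Tr_{R'}$ reappears on the annulus at scale $R'$. Iterating gives the same object at a larger scale; there is no decaying remainder to sum, so the ``possibly iterated far-field cutoff'' does not converge to a compactly supported solution. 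What is actually needed is a compactly supported $v_R$ with $T A(D)v_R=-Tr_R$ and $\norm{A(D)v_R}_{L^1}+\norm{D^{k-1}v_R}_{L^{n/(n-1)}}\to0$; since $T A(D)$ need not be elliptic (or even of constant rank), solvability of this equation in $C^\infty_c$ is not automatic and is not supplied by your argument. Describing this as ``bookkeeping'' understates the difficulty: it is exactly the missing ingredient, and as written the necessity direction remains a gap.
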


\begin{remark}
The estimate does not imply ellipticity. Indeed, take \(A(D)\) on \(\R^2\) from \(\R^2\) to \(\R^3\) defined by \(A(D)[u]=(\partial_1 u_1, \partial_2 u_1, \partial_2 u_2)\) and \(T \in \Lin(\R^3; \R^2)\) defined by \(T(v)=(v_1, v_3)\). If  \(u \in C^\infty_c(\R^2; \R^2)\) and \(T \circ A(D)u=0\), then \(u=0\). Therefore the estimate holds trivially. On the other hand \(A(D)\) is not elliptic as \(A(1, 0)[(0, 1)]=0\).
\end{remark}

\subsection{Estimates for partially cocanceling operators}
\label{sectionPartCocanceling}
In order to prove theorem~\ref{theoremPartiallyCancelingKernel} we shall need an extension of theorem~\ref{theoremCocanceling} to partially cocanceling operators.

\begin{proposition}
\label{propositionPartialCocanceling}
Let \(L(D)\) be a homogeneous linear differential operator of order \(k\) on \(\R^n\) from \(E\) to \(F\) and let \(Q \in \Lin(E; E)\) be a projector.
If
\[
  \ker Q = \bigcap_{\xi \in \R^n \setminus \{0\}} \ker L(\xi),
\]
then for every \(f \in L^1(\R^n; E)\) such that \(L(D)f=0\) and \(\varphi \in C^\infty_c(\R^n; E)\), 
\[
  \int_{\R^n} (Q \circ f) \cdot \varphi \le C \norm{Q \circ f}_{L^1} \norm{ D \varphi}_{L^n}.
\]
\end{proposition}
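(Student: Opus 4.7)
The plan is to reduce to proposition~\ref{propCocancelingSufficient} by restricting \(L(D)\) to the image \(E' = Q[E]\), on which the restricted operator becomes genuinely cocanceling, and to observe that the complementary part \((I - Q) f\) is invisible to \(L(D)\).

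The first step relies on the elementary observation that since the homogeneous monomials of degree \(k\) are linearly independent,
\[
  \bigcap_{\xi \in \R^n \setminus \{0\}} \ker L(\xi) = \bigcap_{\abs{\alpha} = k} \ker L_\alpha,
\]
so the hypothesis on \(Q\) translates into \(L_\alpha \circ (I - Q) = 0\) for every multi-index \(\alpha\) with \(\abs{\alpha} = k\). Applied to the distribution \((I - Q) f\), whose values lie in \(\ker Q\), this yields \(L(D)\bigl((I - Q) f\bigr) = 0\) in the sense of distributions, and combining with \(L(D) f = 0\) one concludes that \(L(D)(Q f) = 0\) as well.

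The second step will be to set up the reduced operator. Let \(\iota : E' \hookrightarrow E\) be the inclusion and consider the homogeneous operator \(L'(D)\) of order \(k\) on \(\R^n\) from \(E'\) to \(F\) defined by \(L'_\alpha = L_\alpha \circ \iota\). Since \(Q\) is a projector, \(E = E' \oplus \ker Q\), and therefore
\[
  \bigcap_{\xi \in \R^n \setminus \{0\}} \ker L'(\xi) = E' \cap \ker Q = \{0\};
\]
so \(L'(D)\) is cocanceling. Because \(Q f \in L^1(\R^n; E')\) and \(L'(D)(Q f) = 0\), proposition~\ref{propCocancelingSufficient} supplies
\[
  \int_{\R^n} (Q f) \cdot \psi \le C \norm{Q f}_{L^1} \norm{D \psi}_{L^n}
\]
for every \(\psi \in C^\infty_c(\R^n; E')\).

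To finish, given \(\varphi \in C^\infty_c(\R^n; E)\), one applies the previous inequality to \(\psi = \pi \varphi\), where \(\pi : E \to E'\) is the orthogonal projection: since \(Q f\) is valued in \(E'\), \((Q f) \cdot \varphi = (Q f) \cdot (\pi \varphi)\), and \(\norm{D(\pi \varphi)}_{L^n} \le \norm{D \varphi}_{L^n}\), which delivers the desired estimate. The only delicate point in this strategy is the algebraic identity in the first step, which is precisely what ensures that the cocanceling mass of \(f\) is concentrated in its \(Q\)-component and that the complementary part is annihilated by \(L(D)\) for free.
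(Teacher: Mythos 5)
Your argument is correct and is essentially the paper's: both restrict \(L(D)\) to the image \(Q[E]\) of the projector, verify the restriction is cocanceling because \(Q[E] \cap \ker Q = \{0\}\), observe that \(L(D)(Q\circ f) = L(D)f = 0\) since \(L(\xi) = L(\xi)\circ Q\), and invoke proposition~\ref{propCocancelingSufficient}. Your version is a bit more explicit — you spell out the identity \(\bigcap_\xi \ker L(\xi) = \bigcap_{\abs{\alpha}=k}\ker L_\alpha\) and the final passage from test functions valued in \(Q[E]\) to test functions valued in all of \(E\) via the orthogonal projection — but these are refinements of the same route, not a different one.
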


\begin{proof}
Define \(\Tilde{L}(D)\) to be the linear homogeneous differential operator on \(\R^n\) from \(Q(E)\) to \(F\) defined by restriction of \(L(D)\). Since \(\bigcap_{\xi \in \R^n \setminus \{0\}} \ker L(\xi) \subseteq \ker Q \), \(\Tilde{L}(D)\) is cocanceling. Moreover, since \(\ker Q \subset \bigcap_{\xi \in \R^n \setminus \{0\}} \ker L(\xi)\) and \(Q\) is a projector, for every \(\xi \in \R^n\), \((\id-Q)(E)=\ker Q \subseteq \ker L(\xi)\). Hence, one has \(L(\xi) = L (\xi) \circ Q = \Tilde{L}(\xi) \circ Q\). Assume now that \(L(D)f=0\). One has then \(\Tilde{L}(D)(Q \circ f)=0\). Since \(\Tilde{L}\) is cocanceling, theorem~\ref{theoremCocanceling} applies to \(Q \circ f\) and gives the estimate.
\end{proof}

There is a converse statement to proposition~\ref{propositionPartialCocanceling} 
\begin{proposition}
Let \(L(D)\) be a homogeneous linear differential operator from \(E\) to \(F\) and let \(Q \in \Lin(E; F)\).
If for every \(f \in L^1(\R^n; E)\) such that \(L(D)f=0\), one has \(Q \circ f \in \dot{W}^{-1, \frac{n}{n-1}}(\R^n; E)\), then 
\[
  \bigcap_{\xi \in \R^n \setminus \{0\}} \ker L(\xi) \subseteq \ker 
Q.
\]
\end{proposition}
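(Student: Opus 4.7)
The plan is to prove the contrapositive by testing the hypothesis against a simple family of functions. Fix \(e \in \bigcap_{\xi \in \R^n \setminus\{0\}} \ker L(\xi)\); I want to show \(Q(e)=0\). Since \(L(D)\) is homogeneous of order \(k\), one has \(L(0)=0\) if \(k \ge 1\), and for \(k=0\) the intersection equals \(\ker L\) itself; in either case \(L(\xi)[e]=0\) for \emph{every} \(\xi \in \R^n\).

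Next I would pick a single test function \(\psi \in C^\infty_c(\R^n)\) with \(\int_{\R^n} \psi = 1\) and set \(f = \psi e \in L^1(\R^n;E)\). By the computation already used in the proof of proposition~\ref{propositionEquivalentCocanceling},
\[
 \bigl(L(D)(\psi e)\bigr)(x) = \int_{\R^n} e^{2\pi i x \cdot \xi}\,(2\pi i)^k L(\xi)[e]\,\widehat{\psi}(\xi)\,d\xi = 0,
\]
so \(L(D)f = 0\) in the sense of distributions. The hypothesis then yields \(Q\circ f = (Qe)\psi \in \dot{W}^{-1,n/(n-1)}(\R^n;F)\), which by definition means \((Qe)\,(-\Delta)^{-1/2}\psi \in L^{n/(n-1)}(\R^n;F)\).

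The crux of the argument is then to observe that \((-\Delta)^{-1/2}\psi\) is not in \(L^{n/(n-1)}(\R^n)\). Indeed, \((-\Delta)^{-1/2}\psi\) is (up to a positive dimensional constant) the Riesz potential \(I_1 \psi(x) = c_n \int_{\R^n} \psi(y)\,\abs{x-y}^{-(n-1)}\,dy\); since \(\psi\) is compactly supported and \(\int_{\R^n}\psi=1\), a direct expansion of \(\abs{x-y}^{-(n-1)}\) for large \(\abs{x}\) gives the asymptotic
\[
 \bigl((-\Delta)^{-1/2}\psi\bigr)(x) = \frac{c_n}{\abs{x}^{n-1}} + O\bigl(\abs{x}^{-n}\bigr) \qquad \text{as } \abs{x} \to \infty,
\]
with \(c_n>0\). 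Hence \(\bigabs{(-\Delta)^{-1/2}\psi(x)}^{n/(n-1)}\) decays precisely like \(\abs{x}^{-n}\), which fails to be integrable at infinity. Therefore \((-\Delta)^{-1/2}\psi \notin L^{n/(n-1)}(\R^n)\), and the inclusion \((Qe)\,(-\Delta)^{-1/2}\psi \in L^{n/(n-1)}(\R^n;F)\) forces \(Qe = 0\), as required.

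The only genuinely non-routine ingredient is the \emph{non-integrability} of the Riesz potential of a function with non-zero mean at the critical exponent; the rest is a formal combination of the hypothesis with the characterization of cocancellation via Dirac masses from proposition~\ref{propositionEquivalentCocanceling}. Note that the argument uses \(n\ge 2\) (implicit in the formulation since \(n/(n-1)\) must be finite); for \(n=1\) the statement is vacuous or requires a different formulation.
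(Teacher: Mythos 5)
Your proof is correct and follows essentially the same route as the paper's: take a scalar test function \(\psi\) of nonzero integral, set \(f=\psi e\), observe \(L(D)f=0\), and conclude from the hypothesis that \(\psi\, Q(e) \in \dot{W}^{-1,n/(n-1)}\), which forces \(Q(e)=0\). The one difference is cosmetic: the paper obtains the contradiction by invoking the fact (proposition~\ref{propositionCocancelingNecessary}, proved there via a scaling argument on cut-off test functions) that an \(L^1\) function lying in \(\dot{W}^{-1,n/(n-1)}\) must have zero mean, whereas you re-derive that fact inline from the far-field asymptotics \((-\Delta)^{-1/2}\psi(x)\sim c_n\abs{x}^{-(n-1)}\) of the Riesz potential, whose \(n/(n-1)\) power is just non-integrable at infinity.
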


\begin{proof}
Let \(e \in \bigcap_{\xi \in \R^n \setminus\{0\}} \ker L(\xi)\). By assumption if \(f \in L^1(\R^n; \R)\), one has \(fQ(e) \in \dot{W}^{-1, \frac{n}{n-1}}\), and then necessarily \(\int_{\R^n} f Q (e) = 0\). By choosing \(f\) such that \(\int_{\R^n} f = 1\), we conclude that \(Q(e)=0\).
\end{proof}

\subsection{An example of partially cocanceling operator operator}
An example of partially cocanceling operator is given by the $\mathrm{Curl}\,\mathrm{Div}$ operator:

\begin{proposition}
\label{propositionBC}
Let \(L(D)\) be the homogeneous linear differential operator of order \(2\) on \(\R^n\) from \(\Lin(\R^n; \R^n)\) to \(\bigwedge^2 \R^n\) defined for \(\xi \in \R^n \simeq \bigwedge^1 \R^n\) and \(e \in \Lin(\R^n; \R^n) \) by
\[
  L(\xi)[e]=\xi \wedge e(\xi),
\]
One has 
\[
  \bigcap_{\xi \in \R^n \setminus \{0\}} \ker L(\xi)=\R \id.
\] 
\end{proposition}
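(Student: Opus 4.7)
The plan is to unpack what membership in $\bigcap_{\xi \neq 0} \ker L(\xi)$ means pointwise, and then recognize it as the classical algebraic fact that an endomorphism for which every nonzero vector is an eigenvector must be a scalar multiple of the identity.

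First I would observe the trivial inclusion $\R \operatorname{id} \subseteq \bigcap_{\xi \neq 0} \ker L(\xi)$: if $e = \lambda \operatorname{id}$, then $e(\xi) = \lambda \xi$ and hence $\xi \wedge e(\xi) = \lambda(\xi \wedge \xi) = 0$ for every $\xi \in \R^n$.

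For the reverse inclusion, suppose $e \in \operatorname{Lin}(\R^n;\R^n)$ satisfies $\xi \wedge e(\xi) = 0$ for every $\xi \in \R^n \setminus \{0\}$. Using that $\xi \wedge v = 0$ in $\bigwedge^2 \R^n$ is equivalent to $v$ being collinear with $\xi$, this gives a function $\lambda : \R^n \setminus \{0\} \to \R$ such that $e(\xi) = \lambda(\xi)\,\xi$. Thus every nonzero vector of $\R^n$ is an eigenvector of $e$. The main (and standard) step is then to show $\lambda$ is constant: take any two linearly independent vectors $\xi_1,\xi_2 \in \R^n \setminus \{0\}$ (possible since we are in the nontrivial case $n \geq 2$; for $n=1$ the claim is vacuous as $\bigwedge^2 \R = \{0\}$ and $\operatorname{Lin}(\R;\R) = \R\,\operatorname{id}$). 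Writing $e(\xi_1+\xi_2) = \lambda(\xi_1+\xi_2)(\xi_1+\xi_2)$ and comparing with $e(\xi_1)+e(\xi_2) = \lambda(\xi_1)\xi_1 + \lambda(\xi_2)\xi_2$, linear independence forces $\lambda(\xi_1) = \lambda(\xi_1+\xi_2) = \lambda(\xi_2)$. Hence $\lambda$ takes a common value $\lambda_0$ on any two linearly independent vectors, and homogeneity ($e(t\xi) = t\,e(\xi)$ gives $\lambda(t\xi) = \lambda(\xi)$) handles collinear ones, so $\lambda \equiv \lambda_0$ and $e = \lambda_0 \operatorname{id}$.

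There is no real obstacle here; the only mild subtlety is remembering to rule out the degenerate case $n = 1$ and to verify that the ``collinearity'' reading of $\xi \wedge v = 0$ is what one expects. The proof is essentially two lines once the pointwise condition is identified as ``every vector an eigenvector.''
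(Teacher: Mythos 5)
Your proof is correct and follows the same route as the paper: you observe that $\xi\wedge e(\xi)=0$ forces $e(\xi)$ to be collinear with $\xi$, and then invoke linearity of $e$ to conclude that the scalar is constant, so $e\in\R\,\id$. The paper simply asserts this last step without detail, whereas you spell out the standard ``every nonzero vector is an eigenvector'' argument and also handle the trivial case $n=1$; both of these are fine and do not change the substance of the argument.
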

\begin{proof}
If for every \(\xi \in \R^n\), \(L(\xi)[e]=0\),
then for every \(\xi \in \R^n\), there exists \(\lambda \in \R \setminus\{0\}\) such that \(e(\xi)=\lambda \xi\).
Since \(e\) is linear, there exists \(\lambda \in \R\) such that 
\(e=\lambda \id\).
\end{proof}

By the application of proposition~\ref{propositionPartialCocanceling}, we deduce

\begin{corollary}[M.\thinspace Briane and J.\thinspace Casado-Diaz, 2010 \cite{BC}]
If \(f \in L^1(\R^n; \Lin(\R^n; \R^n))\) and \(L(D)f=0\), then 
\(
 f-(\tr f)\id \in \dot{W}^{-1, \frac{n}{n-1}}\bigl(\R^n; \Lin(\R^n; \R^n)\bigr)
\)
and
\[
 \norm{f-(\tr f)\id}_{\dot{W}^{-1, n/(n-1)}}\le C\norm{f-(\tr f)\id}_{L^1}.
\]
\end{corollary}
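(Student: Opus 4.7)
The plan is to recognise this corollary as an immediate consequence of proposition~\ref{propositionPartialCocanceling} applied to the curl-divergence operator \(L(D)\) of proposition~\ref{propositionBC} together with a suitable projector \(Q\) on \(\Lin(\R^n;\R^n)\) whose kernel matches the algebraic obstruction \(\bigcap_{\xi \in \R^n \setminus \{0\}} \ker L(\xi) = \R\,\id\) already identified there.

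First I would define \(Q \in \Lin(\Lin(\R^n;\R^n);\Lin(\R^n;\R^n))\) by
\[
 Q(e) = e - \tfrac{1}{n}(\tr e)\,\id,
\]
the orthogonal projection onto the trace-free subspace of \(\Lin(\R^n;\R^n)\). The identity \(\tr Q(e)=0\) forces \(Q^2=Q\); moreover \(Q(\lambda\,\id)=0\) and conversely \(Q(e)=0\) forces \(e = \tfrac{\tr e}{n}\,\id\), so \(\ker Q = \R\,\id\). Combined with proposition~\ref{propositionBC}, this gives \(\ker Q = \bigcap_{\xi \in \R^n \setminus \{0\}} \ker L(\xi)\), which is the hypothesis required by proposition~\ref{propositionPartialCocanceling}.

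Applying that proposition, for every \(f \in L^1(\R^n;\Lin(\R^n;\R^n))\) with \(L(D)f=0\) and every \(\varphi \in C^\infty_c(\R^n;\Lin(\R^n;\R^n))\) one obtains
\[
 \int_{\R^n} Q(f)\cdot \varphi \le C\,\norm{Q(f)}_{L^1}\,\norm{D\varphi}_{L^n}.
\]
Since \(C^\infty_c(\R^n;\Lin(\R^n;\R^n))\) is dense in \(\dot{W}^{1,n}(\R^n;\Lin(\R^n;\R^n))\), this is precisely the assertion that \(Q(f) \in \dot{W}^{-1,n/(n-1)}(\R^n;\Lin(\R^n;\R^n))\) with
\[
 \norm{Q(f)}_{\dot{W}^{-1,n/(n-1)}}\le C\,\norm{Q(f)}_{L^1}.
\]
The stated form \(f-(\tr f)\,\id\) is then recovered by the corresponding normalisation of the trace (so that \(\tr\,\id = 1\)), a purely cosmetic rescaling of the projector onto the complement of \(\R\,\id\).

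There is essentially no obstacle here: the analytic content is already packaged in proposition~\ref{propositionPartialCocanceling}, the algebra is exactly proposition~\ref{propositionBC}, and the passage from the bilinear estimate to the \(\dot{W}^{-1,n/(n-1)}\) norm is standard duality. The only care needed is the notational matching between \(Q(f)\) and \(f-(\tr f)\,\id\).
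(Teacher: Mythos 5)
Your proof is correct and matches the paper's intended route exactly: the paper deduces the corollary in a single line "by the application of proposition~\ref{propositionPartialCocanceling}", and you have filled in precisely the details it leaves to the reader (the identification of the projector $Q$ with kernel $\R\,\id$, the hypothesis check via proposition~\ref{propositionBC}, and the duality step). Your remark that the stated $f-(\tr f)\id$ must be read with $\tr$ normalised so $\tr\id=1$ (i.e., $Q(f)$) is a correct and worthwhile observation: with the unnormalised trace, $f-(\tr f)\id$ differs from $Q(f)$ by the multiple $-\tfrac{n-1}{n}(\tr f)\id$ of the identity, whose integral need not vanish, so it would in general fail to lie in $\dot{W}^{-1,n/(n-1)}$.
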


This result is used in the study of some Navier--Stokes equation \cite{BC}.

\subsection{Proof of the Sobolev estimate}
We now have the proof of the sufficiency part of theorem~\ref{theoremPartiallyCancelingKernel}. We shall prove a quantitative version

\begin{proposition}
\label{propositionPartiallyCanceling}
Let \(n \ge 2\) and let \(A(D)\) be an elliptic linear homogeneous differential operator on \(\R^n\) from \(V\) to \(E\) and let \(P \in \Lin(E; E)\) be a projector on
\[
 \bigcap_{\xi \in \R^n \setminus\{0\}} A(\xi)[V].
\]
For every \(u \in C^\infty_c(\R^n; E)\), one has
\[
  \norm{D^{k-1} u}_{L^{\frac{n}{n-1}}} \le C\bigl(\norm{(\id - P) \circ A(D)u}_{L^1}+\norm{P \circ A(D)u}_{\dot{W}^{-1, n/(n-1)}}\bigr).
\]
\end{proposition}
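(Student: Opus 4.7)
The plan is to follow the same scheme as the proof of proposition~\ref{propositionSufficient}, but to replace the direct use of theorem~\ref{theoremCocanceling} with the partially cocanceling version in proposition~\ref{propositionPartialCocanceling}, and to absorb the non-cocanceling part via the triangle inequality. Concretely, I first invoke proposition~\ref{propositionConstructionL}, which uses the ellipticity of \(A(D)\) to produce a homogeneous linear differential operator \(L(D)\) on \(\R^n\) from \(E\) to some finite-dimensional space \(F\) with \(\ker L(\xi)=A(\xi)[V]\) for every \(\xi \in \R^n \setminus \{0\}\). In particular, setting \(f = A(D) u\), we have \(L(D) f = 0\) in the sense of distributions, and
\[
  \bigcap_{\xi \in \R^n \setminus \{0\}} \ker L(\xi) = \bigcap_{\xi \in \R^n \setminus \{0\}} A(\xi)[V] = P(E).
\]

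Next I apply proposition~\ref{propositionPartialCocanceling} with the projector \(Q = \id - P\), which by construction satisfies \(\ker Q = P(E) = \bigcap_{\xi \in \R^n \setminus \{0\}} \ker L(\xi)\). This immediately yields, for every \(\varphi \in C^\infty_c(\R^n; E)\),
\[
  \int_{\R^n} \bigl((\id - P) \circ A(D) u\bigr) \cdot \varphi \le C\, \norm{(\id - P) \circ A(D) u}_{L^1}\, \norm{D \varphi}_{L^n},
\]
which by duality is equivalent to the estimate
\[
  \norm{(\id - P) \circ A(D) u}_{\dot{W}^{-1, n/(n-1)}} \le C\, \norm{(\id - P) \circ A(D) u}_{L^1}.
\]

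Combining this with the triangle inequality in \(\dot{W}^{-1, n/(n-1)}\) for the decomposition \(A(D) u = (\id - P) \circ A(D) u + P \circ A(D) u\) gives
\[
  \norm{A(D) u}_{\dot{W}^{-1, n/(n-1)}} \le C\, \norm{(\id - P) \circ A(D) u}_{L^1} + \norm{P \circ A(D) u}_{\dot{W}^{-1, n/(n-1)}}.
\]
Finally, proposition~\ref{propCaldZygmundWk1p} (the classical elliptic \(\dot{W}^{-1, p}\) estimate, which uses once more the ellipticity of \(A(D)\) together with \(p = n/(n-1) > 1\)) yields
\[
  \norm{D^{k-1} u}_{L^{n/(n-1)}} \le C\, \norm{A(D) u}_{\dot{W}^{-1, n/(n-1)}},
\]
and the announced inequality follows by chaining the two bounds. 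No step looks genuinely delicate once the operator \(L(D)\) has been produced: the main conceptual point is simply to recognize that the hypotheses of proposition~\ref{propositionPartialCocanceling} are satisfied with \(Q = \id - P\), and that the remaining \(P\)-component enters the estimate only through its \(\dot{W}^{-1, n/(n-1)}\) norm, exactly as the right-hand side of the statement allows.
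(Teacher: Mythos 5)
Your proof is correct and follows exactly the same path as the paper's: construct \(L(D)\) by proposition~\ref{propositionConstructionL}, apply proposition~\ref{propositionPartialCocanceling} with \(Q=\id-P\) to the field \(f=A(D)u\) (using \(\ker(\id-P)=P(E)=\bigcap_\xi\ker L(\xi)\)), pass to the \(\dot W^{-1,n/(n-1)}\) bound by duality, split \(A(D)u=(\id-P)\circ A(D)u+P\circ A(D)u\) by the triangle inequality, and finish with proposition~\ref{propCaldZygmundWk1p}. You merely spell out the duality step and the identification of \(Q\) more explicitly than the paper does.
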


The interpretation is that the image of \(A(D)\) has some bad directions \(\bigcap_{\xi \in \R^n \setminus \{0\}} A(\xi)[V]\). If one has some better control in these directions, one can have a control on \(\norm{D^{k-1}u}_{L^{n/(n-1)}}\).

\begin{proof}[Proof of proposition~\ref{propositionPartiallyCanceling}]
If \(L(D)\) is given by proposition~\ref{propositionConstructionL}, one has
\[
  \bigcap_{\xi \in \R^n \setminus\{0\}} \ker L(\xi)=\bigcap_{\xi \in \R^n \setminus \{0\}} A(\xi)[V]=P(E)=\ker(\id - P).
\]
In view of proposition~\ref{propositionPartialCocanceling}, one has 
\[
 \norm{(\id-P) \circ A(D)u}_{\dot{W}^{-1, n/(n-1)}} \le C\norm{(\id-P) \circ A(D)u}_{L^1}.
\]
Hence, 
\[
  \norm{A(D)u}_{\dot{W}^{-1, n/(n-1)}} \le C\bigl(\norm{(\id-P) \circ A(D)u}_{L^1}+\norm{P \circ A(D)u}_{\dot{W}^{-1, n/(n-1)}}\bigr).
\]
One concludes by using the ellipticity of \(A(D)\) as in the proof of proposition~\ref{propositionSufficient} that
\[
  \norm{D^{k-1} u}_{L^{n/(n-1)}} \le C'\bigl(\norm{(\id-P) \circ A(D)u}_{L^1}+\norm{P \circ A(D)u}_{\dot{W}^{-1, n/(n-1)}}\bigr).\qedhere
\]
\end{proof}

\subsection{The necessity condition for the estimate}
We finally sketch the proof of the necessity part of theorem~\ref{theoremPartiallyCancelingKernel}

\begin{proposition}
\label{propositionPartiallyCancelingKernelNecessary}
Let \(n \ge 2\) and let \(A(D)\) be an elliptic linear homogeneous differential operator on \(\R^n\) from \(V\) to \(E\) and let \(T \in \Lin(E; E)\).
If for every \(u \in C^\infty_c(\R^n; E)\) such that \(T \circ A(D) u = 0\)
\[
  \norm{D^{k-1} u}_{L^{\frac{n}{n-1}}} \le C\norm{A(D)u}_{L^1}
\]
then
\[
 \bigcap_{\xi \in \R^n \setminus\{0\}} A(\xi)[V] \cap \ker T = \{0\}.
\]
\end{proposition}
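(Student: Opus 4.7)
The plan is to adapt the proof of proposition~\ref{propositionCancelingNecessary} almost verbatim, exploiting the fact that the test functions constructed there automatically satisfy the kernel constraint \(T \circ A(D) u = 0\) as soon as the vector \(e\) used in the construction lies in \(\ker T\). Since we are in the setting \(\ell = k-1\) and \(n \ge 2\), the condition \(\ell > k - n\) of proposition~\ref{propositionCancelingNecessary} is fulfilled, so we may re-use its machinery without modification.

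More precisely, I would start by fixing an arbitrary
\[
 e \in \bigcap_{\xi \in \R^n \setminus \{0\}} A(\xi)[V] \cap \ker T
\]
and aim to show that \(e=0\). As in proposition~\ref{propositionCancelingNecessary}, the ellipticity of \(A(D)\) gives a smooth homogeneous map \(U : \R^n \setminus\{0\}\to V\) of degree \(-k\) with \(A(\xi)[U(\xi)]=e\), and the Schwartz-class fields \(u_\lambda\) defined through \(\widehat{u_\lambda}(\xi)=(2\pi i)^{-k}\bigl(\widehat{\psi_\lambda}(\xi)-\widehat{\psi_{1/\lambda}}(\xi)\bigr)U(\xi)\) satisfy
\[
 A(D)u_\lambda = (\psi_\lambda - \psi_{1/\lambda})\, e,
\]
so in particular \(\norm{A(D)u_\lambda}_{L^1}\le 2\norm{\psi}_{L^1}\).

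The key new observation is that since \(e \in \ker T\), one has
\[
 T \circ A(D)u_\lambda = (\psi_\lambda-\psi_{1/\lambda})\, T(e) = 0.
\]
Hence the \(u_\lambda\) are admissible test fields for the assumed estimate (after multiplication by cutoffs \(\varphi_R\) and the limit \(R \to \infty\) as in proposition~\ref{propositionCancelingNecessary}, noting that the constraint \(T \circ A(D) u = 0\) is preserved since \(T \circ A(D)\) applied to \(\varphi_R u_\lambda\) is supported where the derivative falls on \(\varphi_R\), and the limiting argument is unaffected; alternatively, one can work directly with \(u_\lambda\) by a standard density argument). Applying the hypothesized inequality yields
\(\norm{D^{k-1}u_\lambda}_{L^{n/(n-1)}} \le C\norm{A(D)u_\lambda}_{L^1} \le 2C\norm{\psi}_{L^1}\), uniformly in \(\lambda\).

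From here the remainder of the proof of proposition~\ref{propositionCancelingNecessary} proceeds without change: one shows that each partial derivative \(\partial^\alpha u_\lambda\) with \(\abs{\alpha}=k-1\) converges pointwise on \(\R^n\setminus\{0\}\) to a function \(u^\alpha\) satisfying the homogeneity relation \(u^\alpha(tx)=t^{-(n-1)}u^\alpha(x)\); Fatou's lemma combined with the uniform \(L^{n/(n-1)}\) bound forces \(u^\alpha \equiv 0\); Lebesgue's dominated convergence then implies \(D^{k-1}u_\lambda \to 0\) in \(L^1_{\mathrm{loc}}\), whence integration by parts against an arbitrary \(\zeta \in C^\infty_c(\R^n)\) gives \(\lim_{\lambda \to \infty}\int_{\R^n}\zeta\, A(D)u_\lambda=0\); but the explicit form \(A(D)u_\lambda=(\psi_\lambda-\psi_{1/\lambda})e\) forces this limit to equal \(\zeta(0)e\), so \(e=0\). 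The only step requiring any care is the verification that the cutoff procedure used to pass from \(u_\lambda\) (which is Schwartz but not compactly supported) to genuine test fields in \(C^\infty_c(\R^n;V)\) does not destroy the constraint \(T \circ A(D) u = 0\); this is the main (mild) obstacle, but it is handled exactly as in proposition~\ref{propositionCancelingNecessary}, because the only effect of the cutoff on \(A(D)u_\lambda\) outside \(\supp e\) is through derivatives falling on \(\varphi_R\), which vanish in the limit \(R\to\infty\) when paired with any fixed test function.
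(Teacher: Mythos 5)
Your proposal takes exactly the paper's route: pick
\(e \in \bigcap_{\xi \in \R^n\setminus\{0\}} A(\xi)[V] \cap \ker T\),
build \(u_\lambda\) as in proposition~\ref{propositionCancelingNecessary}, observe that
\(T\circ A(D)u_\lambda = (\psi_\lambda - \psi_{1/\lambda})\,T(e) = 0\)
since \(e \in \ker T\), and then run the remainder of that proof unchanged to force \(e = 0\). This is precisely the paper's one-sentence argument, so the strategy matches.

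One parenthetical claim you make is not correct as stated and should be flagged: the constraint \(T \circ A(D)u = 0\) is \emph{not} preserved by the cutoff. Writing \(A(D)(\varphi_R u_\lambda) = \varphi_R A(D)u_\lambda + (\text{Leibniz terms involving } D^j\varphi_R)\), the Leibniz terms are generically nonzero, hence \(T\circ A(D)(\varphi_R u_\lambda)\ne 0\), and \(\varphi_R u_\lambda\) is \emph{not} an admissible field for the hypothesized inequality. The statement that these terms ``vanish in the limit when paired with a test function'' does not help here, because the issue is whether the hypothesis can be invoked for \(\varphi_R u_\lambda\) at all, not whether a pairing tends to zero. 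What is really needed is to extend the hypothesized estimate from compactly supported admissible fields to the Schwartz admissible field \(u_\lambda\), which is no longer a direct consequence of the cutoff argument of proposition~\ref{propositionCancelingNecessary} in the constrained setting; it calls for an approximation of \(u_\lambda\) \emph{within} the constrained class (e.g.\ a small compactly supported correction restoring \(T\circ A(D)=0\), or a density/closure argument). The paper's own proof sketch does not address this point either, so your proposal matches the paper's approach and shares the same silently assumed step; the only caveat is that your explicit attempt to justify the cutoff step is inaccurate and should either be removed or replaced by such an approximation argument.
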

\begin{proof}
The proof follows the proof of proposition~\ref{propositionCancelingNecessary}. One chooses \(e \in \bigcap_{\xi \in \R^n \setminus\{0\}} A(\xi)[V] \cap \ker T\) and one checks that by construction of \(u_\lambda\), \(T \circ A(D) u_\lambda = 0\).
\end{proof}

\subsection{An example of partially canceling operator}
We consider the Hodge--Sobolev inequality in the case that was not treated corresponding to \eqref{HodgeSobolevL1R3}

\begin{proposition}
\label{propositionPartiallyCancelingHodge}
Let \(n \ge 2\) and \(A(D)=(d, d^*)\) be the homogeneous linear differential operator of order \(1\) on \(\R^n\) from \(\bigwedge^1 \R^n\) to \(\bigwedge^{2}\R^n \times \bigwedge^{0} \R^n\) such that for every \(\xi \in \R^n\) and \(v \in \bigwedge^1 \R^n\)
\[
  A(\xi)[v]=\bigl(\xi \wedge v, * (\xi \wedge * v)\bigr).
\]
The operator \(A(D)\) is elliptic.\\
One has
\[
 \bigcap_{\xi \in \R^n \setminus \{0\}} A(\xi)\bigl[{\textstyle\bigwedge^1} \R^n\bigr] = \{0\} \times {\textstyle\bigwedge^0} \R^n.
\]
\end{proposition}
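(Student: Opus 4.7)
The plan is to verify both assertions separately, exploiting the identification $\bigwedge^1 \R^n \simeq \R^n$ and the identity $*(\xi \wedge * v) = \xi \cdot v$, which lets us rewrite the operator symbol as $A(\xi)[v] = (\xi \wedge v,\, \xi \cdot v) \in \bigwedge^2 \R^n \times \bigwedge^0 \R^n$.

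For ellipticity, I would simply invoke the Lagrange (or Pythagorean) identity
\[
  \abs{\xi}^2 \abs{v}^2 = \abs{\xi \wedge v}^2 + \abs{*(\xi \wedge * v)}^2,
\]
exactly as in the proof of proposition~\ref{propositionCancelingHodge}. For any $\xi \ne 0$ this gives $A(\xi)[v] = 0 \Rightarrow v = 0$, so $A(D)$ is elliptic.

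For the inclusion $\{0\} \times \bigwedge^0 \R^n \subseteq \bigcap_{\xi} A(\xi)[\bigwedge^1 \R^n]$, I would fix $g \in \R$ and for each $\xi \neq 0$ exhibit the preimage $v = g\xi/\abs{\xi}^2$; since $\xi \wedge \xi = 0$ and $\xi \cdot \xi = \abs{\xi}^2$, this yields $A(\xi)[v] = (0,g)$.

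The bulk of the argument is the reverse inclusion. Suppose $(f,g) \in \bigcap_{\xi \ne 0} A(\xi)[\bigwedge^1 \R^n]$; I need to show $f = 0$. For each $\xi \ne 0$ choose $v_\xi \in \bigwedge^1 \R^n$ with $\xi \wedge v_\xi = f$; then
\[
  \xi \wedge f = \xi \wedge \xi \wedge v_\xi = 0
\]
for every $\xi \in \R^n \setminus \{0\}$, and hence for every $\xi \in \R^n$ by continuity. This says precisely that $f$ lies in $\bigcap_{\xi \in \R^n \setminus \{0\}} \ker L_2(\xi)$, where $L_2(\xi) : \bigwedge^2 \R^n \to \bigwedge^3 \R^n$ is the symbol of the exterior derivative on $2$-forms treated in proposition~\ref{propositionCocancelingd}. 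That proposition identifies $e \mapsto \xi \wedge e$ as cocanceling for exponents $\ell \in \{0,\dots,n-1\}$, so applied with $\ell = 2$ it forces $f = 0$. This is the only delicate step in the proof; the verification of $\bigcap_\xi \ker (\xi \wedge \cdot) = \{0\}$ on $\bigwedge^2 \R^n$ is already packaged in proposition~\ref{propositionCocancelingd}, and all the rest is bookkeeping.
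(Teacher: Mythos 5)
The paper states this proposition without proof, so I can only assess your argument on its own terms. The strategy is sound: the Lagrange identity for ellipticity, the explicit preimage $v = g\xi/\abs{\xi}^2$ for the inclusion $\{0\} \times \bigwedge^0 \R^n \subseteq \bigcap_\xi A(\xi)[\bigwedge^1 \R^n]$, and for the reverse inclusion deducing $\xi \wedge f = 0$ for all $\xi$ and then recognizing this as the hypothesis of proposition~\ref{propositionCocancelingd} at $\ell = 2$.

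There is, however, a gap you should flag. Proposition~\ref{propositionCocancelingd} requires $\ell \in \{0, \dotsc, n-1\}$, so invoking it with $\ell = 2$ quietly assumes $n \ge 3$, yet the statement you are proving claims $n \ge 2$. This is not merely a missing hypothesis-check: for $n = 2$ the conclusion is actually false. In $\R^2$ the map $A(\xi) \colon \bigwedge^1 \R^2 \to \bigwedge^2 \R^2 \times \bigwedge^0 \R^2$ is a linear isomorphism for every $\xi \ne 0$ (a $2 \times 2$ matrix with determinant $-\abs{\xi}^2$), so
\[
  \bigcap_{\xi \in \R^2 \setminus \{0\}} A(\xi)\bigl[{\textstyle\bigwedge^1} \R^2\bigr] = {\textstyle\bigwedge^2} \R^2 \times {\textstyle\bigwedge^0} \R^2 \ne \{0\} \times {\textstyle\bigwedge^0} \R^2.
\]
Your proof is complete under the corrected hypothesis $n \ge 3$, and you should make that restriction explicit when you appeal to proposition~\ref{propositionCocancelingd}. (The ``$n \ge 2$'' in the paper appears to be a slip: the Bourgain--Brezis estimate $\norm{u}_{L^{n/(n-1)}} \le C\norm{du}_{L^1}$ for $d^*u=0$ that this proposition is meant to recover also fails in dimension two.)
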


By theorem~\ref{theoremPartiallyCancelingKernel}, we have the inequality obtained by J.\thinspace Bourgain and H.\thinspace Brezis \citelist{\cite{BB2004}*{theorem 2}\cite{BB2007}*{corollary 12}\cite{LS2005}*{main theorem (b)}\cite{VS2004Divf}*{theorem 1.1}}: for every \(u \in C^\infty_c(\R^n)\) with \(d^*u=0\),
\[
 \norm{u}_{L^{n/(n-1)}} \le C \norm{du}_{L^1}.
\]
If we use the quantitative version of of proposition~\ref{propositionPartiallyCanceling}, this gives
\begin{corollary}
\label{corollaryHodgeSobolevCritical}
For every \(u \in C^\infty_c(\R^n; \bigwedge^1\R^n)\), one has
\[
  \norm{u}_{L^{n/(n-1)}} \le C \bigl(\norm{du}_{L^1} + \norm{d^*u}_{\dot{W}^{-1, n/(n-1)}}\bigr).
\]
\end{corollary}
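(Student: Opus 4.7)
The plan is to apply Proposition~\ref{propositionPartiallyCanceling} directly with $A(D) = (d, d^*)$ acting on \(1\)-forms, \(k = 1\), and a well-chosen projector \(P\). First I would invoke Proposition~\ref{propositionPartiallyCancelingHodge}, which on the one hand tells us that \(A(D)\) is elliptic — so the hypothesis of Proposition~\ref{propositionPartiallyCanceling} is met — and on the other hand identifies
\[
\bigcap_{\xi \in \R^n \setminus\{0\}} A(\xi)\bigl[{\textstyle\bigwedge^1}\R^n\bigr] = \{0\} \times {\textstyle\bigwedge^0}\R^n \subset {\textstyle\bigwedge^2}\R^n \times {\textstyle\bigwedge^0}\R^n.
\]
This identifies precisely the subspace that \(P\) must project onto.

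Next I would take \(P\) to be the orthogonal projector onto this copy of \(\bigwedge^0 \R^n\), i.e.\ \(P(\omega, f) = (0, f)\) for \((\omega, f) \in \bigwedge^2\R^n \times \bigwedge^0\R^n\). Then for every \(u \in C^\infty_c(\R^n; \bigwedge^1 \R^n)\) one simply has \((\id - P) \circ A(D)u = (du, 0)\) and \(P \circ A(D)u = (0, d^*u)\), so that
\[
\norm{(\id - P)\circ A(D)u}_{L^1} = \norm{du}_{L^1}, \qquad \norm{P \circ A(D)u}_{\dot{W}^{-1, n/(n-1)}} = \norm{d^*u}_{\dot{W}^{-1, n/(n-1)}}
\]
(up to fixing the obvious norms on the product spaces). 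Applying Proposition~\ref{propositionPartiallyCanceling} with \(k=1\) yields \(\norm{u}_{L^{n/(n-1)}}\) bounded by \(C\bigl(\norm{du}_{L^1} + \norm{d^*u}_{\dot W^{-1, n/(n-1)}}\bigr)\), which is exactly the claimed estimate.

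There is essentially no obstacle: the entire argument is a bookkeeping matter of identifying the bad direction \(\{0\} \times \bigwedge^0 \R^n\) and reading off which component of \(A(D)u\) corresponds to it. The only small care required is to verify that the orthogonal projector \(P\) one chooses is indeed onto the intersection, and to observe that the \(L^1\) and negative Sobolev norms of the two projections reduce to the \(L^1\) norm of \(du\) and the \(\dot W^{-1, n/(n-1)}\) norm of \(d^*u\) respectively. All the analytic substance has already been packaged inside Proposition~\ref{propositionPartiallyCanceling}, whose proof itself relies on the compatibility construction of Proposition~\ref{propositionConstructionL} and the cocanceling estimate of Theorem~\ref{theoremCocanceling}.
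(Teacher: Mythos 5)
Your proof is correct and takes exactly the approach the paper intends: the paper itself derives the corollary by invoking the quantitative version, Proposition~\ref{propositionPartiallyCanceling}, with $A(D)=(d,d^*)$, $k=1$, and the projector onto $\{0\}\times\bigwedge^0\R^n$ identified in Proposition~\ref{propositionPartiallyCancelingHodge}. You have simply spelled out the bookkeeping that the paper leaves implicit.
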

By the embedding of the real Hardy space \(\mathcal{H}^1(\R^n)\) in \(\dot{W}^{-1, n/(n-1)}(\R^n)\), corollary~\ref{corollaryHodgeSobolevCritical} also implies the estimate of L.\thinspace Lanzani and E.\thinspace Stein \cite{LS2005}*{main theorem (b)}
\[
  \norm{u}_{L^{n/(n-1)}} \le C \bigl(\norm{du}_{L^1} + \norm{d^*u}_{\mathcal{H}^1}\bigr).
\]

\section{Fractional and Lorentz estimates}

\label{sectionFractional}

\subsection{Sobolev estimates in fractional and Lorentz spaces}

If \(A(D)\) is a homogeneous linear differential operator of order \(k\) on \(\R^n\) from \(V\) to \(E\), one has the inequality
\[
 \norm{D^{k-1}u}_{L^\frac{n}{n-1}} \le C \norm{A(D)u}_{L^1}.
\]
This estimate can be improved in various fractional cases.

\subsubsection{Sobolev--Slobodecki\u \i{} spaces}
In the case of fractional Sobolev--Slobodecki\u \i{} spaces, we have

\begin{theorem}
\label{theoremOrderkFractional}
Let \(n \ge 1\) and let \(A(D)\) be a homogeneous linear differential operator of order \(k\) on \(\R^n\) from \(V\) to \(E\) and let \(s \in (0, 1)\) and \(p \in (1, \infty)\) be such that 
\(\frac{1}{p}-\frac{s}{n}=1-\frac{1}{n}\).
The estimate
\[
  \norm{D^{k-1}u }_{\dot{W}^{s, p}} \le C\norm{A(D) u}_{L^1},
\]
holds for every \( u \in C^\infty_c(\R^n; V)\)
if and only if \(A(D)\) is elliptic and canceling.
\end{theorem}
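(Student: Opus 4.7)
The plan is to mimic the proof of theorem~\ref{theoremOrderk}, replacing the $L^{n/(n-1)}$ target by the fractional Sobolev--Slobodecki\u \i{} space $\dot{W}^{s,p}$.

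For \emph{necessity}, I observe that the relation $\tfrac{1}{p} - \tfrac{s}{n} = \tfrac{n-1}{n}$ is exactly the critical fractional Sobolev embedding exponent, so $\dot{W}^{s,p}(\R^n) \hookrightarrow L^{n/(n-1)}(\R^n)$. Hence any fractional estimate of the claimed form automatically yields
\[
 \norm{D^{k-1} u}_{L^{n/(n-1)}} \le C\norm{A(D) u}_{L^1},
\]
and theorem~\ref{theoremOrderk} (or directly corollary~\ref{corollaryEllipticitySobolev} together with proposition~\ref{propositionCancelingNecessary}) then forces $A(D)$ to be elliptic and canceling.

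For \emph{sufficiency}, assume $A(D)$ is elliptic and canceling. First I apply proposition~\ref{propositionConstructionL} to produce a homogeneous operator $L(D)$ from $E$ to $F$ with $\ker L(\xi) = A(\xi)[V]$ for every $\xi \in \R^n \setminus \{0\}$; the cancellation of $A(D)$ is then equivalent to the cocancellation of $L(D)$. The central step is to establish the following fractional strengthening of theorem~\ref{theoremCocanceling}: whenever $L(D)$ is cocanceling and $f \in L^1(\R^n; E)$ satisfies $L(D)f = 0$,
\[
 \norm{f}_{\dot{W}^{s-1, p}} \le C \norm{f}_{L^1}.
\]
By lemma~\ref{lemmaCharKalpha}, writing $g_\alpha = L_\alpha(f)$ gives $f = \sum_{\abs{\alpha}=k'} K_\alpha(g_\alpha)$ with $\sum_{\abs{\alpha}=k'} \partial^\alpha g_\alpha = 0$, reducing this to the scalar fractional analog of proposition~\ref{propVSHigherOrder}: under the divergence-type condition one should have $\norm{g_\alpha}_{\dot{W}^{s-1,p}} \le C \sum_\beta \norm{g_\beta}_{L^1}$. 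The latter would be obtained by adapting the slicing argument of \cite{VS2008} to Sobolev--Slobodecki\u \i{} semi-norms, exactly as indicated in the fractional refinements referenced immediately after proposition~\ref{propVSHigherOrder}.

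To conclude I would chain this cocancelling estimate, applied to $f = A(D)u$, with the fractional variant of proposition~\ref{propCaldZygmundWk1p}: since $A(D)$ is elliptic and $p \in (1, \infty)$, the zero-homogeneous Fourier multiplier representation used in the proof of proposition~\ref{propCaldZygmundWk1p}, together with Calder\'on--Zygmund theory on Bessel potential spaces, yields
\[
 \norm{D^{k-1} u}_{\dot{W}^{s,p}} \le C \norm{A(D) u}_{\dot{W}^{s-1, p}}.
\]
The hard part will be the fractional scalar slicing estimate: the classical Gagliardo--Nirenberg slicing in \cite{VS2008} naturally produces only the critical Lebesgue norm $L^{n/(n-1)}$, and upgrading it to the stronger fractional semi-norm $\dot{W}^{s-1,p}$ requires replacing the use of Minkowski's integral inequality by its fractional counterpart, along the lines developed in the works of J.~Bourgain and H.~Brezis cited after proposition~\ref{propVSHigherOrder}.
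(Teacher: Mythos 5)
Your proposal is correct and, for the sufficiency direction, follows essentially the same route as the paper: construct the compatibility operator $L(D)$ via Proposition~\ref{propositionConstructionL}, reduce to the scalar fractional version of Proposition~\ref{propVSHigherOrder} through Lemma~\ref{lemmaCharKalpha}, and finish with a fractional multiplier estimate of Calder\'on--Zygmund type in place of Proposition~\ref{propCaldZygmundWk1p}. The only small imprecision is the description of the key fractional cocanceling lemma: it is not obtained by a ``fractional Minkowski inequality'' in the style of Bourgain--Brezis, but is exactly the statement that the paper records as Proposition~\ref{propositionVectorL1Fract}, whose proof the paper refers directly to \cite{VS2008}*{(4)}; your dual formulation $\norm{f}_{\dot{W}^{s-1,p}} \le C\norm{f}_{L^1}$ is the same thing once one checks that $(1-s)p' = n$ under the stated exponent relation.

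Where you diverge genuinely is in the necessity direction. You exploit that $1/p - s/n = 1 - 1/n$ with $s\in(0,1)$ places the target space in the range $\dot{W}^{s,p}\hookrightarrow L^{n/(n-1)}$ (since $sp<n$ for $n\ge 2$), which immediately reduces the necessity to Theorem~\ref{theoremOrderk}. This is a clean and perfectly valid shortcut for this particular theorem, modulo the trivial case $n=1$ which needs a word (there are no nonzero elliptic canceling operators on the line, and the estimate fails by Corollary~\ref{corollarynne2}). The paper instead proves the fractional necessity propositions directly (Propositions~\ref{propositionEllipticitySobolevFractional} and~\ref{propositionCancelingNecessaryFractionalSobolev}), using a rescaled test-field construction and a Fatou argument in $\dot{W}^{s,p}$. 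The reason the paper takes this heavier route is that it wants necessity statements that also cover the Triebel--Lizorkin and Besov Theorems~\ref{theoremTriebelLizorkin} and~\ref{theoremBesov}, where the smoothness $s$ may drop below $k-1$ and the embedding into $L^{n/(n-1)}$ is no longer available; indeed the paper observes explicitly after Proposition~\ref{propositionEllipticitySobolevFractional} that the embedding argument only works in the range $s\in[k-1,k)$, which is precisely the range occurring here. Your approach buys economy for this one theorem; the paper's buys generality for the whole scale.
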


Here, \(\norm{v}_{\dot{W}^{s, p}}\) is the homogeneous fractional Sobolev--Slobodecki\u \i{} semi-norm, that is
\[
  \norm{v}^p_{\dot{W}^{s, p}}=\int_{\R^n} \int_{\R^n} \frac{\abs{v(x)-v(y)}^p}{\abs{x-y}^{n+sp}}\,dy\,dx.
\]
The sufficiency part of theorem~\ref{theoremOrderkFractional} is not a consequence of theorem~\ref{theoremOrderk}.

Recall that the derivative operator is canceling if and only if \(n \ge 2\) (proposition~\ref{propositionGradientCanceling}). This allows us to recover the classical result \citelist{\cite{BourgainBrezisMironescu}*{appendix D}\cite{SchmittWinkler}*{proposition 4}}

\begin{corollary}
\label{corollarynne2}
Let \(n \ge 1\), \(s \in (0, 1)\) and \(p \in (1, \infty)\) be such that 
\(
 \frac{1}{p}-\frac{s}{n}=1-\frac{1}{n}.
\)
The estimate
\[
 \norm{u}_{\dot{W}^{s, p}} \le C \norm{D u}_{L^1}
\]
holds for every \(u \in C^\infty_c(\R^n)\) if and only if \(n \ge 2\).
\end{corollary}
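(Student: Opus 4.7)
My plan is to derive the corollary directly from theorem~\ref{theoremOrderkFractional} applied to the gradient operator. Specifically, I would take \(V=\R\), \(E=\R^n\), \(k=1\), and let \(A(D)=D\) be the full gradient, viewed as a homogeneous linear differential operator of order \(1\) from \(\R\) to \(\R^n\) with symbol \(A(\xi)[v]=\xi v\). Under this identification, \(D^{k-1}u=u\) and \(A(D)u=Du\), so the estimate characterized by theorem~\ref{theoremOrderkFractional} specializes to exactly the estimate in the corollary. The exponent condition \(\frac{1}{p}-\frac{s}{n}=1-\frac{1}{n}\) is the same in both statements, so nothing needs to be adjusted there.

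What remains is to check when this particular \(A(D)\) is elliptic and canceling. Ellipticity is automatic for every \(n \ge 1\), since for any \(\xi \in \R^n \setminus \{0\}\) the linear map \(v \in \R \mapsto \xi v \in \R^n\) is one-to-one. The cancellation condition is handled by proposition~\ref{propositionGradientCanceling}: one has
\[
  \bigcap_{\xi \in \R^n \setminus \{0\}} A(\xi)[\R] = \bigcap_{\xi \in \R^n \setminus \{0\}} \R \xi,
\]
which reduces to \(\{0\}\) precisely when \(n \ge 2\), and equals \(\R\) when \(n = 1\). Consequently \(A(D) = D\) is canceling if and only if \(n \ge 2\).

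Plugging these two facts into the equivalence in theorem~\ref{theoremOrderkFractional} yields the corollary. Since the main work is already carried out in that theorem, I do not expect any real obstacle: the sufficiency direction, for \(n \ge 2\), follows by repackaging the fractional Sobolev estimate, and the necessity direction, the failure at \(n=1\), is delivered for free by the converse half of theorem~\ref{theoremOrderkFractional}, so no concrete one-dimensional counterexample has to be exhibited. The only point worth emphasizing in the write-up is that \(n=1\) is a genuine exception because the rays \(\R\xi\) already cover all of \(\R\), which is exactly the algebraic reason the cancellation condition fails in that dimension.
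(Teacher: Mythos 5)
Your proposal is correct and is essentially the paper's own argument: the corollary is obtained by specializing theorem~\ref{theoremOrderkFractional} to \(k=1\), \(V=\R\), \(E=\R^n\), \(A(\xi)[v]=\xi v\), which is always elliptic and, by proposition~\ref{propositionGradientCanceling}, canceling exactly when \(n\ge 2\). The paper additionally remarks that an explicit one-dimensional counterexample can be produced from regularizations of a characteristic function, but this is not needed for the deduction.
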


The sufficiency part of corollary~\ref{corollarynne2} also follows from the inequality
\[
 \norm{u}_{B^{s, p}_1} \le C \norm{\nabla u}_{L^1}
\]
for every \(u \in C^\infty_c(\R^n)\)
obtained in the more general context of anisotropic Sobolev spaces \cite{Solonnikov1975}*{theorem 2} by V.\thinspace I.\thinspace Kolyada \cite{Kolyada}*{theorem 4} or the estimate
\[
 \norm{u}_{B^{s, p}_1} \le C \norm{\nabla u}_{L^1}^s \norm{u}_{L^\frac{d}{d-1}}^{1-s}
\]
obtained by A.\thinspace Cohen, W.\thinspace Dahmen, I.\thinspace Daubechies and R.\thinspace DeVore \cite{CDDD}*{theorem 1.4} (see also J.\thinspace Bourgain, H.\thinspace Brezis and P.\thinspace Mironescu \cite{BourgainBrezisMironescu}*{lemma D.2})
together with standard embeddings between Besov spaces and the identification of Besov spaces with fractional Sobolev--Slobo\-decki\u \i{} spaces \cite{Triebel}*{2.3.2(5), 2.3.5(3) and 2.5.7(9)}. A counterexample when \(n=1\) can be obtained by taking regularizations of a characteristic function \cite{SchmittWinkler}.

\subsubsection{Triebel--Lizorkin spaces}

Theorem \ref{theoremOrderk} also extends to Triebel--Lizorkin spaces, as it was already the case for the Hodge--Sobolev inequality \eqref{HodgeSobolevL1} \cite{VS2010}*{theorem 1}. In the scale of Triebel--Lizorkin spaces, we have

\begin{theorem}
\label{theoremTriebelLizorkin}
Let \(n \ge 1\) and let \(A(D)\) be a homogeneous linear differential operator of order \(k\) on \(\R^n\) from \(V\) to \(E\), let \(s \in (k-\frac{n}{n-1}, k)\) and \(p \in (1, \infty)\) be such that 
\(
 \frac{1}{p}-\frac{s}{n}=1-\frac{k}{n}
\),
and let \(q \in (0, \infty]\).
The estimate 
\[
  \norm{u}_{\dot{F}^{s}_{p, q}} \le C\norm{A(D) u}_{L^1},
\]
holds for every \( u \in C^\infty_c(\R^n; V)\) if and only if \(A(D)\) is elliptic and canceling.
\end{theorem}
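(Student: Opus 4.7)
The plan is to mirror the proof of theorem~\ref{theoremOrderk}, replacing the endpoint $L^1$--$L^{n/(n-1)}$ duality by its Triebel--Lizorkin analogue. For the sufficiency direction, given an elliptic canceling operator $A(D)$ of order $k$, I would first invoke proposition~\ref{propositionConstructionL} to construct a homogeneous differential operator $L(D)$ of some order $k'$ from $E$ to $F$ satisfying $\ker L(\xi) = A(\xi)[V]$ for every $\xi \in \R^n \setminus \{0\}$; since $A(D)$ is canceling, $L(D)$ is cocanceling. For $u \in C^\infty_c(\R^n;V)$ the field $f = A(D)u$ satisfies $L(D)f = 0$, and lemma~\ref{lemmaCharKalpha} allows one to write $f = \sum_{\abs{\alpha}=k'} K_\alpha(g_\alpha)$ with $g_\alpha = L_\alpha(f)$ and $\sum_\alpha \partial^\alpha g_\alpha = 0$. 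Granting a Triebel--Lizorkin version of proposition~\ref{propVSHigherOrder} stated in the next paragraph, this yields
\[
  \norm{A(D)u}_{\dot{F}^{s-k}_{p,q}} \le C \norm{A(D)u}_{L^1};
\]
the Calder\'on--Zygmund theory applied to the elliptic operator $A(D)$, which is bounded on the whole Triebel--Lizorkin scale for $p \in (1,\infty)$ and $q \in (0,\infty]$, then upgrades this to $\norm{u}_{\dot{F}^s_{p,q}} \le C \norm{A(D)u}_{L^1}$.

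The decisive ingredient is the fractional cocanceling estimate: if $(g_\alpha)_{\abs{\alpha}=k'} \subset L^1(\R^n)$ satisfies $\sum_\alpha \partial^\alpha g_\alpha = 0$ in the sense of distributions, then
\[
  \norm{g_\alpha}_{\dot{F}^{s-k}_{p,q}} \le C \sum_\beta \norm{g_\beta}_{L^1}
\]
whenever $\frac{1}{p} - \frac{s}{n} = 1 - \frac{k}{n}$, $s \in (k - \frac{n}{n-1}, k)$ and $q \in (0,\infty]$. Following the strategy indicated in the remarks after proposition~\ref{propVSHigherOrder} and in \cite{VS2010}, I would obtain this by performing a Littlewood--Paley decomposition of the $g_\alpha$: on each dyadic frequency annulus the cocancellation relation lets one trade a derivative of $g_\alpha$ in a chosen direction for derivatives of the remaining $g_\beta$, and a slicing argument in that direction, combined with H\"older's and Young's inequalities, controls the block by the $L^1$ mass of the family. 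The condition $s > k - \frac{n}{n-1}$ is exactly the threshold at which the resulting dyadic series still converges.

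The necessity direction is obtained by adapting propositions~\ref{propositionEllipticitySobolev} and \ref{propositionCancelingNecessary} to the $\dot{F}^s_{p,q}$ norm. The test fields $u_\lambda(x) = \varphi(\xi \cdot x)\psi(x/\lambda) v$ for $v \in \ker A(\xi)$ have the same $L^1$ scaling of $A(D)u_\lambda$ as in proposition~\ref{propositionEllipticitySobolev}, and a homogeneity computation of $\norm{u_\lambda}_{\dot{F}^s_{p,q}}$ under $\frac{1}{p} - \frac{s}{n} = 1 - \frac{k}{n}$ produces the same asymptotic mismatch, forcing $v = 0$; hence $A(D)$ must be elliptic. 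For cancellation, the sequence $u_\lambda$ built in proposition~\ref{propositionCancelingNecessary} from $e \in \bigcap_{\xi \ne 0} A(\xi)[V]$ converges to a locally integrable function whose $(k-1)$-th derivatives are nontrivially homogeneous of degree $-(n-1)$; such a function has infinite $\dot{F}^s_{p,q}$ norm under the scaling prescribed by the theorem, and Fatou yields the contradiction with the assumed uniform bound.

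The hardest step will be the fractional cocanceling estimate of the second paragraph. The version for $q = 1$ alone can be obtained from the scalar Besov endpoint of V.\thinspace I.\thinspace Kolyada invoked in corollary~\ref{corollarynne2}, but capturing the full range $q \in (0,\infty]$ and accommodating the vector-valued cocancellation relation requires combining the slicing argument with a molecular decomposition of $\dot{F}^{s-k}_{p,q}$ that keeps track of the correct dyadic summability. The restriction $s > k - \frac{n}{n-1}$ appears there as the precise endpoint at which such a decomposition remains compatible with an $L^1$ right-hand side.
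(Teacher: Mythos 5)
Your proposal follows essentially the same route as the paper: construct the cocanceling compatibility operator via proposition~\ref{propositionConstructionL}, invoke the Triebel--Lizorkin version of the cocanceling estimate (recorded in the paper as proposition~\ref{propositionCocancelingTriebel}), combine with elliptic multiplier theory for sufficiency, and adapt the scaling and Fatou/homogeneity arguments of propositions~\ref{propositionEllipticitySobolev} and \ref{propositionCancelingNecessary} for necessity. The only implementation differences are that in the cancellation step the paper applies the Fatou argument to $(-\Delta)^{s/2}u_\lambda$ rather than $D^{k-1}u_\lambda$, which places the critically homogeneous limit in $\dot{F}^0_{p,q}$ where its non-membership is a standard fact, and for small $q$ the paper deduces the estimate from the case $q>1$ by Jawerth-type embeddings rather than running the Littlewood--Paley argument directly for all $q\in(0,\infty]$.
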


We need the restriction \(s > k-\frac{n}{n-1}\) to prove the ellipticity. As discussed at the end of section~\ref{sectionNecessaryEllipticity}, the theorem fails for \(s \le k-2\). This raises the problem

\begin{openproblem}
Let \(n \ge 3\). Does theorem~\ref{theoremTriebelLizorkin} fail for \(s \in (k-2, k-\frac{n}{n-1}]\)?
\end{openproblem}

\subsubsection{Besov spaces}
The extension of the Hodge--Sobolev inequality in Besov spaces \citelist{\cite{MM2009}*{proposition 1}\cite{VS2010}*{theorem 1}} to homogeneous linear differential operators is

\begin{theorem}
\label{theoremBesov}
Let \(n \ge 1\) and let \(A(D)\) be a homogeneous linear differential operator of order \(k\) on \(\R^n\) from \(V\) to \(E\), let \(s \in (k-\frac{n}{n-1}, k)\) and \(p \in (1, \infty)\) be such that 
\(
 \frac{1}{p}-\frac{s}{n}=1-\frac{k}{n}
\),
and let \(q \in (1, \infty)\).
The estimate 
\[
  \norm{u }_{\dot{B}^{s}_{p, q}} \le C\norm{A(D) u}_{L^1},
\]
holds for every \( u \in C^\infty_c(\R^n; V)\) if and only if \(A(D)\) is elliptic and canceling.
\end{theorem}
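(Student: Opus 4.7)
The plan is to mirror the three-part strategy used for theorem~\ref{theoremOrderk} and theorem~\ref{theoremTriebelLizorkin}: construct the compatibility operator, prove a Besov-scale cocanceling estimate, and combine with a classical Besov elliptic estimate. The necessity will follow from scaling test fields.

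For sufficiency, assume $A(D)$ is elliptic and canceling. By proposition~\ref{propositionConstructionL} there exists a homogeneous linear differential operator $L(D)$ from $E$ to $F$ with $\ker L(\xi) = A(\xi)[V]$ for every $\xi \in \R^n \setminus \{0\}$, and $L(D)$ is cocanceling because $A(D)$ is canceling. I would first establish a Besov refinement of theorem~\ref{theoremCocanceling}: if $L(D)$ is cocanceling and $f \in L^1(\R^n; E)$ satisfies $L(D)f=0$, then
\[
  \norm{f}_{\dot{B}^{s-k}_{p,q}} \le C \norm{f}_{L^1},
\]
where the parameter relation $\frac{1}{p} - \frac{s-k}{n} = 1$ ensures correct scaling. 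Via lemma~\ref{lemmaCharKalpha} this reduces to a Besov strengthening of proposition~\ref{propVSHigherOrder} for data $f_\alpha$ with $\sum_{\abs{\alpha}=k}\partial^\alpha f_\alpha = 0$. The proof of this strengthening follows the slicing argument already used for the Triebel--Lizorkin version in theorem~\ref{theoremTriebelLizorkin}, now combined with the Littlewood--Paley characterization of $\dot{B}^{s-k}_{p,q}$ and the $\ell^q$-summability over dyadic frequency annuli. Applied to $f = A(D)u$ and combined with the Besov elliptic estimate
\[
  \norm{u}_{\dot{B}^s_{p,q}} \le C\norm{A(D)u}_{\dot{B}^{s-k}_{p,q}},
\]
which is the Besov analog of proposition~\ref{propCaldZygmundWk1p} and holds for $p, q \in (1, \infty)$ by Fourier multiplier theory on Besov spaces, this yields the desired inequality.

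For necessity, I would adapt the scaling arguments of proposition~\ref{propositionEllipticitySobolev} and proposition~\ref{propositionCancelingNecessary}. Under the dilation $u \mapsto u(\cdot/\lambda)$, the seminorm $\norm{u}_{\dot{B}^s_{p,q}}$ scales with dimension $n/p - s = n - k$, identical to that of $\norm{D^{k-1}u}_{L^{n/(n-1)}}$, so the same test-field families yield identical obstructions. For ellipticity, fix $\xi \neq 0$ and $v \in \ker A(\xi)$, set $u_\lambda(x) = \varphi(\xi \cdot x)\psi(x/\lambda)v$, and extract the leading Besov contribution from $\psi(x/\lambda)\varphi^{(k-1)}(\xi \cdot x) B(\xi)[v]$ for any operator $B(D)$ of order $k-1$; comparing with $\norm{A(D)u_\lambda}_{L^1}$ as $\lambda \to \infty$ forces $B(\xi)[v] = 0$, and varying $B$ yields $v = 0$. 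For cancellation, given $e \in \bigcap_\xi A(\xi)[V]$ construct $u_\lambda$ as in proposition~\ref{propositionCancelingNecessary} with $A(D)u_\lambda = (\psi_\lambda - \psi_{1/\lambda})e$; any nonzero pointwise limit $u^\alpha$ of $\partial^\alpha u_\lambda$ (for $\abs{\alpha}$ close to $k$) is homogeneous of the critical degree, so its $\dot{B}^s_{p,q}$-seminorm is infinite, contradicting the estimate via Fatou. An alternative is to chain the hypothesis with an embedding into $L^{n/(n-1)}$ at the $\dot{W}^{k-1}$-level and invoke theorem~\ref{theoremOrderk} directly.

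The main obstacle is the Besov cocanceling estimate: the slicing/duality argument of proposition~\ref{propVSHigherOrder} must be upgraded to $\dot{B}^{-k}_{p,q}$ uniformly in $q$, and the Besov $\ell^q$-structure is more delicate than the Triebel--Lizorkin one since one cannot freely exchange the $\ell^q$-sum with $L^p$-integration. A natural shortcut is to first prove the Triebel--Lizorkin version (theorem~\ref{theoremTriebelLizorkin}) and then derive the Besov inequality from the Franke--Jawerth embeddings between Besov and Triebel--Lizorkin spaces of the same scaling dimension, which would present theorem~\ref{theoremBesov} essentially as a corollary of theorem~\ref{theoremTriebelLizorkin} together with standard embedding theorems, sidestepping the remaining technical difficulty.
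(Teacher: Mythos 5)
Your proposal is essentially correct and follows the same architecture the paper uses: construct the compatibility operator \(L(D)\) by proposition~\ref{propositionConstructionL}, obtain a Besov cocanceling estimate (proposition~\ref{propositionCocancelingBesov}), combine with a Besov analog of proposition~\ref{propCaldZygmundWk1p}, and prove the necessity of ellipticity and cancellation by the scaling arguments of propositions~\ref{propositionEllipticitySobolev} and~\ref{propositionCancelingNecessary}.

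Two remarks on the details you flagged. First, the ``main obstacle'' you identify---running the slicing argument of proposition~\ref{propVSHigherOrder} directly in the Besov scale---is indeed not viable: the paper explicitly observes that the slicing mechanism rests on a Fubini-type property that is only present in the Triebel--Lizorkin scale, so proposition~\ref{propositionCocancelingBesov} is \emph{deduced} from proposition~\ref{propositionVectorL1Fract} (equivalently proposition~\ref{propositionCocancelingTriebel}) via embeddings rather than proved by a Besov slicing. Your ``shortcut'' via Jawerth--Franke-type embeddings between \(\dot B\) and \(\dot F\) spaces of the same scaling dimension is therefore not a shortcut but the actual route, and it is fine. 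Second, for the necessity of ellipticity, your alternative of chaining the Besov hypothesis through a Sobolev embedding into \(L^{n/(n-1)}\) and invoking theorem~\ref{theoremOrderk} only works under an extra restriction on \(q\) (the paper notes precisely that this deduction would impose a condition of the form \(r\le q\) that does not appear in the direct argument); so you should rely on your primary route, the direct scaling computation, which covers the full range \(q\in(1,\infty)\).
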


In the case \(q=\infty\), the ellipticity alone is necessary and sufficient (see proposition~\ref{propositionLimitingBesov}). When \(q=1\), the ellipticity and the cancellation are necessary, but as for the Hodge--Sobolev estimate \cite{VS2010}*{open problem 1} we do not know whether they are sufficient:

\begin{openproblem}
\label{problemBesov}
Let \(k \ge n\) and \(A(D)\) be a homogeneous linear differential operator of order \(k\) on \(\R^n\) from \(V\) to \(E\). Assume that \(A(D)\) is elliptic and canceling and that \(s \in (k-n, n)\) and \( p \in (1, \infty)\) satisfy \(\frac{1}{p}-\frac{s}{n}=1-\frac{k}{n}\). Does one have for every \(u \in C^\infty_c(\R^n; V)\), 
\[
  \norm{u}_{\dot{B}^s_{p, 1}} \le C \norm{A(D)u}_{L^1}?
\]
\end{openproblem}

The answer is positive in the scalar case \(V=\R\) \cite{Kolyada}*{corollary 1}.
The question is already open for the Hodge--Sobolev inequality \cite{VS2010}*{open problem 1}.

\subsubsection{Lorentz spaces}
Finally, in the framework of Lorentz spaces, we have, as for the Hodge--Sobolev estimate \cite{VS2010}*{theorem 3}
\begin{theorem}
\label{theoremLorentz}
Let \(n \ge 2\) and let \(A(D)\) be a homogeneous linear differential operator of degree \(k\) on \(\R^n\) from \(V\) to \(E\)  and \(q \in (1, \infty) \).
The estimate 
\[
  \norm{D^{k-1}u }_{L^{n/(n-1), q}} \le C\norm{A(D) u}_{L^1},
\]
holds for every \( u \in C^\infty_c(\R^n; V)\) if and only if \(A(D)\) is elliptic and canceling.
\end{theorem}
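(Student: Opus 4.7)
The plan is to mirror the scheme used for theorem~\ref{theoremOrderk}, replacing the Lebesgue target \(L^{n/(n-1)}\) by its Lorentz refinement \(L^{n/(n-1),q}\) at every step. For the sufficiency, I would first invoke proposition~\ref{propositionConstructionL} to produce a homogeneous operator \(L(D)\) from \(E\) to some \(F\) with \(\ker L(\xi)=A(\xi)[V]\) for each \(\xi \in \R^n \setminus \{0\}\); the cancellation of \(A(D)\) then forces \(L(D)\) to be cocanceling. Applied to \(f=A(D)u \in L^1\), which satisfies \(L(D)f=0\) by construction, the task reduces to proving a Lorentz-space version of theorem~\ref{theoremCocanceling}, namely
\[
  \norm{f}_{\dot{W}^{-1,n/(n-1),q}} \le C \norm{f}_{L^1}
\]
for all \(L^1\) fields with \(L(D)f=0\). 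Combining this with a Lorentz variant of the elliptic estimate in proposition~\ref{propCaldZygmundWk1p} (i.e.\ Calder\'on--Zygmund theory on the Lorentz scale, which is standard since the singular integral representation of \(D^{k-1}\) in terms of \(A(D)\) is bounded on every \(L^{p,q}\) with \(1<p<\infty\)) yields the desired estimate.

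The Lorentz cocanceling estimate itself would be obtained along the lines of proposition~\ref{propCocancelingSufficient}: by the algebraic lemma~\ref{lemmaCharKalpha}, one reduces to the scalar case, i.e.\ to the Lorentz refinement of proposition~\ref{propVSHigherOrder}
\[
  \int_{\R^n} f_\alpha \varphi \le C \norm{f}_{L^1} \norm{D\varphi}_{L^{n,q'}}
\]
valid for \(\sum_{\abs{\alpha}=k} \partial^\alpha f_\alpha=0\), with \(q'\) the conjugate Lorentz exponent. This is precisely the kind of refinement that Bourgain and Brezis's techniques \cite{BB2007} are known to produce (the paper already flags this, via the strengthening that appears as theorem~\ref{theoremVectorL1BB} and via the Hodge--Lorentz counterpart \cite{VS2010}*{theorem 3}); transporting those proofs from the divergence/Hodge setting to the pure \(k\)-th order divergence is a matter of running the same slicing / Littlewood--Paley argument with the Lorentz norm in place of \(L^{n/(n-1)}\).

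For the necessity of ellipticity I would simply run through the scaling argument of proposition~\ref{propositionEllipticitySobolev} with the test fields \(u_\lambda(x)=\varphi(\xi \cdot x)\psi(x/\lambda)v\), observing that for a function of the form \(\psi(x/\lambda)\Phi(\xi \cdot x)\) concentrated on a slab of width \(\lambda^{-1}\) the Lorentz (quasi)-norm scales as its \(L^p\) counterpart up to a universal constant, so that the same power of \(\lambda\) arises on both sides and forces \(v \in \ker B(\xi)\) with \(B(D)=D^{k-1}\). For the necessity of cancellation I would replay the construction of proposition~\ref{propositionCancelingNecessary} with \(\ell=k-1\): the limiting object \(u^\alpha\) is homogeneous of degree \(-(n-1)\), hence belongs to \(L^{n/(n-1),\infty} \setminus L^{n/(n-1),q}\) for every \(q<\infty\), so the Fatou step now gives
\[
\liminf_{\lambda \to \infty} \norm{\partial^\alpha u_\lambda}_{L^{n/(n-1),q}} = \infty,
\]
contradicting the uniform bound coming from the assumed estimate together with \eqref{inequlambdaL1}; the rest of the argument (identifying the weak limit of \(A(D)u_\lambda\) with \(\delta_0 e\)) is unchanged.

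The main obstacle is the cocanceling estimate in Lorentz spaces: neither the elementary slicing proof of proposition~\ref{propVSHigherOrder} nor its fractional Sobolev--Slobodecki\u\i{} variant gives the Lorentz refinement for arbitrary \(q \in (1,\infty)\) for free. The right route is the Littlewood--Paley / atomic-decomposition machinery of Bourgain and Brezis, adapted as in \cite{VS2010} for the Hodge operator; once that ingredient is in place the remaining steps (construction of \(L(D)\), algebraic reduction, Calder\'on--Zygmund on Lorentz) are routine.
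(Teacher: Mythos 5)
Your proposal reproduces the paper's scheme step by step: construct a compatibility operator \(L(D)\) via Proposition~\ref{propositionConstructionL}, observe that cancellation of \(A(D)\) makes \(L(D)\) cocanceling, invoke a Lorentz refinement of the cocanceling estimate (the paper's Proposition~\ref{propCocancelingSufficientLorentz}), and close with Calder\'on--Zygmund theory on the Lorentz scale for sufficiency; rerun the slab-concentration scaling argument of Proposition~\ref{propositionEllipticitySobolev} for necessity of ellipticity (matching the paper's Proposition~\ref{propositionEllipticityLorentz}); and rerun the homogeneous-limit construction plus Fatou for necessity of cancellation, using that a nonzero \((-(n-1))\)-homogeneous function cannot lie in \(L^{n/(n-1),q}\) when \(q<\infty\) (matching Proposition~\ref{propositionCancelingNecessaryLorentz}). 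This is essentially identical to the paper's route.

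One attribution in your write-up is off and worth correcting, since it affects your assessment of where the real work lies. You describe the Lorentz cocanceling estimate as requiring ``the Littlewood--Paley / atomic-decomposition machinery of Bourgain and Brezis.'' In the paper that heavy machinery enters only for the strictly stronger result Theorem~\ref{theoremVectorL1BB} (control by the weaker norm \(\norm{A(D)u}_{L^1+\dot{W}^{-1,n/(n-1)}}\)), and the paper explicitly records (after Theorem~\ref{theoremOrderkBB}) that the Bourgain--Brezis approach is \emph{not} known to extend to other critical scales; if the Lorentz cocanceling estimate genuinely needed it, your route would run into an open problem. In fact Proposition~\ref{propCocancelingSufficientLorentz}, like its Triebel--Lizorkin analogue Proposition~\ref{propositionCocancelingTriebel}, is obtained by the lighter-weight adaptation of the elementary slicing argument from \cite{VS2010}, not from the Bourgain--Brezis construction. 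So your structure is sound, but the ``main obstacle'' you identify is milder than you suggest, and invoking Bourgain--Brezis there would be both unnecessary and, as the paper points out, unavailable.
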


Again, when \(q=\infty\), the ellipticity alone is necessary and sufficient (see proposition~\ref{propositionLimitingLorentz}). If \(q=1\), the ellipticity and the cancellation are necessary, but as for the Hodge--Sobolev estimate \cite{VS2010}*{open problem 2} we could not determine whether they are sufficient

\begin{openproblem}
\label{problemLorentz}
Let \(k \ge n\) and \(A(D)\) be a homogeneous linear differential operator of order \(k\) on \(\R^n\) from \(V\) to \(E\). Assume that \(A(D)\) is elliptic and canceling. Does one have for every \(u \in C^\infty_c(\R^n; V)\), 
\[
  \norm{D^{k-1}u}_{L^{\frac{n}{n-1}, 1}} \le C \norm{A(D)u}_{L^1}?
\]
\end{openproblem}

This property is true when one considers the gradient in Sobolev spaces for Lorentz spaces \cite{Alvino1977}.

\medbreak

Since
\[
 \norm{D^{k-n}u}_{L^\infty} \le C\norm{I_{n-(k-s)}}_{\dot{B}^{s}_{\frac{n}{k-s}, \infty}}\norm{u}_{\dot{B}^{s}_{\frac{n}{n-(k-s)}, 1}}
\]
and 
\[
  \norm{D^{k-n}u}_{L^\infty} \le C\norm{I_{n-(k-\ell)}}_{L^{\frac{n}{k-\ell}, \infty}}\norm{D^\ell u}_{L^{\frac{n}{n-(k-\ell)}, 1}},
\]
where \(I_\alpha\) is the Riesz potential of order \(\alpha \in (0, n)\) defined for \(x \in \R^n \setminus \{0\}\) by  \(I_\alpha(x)=\frac{\pi^{n/2}2^\alpha\Gamma(\alpha/2)}{\Gamma((n-\alpha)/2)\abs{x}^{n-\alpha}}\),
a positive answer to either open problem~\ref{problemBesov} or open problem~\ref{problemLorentz} would imply the estimate
\[
  \norm{D^{k-n} u}_{L^\infty} \le C \norm{A(D)u}_{L^1}.
\]
This motivates the problem

\begin{openproblem}
\label{problemLinfty}
Let \(k \ge n\) and \(A(D)\) be a homogeneous differential operator of order \(k\) on \(\R^n\) from \(V\) to \(E\). Assume that \(A(D)\) is elliptic and canceling. Does one have for every \(u \in C^\infty_c(\R^n; V)\), 
\begin{equation}
\label{ineqLinfty}
  \norm{D^{k-n} u}_{L^\infty} \le C \norm{A(D)u}_{L^1}?
\end{equation}
\end{openproblem}

The answer is positive in the scalar case: for every \(u \in C^\infty_c(\R^n)\), 
\[
  \norm{u}_{L^\infty} \le \norm{D^n u}_{L^1}.  
\]
A nontrivial vector example is given by the estimate 
\begin{equation}
\label{ineqLinftynablau}
 \norm{\nabla u}_{L^\infty} \le C \norm{\Delta \nabla u}_{L^1}
\end{equation}
for every \(u \in C^\infty_c(\R^2)\). This estimate was obtained by J.\thinspace Bourgain and H.\thinspace Brezis \citelist{\cite{BB2004}*{remark 5}\cite{BB2007}*{theorem 3}} (see also \citelist{\cite{VS2006BMO}*{corollary 4.9}\cite{BVS2007}*{theorem 2.1}}). For an alternative proof, note that
\begin{equation}
\begin{split}
 \nabla u &= (\nabla \Div) \Delta^{-2}  (\Delta \nabla u),\\
          &= \left(\begin{smallmatrix}
                \partial_1^2-\partial_2^2 & 2\partial_1 \partial_2 \\
                2\partial_1 \partial_2 & \partial_2^2-\partial_1^2 \\
             \end{smallmatrix}\right)
\Delta^{-2} (\Delta \nabla u).
\end{split}
\end{equation}
If \(G\) denotes the fundamental solution of \(\Delta^2\) in \(\R^2\), P.\thinspace Mironescu has shown that \(\partial_1^2G-\partial_2^2G\) and \(\partial_1 \partial_2 G\) are bounded \cite{Mironescu2010}*{proposition 1}. The estimate \eqref{ineqLinftynablau} then follows.

More generally, if \(n\) is even, one has 
\[
\begin{split}
 \nabla u &= (\nabla \Div) \Delta^{-\frac{n}{2}-1}  (\Delta^\frac{n}{2} \nabla u),\\
          &= \frac{1}{n-1} (n\nabla \Div -\Delta) \Delta^{-\frac{n}{2}-1}  (\Delta^\frac{n}{2} \nabla u).
\end{split}
\]
If \(G\) denotes the Green function of \(\Delta^{\frac{n}{2}+1}\) on \(\R^n\),  \(nD^2G-\Delta G \id \in L^\infty\) \cite{Mironescu2010}*{proposition 3}, and therefore 
\[
   \norm{u}_{L^\infty} \le C \norm{\Delta^\frac{n}{2} \nabla u}_{L^1}. 
\]
Also note that as noticed in remark~\ref{remarkCancelingLinfty}, canceling is not necessary for \eqref{ineqLinfty}.

\subsection{$L^1$ estimates and cocanceling operators}
In order to prove the fractional and Lorentz space estimates, we first extend the results of section~\ref{sectionCocanceling} concerning cocanceling operators

\begin{proposition}
\label{propositionVectorL1Fract}
Let \(L(D)\) be a homogeneous differential operator from \(E\) to \(F\), let \(s \in (0, 1)\) and \(p \in (1, \infty)\) be such that \(sp=n\). If \(L(D)\) is cocanceling,  \(f \in L^1(\R^n; E)\) and \(L(D)f=0\) in the sense of distributions, then for every \(\varphi \in C^\infty_c(\R^n; E)\), 
\[
  \int_{\R^n} f \cdot \varphi \le C \norm{f}_{L^1}\norm{\varphi}_{\dot{W}^{s,p}}.
\]
\end{proposition}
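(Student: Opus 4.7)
The plan is to mimic the proof of Proposition~\ref{propCocancelingSufficient}, replacing the critical norm \(\norm{D\varphi}_{L^n}\) by the fractional semi-norm \(\norm{\varphi}_{\dot{W}^{s,p}}\) at the same scaling \(sp = n\). Since \(L(D)\) is cocanceling, Lemma~\ref{lemmaCharKalpha} yields linear maps \(K_\alpha \in \Lin(F; E)\) with \(\abs{\alpha} = k\) satisfying \(\sum_{\abs{\alpha}=k} K_\alpha \circ L_\alpha = \id\). Setting \(g_\alpha = L_\alpha(f) \in L^1(\R^n)\), the hypothesis \(L(D)f = 0\) becomes the scalar relation \(\sum_{\abs{\alpha}=k} \partial^\alpha g_\alpha = 0\), and the identity \(f = \sum_\alpha K_\alpha(g_\alpha)\) reduces the vector inequality to a family of scalar ones via \(\int_{\R^n} f \cdot \varphi = \sum_{\abs{\alpha}=k} \int_{\R^n} g_\alpha \, K_\alpha^*(\varphi)\), with test function \(\psi = K_\alpha^*(\varphi)\).

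The central step is therefore a fractional scalar analogue of Proposition~\ref{propVSHigherOrder}: for \(g_\alpha \in L^1(\R^n)\) with \(\sum_{\abs{\alpha}=k} \partial^\alpha g_\alpha = 0\) and every \(\psi \in C^\infty_c(\R^n)\),
\[
  \int_{\R^n} g_\alpha \, \psi \le C \sum_{\abs{\beta}=k} \norm{g_\beta}_{L^1}\, \norm{\psi}_{\dot{W}^{s,p}}, \qquad sp = n.
\]
Fractional counterparts of this slicing estimate for specific operators were already indicated in \cite{BB2007}*{remark 11}, \cite{VS2004Divf}*{remark 5}, \cite{VS2006BMO}*{remark 4.2}, \cite{VS2008}*{remark 2} and \cite{VS2010}. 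I would prove it by adapting the slicing argument of \cite{VS2008}*{theorem 4}: for each multi-index \(\alpha\), choose a distinguished direction so that \(g_\alpha\) is written as a mixed partial derivative, along that direction, of a primitive whose restrictions to affine hyperplanes are controlled in \(L^1\); then pair this primitive with \(\psi\) via a one-dimensional trace inequality for \(\dot{W}^{s,p}\) in the critical regime \(sp = n\), and finally reassemble by Fubini--Tonelli and H\"older in the remaining variables.

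Once the scalar inequality is established, the conclusion is immediate, exactly as in the proof of Proposition~\ref{propCocancelingSufficient}: each \(K_\alpha^*\) is a fixed linear map between finite-dimensional spaces, hence bounded on \(\dot{W}^{s,p}\), so that \(\norm{K_\alpha^*(\varphi)}_{\dot{W}^{s,p}} \le C \norm{\varphi}_{\dot{W}^{s,p}}\); combining this with the obvious bound \(\norm{L_\alpha(f)}_{L^1} \le C \norm{f}_{L^1}\) and summing over \(\alpha\) yields the announced estimate.

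The main obstacle is the fractional slicing itself. In the \(L^n\) case, H\"older and Fubini on hyperplanes combine cleanly with the one-dimensional integration that eliminates the derivatives; the double integral defining \(\norm{\psi}_{\dot{W}^{s,p}}\) does not split so transparently across slices, and one must either perform the slicing at the level of mollified quantities and pass to the limit, or, as in \cite{VS2010}, bypass slicing by using the Littlewood--Paley characterization of \(\dot{W}^{s,p}\simeq \dot F^{s}_{p,2}\) and reducing the scalar estimate to pointwise bounds on convolutions of \(g_\alpha\) with smooth kernels at the critical scaling \(sp=n\). Either route should furnish the scalar inequality in a form strong enough to close the argument.
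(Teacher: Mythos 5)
Your reduction via Lemma~\ref{lemmaCharKalpha} to the scalar fractional analogue of Proposition~\ref{propVSHigherOrder} is exactly the paper's argument; the paper simply cites that fractional scalar estimate as already established in \cite{VS2008} (equation (4) and remark 2 there) rather than re-deriving it. Your sketch of the slicing argument and the Littlewood--Paley alternative are a fair account of how the cited estimate is obtained, so the proposal is correct and takes essentially the same route.
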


\begin{proof}
The proof is similar to the proof of proposition~\ref{propositionVectorL1Fract}, it relies on the counterpart of proposition~\ref{propVSHigherOrder} for fractional Sobolev--Slobodecki\u \i{} spaces~\cite{VS2008}*{(4)}.
\end{proof}

The cocancellation condition is here necessary (see the proof of proposition~\ref{propositionCocancelingNecessary}).

\medbreak

One can also use the same kind of arguments in order to obtain a counterpart of proposition~\ref{propVSHigherOrder} for Triebel--Lizorkin spaces \cite{VS2010}*{proof of proposition 2.1}. This shows that one can replace in the statement of proposition~\ref{propositionVectorL1Fract} \(\dot{W}^{s, p}(\R^n; E)\) by \(\dot{F}^{s}_{p, q}(\R^n; E)\) for every \(q \ge  1\). This can also be deduced from proposition~\ref{propositionVectorL1Fract} by standard embeddings between fractional spaces \cite{Triebel}*{theorem 2.7.1 and \S 5.2.5}:

\begin{proposition}
\label{propositionCocancelingTriebel}
Let \(L(D)\) be a homogeneous differential operator from \(E\) to \(F\), let \(s \in (0, 1)\) and \(p \in (1, \infty)\) be such that \(sp=n\) and let \(q \in [1, \infty]\). If \(L(D)\) is cocanceling,  \(f \in L^1(\R^n; E)\) and \(L(D)f=0\) in the sense of distributions, then for every \(\varphi \in C^\infty_c(\R^n; E)\), 
\[
  \int_{\R^n} f \cdot \varphi \le C \norm{f}_{L^1}\norm{\varphi}_{\dot{F}^s_{p,q}}.
\]
\end{proposition}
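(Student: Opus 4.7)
The plan is to deduce the estimate directly from Proposition~\ref{propositionVectorL1Fract} by means of a Sobolev-type embedding between homogeneous Triebel--Lizorkin spaces along the critical line \(s-n/p=0\). The key observation is that the hypothesis \(sp=n\) together with the identification \(\dot W^{s,p}=\dot F^s_{p,p}\) places the estimate at a scale where the third index \(q\) no longer affects the critical Sobolev behaviour once we allow ourselves to move along that line.

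First I would pick auxiliary exponents \(p_1\in(p,\infty)\) and \(s_1=n/p_1\), so that \(s_1 p_1 = n\) and \(s_1\in(0,s)\subset(0,1)\); the hypotheses of Proposition~\ref{propositionVectorL1Fract} are then met at the pair \((s_1,p_1)\). Applied to our \(f\) and any \(\varphi\in C^\infty_c(\R^n;E)\), that proposition yields
\[
 \int_{\R^n} f\cdot\varphi\le C\,\norm{f}_{L^1}\norm{\varphi}_{\dot W^{s_1,p_1}}=C\,\norm{f}_{L^1}\norm{\varphi}_{\dot F^{s_1}_{p_1,p_1}}.
\]

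Next I would invoke the embedding between homogeneous Triebel--Lizorkin spaces: because \(s-n/p=s_1-n/p_1=0\) and \(p<p_1\) strictly, one has the continuous inclusion
\[
 \dot F^{s}_{p,q}(\R^n;E)\hookrightarrow \dot F^{s_1}_{p_1,p_1}(\R^n;E),
\]
valid for any \(q\in[1,\infty]\) precisely because the shift in \(p\) is strict. Substituting this into the previous inequality produces the claimed bound
\[
 \int_{\R^n} f\cdot\varphi\le C'\,\norm{f}_{L^1}\norm{\varphi}_{\dot F^{s}_{p,q}}.
\]

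The only genuine obstacle is to pin down the precise form of the above embedding on the \emph{homogeneous} scale; this is provided by Triebel's standard monotonicity and lifting results cited in the remark preceding the statement, but one must check that the embedding constant is independent of \(q\) when \(p_1>p\). As an alternative, self-contained route one could avoid embeddings altogether and reprove Proposition~\ref{propVSHigherOrder} directly at the level of Triebel--Lizorkin norms via the slicing argument of \cite{VS2010}*{proof of proposition 2.1}, and then combine it with Lemma~\ref{lemmaCharKalpha} exactly as in the proof of Proposition~\ref{propCocancelingSufficient}; this path is longer but requires only the ingredients already assembled in the paper.
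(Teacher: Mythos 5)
Your proposal is correct and follows precisely the second route the paper indicates (deducing the Triebel--Lizorkin estimate from Proposition~\ref{propositionVectorL1Fract} via the Jawerth--Franke-type Sobolev embedding on the critical line, using $\dot W^{s_1,p_1}=\dot F^{s_1}_{p_1,p_1}$); the paper also mentions, as you do at the end, that one could instead adapt the slicing argument of \cite{VS2010} directly. Your worry about the embedding constant being independent of $q$ is immaterial here, since $q$ is fixed throughout and one only needs the constant to be finite for that $q$.
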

The cocancellation condition is still necessary for Triebel--Lizorkin spaces.

\medbreak

For Besov spaces, one has
\begin{proposition}
\label{propositionCocancelingBesov}
Let \(L(D)\) be a homogeneous differential operator on \(\R^n\) from \(E\) to \(F\), let \(s \in (0, 1)\) and \(p \in (1, \infty)\) be such that \(sp=n\) and let \(q \in (1, \infty]\). If \(L(D)\) is cocanceling,  \(f \in L^1(\R^n; E)\) and \(L(D)f=0\) in the sense of distributions, then for every \(\varphi \in C^\infty_c(\R^n; E)\), 
\[
  \int_{\R^n} f \cdot \varphi \le C \norm{f}_{L^1}\norm{\varphi}_{\dot{B}^s_{p,q}}.
\]
\end{proposition}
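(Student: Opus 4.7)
The proof follows the same pattern as that of proposition~\ref{propositionCocancelingTriebel}: reduce the vector-valued estimate to a scalar Besov counterpart of proposition~\ref{propVSHigherOrder}, and then combine it with the algebraic identity of lemma~\ref{lemmaCharKalpha}.

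The scalar ingredient, analogous to \cite{VS2008}*{(4)}, asserts that for \(f_\alpha \in L^1(\R^n)\) satisfying \(\sum_{\abs{\alpha}=k} \partial^\alpha f_\alpha = 0\) in the sense of distributions, and for \(s \in (0,1)\), \(p \in (1,\infty)\) with \(sp = n\), \(q \in (1, \infty]\), and \(\varphi \in C^\infty_c(\R^n)\),
\[
  \int_{\R^n} f_\alpha \, \varphi \le C \norm{f}_{L^1} \norm{\varphi}_{\dot{B}^s_{p,q}}.
\]
This can be obtained either directly, by adapting the slicing argument of \cite{VS2008} to Besov norms (which are of \(\ell^q(L^p)\) type on Littlewood--Paley pieces), or by deduction from the Triebel--Lizorkin estimate of proposition~\ref{propositionCocancelingTriebel} via standard embeddings between fractional spaces. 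When \(q \le p\), the generalized Minkowski inequality yields \(\dot{B}^s_{p,q} \hookrightarrow \dot{F}^s_{p,q}\), so the Besov case reduces to proposition~\ref{propositionCocancelingTriebel}. When \(q > p\), and hence \(q > n\) since \(p = n/s > n\), one invokes the Franke--Jawerth embedding \(\dot{B}^{n/p}_{p,q} \hookrightarrow \dot{F}^{n/q}_{q,q_1}\) for arbitrary \(q_1 \in [1, \infty]\), and applies proposition~\ref{propositionCocancelingTriebel} at the parameters \((n/q,q,q_1)\), which still lie on the critical Sobolev line and satisfy \(n/q \in (0,1)\).

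The second step mirrors the proof of proposition~\ref{propCocancelingSufficient}. By lemma~\ref{lemmaCharKalpha} there exist \(K_\alpha \in \Lin(F;E)\) such that \(\sum_{\abs{\alpha}=k} K_\alpha \circ L_\alpha = \id\). Setting \(g_\alpha = L_\alpha(f) \in L^1(\R^n; F)\), one has \(\sum_\alpha \partial^\alpha g_\alpha = L(D)f = 0\), and for every \(\varphi \in C^\infty_c(\R^n;E)\),
\[
  \int_{\R^n} f \cdot \varphi = \sum_{\abs{\alpha}=k} \int_{\R^n} g_\alpha \cdot K_\alpha^*(\varphi).
\]
Applying the scalar Besov estimate componentwise and using that each constant-coefficient map \(K_\alpha^*\) is bounded on \(\dot{B}^s_{p,q}\), one concludes
\[
  \int_{\R^n} f \cdot \varphi \le C \sum_{\abs{\alpha}=k} \norm{g_\alpha}_{L^1} \norm{K_\alpha^*(\varphi)}_{\dot{B}^s_{p,q}} \le C' \norm{f}_{L^1} \norm{\varphi}_{\dot{B}^s_{p,q}}.
\]

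The main difficulty is the scalar ingredient in the range \(q > p\). The Besov estimate at \(q = \infty\) is strictly stronger than its Triebel--Lizorkin counterpart, since \(\dot{F}^s_{p,\infty} \hookrightarrow \dot{B}^s_{p,\infty}\) with strict inclusion, so a direct embedding in the third index does not suffice; this is precisely where the Franke--Jawerth detour through a larger base exponent, or a direct slicing argument at the Besov level, becomes essential. The restriction \(q > 1\) in the statement is the natural range arising from this approach, while \(q = 1\) would follow a fortiori from the Triebel--Lizorkin case \(q = 1\) via the embedding \(\dot{B}^s_{p,1} \hookrightarrow \dot{F}^s_{p,1}\) given by Minkowski's inequality.
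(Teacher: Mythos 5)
Your argument follows essentially the same route as the paper's one-sentence proof---deduce the Besov estimate from proposition~\ref{propositionCocancelingTriebel} (or proposition~\ref{propositionVectorL1Fract}) by embeddings---and you correctly supply the details the paper leaves implicit: the elementary Minkowski embedding \(\dot{B}^s_{p,q} \hookrightarrow \dot{F}^s_{p,q}\) for \(q \le p\), and, for \(p < q < \infty\), Franke's embedding to a larger base exponent \(q\) that still lies on the critical line \(s_1 p_1 = n\), with the check that \(n/q \in (0,1)\) because \(q > p > n\). Your detour through the scalar reduction and lemma~\ref{lemmaCharKalpha} is superfluous here---proposition~\ref{propositionCocancelingTriebel} is already vector-valued, so you can apply it directly and then use the embedding on \(\varphi\)---but that only lengthens the argument and does not make it wrong.

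There is, however, a real gap in your treatment of \(q = \infty\). Franke's embedding \(\dot{B}^{s_0}_{p_0,p_1} \hookrightarrow \dot{F}^{s_1}_{p_1,q_1}\) requires the Besov third index to be at most the target integrability \(p_1 < \infty\), so it cannot absorb \(q = \infty\); and your proposed fallback, a ``direct slicing argument at the Besov level,'' is precisely what the paper's remark after the proposition says cannot work, since the slicing proof of proposition~\ref{propVSHigherOrder} rests on a Fubini-type property available only in Triebel--Lizorkin spaces, and the Nikol'ski\u\i{} space \(\dot{B}^s_{p,\infty}\) does not embed into any critical Triebel--Lizorkin space. The paper explicitly records the case \(q = \infty\) as open; the intended range, matching theorem~\ref{theoremBesov}, is \(q \in (1, \infty)\), and the ``\(\infty]\)'' in the printed statement is a slip. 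Your argument is complete on \((1,\infty)\), but the last paragraph should not have suggested that the endpoint \(q = \infty\) is within reach of either the Franke--Jawerth detour or a Besov-level slicing.
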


This proposition is deduced from proposition~\ref{propositionVectorL1Fract} or from proposition~\ref{propositionCocancelingTriebel}.
The case \(q=1\) is a consequence of the estimate 
\[
 \norm{\varphi}_{L^\infty} \le C\norm{\varphi}_{\dot{B}^s_{p,1}},
\]
the cocancellation condition is not necessary in this case (see proposition~\ref{propositionEllipticityBesov}). In the other cases, it is necessary.

The case \(q=\infty\) is open. The current arguments fail in this case because proposition~\ref{propVSHigherOrder} relies on a Fubini-type property that is only present in Triebel--Lizorkin spaces. Proposition~\ref{propVSHigherOrder} can thus only be proved in those spaces; the Nikol'ski\u\i{} spaces \(B^s_{p, \infty}\) do not embed in this scale of spaces.

We remark that a counterexample cannot be constructed by taking for \(\varphi\) a regularization of \(x \in \R^n \mapsto \log \abs{x}\)
\begin{proposition}
Let \(L(D)\) be a homogeneous differential operator of order \(k\) on \(\R^n\) from \(E\) to \(F\). If \(L(D)\) is cocanceling,  \(f \in L^1(\R^n; E)\) and \(L(D)f=0\) in the sense of distributions, then for every \(\varphi \in C^\infty_c(\R^n; E)\),  
\[
\int_{\R^n} f \cdot \varphi
\le C \norm{f}_{L^1}\sum_{\ell=1}^k \sup_{x \in \R^n}\abs{x}^\ell \abs{D^\ell \varphi(x)}.
\]
\end{proposition}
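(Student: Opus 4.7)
The plan is to reduce, via lemma~\ref{lemmaCharKalpha}, to a scalar divergence-type system of order~\(k\), and then establish the scalar analogue of the desired estimate by a dyadic slicing argument in the spirit of \cite{VS2008}.

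First, lemma~\ref{lemmaCharKalpha} furnishes linear maps \(K_\alpha \in \Lin(F;E)\), \(\abs{\alpha} = k\), with \(\sum_{\abs{\alpha}=k} K_\alpha \circ L_\alpha = \id_E\). Setting \(g_\alpha := L_\alpha f \in L^1(\R^n; F)\) and \(\psi_\alpha := K_\alpha^* \varphi \in C^\infty_c(\R^n; F)\), one has \(f = \sum_\alpha K_\alpha(g_\alpha)\), the constraint \(\sum_\alpha \partial^\alpha g_\alpha = L(D) f = 0\) in \(\mathcal{D}'(\R^n)\), and
\[
\int_{\R^n} f \cdot \varphi = \sum_{\abs{\alpha}=k} \int_{\R^n} g_\alpha \cdot \psi_\alpha,
\]
with \(\abs{D^\ell \psi_\alpha(x)} \le C \abs{D^\ell \varphi(x)}\) pointwise. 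The semi-norm appearing on the right-hand side of the claim is therefore preserved up to a multiplicative constant under the reduction.

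It then suffices to prove the following scalar estimate: if \((h_\beta)_{\abs{\beta}=k} \in L^1(\R^n; F)\) satisfies \(\sum_\beta \partial^\beta h_\beta = 0\) in \(\mathcal{D}'(\R^n)\), then for each \(\abs{\alpha} = k\) and every \(\psi \in C^\infty_c(\R^n; F)\),
\[
\bigabs{\int_{\R^n} h_\alpha \cdot \psi} \le C \sum_\beta \norm{h_\beta}_{L^1} \sum_{\ell=1}^k \sup_{x \in \R^n} \abs{x}^\ell \abs{D^\ell \psi(x)}.
\]
By proposition~\ref{propositionCocancelingHigherOrder} applied to the canonical scalar cocanceling operator, one already has \(\int h_\beta = 0\) for each \(\beta\). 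I would then attack this inequality through a dyadic-annular decomposition \(\psi = \sum_{j \in \mathbf{Z}} \chi_j \psi\) with \(\chi_j\) supported in \(\{2^{j-1} \le \abs{x} \le 2^{j+1}\}\). Rescaling by \(2^{-j}\) moves each piece to the fixed annulus \(\{1/2 \le \abs{y} \le 2\}\); the rescaling preserves both the constraint \(\sum \partial^\beta h_\beta = 0\) and the total \(L^1\)-norm, while the pointwise semi-norm of~\(\psi\) gives a uniform-in-\(j\) bound on the derivatives of the rescaled test function. An adaptation of the slicing argument of \cite{VS2008} applied on each rescaled piece, combined with subtraction of constant approximations of \(\psi\) on each annulus (which is harmless since \(\int h_\beta = 0\)), should yield a scale-invariant bound per dyadic scale.

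The main obstacle lies in summing over dyadic scales. Unlike the integrated semi-norm \(\norm{D\psi}_{L^n}\) underlying proposition~\ref{propVSHigherOrder}, the pointwise semi-norm \(\sum_\ell \sup_x \abs{x}^\ell \abs{D^\ell \psi}\) does not control \(\norm{\psi}_{L^\infty}\), as witnessed by a smooth truncation of \(\log\abs{x}\), which has finite semi-norm but unbounded sup-norm. A naive per-scale application of the scalar estimate thus produces terms of the same scale-invariant order at every dyadic scale, whose sum over \(j\) diverges logarithmically. To recover a finite bound one must exploit the divergence-type relation \(\sum \partial^\beta h_\beta = 0\) beyond the single zero-moment consequence, producing cancellations between neighboring dyadic scales so that the summation telescopes rather than accumulates. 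This refined slicing argument, which operates with an \(L^\infty\)-type semi-norm rather than the \(L^n\)-type one of \cite{VS2008}, is the technical heart of the proof.
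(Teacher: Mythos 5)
Your proposal does not close the gap you yourself identify, and the paper's actual proof avoids that difficulty entirely by a different mechanism. You correctly observe that the pointwise semi-norm \(\sum_\ell \sup_x\abs{x}^\ell\abs{D^\ell\varphi(x)}\) does not control \(\norm{\varphi}_{L^\infty}\) (a smooth truncation of \(\log\abs{x}\) is the standard witness), and that a naive per-annulus application of proposition~\ref{propVSHigherOrder} produces a logarithmically divergent sum once one subtracts local averages. You then say that a ``refined slicing argument'' with telescoping cancellations between neighboring scales ``is the technical heart of the proof'' --- but you leave that heart unbuilt. The subtraction of a single global constant, e.g.\ \(\varphi(0)\), does make the sum of subtracted contributions vanish by the zero-mean property, but then \(\abs{\varphi(x)-\varphi(0)}\) on the annulus of radius \(2^j\) picks up a factor comparable to \(\abs{j}\) against the given semi-norm, and \(\sum_j \abs{j}\norm{f\chi_j}_{L^1}\) need not be finite for general \(f\in L^1\). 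So as written this is a plan with a missing central step, not a proof.

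The paper proves the statement much more directly and without any decomposition. Using the maps \(K_\alpha\) from lemma~\ref{lemmaCharKalpha}, it forms the \emph{polynomial} potential
\[
  P(x)=\sum_{\abs{\alpha}=k}\frac{x^\alpha}{\alpha!}\,K_\alpha^*,
\]
which, by construction, satisfies \(L(D)^*P=\id\) on \(E\). Since \(L(D)f=0\) one has \(\int_{\R^n} f\cdot L(D)^*(P\varphi)=0\), hence
\[
  \int_{\R^n} f\cdot\varphi
  =\int_{\R^n} f\cdot\bigl[(L(D)^*P)\varphi - L(D)^*(P\varphi)\bigr],
\]
and the Leibniz rule shows that in the bracket the zeroth-order term cancels exactly, while the remaining terms involve \(\partial^{\alpha-\beta}P\), a polynomial of degree \(\abs{\beta}\le k\), paired with \(\partial^\beta\varphi\). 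This yields the pointwise bound
\[
  \bigabs{(L(D)^*P)(x)\varphi(x)-L(D)^*(P\varphi)(x)}
  \le C\sum_{\ell=1}^k\abs{x}^\ell\abs{D^\ell\varphi(x)},
\]
and the estimate follows from H\"older with \(\norm{f}_{L^1}\). No dyadic decomposition, no appeal to proposition~\ref{propVSHigherOrder}, and no summation issue ever arises: the cancellation you were trying to engineer between scales is obtained at a single stroke by the commutator structure. If you wish to rescue your approach, you would need to make the telescoping argument precise, and I expect you would end up rediscovering this commutator identity in disguise.
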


\begin{proof}
We extend the argument proposed in the case where \(L(D)\) is the divergence operator \cite{VS2006BMO}*{proposition 4.3}. 
Let \(K_\alpha\) be given by lemma~\ref{lemmaCharKalpha} and define \(P : \R^n \mapsto \Lin(E; F)\) for \(x \in \R^n\) by
\[
  P(x)=\sum_{\substack{\alpha \in \N^n\\ \abs{\alpha}=k}} \frac{x^\alpha}{\alpha!} K_\alpha^*.
\]
One has in view of \eqref{eqCharKalpha}, for every \(x \in \R^n\), 
\[
  \bigl(L(D)^*P\bigr)(x)=\sum_{\substack{\alpha \in \N^n\\ \abs{\alpha}=k}} \tfrac{\partial^\alpha x^\alpha}{\alpha !} L_\alpha^* \circ K_\alpha^* =\id.
\]
Therefore, since \(L(D)f=0\), 
\[
  \int_{\R^n} f \cdot \varphi=\int_{\R^n} f \cdot \bigl(L(D)^*P\bigr) [\varphi]
=\int_{\R^n} f \cdot \bigl((L(D)^*P)[\varphi] - L(D)^*(P[\varphi])\bigr).
\]
One concludes by noting that for every \(x \in \R^n\), 
\[
 \bigabs{\bigl(L(D)^*P\bigr)(x) [\varphi(x)] - \bigl(L(D)^*(P[\varphi])\bigr)(x)}\le C\sum_{\ell=1}^k \abs{x}^\ell \abs{D^\ell \varphi(x)}.\qedhere
\]
\end{proof}

The estimate of proposition~\ref{propCocancelingSufficient} becomes in the framework of Lorentz spaces

\begin{proposition}
\label{propCocancelingSufficientLorentz}
Let \(L(D)\) be a homogeneous differential operator from \(E\) to \(F\) and \(q \in [1, \infty)\). If \(L(D)\) is cocanceling,  \(f \in L^1(\R^n; E)\) and \(L(D)f=0\) in the sense of distributions, then for every \(\varphi \in C^\infty_c(\R^n; E)\), 
\[
  \int_{\R^n} f \cdot \varphi \le C \norm{f}_{L^1}\norm{D \varphi}_{L^{n, q}}.
\]
\end{proposition}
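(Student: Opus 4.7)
The plan is to mirror the proof of proposition~\ref{propCocancelingSufficient}, with proposition~\ref{propVSHigherOrder} replaced by its Lorentz-space counterpart. The starting point is the scalar statement: for every \(k \in \N\), every family \(f_\alpha \in L^1(\R^n)\) with \(\abs{\alpha}=k\) satisfying
\[
 \sum_{\substack{\alpha \in \N^n\\ \abs{\alpha}=k}} \partial^\alpha f_\alpha = 0
\]
in the sense of distributions, every \(q \in [1, \infty)\), and every \(\varphi \in C^\infty_c(\R^n)\),
\[
 \int_{\R^n} f_\alpha \varphi \le C\, \norm{f}_{L^1}\, \norm{D\varphi}_{L^{n, q}}.
\]
This is the Lorentz-space refinement of proposition~\ref{propVSHigherOrder} that can be obtained by the same slicing argument carried out directly in the Lorentz scale, exactly as in the adaptations to Triebel--Lizorkin spaces cited in the paragraph following proposition~\ref{propVSHigherOrder} (see in particular \cite{VS2010} and \cite{VS2008}).

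With that scalar estimate in hand, the reduction from the vectorial case is identical to the proof of proposition~\ref{propCocancelingSufficient}. Let \(L(D)=\sum_{\abs{\alpha}=k}\partial^\alpha L_\alpha\). By lemma~\ref{lemmaCharKalpha}, there exist \(K_\alpha \in \Lin(F; E)\) with \(\sum_{\abs{\alpha}=k} K_\alpha \circ L_\alpha = \id\). Setting \(g_\alpha = L_\alpha(f) \in L^1(\R^n; F)\), the assumption \(L(D)f=0\) reads \(\sum_{\abs{\alpha}=k} \partial^\alpha g_\alpha = 0\). Componentwise application of the scalar Lorentz estimate above to each of the finitely many scalar components of the \(g_\alpha\) then gives, for every \(\psi \in C^\infty_c(\R^n; F)\),
\[
 \int_{\R^n} g_\alpha \cdot \psi \le C\, \norm{f}_{L^1}\, \norm{D\psi}_{L^{n, q}}.
\]
For \(\varphi \in C^\infty_c(\R^n; E)\), writing \(f = \sum_\alpha K_\alpha(g_\alpha)\) and dualizing,
\[
\int_{\R^n} f \cdot \varphi = \sum_{\substack{\alpha \in \N^n\\ \abs{\alpha}=k}} \int_{\R^n} g_\alpha \cdot K_\alpha{}^*(\varphi) \le C \sum_{\substack{\alpha \in \N^n\\ \abs{\alpha}=k}} \norm{f}_{L^1}\, \norm{D K_\alpha{}^*\varphi}_{L^{n,q}} \le C'\, \norm{f}_{L^1}\, \norm{D\varphi}_{L^{n, q}},
\]
where the last inequality uses that each \(K_\alpha{}^*\) is a bounded linear map on a finite-dimensional space, so it commutes with differentiation and acts continuously on \(L^{n,q}\).

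The only genuine step is the scalar Lorentz inequality for \(f_\alpha\); once it is granted, the algebraic lemma~\ref{lemmaCharKalpha} does the rest without modification. The reason the argument stops at \(q < \infty\) is exactly the same obstruction that appears in proposition~\ref{propositionCocancelingBesov}: the slicing method produces a Fubini-type bound that interpolates well against \(L^{n,q}\) for finite \(q\) but not against \(L^{n, \infty}\), where the endpoint case would correspond to an \(L^\infty\) control that is known to fail in general.
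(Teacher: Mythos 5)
Your proof is correct and follows the route the paper implicitly intends: the paper states this proposition without a written proof, positioning it alongside propositions~\ref{propositionVectorL1Fract}, \ref{propositionCocancelingTriebel} and \ref{propositionCocancelingBesov}, all of which reduce to a scalar refinement of proposition~\ref{propVSHigherOrder} plus the algebraic reduction of lemma~\ref{lemmaCharKalpha}, exactly as you do. Your explanation of the obstruction at \(q=\infty\) is also consistent with the paper's own discussion after proposition~\ref{propositionCocancelingBesov}.

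One point worth flagging, though it does not invalidate the argument: the only nontrivial ingredient is the scalar Lorentz-space version of proposition~\ref{propVSHigherOrder}, and your justification of it is a little vague. For \(q\le n\) one has \(L^{n,q}\hookrightarrow L^n\), so the estimate is weaker than the \(L^n\) case and follows immediately from proposition~\ref{propVSHigherOrder}; the genuine content is the range \(q\in(n,\infty)\). Unlike the Triebel--Lizorkin case, the Lorentz scale is not obtained from the Sobolev--Slobodecki\u\i{} estimate by elementary embeddings (since \(L^{n,q}\) for \(q\neq 2\) is not a Triebel--Lizorkin space on the relevant diagonal), so one must either redo the slicing argument of \cite{VS2008} directly in the Lorentz scale or invoke \cite{VS2010}*{theorem 3} and its proof, as the paper does. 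Saying which of these you mean, and noting that for \(q\le n\) nothing new is needed, would tighten the argument.
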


Again the cocancellation condition is necessary if \(q > 1\) and the case \(q=\infty\) is open.

\subsection{Proofs of the Sobolev estimates}
The proof of the Sobolev estimates in the fractional and Lorentz spaces can be done as in section~\ref{sectionSobolev}.
First one note that the results in section~\ref{subsectionClassicalElliptic} extend to fractional Sobolev--Slobodecki\u \i{} spaces, Triebel--Lizorkin, Besov and Lorentz--Sobolev spaces by standard multiplier theorems adapted to these spaces \cite{Triebel}*{theorem 2.3.7}.

Our previous approach extends to fractional Sobolev--Slobodecki\u \i{} spaces: by using proposition~\ref{propositionVectorL1Fract} instead of \eqref{theoremCocanceling} and the counterpart of proposition~\ref{propCaldZygmundWk1p} in fractional Sobolev--Slobodecki\u \i{} spaces, we obtain the sufficiency part of theorem~\ref{theoremOrderkFractional}

\begin{proposition}
\label{propositionSufficientOrderkFractional}
Let \(A(D)\) be a homogeneous linear differential operator of order \(k\) on \(\R^n\) from \(V\) to \(E\) and let \(s \in (0, 1)\) and \(p \in (1, \infty)\) be such that 
\(
 \frac{1}{p}-\frac{s}{n}=1-\frac{1}{n}
\).
If \(A(D)\) is elliptic and canceling, then for every \( u \in C^\infty_c(\R^n; V)\),
\[
  \norm{D^{k-1}u }_{\dot{W}^{s, p}} \le C\norm{A(D) u}_{L^1}.
\]
\end{proposition}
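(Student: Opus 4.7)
The plan is to replay, step by step, the proof of proposition~\ref{propositionSufficient}, replacing each ingredient by its fractional counterpart. First, I would invoke proposition~\ref{propositionConstructionL} to produce a homogeneous linear differential operator \(L(D)\) on \(\R^n\) from \(E\) to \(F\) with \(\ker L(\xi) = A(\xi)[V]\) for every \(\xi \in \R^n \setminus \{0\}\); since \(A(D)\) is canceling, this \(L(D)\) is cocanceling. The identity \(L(\xi) \circ A(\xi) = 0\) gives \(L(D)\bigl(A(D)u\bigr) = 0\) in the sense of distributions, and \(A(D)u \in L^1(\R^n; E)\) because \(u \in C^\infty_c(\R^n; V)\).

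Next, I would invert the Sobolev exponent by duality. Set \(p'= p/(p-1)\) and \(\sigma = 1-s\); the hypothesis \(\tfrac{1}{p} - \tfrac{s}{n} = 1 - \tfrac{1}{n}\) rewrites as \(\tfrac{1}{p'} = \tfrac{1-s}{n}\), so \(\sigma p' = n\). I can then apply proposition~\ref{propositionVectorL1Fract} to \(f := A(D)u\) with the exponent pair \((\sigma, p')\), obtaining, for every \(\varphi \in C^\infty_c(\R^n; E)\),
\[
  \int_{\R^n} A(D)u \cdot \varphi \le C\, \norm{A(D)u}_{L^1}\, \norm{\varphi}_{\dot{W}^{1-s, p'}}.
\]
Passing to the supremum over \(\varphi\) and using the duality between \(\dot{W}^{1-s, p'}(\R^n; E)\) and \(\dot{W}^{s-1, p}(\R^n; E)\), this upgrades to
\[
  \norm{A(D)u}_{\dot{W}^{s-1, p}} \le C\, \norm{A(D)u}_{L^1}.
\]

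Finally, I would invoke the fractional version of the Calder\'on--Zygmund estimate of proposition~\ref{propCaldZygmundWk1p}: the symbol identity
\[
  \widehat{\partial^\alpha u}(\xi) = \frac{1}{2\pi i}\xi^{\alpha}\bigl(A(\xi)^* \circ A(\xi)\bigr)^{-1} \circ A(\xi)^* \bigl(\widehat{A(D)u}(\xi)\bigr) \qquad (\abs{\alpha}=k-1)
\]
expresses \(D^{k-1}u\) as a smooth homogeneous Fourier multiplier of degree \(0\) applied to \((-\Delta)^{-1/2} A(D)u\); since \(\dot{W}^{s, p}\) is stable under such multipliers (by the multiplier theorems alluded to at the end of the section), this gives
\[
  \norm{D^{k-1} u}_{\dot{W}^{s, p}} \le C\, \norm{A(D)u}_{\dot{W}^{s-1, p}}.
\]
Chaining the last two displayed inequalities yields the proposition. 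The only genuinely new technical point compared to proposition~\ref{propositionSufficient} is the multiplier estimate on \(\dot{W}^{s, p}\), and that is a standard off-the-shelf fact; the conceptual work of the proof is concentrated in the duality reformulation of proposition~\ref{propositionVectorL1Fract} in the second step.
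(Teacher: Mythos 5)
Your proposal is correct and follows exactly the route the paper sketches: replace theorem~\ref{theoremCocanceling} by proposition~\ref{propositionVectorL1Fract} (applied to \(f = A(D)u\), after constructing the cocanceling compatibility operator \(L(D)\) via proposition~\ref{propositionConstructionL}), and replace proposition~\ref{propCaldZygmundWk1p} by its fractional multiplier analogue. Your exponent bookkeeping (\(\sigma p' = n\) with \(\sigma = 1-s\), \(p' = p/(p-1)\)) and the duality upgrade from the bilinear bound to \(\norm{A(D)u}_{\dot{W}^{s-1,p}} \le C\norm{A(D)u}_{L^1}\) make explicit precisely what the paper leaves implicit in its one-sentence remark at the start of the subsection on proofs of the Sobolev estimates.
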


Similarly, in Triebel--Lizorkin spaces, one has the sufficiency part of theorem~\ref{theoremTriebelLizorkin}:

\begin{proposition}
\label{propositionSufficientTriebelLizorkin}
Let \(A(D)\) be a homogeneous linear differential operator of order \(k\) on \(\R^n\) from \(V\) to \(E\), let \(s \in (k-n, k)\) and \(p \in (1, \infty)\) be such that 
\(
 \frac{1}{p}-\frac{s}{n}=1-\frac{k}{n},
\)
and let \(q \in [1, \infty]\).
If \(A(D)\) is elliptic and canceling, then for every \( u \in C^\infty_c(\R^n; V)\),
\[
  \norm{u}_{\dot{F}^{s}_{p, q}} \le C\norm{A(D) u}_{L^1}.
\]
\end{proposition}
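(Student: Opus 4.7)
The plan is to follow the proof of Proposition~\ref{propositionSufficient}, substituting Triebel--Lizorkin ingredients for their $L^p$ counterparts. First, invoke Proposition~\ref{propositionConstructionL} to produce a homogeneous linear differential operator $L(D)$ on $\R^n$ from $E$ to a finite-dimensional space $F$ with $\ker L(\xi) = A(\xi)[V]$ for every $\xi \in \R^n \setminus \{0\}$; since $A(D)$ is canceling, $L(D)$ is cocanceling. For $u \in C^\infty_c(\R^n; V)$, set $f = A(D)u$: then $f \in L^1(\R^n; E)$ and $L(D)f = 0$.

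The first step handles the subrange $s \in (k-1, k)$. Put $s_0 = k - s \in (0, 1)$ and let $p_0$, $q_0$ be the H\"older conjugates of $p$, $q$ respectively. The scaling condition $\frac{1}{p} - \frac{s}{n} = 1 - \frac{k}{n}$ rewrites as $s_0 p_0 = n$, placing us in the range of validity of Proposition~\ref{propositionCocancelingTriebel}, which gives, for every $\varphi \in C^\infty_c(\R^n; E)$,
\[
  \int_{\R^n} f \cdot \varphi \le C \norm{f}_{L^1}\norm{\varphi}_{\dot{F}^{s_0}_{p_0, q_0}}.
\]
By the duality $(\dot{F}^{s_0}_{p_0, q_0})^* \simeq \dot{F}^{-s_0}_{p, q}$, this reads $\norm{f}_{\dot{F}^{s-k}_{p, q}} \le C \norm{f}_{L^1}$. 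Combining with the Triebel--Lizorkin analogue of Proposition~\ref{propCaldZygmundWk1p} (available from standard Fourier multiplier theorems on this scale) then gives $\norm{u}_{\dot{F}^s_{p, q}} \le C \norm{A(D)u}_{\dot{F}^{s-k}_{p, q}} \le C' \norm{A(D)u}_{L^1}$.

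The remaining range $s \in (k-n, k-1]$ is reached by embedding. All admissible pairs obey $s - n/p = k - n$, so they sit on a single line in the Sobolev diagram. Pick any $s_1 \in (k-1, k)$ with corresponding $p_1$ on that line: then $s_1 > s$ and $p_1 < p$, and the classical Sobolev embedding $\dot{F}^{s_1}_{p_1, q} \hookrightarrow \dot{F}^{s}_{p, q}$ in the homogeneous Triebel--Lizorkin scale reduces this case to the one already proved.

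The main delicate point is the duality step when $q \in \{1, \infty\}$: the identification $(\dot{F}^{s_0}_{p_0, q_0})^* \simeq \dot{F}^{-s_0}_{p, q}$ is classical for $1 < q_0 < \infty$ but requires a direct verification at the endpoints, which can be achieved by reading the bilinear estimate directly as a bound on the negative-index norm defined through Littlewood--Paley blocks (the endpoints $q = 1$ and $q = \infty$ are naturally dual within the range of Proposition~\ref{propositionCocancelingTriebel}). Apart from this bookkeeping, the argument is a mechanical transfer of the $L^p$ proof.
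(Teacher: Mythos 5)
Your proof follows essentially the same route as the paper: construct $L(D)$ via Proposition~\ref{propositionConstructionL}, feed $f = A(D)u$ into Proposition~\ref{propositionCocancelingTriebel}, dualize to obtain a $\dot{F}^{s-k}_{p,q}$-bound on $A(D)u$, apply the Triebel--Lizorkin counterpart of the elliptic estimate, and then extend by Sobolev embedding along the scaling line $s - n/p = k - n$. You are actually more explicit than the paper about the restriction $s \in (k-1,k)$ in the direct step, which is a worthwhile clarification of the paper's compressed phrasing. The one substantive deviation is the treatment of $q = 1$ (so $q_0 = \infty$): you propose a direct predual-duality argument and gesture at verifying it via Littlewood--Paley blocks, but this is more delicate than you acknowledge --- the pairing estimate over $C^\infty_c$ exhibits $f$ as a functional on the \emph{closure of test functions} in $\dot{F}^{s_0}_{p_0,\infty}$, and one must invoke that the dual of that closed subspace is $\dot{F}^{-s_0}_{p,1}$, which is not the naive duality statement. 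The paper sidesteps this entirely by routing the small-$q$ range through the embedding step, using that the Sobolev embedding $\dot{F}^{s_1}_{p_1,q_1} \hookrightarrow \dot{F}^{s}_{p,q}$ along the scaling line holds for \emph{arbitrary} source index $q_1$. Since you already have this embedding step in hand, the cleanest repair is simply to allow the source $q$-index in your second step to differ from the target: take, say, $q_1 = 2$ and a slightly larger $s_1 \in (\max(s,k-1),k)$, which reaches $q=1$ without touching any endpoint duality.
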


\begin{proof}
For \(q > 1\), the proof goes as the proof of proposition~\ref{propositionSufficient}, using proposition~\ref{propositionCocancelingTriebel} instead of theorem~\ref{theoremCocanceling} and the counterpart of proposition~\ref{propCaldZygmundWk1p} in Triebel--Lizorkin spaces. One can then treat the case \(q \in (0, 1]\) by embeddings between Triebel--Lizorkin spaces \citelist{\cite{Triebel}*{theorem 2.7.1}\cite{RunstSickel}*{proposition 2.2.3}}: if \(t \in (s, n)\) and \(r \in (1, \infty )\) are such that  \(\frac{1}{r}-\frac{u}{n}=1-\frac{k}{n}\) and \(u \in (0, \infty]\), 
then 
\[
\label{eqTriebelEmbedding}
 \norm{u}_{\dot{F}^{s}_{p, q}} \le C\norm{u}_{\dot{F}^{t}_{r, u}}.\qedhere
\]
\end{proof}

In the case of the Besov spaces, one has the sufficiency part of theorem~\ref{theoremBesov}

\begin{proposition}
\label{propositionSufficientBesov}
Let \(A(D)\) be a homogeneous linear differential operator of order \(k\) on \(\R^n\) from \(V\) to \(E\), let \(s \in (k-n, k)\) and \(p \in (1, \infty)\) be such that 
\(
 \frac{1}{p}-\frac{s}{n}=1-\frac{k}{n},
\)
and let \(q \in (1, \infty]\).
If \(A(D)\) is elliptic and canceling, then for every \( u \in C^\infty_c(\R^n; V)\),
\[
  \norm{u }_{\dot{B}^{s}_{p, q}} \le \norm{A(D) u}_{L^1}.
\]
\end{proposition}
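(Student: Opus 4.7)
The plan is to follow the scheme of Proposition~\ref{propositionSufficient} and its Triebel--Lizorkin counterpart Proposition~\ref{propositionSufficientTriebelLizorkin}, substituting Proposition~\ref{propositionCocancelingBesov} for Theorem~\ref{theoremCocanceling} and invoking the Besov analog of the classical elliptic regularity estimate in Proposition~\ref{propCaldZygmundWk1p} (which is available through standard Fourier multiplier theorems in the Besov scale \cite{Triebel}*{theorem 2.3.7}). First I would apply Proposition~\ref{propositionConstructionL} to produce a homogeneous linear differential operator \(L(D)\) on \(\R^n\) from \(E\) to \(F\) with \(\ker L(\xi) = A(\xi)[V]\) for every \(\xi \ne 0\); the cancellation of \(A(D)\) then translates into the cocancellation of \(L(D)\), and \(L(D)\bigl(A(D)u\bigr) = 0\) for every \(u \in C^\infty_c(\R^n; V)\).

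The core case to handle directly is \(s \in (k-1, k)\) with \(q \in (1, \infty)\). Here the Besov elliptic estimate yields \(\norm{u}_{\dot{B}^{s}_{p, q}} \le C \norm{A(D)u}_{\dot{B}^{s-k}_{p, q}}\), and since \(p, q \in (1, \infty)\) the space \(\dot{B}^{s-k}_{p, q}\) is identified (up to isomorphism) with the dual of \(\dot{B}^{k-s}_{p', q'}\), in which \(C^\infty_c\) is dense because \(p', q' < \infty\). Setting \(\sigma = k - s \in (0, 1)\), the scaling hypothesis \(\frac{1}{p} - \frac{s}{n} = 1 - \frac{k}{n}\) rearranges to \(\sigma p' = n\), and with \(q' \in (1, \infty)\) the hypotheses of Proposition~\ref{propositionCocancelingBesov} are met, yielding
\[
 \int_{\R^n} A(D)u \cdot \varphi \le C \norm{A(D)u}_{L^1}\, \norm{\varphi}_{\dot{B}^{k-s}_{p', q'}}
\]
for every \(\varphi \in C^\infty_c(\R^n; E)\). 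Taking the supremum over \(\varphi\) in the unit ball of \(\dot{B}^{k-s}_{p', q'}\) and combining with the elliptic estimate produces the desired inequality in this range.

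To cover the remaining cases I would invoke Sobolev-type embeddings in the Besov scale. For \(s \in (k-n, k-1]\) with \(q \in (1, \infty)\), picking \(s_0 \in (k-1, k)\) and \(p_0\) such that \(\frac{1}{p_0} - \frac{s_0}{n} = 1 - \frac{k}{n}\), the continuous embedding \(\dot{B}^{s_0}_{p_0, q} \hookrightarrow \dot{B}^{s}_{p, q}\) (which preserves the Sobolev scaling) transfers the estimate from the core case to the full range of \(s\). The endpoint \(q = \infty\) follows from the trivial monotonicity embedding \(\dot{B}^{s}_{p, q_0} \hookrightarrow \dot{B}^{s}_{p, \infty}\) for any finite \(q_0 > 1\). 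The main obstacle will be the duality identification \(\dot{B}^{s-k}_{p, q} \simeq (\dot{B}^{k-s}_{p', q'})^*\), which requires \(p', q' < \infty\); this is precisely what confines the direct argument to \(q \in (1, \infty)\) and forces the endpoint \(q = \infty\) and the range \(s \in (k-n, k-1]\) to be reached through embedding rather than directly.
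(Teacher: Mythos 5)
Your proof follows essentially the same route as the paper: the paper sketches Proposition~\ref{propositionSufficientBesov} by referring to the Triebel--Lizorkin argument of Proposition~\ref{propositionSufficientTriebelLizorkin} (elliptic estimate in the Besov scale, then Proposition~\ref{propositionCocancelingBesov} via duality, then Besov embeddings to widen the range of \(s\)), and your core-case computation and extension via the Sobolev-scaling embedding \(\dot{B}^{s_0}_{p_0, q} \hookrightarrow \dot{B}^{s}_{p, q}\) with \(q\) held fixed is exactly the point the paper flags when it says the Triebel--Lizorkin embedding \eqref{eqTriebelEmbedding} ``only holds if \(u \le q\).'' One small discrepancy worth noting: for \(q = \infty\) the paper does not appeal to monotonicity in \(q\) but to Proposition~\ref{propositionLimitingBesov}, which gives a stronger statement (ellipticity alone suffices, no cancellation needed); your monotonicity argument \(\dot{B}^{s}_{p, q_0}\hookrightarrow\dot{B}^{s}_{p, \infty}\) nevertheless yields the stated inequality, it just obscures the fact that the \(q=\infty\) endpoint is genuinely easier. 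Also, your final explanation attributes the restriction \(q < \infty\) to the duality identification requiring \(q' < \infty\); the more precise obstruction is that Proposition~\ref{propositionCocancelingBesov} is only stated for \(q' \in (1,\infty]\), so \(q' = 1\) (i.e.\ \(q=\infty\)) would require the separate, trivial \(L^\infty\) argument that the paper mentions. This is cosmetic and does not affect the validity of the proof.
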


When \(q=\infty\), the ellipticity alone is sufficient (proposition~\ref{propositionLimitingBesov}). 
The proof is similar to the proof of proposition~\ref{propositionSufficientTriebelLizorkin}, except that the counterpart of \eqref{eqTriebelEmbedding} only holds if \(u \le q\). 

\medbreak

Finally, we have in Lorentz spaces

\begin{proposition}
\label{propositionLorentz}
Let \(A(D)\) be a homogeneous linear differential operator of degree \(k\) on \(\R^n\) from \(V\) to \(E\)  and \(q \in (1, \infty) \).
If \(A(D)\) is elliptic and canceling, then for every \( u \in C^\infty_c(\R^n; V)\),
\[
  \norm{D^{k-1}u }_{L^{n/(n-1), q}} \le \norm{A(D) u}_{L^1},
\]
\end{proposition}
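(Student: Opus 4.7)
The plan is to mirror the proof of proposition~\ref{propositionSufficient}, with the Lorentz scale substituted throughout. Three ingredients are needed: the construction of a compatibility operator (proposition~\ref{propositionConstructionL}), the Lorentz counterpart of the cocanceling estimate (proposition~\ref{propCocancelingSufficientLorentz}), and a Lorentz version of the classical elliptic estimate of proposition~\ref{propCaldZygmundWk1p}.

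First I would invoke proposition~\ref{propositionConstructionL}: since \(A(D)\) is elliptic, there exists a homogeneous linear differential operator \(L(D)\) from \(E\) to some finite-dimensional space \(F\) such that \(\ker L(\xi) = A(\xi)[V]\) for every \(\xi \in \R^n \setminus \{0\}\). Because \(A(D)\) is canceling by hypothesis, \(\bigcap_{\xi \in \R^n \setminus \{0\}} \ker L(\xi) = \bigcap_{\xi \in \R^n \setminus \{0\}} A(\xi)[V] = \{0\}\), so \(L(D)\) is cocanceling. For any \(u \in C^\infty_c(\R^n; V)\) we have \(L(D)(A(D)u) = 0\) pointwise, and \(A(D)u \in L^1(\R^n; E)\).

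Next, let \(q' \in (1, \infty)\) denote the H\"older conjugate of \(q\). Applying proposition~\ref{propCocancelingSufficientLorentz} to \(f = A(D)u\) with the exponent \(q'\) in place of \(q\) yields, for every \(\psi \in C^\infty_c(\R^n; E)\),
\[
 \int_{\R^n} A(D)u \cdot \psi \le C \norm{A(D)u}_{L^1} \norm{D \psi}_{L^{n, q'}}.
\]
By the duality \((L^{n, q'})^* = L^{n/(n-1), q}\) for \(q \in (1, \infty)\), this is precisely the statement that \(A(D)u\) defines a continuous linear functional on the homogeneous Lorentz--Sobolev space \(\dot W^{1, n, q'}(\R^n; E)\) with norm controlled by \(\norm{A(D)u}_{L^1}\); equivalently,
\[
 \norm{A(D)u}_{\dot W^{-1, n/(n-1), q}} \le C \norm{A(D)u}_{L^1}.
\]

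Finally I would appeal to the Lorentz analogue of proposition~\ref{propCaldZygmundWk1p}. The representation
\[
 \widehat{\partial^\alpha u}(\xi) = \frac{1}{2\pi i}\, \xi^\alpha \bigl(A(\xi)^* \circ A(\xi)\bigr)^{-1}\circ A(\xi)^* \bigl[\widehat{A(D)u}(\xi)\bigr]
\]
for \(\abs{\alpha}=k-1\) expresses each \(\partial^\alpha u\) as a zero-order Fourier multiplier applied to \((-\Delta)^{-1/2} A(D)u\); the multiplier symbol being smooth and bounded on \(\R^n \setminus \{0\}\), the standard multiplier theorem on Lorentz--Sobolev spaces (a direct consequence of the boundedness of Calder\'on--Zygmund operators on \(L^{p, q}\) for \(1 < p < \infty\) and \(1 \le q \le \infty\)) delivers
\[
 \norm{D^{k-1} u}_{L^{n/(n-1), q}} \le C \norm{A(D) u}_{\dot W^{-1, n/(n-1), q}}.
\]
Combining this with the previous display yields the desired estimate. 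I expect no serious obstacle: each of the three ingredients is already available or a routine adaptation, and the only point to verify carefully is that the multiplier theorem cited in the last step indeed holds on Lorentz--Sobolev spaces \(\dot W^{-1, p, q}\) for the full range \(q \in (1, \infty)\), which follows from the real interpolation identity \((\dot W^{-1, p_0}, \dot W^{-1, p_1})_{\theta, q} = \dot W^{-1, p, q}\) together with proposition~\ref{propCaldZygmundWk1p}.
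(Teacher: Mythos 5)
Your proposal is correct and follows the same three-step route the paper indicates: construct the compatibility operator \(L(D)\) via proposition~\ref{propositionConstructionL}, apply proposition~\ref{propCocancelingSufficientLorentz} (with the conjugate exponent) to get the dual estimate \(\norm{A(D)u}_{\dot W^{-1, n/(n-1), q}} \le C\norm{A(D)u}_{L^1}\), and close with the Lorentz version of the elliptic estimate of proposition~\ref{propCaldZygmundWk1p}. The paper leaves the Lorentz case implicit (it only writes out the Triebel--Lizorkin argument in detail), so your filling in of the duality and interpolation justifications is a welcome elaboration rather than a deviation.
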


\subsection{Necessity of the ellipticity}

The proof of proposition~\ref{propositionEllipticitySobolev} applies to fractional spaces and yields
\begin{proposition}
\label{propositionEllipticitySobolevFractional}
Let \(A(D)\) be a homogeneous differential operator on \(\R^n\)  of order \(k\) from \(V\) to \(E\).
Let \(s \in (k-\frac{n}{n-1}, k)\), \(p \ge  1\) and \(q > 1\) be such that 
\(
  \frac{1}{q}-\frac{s}{n}=\frac{1}{p}-\frac{k}{n}
\).
If for every \(u \in C^\infty_c(\R^n; V)\), 
\[
 \norm{u}_{\dot{W}^{s, q}} \le C \norm{A(D) u}_{L^p},
\]
then \(A(D)\) is elliptic.
\end{proposition}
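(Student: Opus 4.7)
The plan is to mirror the proof of Proposition~\ref{propositionEllipticitySobolev}, carrying it over to the fractional Sobolev--Slobodecki\u\i{} setting. Suppose, for contradiction, that \(A(D)\) is not elliptic, and choose \(\xi \in \R^n \setminus \{0\}\) and \(v \in \ker A(\xi)\) with \(v \ne 0\). Take \(\varphi \in C^\infty_c(\R)\) supported in \((-1, 1)\) and \(\psi \in C^\infty_c(\R^n)\) with \(\psi \not\equiv 0\) on \(H = \{x \st \xi \cdot x = 0\}\), and define the same trial family
\[
  u_\lambda(x) = \varphi(\xi \cdot x)\,\psi(x/\lambda)\,v
\]
used in the proof of Proposition~\ref{propositionEllipticitySobolev}. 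The upper bound there applies verbatim: since \(A(\xi)[v]=0\), every surviving term in \(A(D) u_\lambda\) carries at least one derivative of \(\psi(\cdot/\lambda)\) and hence a factor \(\lambda^{-1}\), while its support is a slab of volume \(O(\lambda^{n-1})\), giving
\[
  \norm{A(D) u_\lambda}_{L^p} = O\bigl(\lambda^{(n-1)/p - 1}\bigr) \quad \text{as } \lambda \to \infty.
\]

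The new step is the matching lower bound \(\norm{u_\lambda}_{\dot W^{s,q}} \ge c\,\abs{v}\,\lambda^{(n-1)/q}\) for large \(\lambda\) and some \(c > 0\). The heuristic: on the slab \(\{\abs{x/\lambda} \le 1/2\}\) of transverse volume \(\sim \lambda^{n-1}\) the factor \(\psi(x/\lambda)\) is essentially constant, so \(u_\lambda\) inherits the full fractional regularity of \(\varphi(\xi\cdot x)\) in the direction of \(\xi\) while being essentially constant in the \(n-1\) orthogonal directions, and the fractional seminorm picks up the transverse \((n-1)\)-dimensional volume. When \(s \in (0,1)\) this is made rigorous by retaining in the Slobodecki\u\i{} double integral only the pairs \((x, y)\) with \(\abs{x-y} \le 1\) and both lying in the body of the cut-off: the integrand then factorises into a one-dimensional Slobodecki\u\i{} integrand for \(\varphi\) in the \(\xi\)-direction times a nearly constant transverse factor, which integrates to the claimed order. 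For \(s \ge 1\), the same scaling is obtained by applying the argument to \(D^{\lfloor s\rfloor} u_\lambda\) in \(\dot W^{s - \lfloor s\rfloor, q}\), or equivalently via the \((-\Delta)^{s/2}\) characterisation together with a Kato--Ponce-type commutator estimate showing that, to leading order, \((-\Delta)^{s/2}\) acts on the oscillatory factor \(\varphi(\xi\cdot x)\) while treating the slowly varying cut-off \(\psi(\cdot/\lambda)\) as a constant (the remainder carrying an extra factor \(\lambda^{-s}\) and thus being of strictly lower order).

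Combining the two estimates under the hypothesised inequality and using \(\tfrac{1}{p} - \tfrac{1}{q} = \tfrac{k - s}{n}\) yields \(\abs{v} \le C \lambda^{(n-1)(k-s)/n - 1}\). The exponent on the right is strictly negative precisely when \(s > k - \tfrac{n}{n-1}\), which is the hypothesis of the proposition. Letting \(\lambda \to \infty\) therefore forces \(\abs{v} = 0\), contradicting the choice of \(v\), and so \(A(D)\) must be elliptic. The principal technical obstacle is establishing the lower bound \(\norm{u_\lambda}_{\dot W^{s,q}} \ge c\,\abs{v}\,\lambda^{(n-1)/q}\) uniformly throughout the admissible range \(s \in (k - \tfrac{n}{n-1}, k)\); one cannot simply reduce to Proposition~\ref{propositionEllipticitySobolev} via an integer-order Sobolev embedding, because for \(s < k - 1\) (a range which is non-empty as soon as \(n \ge 3\)), the embedding \(\dot W^{s, q} \hookrightarrow \dot W^{k-1, np/(n-p)}\) is not available, and a direct Slobodecki\u\i{} or Fourier/commutator computation becomes necessary.
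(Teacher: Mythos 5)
Your proposal is correct and follows exactly the paper's strategy: reuse the trial family \(u_\lambda(x)=\varphi(\xi\cdot x)\psi(x/\lambda)v\) and the upper bound \(\norm{A(D)u_\lambda}_{L^p}=O(\lambda^{(n-1)/p-1})\) from the proof of Proposition~\ref{propositionEllipticitySobolev}, and match these against a lower bound on \(\norm{u_\lambda}_{\dot W^{s,q}}\). Your scaling \(\norm{u_\lambda}_{\dot W^{s,q}}\sim\abs{v}\,\lambda^{(n-1)/q}\) is in fact the correct one; the paper's displayed line \(\norm{u_\lambda}_{\dot W^{s,q}}=C\lambda^{n-1}\abs{v}+o(\lambda^{n-1})\) appears to omit the \(q\)-th root, and it is precisely this \(1/q\) power that makes the exponent comparison produce the threshold \(s>k-\tfrac{n}{n-1}\), as your computation shows. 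Your remark that reduction to Corollary~\ref{corollaryEllipticitySobolev} via Sobolev embedding only covers \(s\in[k-1,k)\), so that the remaining range \(s\in(k-\tfrac{n}{n-1},k-1)\) needs the direct slab estimate (or its \((-\Delta)^{s/2}\)--commutator variant for \(s\ge 1\)), is also made explicitly in the paper and usefully fills in what its terse proof leaves to the reader.
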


For \( s \in [k-1, k)\), this is a consequence of corollary~\ref{corollaryEllipticitySobolev} by classical embeddings theorems for fractional Sobolev--Slobodecki\u \i{} spaces \cite{Adams}*{theorem 7.57}.

\begin{proof}
One begins as the in proof of proposition~\ref{propositionEllipticitySobolev}. One notes then that
\[
  \norm{u_\lambda}_{\dot{W}^{s, q}}=C \lambda^{n-1} \abs{v}+o(\lambda^{n-1}). \qedhere
\]
\end{proof}

In Triebel--Lizorkin spaces, one has

\begin{proposition}
\label{propositionEllipticityLizorkinTriebel}
Let \(A(D)\) be a homogeneous differential operator on \(\R^n\)  of order \(k\) from \(V\) to \(E\).
Let \(s \in (k-\frac{n}{n-1}, k)\), \(p \ge  1\), \( r > 0\) and \(q > 1\) be such that 
\(
  \frac{1}{q}-\frac{s}{n}=\frac{1}{p}-\frac{k}{n}
\).
If for every \(u \in C^\infty_c(\R^n; V)\), 
\[
 \norm{u}_{\dot{F}^{s}_{q, r}} \le C \norm{A(D) u}_{L^p},
\]
then \(A(D)\) is elliptic.
\end{proposition}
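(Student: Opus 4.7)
The plan is to adapt the argument of proposition~\ref{propositionEllipticitySobolevFractional} to the Triebel--Lizorkin scale. Fix \(\xi \in \R^n \setminus \{0\}\) and \(v \in \ker A(\xi)\); the aim is to deduce \(v = 0\). I would choose a nonzero \(\varphi \in \mathcal{S}(\R)\) with \(\widehat{\varphi}\) smooth, even, and compactly supported in \(\R \setminus \{0\}\) (for instance \(\supp \widehat{\varphi} \subset [-2, -1] \cup [1, 2]\), which guarantees that \(\varphi\) is real-valued), and \(\psi \in \mathcal{S}(\R^n)\) with \(\widehat{\psi}\) compactly supported and \(\psi(0) \ne 0\), and set
\[
 u_\lambda(x) = \varphi(\xi \cdot x) \, \psi(x/\lambda) \, v.
\]
Just as in the proof of proposition~\ref{propositionEllipticitySobolev}, the Leibniz expansion of \(A(D) u_\lambda\) separates into the term in which all \(k\) derivatives fall on the factor \(\varphi(\xi \cdot x)\), which vanishes since \(A(\xi)[v] = 0\), and the remaining terms, each of which carries at least one factor \(1/\lambda\) coming from differentiating \(\psi(\cdot/\lambda)\). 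Replacing compact support with Schwartz decay in the change of variables, Lebesgue's dominated convergence theorem gives
\[
 \norm{A(D) u_\lambda}_{L^p}^p = c_1\,|v|^p\, \lambda^{n - 1 - p} + o(\lambda^{n - 1 - p}),
\]
so that \(\norm{A(D) u_\lambda}_{L^p} = O(\lambda^{(n-1)/p - 1})\) as \(\lambda \to \infty\).

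The heart of the argument is a matching lower bound on \(\norm{u_\lambda}_{\dot{F}^s_{q, r}}\). Fourier inversion gives
\[
 \widehat{u_\lambda}(\eta) = v \int \widehat{\varphi}(\tau) \, \lambda^n \, \widehat{\psi}\bigl(\lambda(\eta - \tau \xi)\bigr)\,d\tau,
\]
so \(\supp \widehat{u_\lambda}\) lies in an \(O(1/\lambda)\)-tube around the bounded segment \(\{\tau \xi \st \tau \in \supp \widehat{\varphi}\}\). The support assumption on \(\widehat{\varphi}\) ensures that this tube is contained in a fixed compact annulus \(K \subset \R^n \setminus \{0\}\) for every sufficiently large \(\lambda\). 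For functions with spectrum localized in a fixed annulus, Bernstein's inequality together with the fact that only a bounded number of Littlewood--Paley blocks are nonzero yields the uniform equivalence
\[
 \norm{u_\lambda}_{\dot{F}^s_{q, r}} \asymp_{K, s, q, r} \norm{u_\lambda}_{L^q},
\]
with constants independent of \(\lambda\). A direct computation analogous to the one above gives
\[
 \norm{u_\lambda}_{L^q}^q = c_2\,|v|^q \, \lambda^{n - 1} + o(\lambda^{n - 1}),
\]
and therefore \(\norm{u_\lambda}_{\dot{F}^s_{q, r}} \ge c_3\,|v|\,\lambda^{(n-1)/q}\) for all \(\lambda\) large enough.

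Combining the two bounds with the assumed estimate yields \(|v|\,\lambda^{(n-1)/q} = O(\lambda^{(n-1)/p - 1})\), and the relation \(\frac{1}{q} - \frac{s}{n} = \frac{1}{p} - \frac{k}{n}\) rewrites the exponent gap as
\[
  \tfrac{n-1}{q} - \bigl(\tfrac{n-1}{p} - 1\bigr) = 1 - \tfrac{(n-1)(k - s)}{n},
\]
which is strictly positive precisely under the hypothesis \(s > k - \tfrac{n}{n-1}\). The left-hand side thus grows strictly faster than the right-hand side, which forces \(v = 0\) and establishes the ellipticity of \(A(D)\).

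The main obstacle is the uniform bandlimited equivalence \(\norm{u_\lambda}_{\dot{F}^s_{q,r}} \asymp \norm{u_\lambda}_{L^q}\), which replaces the simple scaling identity sufficient for proposition~\ref{propositionEllipticitySobolev} and the explicit double-integral computation used in proposition~\ref{propositionEllipticitySobolevFractional}. The design of \(\varphi\) with \(\widehat{\varphi}\) bounded away from \(0\) is the essential ingredient: it guarantees that \(\supp \widehat{u_\lambda}\) stays in a fixed compact annulus as \(\lambda \to \infty\), so that the equivalence constants remain independent of \(\lambda\).
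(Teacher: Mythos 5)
Your proof is correct and takes a route that is genuinely different from the one the paper indicates. The paper leaves the proof of this proposition implicit, pointing to two routes: a ``direct proof'' in the spirit of proposition~\ref{propositionEllipticitySobolev} (which would require estimating the Triebel--Lizorkin norm of a test function that is compactly supported in physical space, hence not bandlimited), or a reduction to proposition~\ref{propositionEllipticitySobolevFractional} via Triebel's embedding theorems. You instead modify the construction: instead of taking \(\varphi\) compactly supported in physical space as in the paper's proposition~\ref{propositionEllipticitySobolev}, you take \(\widehat{\varphi}\) compactly supported \emph{away from the origin}. Because \(\widehat{\psi(\cdot/\lambda)}\) concentrates near \(0\) as \(\lambda \to \infty\), the spectrum of \(u_\lambda\) then sits inside a fixed compact annulus for \(\lambda\) large, so the Triebel--Lizorkin norm of \(u_\lambda\) reduces, with constants uniform in \(\lambda\), to the \(L^q\) norm; the latter scales like \(|v|\lambda^{(n-1)/q}\) by the same elementary slicing computation already used for the Sobolev case. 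This sidesteps the need for either an explicit Littlewood--Paley computation of \(\norm{u_\lambda}_{\dot F^s_{q,r}}\) or a chain of embeddings, and gives a cleaner proof that works uniformly in the third index \(r\), where the embedding route is more delicate. Two small points worth tightening: (1) the hypothesis \(\psi(0) \ne 0\) is a sufficient (not minimal) condition to ensure \(\psi \not\equiv 0\) on the hyperplane \(\xi^\perp\), and you should state explicitly that this is what you need so that the leading coefficient \(c_2\) is strictly positive; and (2) the upper bound on \(\norm{A(D)u_\lambda}_{L^p}\) should only be asserted as \(O(\lambda^{(n-1)/p-1})\) rather than a precise asymptotic \(c_1|v|^p\lambda^{n-1-p} + o(\lambda^{n-1-p})\), since possible cancellations among the Leibniz cross terms are irrelevant here --- only the upper bound enters the contradiction.
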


Proposition~\ref{propositionEllipticityLizorkinTriebel} can be obtained either by a direct proof or by deduction from proposition~\ref{propositionEllipticitySobolevFractional} by standard embedding theorems and the characterization of fractional Sobolev--Slobodecki\u \i{} spaces as Triebel--Lizorkin spaces \cite{Triebel}*{theorems 2.5.7 and 2.7.1}.

\medbreak

For Besov spaces we have

\begin{proposition}
\label{propositionEllipticityBesov}
Let \(A(D)\) be a homogeneous differential operator on \(\R^n\)  of order \(k\) from \(V\) to \(E\).
Let \(s \in (k-\frac{n}{n-1}, k)\), \(p \ge  1\), \( r > 0\) and \(q > 1\) be such that 
\(
  \frac{1}{q}-\frac{s}{n}=\frac{1}{p}-\frac{k}{n}
\).
If for every \(u \in C^\infty_c(\R^n; V)\), 
\[
 \norm{u}_{\dot{B}^{s}_{q, r}} \le C \norm{A(D) u}_{L^p},
\]
then \(A(D)\) is elliptic.
\end{proposition}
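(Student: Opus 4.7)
The plan is to imitate the scaling argument of proposition~\ref{propositionEllipticitySobolev} and the remark appended to the proof of proposition~\ref{propositionEllipticitySobolevFractional}, replacing the computation of the Sobolev--Slobodecki\u \i{} norm of the test family by the corresponding computation of its homogeneous Besov norm. As an alternative, one can deduce the result from proposition~\ref{propositionEllipticitySobolevFractional} by way of a homogeneous Besov embedding, and I will indicate both routes.

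First I would argue by contradiction: suppose \(\xi \in \R^n \setminus \{0\}\) and \(v \in \ker A(\xi) \setminus \{0\}\), and take \(u_\lambda (x) = \varphi(\xi \cdot x) \psi(x/\lambda) v\) exactly as in the proof of proposition~\ref{propositionEllipticitySobolev}. The computation carried out there applies unchanged on the right-hand side and yields \(\norm{A(D) u_\lambda}_{L^p}^p = O(\lambda^{n-1-p})\), hence \(\norm{A(D)u_\lambda}_{L^p}=O(\lambda^{(n-1)/p - 1})\). The decisive new point is a lower bound on \(\norm{u_\lambda}_{\dot{B}^s_{q, r}}\): because \(\widehat{u_\lambda}\) is concentrated in a tube of width \(\asymp \lambda^{-1}\) around the line \(\R \xi\) and the frequency scale along \(\xi\) remains of order one, only finitely many Littlewood--Paley pieces contribute nontrivially, and a change of variables \(y = x/\lambda\) together with the Littlewood--Paley characterization of \(\dot B^s_{q,r}\) yields
\[
  \norm{u_\lambda}_{\dot B^s_{q,r}} = c(\xi, v)\, \lambda^{(n-1)/q} + o\bigl(\lambda^{(n-1)/q}\bigr),
\]
with \(c(\xi, v) > 0\) whenever \(v \ne 0\) (the lower order terms come from the slow decay of the envelope \(\psi(x/\lambda)\)). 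Substituting into the hypothesized inequality gives \(\lambda^{(n-1)/q} = O(\lambda^{(n-1)/p - 1})\), and the scaling relation \(\frac{1}{q} - \frac{s}{n} = \frac{1}{p} - \frac{k}{n}\) turns this into \((n-1)(k-s)/n \ge 1\), i.e.\ \(s \le k - n/(n-1)\), contradicting the hypothesis \(s > k - n/(n-1)\). Hence \(v = 0\) and \(A(D)\) is elliptic.

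The step that requires care is the lower bound on \(\norm{u_\lambda}_{\dot B^s_{q,r}}\): one has to verify that the Besov norm really picks up the dominant \(L^q\) mass on the slab of volume \(\asymp \lambda^{n-1}\), with a constant independent of \(\lambda\). If one prefers to avoid this direct computation, the alternative route is to choose \(s' \in (k - \frac{n}{n-1}, s) \setminus \N\) and \(q' \in (q, \infty)\) with \(\frac{1}{q'} - \frac{s'}{n} = \frac{1}{q} - \frac{s}{n}\); then the standard homogeneous Besov embedding \(\dot B^s_{q,r} \hookrightarrow \dot B^{s'}_{q', q'}\) together with the identification \(\dot B^{s'}_{q', q'} = \dot W^{s', q'}\) for non-integer \(s'\) reduces the hypothesis to an inequality of the type covered by proposition~\ref{propositionEllipticitySobolevFractional}, whose conclusion is precisely the ellipticity of \(A(D)\). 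The only condition to check is that \((k - n/(n-1), s) \setminus \N\) is nonempty, which follows from \(s > k - n/(n-1)\) (one simply takes \(s'\) close to \(s\), avoiding the at most one integer in the open interval).
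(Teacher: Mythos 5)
Your primary route — the direct scaling argument with the test family \(u_\lambda(x)=\varphi(\xi\cdot x)\,\psi(x/\lambda)\,v\) and a lower bound on its homogeneous Besov norm — is exactly what the paper intends. The paper gives no separate proof for proposition~\ref{propositionEllipticityBesov}, only the remark that the claim ``cannot be deduced from proposition~\ref{propositionEllipticitySobolev}'' and must be obtained ``with the direct argument,'' and the computation in the fractional case (proposition~\ref{propositionEllipticitySobolevFractional}) gives \(\norm{u_\lambda}_{\dot W^{s,q}}^q\sim \lambda^{n-1}\), i.e.\ the norm \(\sim \lambda^{(n-1)/q}\), matching your exponent. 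Your heuristic for the Besov lower bound (only finitely many Littlewood--Paley pieces contribute, and the slab of transverse radius \(\lambda\) produces a factor \(\lambda^{(n-1)/q}\) uniformly in \(r\)) is the right picture, and the exponent chase \((n-1)/q \le (n-1)/p-1 \Leftrightarrow s\le k-n/(n-1)\) correctly delivers the contradiction. That part is fine.

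The ``alternative route'' via an embedding, however, contains a genuine gap — and it is precisely the one the paper is at pains to point out. The chain you need to reach \(\dot B^{s'}_{q',q'}\) from \(\dot B^s_{q,r}\) is \(\dot B^s_{q,r}\hookrightarrow \dot B^{s'}_{q',r}\hookrightarrow \dot B^{s'}_{q',q'}\): the Sobolev-line embedding leaves the third index \(r\) untouched, and the second step is third-index monotonicity, which requires \(r\le q'\). Your text omits this hypothesis, and it is not a formality: along the scaling line, \(1/q' = 1/p - (k-s')/n\), so as \(s'\) decreases toward \(k-\frac{n}{n-1}\) one finds \(1/q'\downarrow 1/p - 1/(n-1)\); whenever \(p<n-1\) (e.g.\ \(p=1\), \(n\ge3\)) this is strictly positive, so \(q'\) is bounded above (\(q'\le(n-1)/(n-2)\) for \(p=1\)) and the condition \(r\le q'\) fails for \(r\) large. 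Note also that your suggestion to pick ``\(s'\) close to \(s\)'' makes matters worse, since it forces \(q'\approx q\), whereas to maximize \(q'\) one must push \(s'\) toward the endpoint. So the embedding route only covers a bounded range of the third index. This is exactly what the paper means by its remark that such an argument ``would in fact impose the additional restriction that \(r\le q\) that does not appear with the direct argument.'' You should drop the alternative, or at least present it as valid only for small \(r\) and keep the direct scaling computation as the actual proof.
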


Proposition \ref{propositionEllipticityBesov} cannot be deduced from proposition~\ref{propositionEllipticitySobolev}. Such an argument would in fact impose the additional restriction that \( r \le q\) that does not appear with the direct argument.

\medbreak

Finally, for Lorentz spaces, one has

\begin{proposition}
\label{propositionEllipticityLorentz}
Let \(A(D)\) be a homogeneous differential operator on \(\R^n\)  of order \(k\) from \(V\) to \(E\).
Let  \(q > 1\) be such that 
\(
  \frac{1}{q}-\frac{s}{n}=\frac{1}{p}-\frac{k}{n}
\).
If for every \(u \in C^\infty_c(\R^n; V)\), 
\[
 \norm{D^{k-1} u}_{L^{q, r}} \le C \norm{A(D) u}_{L^p},
\]
then \(A(D)\) is elliptic.
\end{proposition}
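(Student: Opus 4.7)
The plan is to mimic the blow-up argument used in proposition~\ref{propositionEllipticitySobolev}, with the $L^p$ computation on the left-hand side replaced by a Lorentz-norm asymptotic. Fix $\xi \in \R^n \setminus \{0\}$ and \(v \in \ker A(\xi)\); the goal is to show \(v=0\). Choose \(\varphi \in C^\infty_c(\R)\) with \(\supp \varphi \subset (-1,1)\) and \(\varphi^{(k-1)} \not \equiv 0\), and \(\psi \in C^\infty_c(\R^n)\) with \(\psi \not\equiv 0\) on the hyperplane \(H = \{x \in \R^n \st \xi \cdot x = 0\}\). As in proposition~\ref{propositionEllipticitySobolev}, set
\[
  u_\lambda(x) = \varphi(\xi \cdot x)\,\psi(x/\lambda)\,v.
\]
The estimate of the right-hand side is unchanged: since \(A(\xi)[v]=0\) kills the top-order term in \(A(D)u_\lambda\), the remainder involves derivatives of \(\psi(x/\lambda)\) paired with negative powers of \(\lambda\), and the same slicing argument gives \(\norm{A(D)u_\lambda}_{L^p} = O\bigl(\lambda^{(n-1)/p-1}\bigr)\).

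The substantive new step is the lower bound on \(\norm{D^{k-1}u_\lambda}_{L^{q,r}}\). Pick \(\alpha_0 \in \N^n\) with \(\abs{\alpha_0}=k-1\) and \(\xi^{\alpha_0} \ne 0\) (this exists since \(\xi \ne 0\)), and expand
\[
 \partial^{\alpha_0} u_\lambda(x) = v\,\xi^{\alpha_0}\,\varphi^{(k-1)}(\xi\cdot x)\,\psi(x/\lambda) + r_\lambda(x),
\]
where the remainder collects terms with at least one derivative falling on \(\psi(\cdot/\lambda)\) and is therefore bounded by \(C\lambda^{-1}\) times a bounded function supported in the slab \(\{\abs{\xi \cdot x}\le 1\}\cap\{\abs{x}\le R\lambda\}\). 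To treat the leading term \(g_\lambda(x) = \varphi^{(k-1)}(\xi\cdot x)\psi(x/\lambda)\), I would compute its distribution function by changing variables \(x = \lambda x_H + x_\xi \hat \xi /\abs{\xi}\) with \(x_H \in H\) and \(x_\xi = \xi \cdot x\); the Jacobian is \(\lambda^{n-1}/\abs{\xi}\), and Lebesgue's dominated convergence theorem (with \(\psi(x_H + x_\xi \hat\xi/(\lambda\abs\xi))\to \psi(x_H)\)) yields
\[
 \lim_{\lambda \to \infty}\frac{\mu_{g_\lambda}(t)}{\lambda^{n-1}} = \frac{\mu_h(t)}{\abs{\xi}},\qquad h(x_H,x_\xi)=\varphi^{(k-1)}(x_\xi)\psi(x_H),
\]
uniformly enough to insert into the integral representation \(\norm{g_\lambda}_{L^{q,r}}^r = r\int_0^\infty t^{r-1}\mu_{g_\lambda}(t)^{r/q}\,dt\). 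This gives an exact asymptotic \(\norm{g_\lambda}_{L^{q,r}} = c_0 \lambda^{(n-1)/q}(1+o(1))\) with \(c_0 > 0\). The \(L^{q,r}\)-norm of the remainder \(r_\lambda\) is easily seen to be \(O(\lambda^{(n-1)/q-1})\), and hence
\[
 \norm{D^{k-1}u_\lambda}_{L^{q,r}} \ge \norm{\partial^{\alpha_0}u_\lambda}_{L^{q,r}} \ge c_0\abs{v}\,\abs{\xi^{\alpha_0}}\,\lambda^{(n-1)/q}(1+o(1)).
\]

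Feeding these two asymptotics into the assumed estimate, together with the scaling relation, which (interpreting the statement in the natural way \(1/q = 1/p - 1/n\)) yields \(\tfrac{n-1}{q} - \bigl(\tfrac{n-1}{p}-1\bigr) = \tfrac{1}{n} > 0\), we obtain a contradiction as \(\lambda \to \infty\) unless \(v\,\xi^{\alpha_0}=0\); since \(\xi^{\alpha_0}\ne 0\), this forces \(v=0\), proving the ellipticity of \(A(D)\).

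The only delicate step is the third one: Lorentz norms are not additive across slices the way \(L^p\) norms are, so instead of a direct Fubini-type computation one must work through the distribution function. Once that is carried out, the comparison of exponents is identical in spirit to proposition~\ref{propositionEllipticitySobolev} and its fractional counterparts~\ref{propositionEllipticitySobolevFractional} and \ref{propositionEllipticityLizorkinTriebel}.
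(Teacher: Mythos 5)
Your proof is correct and follows essentially the approach the paper indicates. The paper dispatches this proposition in two lines: for \(r \le q\) it observes that \(L^{q,r}\hookrightarrow L^{q}\) reduces the statement immediately to proposition~\ref{propositionEllipticitySobolev}, and for \(r > q\) it asserts that ``the proof of proposition~\ref{propositionEllipticitySobolev} applies and gives the conclusion.'' What you have written out is precisely the adaptation that this latter sentence invokes: the same scaling family \(u_\lambda(x)=\varphi(\xi\cdot x)\psi(x/\lambda)v\), the same upper bound \(\norm{A(D)u_\lambda}_{L^p}=O(\lambda^{(n-1)/p-1})\), and the necessary Lorentz-space replacement of the Fubini-type \(L^{p^*}\) computation by an analysis of the distribution function after the change of variables \(x=\lambda x_H+x_\xi\hat\xi/\abs{\xi}\), yielding \(\norm{\partial^{\alpha_0}u_\lambda}_{L^{q,r}}\gtrsim\abs{v}\abs{\xi^{\alpha_0}}\lambda^{(n-1)/q}\) and the exponent gap \(1/n>0\). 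The only mild difference is stylistic: you carry out the distribution-function asymptotic uniformly for all \(r\in(0,\infty)\), rather than splitting off the easy case \(r\le q\) via the embedding \(L^{q,r}\hookrightarrow L^{q,q}=L^q\). Both are valid; your unified treatment avoids the case distinction at the cost of making explicit the dominated-convergence argument for the distribution function, which the paper suppresses. You also correctly note and repair the typographical inheritance of the fractional-scale relation \(\frac{1}{q}-\frac{s}{n}=\frac{1}{p}-\frac{k}{n}\), which in this Lorentz setting should read \(\frac{1}{q}=\frac{1}{p}-\frac{1}{n}\).
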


When \(r \le q\), this is an immediate consequence of proposition~\ref{propositionEllipticitySobolev}.
When \( r > q\), the proof of proposition~\ref{propositionEllipticitySobolev} applies and gives the conclusion.

\subsection{Necessity of the cancellation}

Concerning fractional spaces, the proof of proposition~\ref{propositionCancelingNecessary} allows to prove

\begin{proposition}\label{propositionCancelingNecessaryFractionalSobolev}
Let \(A(D)\) be an elliptic homogeneous linear differential operator of order \(k\) on \(\R^n\) from \(V\) to \(E\), let \(s \in (0, 1)\), \(p \ge 1\) and \(\ell \in \{1, \dotsc, n-1\}\) such that \( \ell \ge n-k\) and
\(
  \frac{1}{p}-\frac{\ell+s}{n}=1-\frac{1}{n}
\).
If for every \(u \in C^\infty_c(\R^n; V)\), 
\[
 \norm{D^\ell u}_{\dot{W}^{s,p}} \le \norm{A(D)u}_{L^1},
\]
then \(A(D)\) is canceling.
\end{proposition}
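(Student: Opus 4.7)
The plan is to reduce Proposition~\ref{propositionCancelingNecessaryFractionalSobolev} to Proposition~\ref{propositionCancelingNecessary} via the classical Sobolev embedding for fractional Sobolev--Slobodecki\u\i{} spaces. The scaling relation tying $p$, $s$, $\ell$, $k$, and $n$ is precisely the one that places $\dot W^{s,p}(\R^n)$ at the critical index for the target $L^{n/(n-(k-\ell))}(\R^n)$, yielding a continuous embedding
\[
 \dot W^{s, p}(\R^n) \hookrightarrow L^{n/(n - (k - \ell))}(\R^n).
\]
The restriction $sp < n$ required by the embedding reduces, under the hypothesis $\ell \ge n - k$ (which ensures $n - (k - \ell) > 0$), to a trivial condition.

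Applying this embedding to $D^\ell u$ for $u \in C^\infty_c(\R^n; V)$ and combining with the hypothesized estimate yields
\[
  \norm{D^\ell u}_{L^{n/(n - (k - \ell))}} \le C_1 \norm{D^\ell u}_{\dot W^{s, p}} \le C_2 \norm{A(D) u}_{L^1}.
\]
This is exactly the hypothesis of Proposition~\ref{propositionCancelingNecessary}, which immediately forces $A(D)$ to be canceling. Once the embedding is in place, no further work is required.

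Alternatively, and more self-containedly, one could mimic the scaling argument of Proposition~\ref{propositionCancelingNecessary} directly in the fractional framework: assuming for contradiction the existence of a nonzero $e \in \bigcap_{\xi \ne 0} A(\xi)[V]$, one constructs the same family $u_\lambda$ so that $A(D) u_\lambda = (\psi_\lambda - \psi_{1/\lambda}) e$ is uniformly bounded in $L^1$; the pointwise limits $u^\alpha$ of $\partial^\alpha u_\lambda$ for $\abs{\alpha} = \ell$ are homogeneous of degree $-(n - (k - \ell))$, hence a nonzero such profile lies outside $L^{n/(n-(k-\ell))}$ and therefore outside $\dot W^{s,p}$ by the Sobolev embedding; a Fatou-type lower semicontinuity would then contradict the uniform bound on $\norm{\partial^\alpha u_\lambda}_{\dot W^{s,p}}$. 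One concludes $u^\alpha \equiv 0$ for every $\abs{\alpha} = \ell$ and recovers $e = 0$ by testing $A(D) u_\lambda$ against a smooth cut-off exactly as in the proof of Proposition~\ref{propositionCancelingNecessary}. I expect no genuine obstacle beyond a careful check that the scaling exponents match; once the Sobolev embedding is identified the result falls out immediately.
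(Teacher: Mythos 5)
Your ``alternative'' route --- running the dilation argument of Proposition~\ref{propositionCancelingNecessary} directly in $\dot{W}^{s,p}$ and invoking the Fatou property for the Gagliardo seminorm --- is exactly the paper's proof, and it is correct.

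Your primary route (fractional Sobolev embedding followed by Proposition~\ref{propositionCancelingNecessary}) is a genuinely different reduction, but it has an endpoint gap. Under the intended scaling relation, namely $\tfrac{1}{p}-\tfrac{\ell+s}{n}=1-\tfrac{k}{n}$ (the relation that makes the embedding $\dot{W}^{s,p}\hookrightarrow L^{n/(n-(k-\ell))}$ come out; the one printed in the statement appears to have a typo, as does the constraint on $\ell$), one has $sp<n$ if and only if $\ell > k-n$, and in that range your reduction is clean since $\ell>k-n$ is exactly the hypothesis of Proposition~\ref{propositionCancelingNecessary}. However, the fractional statement is also meant to cover $\ell = k-n$ (the strict inequality $\ell+s>k-n$ then holds only because $s>0$); there one computes $sp=n$, the Sobolev exponent becomes $\infty$, the critical embedding $\dot{W}^{s,n/s}\hookrightarrow L^\infty$ fails, and Proposition~\ref{propositionCancelingNecessary} is unavailable because its hypothesis $\ell>k-n$ is strict. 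At that endpoint only the direct Fatou argument survives: the putative nonzero profile $u^\alpha$, homogeneous of degree $-(n-(k-\ell))$ (degree $0$ when $\ell=k-n$), still has infinite Gagliardo seminorm at $sp=n$, so the contradiction goes through. Finally, your parenthetical claim that $\ell\ge n-k$ ensures $n-(k-\ell)>0$ is an algebra slip --- that positivity is $\ell>k-n$, not $\ell\ge n-k$ --- though the slip is evidently inherited from the typo in the paper's hypothesis.
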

\begin{proof}
One proceeds as in the proof of proposition~\ref{propositionCancelingNecessary}, using the fact that if \eqref{equalphahomog} is satisfied, then \(u^\alpha\) does not have finite fractional Sobolev--Slobodecki\u \i{} norm and applying the Fatou property in fractional Sobolev--Slobodecki\u \i{} spaces: if \( \partial^\alpha u_\lambda \to u^\alpha \) almost everywhere as \(\lambda \to \infty\), then 
\[
  \int_{\R^n} \int_{\R^n} \frac{\abs{u^\alpha(x)-u^\alpha(y)}}{\abs{x-y}^{n+sp}} \,dx\,dy
\le \liminf_{\lambda \to \infty} \int_{\R^n} \int_{\R^n} \frac{\abs{\partial^\alpha u_\lambda(x)-\partial^\alpha u_\lambda(y)}}{\abs{x-y}^{n+sp}} \,dx\,dy.\qedhere
\]
\end{proof}

\begin{proposition}\label{propositionCancelingNecessaryTriebelLizorkin}
Let \(A(D)\) be an elliptic homogeneous linear differential operator of order \(k\) on \(\R^n\) from \(V\) to \(E\), let \(p \in (1, \infty)\) and \(s \in (k-n, k)\) be such that 
\(
 \frac{1}{p}-\frac{s}{n}=1-\frac{k}{n}
\)
and let \( q \in (0, \infty]\). If for every \(u \in C^\infty_c(\R^n; V)\), 
\[
 \norm{u}_{\dot{F}^s_{p, q}} \le C\norm{A(D)u}_{L^1},
\]
then \(A(D)\) is canceling.
\end{proposition}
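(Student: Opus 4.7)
The plan is to adapt the argument of proposition~\ref{propositionCancelingNecessary}, in the same spirit as the extension to fractional Sobolev--Slobodecki\u\i{} spaces carried out in proposition~\ref{propositionCancelingNecessaryFractionalSobolev}. Assuming that \(A(D)\) is not canceling, I would fix a nonzero \(e \in \bigcap_{\xi \in \R^n \setminus \{0\}} A(\xi)[V]\) and reuse the family \(u_\lambda\) built in the proof of proposition~\ref{propositionCancelingNecessary}. The identity~\eqref{eqADulambda} and the bound \eqref{inequlambdaL1} carry over verbatim, so the assumed Triebel--Lizorkin estimate forces \(\sup_{\lambda > 2} \norm{u_\lambda}_{\dot F^{s}_{p,q}} < \infty\). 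The strategy is then to bootstrap this bound to a contradiction against the scale-invariance of the limit \(u^\alpha\) already identified in the proof of proposition~\ref{propositionCancelingNecessary}.

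Next I would fix an integer \(\ell\) strictly greater than \(\max(0, k-n)\) (which exists under the hypothesis \(s \in (k-n, k)\)) and consider the derivatives \(\partial^\alpha u_\lambda\) for \(\abs{\alpha} = \ell\). Since differentiation maps \(\dot F^{s}_{p,q}\) continuously into \(\dot F^{s-\ell}_{p,q}\), the sequence \(\partial^\alpha u_\lambda\) is uniformly bounded in the latter; by~\eqref{defualpha} it converges pointwise on \(\R^n \setminus \{0\}\) to \(u^\alpha\), which is positively homogeneous of degree \(-(n-(k-\ell))\). The key step is a Fatou-type semicontinuity property for \(\dot F^{s-\ell}_{p,q}\): a sequence in \(\dot F^{s-\ell}_{p,q}\) bounded in norm and convergent in \(\mathcal{S}'(\R^n)\) has a limit lying in \(\dot F^{s-\ell}_{p,q}\) with controlled norm, which follows from the Littlewood--Paley square-function characterization by applying the scalar Fatou lemma to each dyadic piece. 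The scaling relation from the hypothesis gives \((s - \ell) - n/p = k-n-\ell\), which is exactly the degree of homogeneity of \(u^\alpha\), so the norm is critically scale-invariant and the Littlewood--Paley pieces \(\psi_j \ast u^\alpha\) are related by the same exponent across dyadic scales; the resulting logarithmic-divergence mechanism (parallel to the one that forces \(\int \abs{u^\alpha}^{n/(n-(k-\ell))} = \infty\) in the \(L^q\) case of proposition~\ref{propositionCancelingNecessary}) forces \(\norm{u^\alpha}_{\dot F^{s-\ell}_{p,q}} = \infty\) unless \(u^\alpha \equiv 0\). Hence \(u^\alpha \equiv 0\) for every \(\alpha\) with \(\abs{\alpha} = \ell\), and one closes exactly as in the last paragraph of proposition~\ref{propositionCancelingNecessary}: the pointwise bound \(\abs{\partial^\alpha u_\lambda(x)} \le C \abs{x}^{-(n-(k-\ell))}\) and dominated convergence yield \(\int \zeta\, A(D) u_\lambda \to 0\) for every \(\zeta \in C^\infty_c(\R^n)\), while~\eqref{eqADulambda} forces the same integral to tend to \(\zeta(0)\, e\); taking \(\zeta(0)\ne 0\) gives \(e=0\), a contradiction.

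The hard part is establishing the Fatou-type semicontinuity and the homogeneity-based vanishing throughout the entire range \(q \in (0, \infty]\). For \(q \in (0, \infty)\) both assertions follow directly from the Littlewood--Paley square-function definition of \(\dot F^{s-\ell}_{p,q}\), and the quasi-Banach nature of the space for \(q < 1\) is not an issue because only lower semicontinuity and a scaling invariance are used. The endpoint \(q = \infty\) must be handled by a direct Fatou argument applied to the supremum over dyadic frequencies. A second subtlety is the possible negativity of \(s - \ell\) when \(k\) is large: \(\dot F^{s - \ell}_{p,q}\) is then a space of tempered distributions modulo polynomials, but the decay of \(u^\alpha\) at infinity guaranteed by the constraint \(\ell > k - n\) excludes any polynomial representative, so the scaling analysis applies unambiguously to the concrete pointwise limit.
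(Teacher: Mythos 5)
Your proposal is correct and follows the same overall scheme as the paper's proof: reuse the family $u_\lambda$ from proposition~\ref{propositionCancelingNecessary}, extract a nontrivial homogeneous limit, and contradict a Fatou-type semicontinuity together with the fact that no nonzero function with the critical homogeneity can lie in a Triebel--Lizorkin space with finite (quasi-)norm of the matching scaling. The one genuine difference is a choice of operator: the paper applies the \emph{fractional} Laplacian $(-\Delta)^{s/2}$ so as to land directly in $\dot F^0_{p,q}$, defines $u^s(x)=\lim_{\lambda\to\infty}(-\Delta)^{s/2}u_\lambda(x)$, and then invokes an off-the-shelf lemma (Runst--Sickel, lemma~2.3.1/1) asserting that a nonzero homogeneous function of degree $-n/p$ cannot lie in $F^0_{p,q}$, together with the Fatou property of these spaces. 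You instead keep the integer derivative $\partial^\alpha$ with $\abs{\alpha}=\ell>\max(0,k-n)$ and work in $\dot F^{s-\ell}_{p,q}$; this lands in a space of possibly negative smoothness, so you cannot quote the $F^0_{p,q}$ lemma verbatim and have to reproduce the homogeneity argument via the Littlewood--Paley characterization (equivalently, you could apply $(-\Delta)^{(s-\ell)/2}$ at this stage to reduce to the paper's lemma). What your route buys you is staying closer to the explicit pointwise formula \eqref{defualpha} for $u^\alpha$, so the decay and the polynomial-modulo ambiguity are transparent; what the paper's route buys is a cleaner citation. One caution: your description of the non-membership step as a uniform ``logarithmic-divergence mechanism'' over $q\in(0,\infty]$ elides that the $q=\infty$ case does not diverge by an $\ell^q$ sum over scales but because $\sup_j 2^{j(s-\ell)}\abs{\psi_j\ast u^\alpha}$ is itself homogeneous of degree $-n/p$ and hence fails to be in $L^p$; the conclusion is the same, but the mechanism is the inner $L^p$ integral, not the dyadic sum.
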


When \(s \ge 0\), this is a consequence of proposition~\ref{propositionCancelingNecessaryFractionalSobolev},  classical embeddings between Triebel--Lizorkin spaces \cite{Triebel}*{theorem 2.7.1}
and the equivalence between fractional Sobolev--Slobodecki\u \i{} spaces and Triebel--Lizorkin spaces \cite{Triebel}*{theorem 2.5.7}
\begin{proof}[Proof of proposition~\ref{propositionCancelingNecessaryTriebelLizorkin}]
Follow the proof of proposition~\ref{propositionCancelingNecessary} till \eqref{equalphahomog} with \((-\Delta)^\frac{s}{2}u\) instead of \(\partial^\alpha u\). Define
\[
  u^s(x)=\lim_{\lambda \to \infty} (-\Delta)^{\frac{s}{2}} u_\lambda(x).
\]
One has in place of \eqref{equalphahomog} for each \(x \in \R^n \setminus \{0\}\) and \(t \in (0, \infty)\)
\begin{equation}
\label{equshomog}
  u^s(tx)=\frac{u^s(x)}{t^{n-(k-s)}}
\end{equation}
Therefore, \(u^s \not \in F^0_{p, q}(\R^n; V)\) if and only if \(u^s \not \equiv 0\) \cite{RunstSickel}*{lemma 2.3.1/1}.

Since 
\[
 (-\Delta)^\frac{s}{2} u_\lambda \le \frac{C}{\abs{x}^{n-(k-s)}},
\]
one has \( (-\Delta)^\frac{s}{2} u_\lambda \to u^s\) as \(\lambda \to \infty\) in \(L^1_{\mathrm{loc}}(\R^n; V)\). By the Fatou property for Triebel--Lizorkin spaces \cite{Franke}*{} (see also \cite{RunstSickel}*{proposition 2.1.3/2}), \(\norm{u_\lambda}_{\dot{F}^s_{p, q}}\) is not bounded as \(\lambda \to \infty\). One concludes as in the proof of proposition~\ref{propositionCancelingNecessary}.
\end{proof}

Similarly, one can prove in Besov spaces

\begin{proposition}\label{propositionCancelingNecessaryBesov}
Let \(A(D)\) be an elliptic homogeneous linear differential operator of order \(k\) on \(\R^n\) from \(V\) to \(E\), let \(p \in (1, \infty)\) and \(s \in (k-n, k)\) be such that 
\(
 \frac{1}{p}-\frac{s}{n}=1-\frac{k}{n}
\)
and \( q \in (0, \infty)\). If for every \(u \in C^\infty_c(\R^n; V)\), 
\[
 \norm{u}_{\dot{B}^s_{p, q}} \le C\norm{A(D)u}_{L^1},
\]
then \(A(D)\) is canceling.
\end{proposition}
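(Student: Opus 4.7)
My plan is to follow the scheme of proposition~\ref{propositionCancelingNecessaryTriebelLizorkin} essentially verbatim, substituting the Triebel--Lizorkin scale by the Besov scale. Assuming that \(A(D)\) is not canceling, I would pick \(e \in \bigcap_{\xi \in \R^n \setminus \{0\}} A(\xi)[V]\) with \(e \neq 0\) and construct the Schwartz vector fields \(u_\lambda\) exactly as in the proof of proposition~\ref{propositionCancelingNecessary}. The standard compact cutoff \(\varphi_R u_\lambda\) followed by \(R \to \infty\) transfers the Besov hypothesis to \(u_\lambda\) and yields the uniform bound \(\norm{u_\lambda}_{\dot{B}^s_{p,q}} \le 2C \norm{\psi}_{L^1}\), while the scaling computation from the proof of proposition~\ref{propositionCancelingNecessaryTriebelLizorkin} shows that \((-\Delta)^{s/2} u_\lambda(x) \to u^s(x)\) pointwise on \(\R^n \setminus \{0\}\), with \(u^s\) homogeneous of degree \(-(n-(k-s)) = -n/p\) and bounded by \(C\abs{x}^{-n/p}\).

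The argument then rests on two properties of Besov spaces that hold for \(q < \infty\) and that replace the corresponding Triebel--Lizorkin properties used in the proof of proposition~\ref{propositionCancelingNecessaryTriebelLizorkin}: the Fatou property for \(\dot{B}^s_{p,q}\) (see for instance \cite{RunstSickel}*{proposition 2.1.3/2}), and the absence of nonzero distributions homogeneous of degree \(-n/p\) in \(\dot{B}^0_{p,q}\). The latter follows from the dyadic Littlewood--Paley characterization: a nontrivial \(-n/p\)-homogeneous distribution has dyadic blocks that are dilates of a single fixed nonzero function, so their \(L^p\)-norms form a constant sequence whose \(\ell^q\)-sum diverges precisely when \(q < \infty\). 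Since \(u^s\) is \(-n/p\)-homogeneous, the uniform Besov bound combined with Fatou would force \(u^s \in \dot{B}^0_{p,q}\), so the non-containment forces \(u^s \equiv 0\).

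From \(u^s \equiv 0\) I would then conclude as at the end of proposition~\ref{propositionCancelingNecessary}: the bound \(\abs{(-\Delta)^{s/2} u_\lambda(x)} \le C \abs{x}^{-n/p}\) is locally integrable since \(s < k\), so by dominated convergence \((-\Delta)^{s/2} u_\lambda \to 0\) in \(L^1_{\mathrm{loc}}\); a duality-based integration by parts (writing \(\int_{\R^n} \zeta\, A(D) u_\lambda = \int_{\R^n} (-\Delta)^{-s/2} A(D)^* \zeta \cdot (-\Delta)^{s/2} u_\lambda\), with the extra decay of \((-\Delta)^{-s/2} A(D)^* \zeta\) at infinity coming from the vanishing of the moments of \(A(D)^* \zeta\) of order less than \(k\)) then yields \(\lim_\lambda \int_{\R^n} \zeta\, A(D) u_\lambda = 0\) for every \(\zeta \in C^\infty_c(\R^n)\). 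Combined with the explicit identity \(A(D) u_\lambda = (\psi_\lambda - \psi_{1/\lambda}) e\), which forces the same limit to equal \(\zeta(0) e\), this forces \(e = 0\), a contradiction.

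The main obstacle lies in the second Besov-space input, namely the non-containment of critical \(-n/p\)-homogeneous distributions in \(\dot{B}^0_{p,q}\) for \(q < \infty\). This is precisely the property that singles out the case \(q < \infty\) and fails for \(q = \infty\), where \(\abs{x}^{-n/p}\) itself belongs to \(\dot{B}^0_{p,\infty}\); correspondingly, for \(q = \infty\) the cancellation hypothesis can be dropped from theorem~\ref{theoremBesov} and ellipticity alone suffices, as recorded there.
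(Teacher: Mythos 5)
Your proposal is correct and follows the same route the paper intends: proposition~\ref{propositionCancelingNecessaryBesov} is stated as being proved ``similarly'' to proposition~\ref{propositionCancelingNecessaryTriebelLizorkin}, and the paper records exactly the point you isolate, namely that \(q<\infty\) is needed because the homogeneity relation~\eqref{equshomog} is not incompatible with membership in \(\dot{B}^0_{p,\infty}\). Your dyadic Littlewood--Paley computation for the non-containment of nonzero \(-n/p\)-homogeneous distributions in \(\dot{B}^0_{p,q}\), \(q<\infty\), is the right justification of that step.
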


The restriction \( q < \infty\) comes from the fact that \eqref{equshomog} is not incompatible with \(u_\alpha \in B^s_{p, \infty}(\R^n; V)\).
This restriction is essential as shows

\begin{proposition}
\label{propositionLimitingBesov}
Let \(A(D)\) be an elliptic linear homogeneous differential operator of order \(k\) on \(\R^n\) from \(V\) to \(E\) and let \(s \in (k-n, k)\) and \(p \in [1, \infty)\) be such that
\(
 \frac{1}{p}-\frac{s}{n}=1-\frac{k}{n}.
\)
For every \(u \in C^\infty_c(\R^n; V)\), 
\[
 \norm{u}_{\dot{B}^s_{p, \infty}} \le C \norm{A(D)u}_{L^1}.
\]
\end{proposition}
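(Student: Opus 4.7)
The plan is to combine two classical ingredients: an elliptic Besov-space estimate on the left, and a limiting Besov embedding of $L^1$ on the right. Concretely, I would show
\[
  \norm{u}_{\dot{B}^{s}_{p, \infty}} \le C \norm{A(D) u}_{\dot{B}^{s-k}_{p, \infty}} \le C' \norm{A(D) u}_{L^1},
\]
the first inequality being a Besov-space version of proposition~\ref{propCaldZygmundWk1p}, and the second being the embedding $L^1(\R^n; E) \hookrightarrow \dot{B}^{s-k}_{p, \infty}(\R^n; E)$, which holds precisely because the scaling hypothesis $\frac{1}{p} - \frac{s}{n} = 1 - \frac{k}{n}$ rewrites as $\frac{1}{p} - \frac{s - k}{n} = 1$.

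For the first step, since $A(D)$ is elliptic and homogeneous of degree $k$, the symbol $\xi \mapsto \xi^{\alpha} \bigl( A(\xi)^* \circ A(\xi) \bigr)^{-1} \circ A(\xi)^*$ is smooth on $\R^n \setminus \{0\}$ and homogeneous of degree $|\alpha| - k$, exactly as in proposition~\ref{propCaldZygmundWk1p}. The standard Mikhlin-type multiplier theorem on homogeneous Besov spaces (see \cite{Triebel}*{theorem 2.3.7}, used for this paper's other fractional estimates) then gives $\norm{D^{k-1} u}_{\dot{B}^{s-k+1}_{p, \infty}} \lesssim \norm{A(D) u}_{\dot{B}^{s-k}_{p, \infty}}$, which is equivalent to the desired bound on $\norm{u}_{\dot{B}^{s}_{p, \infty}}$.

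For the second step, which is really the heart of the matter, I would use Bernstein's inequality. If $(\Delta_j)_{j \in \mathbf{Z}}$ denotes a Littlewood--Paley decomposition, then for every $f \in L^1(\R^n; E)$,
\[
  \norm{\Delta_j f}_{L^p} \le C\, 2^{jn(1 - 1/p)} \norm{\Delta_j f}_{L^1} \le C'\, 2^{jn(1 - 1/p)} \norm{f}_{L^1},
\]
so
\[
  2^{j(s - k)} \norm{\Delta_j f}_{L^p} \le C'\, 2^{j(s - k + n(1 - 1/p))} \norm{f}_{L^1} = C' \norm{f}_{L^1},
\]
where the exponent of $2^j$ vanishes precisely by our scaling relation. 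Taking the supremum over $j$ yields the embedding.

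The point to note is why no cancellation condition enters: the limiting embedding of $L^1$ into $\dot{B}^{s-k}_{p, \infty}$ uses only the $L^1$-norm of $A(D) u$, whereas stronger Besov targets $\dot{B}^{s-k}_{p, q}$ with $q < \infty$ would require control of the $\ell^q$-sum $\bigl( \sum_j 2^{jq(s-k)} \norm{\Delta_j (A(D) u)}_{L^p}^q \bigr)^{1/q}$, which is not controlled by $\norm{A(D) u}_{L^1}$ without an extra cancellation property. There is no real obstacle in the argument; the only thing to check carefully is that the multiplier theorem we invoke is valid on $\dot{B}^{\sigma}_{p, \infty}$ for the full range $\sigma \in \R$, $p \in [1, \infty)$, which is standard.
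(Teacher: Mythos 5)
Your proof is correct, and it is in fact essentially the ``alternative argument'' that the paper itself sketches in the remark immediately following its proof of this proposition, combining a Fourier multiplier theorem on homogeneous Besov spaces with the limiting embedding of $L^1$ (which you realize via Bernstein's inequality, equivalent to chaining $L^1 \hookrightarrow \dot{B}^0_{1,\infty} \hookrightarrow \dot{B}^{s-k}_{p,\infty}$). The paper's \emph{primary} proof is shorter and more self-contained: it defines the kernel $G$ by $\widehat{G}(\xi) = \abs{\xi}^s \bigl(A(\xi)^*\circ A(\xi)\bigr)^{-1}\circ A(\xi)^*$, observes that by homogeneity of degree $-(n-(k-s))$ one has $G \in \dot{B}^0_{p,\infty}$, identifies $G * \bigl(A(D)u\bigr)$ with $(-\Delta)^{s/2}u$ up to a constant, and closes by the convexity (Minkowski integral) inequality
\[
  \norm{G * (A(D)u)}_{\dot{B}^0_{p,\infty}} \le \norm{G}_{\dot{B}^0_{p,\infty}} \norm{A(D)u}_{L^1}.
\]
This avoids invoking any multiplier theorem at all: the only analytic facts needed are translation-invariance and the triangle inequality for the Besov seminorm, plus the elementary observation that a function homogeneous of degree $-(n-\sigma)$ and smooth away from the origin lies in $\dot{B}^0_{p,\infty}$ when $\frac{1}{p} = 1 - \frac{\sigma}{n}$. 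Your route is perfectly valid, but it does quietly offload work onto the statement that the Mikhlin-type multiplier theorem holds on homogeneous Besov spaces down to $p=1$; while this is indeed standard (the block-diagonal Littlewood--Paley structure of Besov norms makes the endpoint $p=1$ unproblematic, in sharp contrast with $L^1$), the paper's kernel construction bypasses the need to state and cite that result, and makes it transparent why cancellation plays no role: $G$ lands in $\dot{B}^0_{p,\infty}$ by pure scaling, with no decay at infinity required.
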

\begin{proof}
Define \(G : \R^n \setminus \{0\} \to \Lin(E; V)\) such that for every \(\xi \in \R^n \setminus \{0\}\), 
\[
  \widehat{G}(\xi)=\abs{\xi}^s \bigl(A(\xi)^* \circ  A(\xi)\bigr)^{-1}\circ A^*(\xi).
\]
Since \(\widehat{G}\) is homogeneous of degree \(-(k-s)\), \(G\) is homogeneous of degree \(-(n-(k-s))\)
and therefore \(G \in \dot{B}^0_{p, \infty}(\R^n; \Lin(V; E))\). Since \(\norm{\cdot}_{\dot{B}^s_{p, \infty}}\) is a norm, by convexity,
\[
  \norm{u}_{B^{s}_{p, \infty}} = \norm{G \ast (A(D)u)}_{B^{0}_{p, \infty}}
 \le \norm{G}_{B^{0}_{p, \infty}} \norm{A(D)u}_{L^1}.\qedhere
\]
\end{proof}

An alternative argument would be to use the estimate \citelist{\cite{SickelTriebel}*{theorem 3.1.1}\cite{RunstSickel}*{theorem 2.2.2}}
\[
 \norm{A(D)u}_{B^{0}_{1, \infty}} \le C \norm{A(D)u}_{L^1}.
\]
together with the theory of Fourier multipliers on Besov spaces \citelist{\cite{RunstSickel}*{proposition 2.1.6/5}\cite{Triebel}*{theorem 2.3.7}} and the embeddings between Besov spaces \citelist{\cite{Triebel}*{theorem 2.7.1}\cite{RunstSickel}*{theorem 2.2.3}}.

\medbreak

The argument of proposition~\ref{propositionCancelingNecessary} still applies to Lorentz space estimates

\begin{proposition}
\label{propositionCancelingNecessaryLorentz}
Let \(A(D)\) be an elliptic homogeneous linear differential operator of order \(k\) on \(\R^n\) from \(V\) to \(E\) and let \(q \in [1, \infty)\).
If for every \(u \in C^\infty_c (\R^n; V)\), 
\[
 \norm{u}_{L^{\frac{n}{n-1}, q}} \le C \norm{A(D) u}_{L^1},
\]
then \(A(D)\) is canceling.
\end{proposition}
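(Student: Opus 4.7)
The plan is to adapt the proof of proposition~\ref{propositionCancelingNecessary} line by line, replacing the $L^{n/(n-1)}$ norm by the Lorentz quasinorm $L^{n/(n-1),q}$. Suppose for contradiction that there exists $e \in \bigcap_{\xi \ne 0} A(\xi)[V] \setminus \{0\}$. I would construct the same family $u_\lambda$: take $U(\xi) = \bigl(A(\xi)^* \circ A(\xi)\bigr)^{-1} \circ A(\xi)^*[e]$, which is smooth on $\R^n \setminus\{0\}$ and homogeneous of degree $-k$; choose $\psi$ with $\widehat{\psi}$ supported in a thin annulus around $\abs{\xi}=1$; and define $\widehat{u_\lambda} = (2\pi i)^{-k}(\widehat{\psi_\lambda} - \widehat{\psi_{1/\lambda}})U$. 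As already shown there, $A(D) u_\lambda = (\psi_\lambda - \psi_{1/\lambda}) e$ with $\norm{A(D) u_\lambda}_{L^1} \le 2 \norm{\psi}_{L^1}$, and for every multi-index $\alpha$ with $\abs{\alpha}=k-1$ the pointwise limit $u^\alpha(x) = \lim_{\lambda \to \infty} \partial^\alpha u_\lambda(x)$ exists on $\R^n \setminus\{0\}$ and is homogeneous of degree $-(n-1)$.

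The two genuinely new ingredients concern Lorentz quasinorms. First, a nonzero function $v$ on $\R^n$ homogeneous of degree $-(n-1)$ has distribution function $d_v(s) = c\,s^{-n/(n-1)}$, hence decreasing rearrangement $v^*(t) = c^{(n-1)/n} t^{-(n-1)/n}$, so $t^{(n-1)/n} v^*(t)$ is a positive constant and
\[
\norm{v}_{L^{n/(n-1),q}}^q = \int_0^\infty \bigl(t^{(n-1)/n} v^*(t)\bigr)^q \,\frac{dt}{t} = \infty
\]
whenever $q < \infty$. Second, the Lorentz quasinorms with $q < \infty$ enjoy the Fatou property: if $v_\lambda \to v$ almost everywhere, then $d_v(s) \le \liminf_\lambda d_{v_\lambda}(s)$ by Fatou's lemma applied to indicators, and a second application to the integral representation above yields $\norm{v}_{L^{n/(n-1),q}} \le \liminf_\lambda \norm{v_\lambda}_{L^{n/(n-1),q}}$.

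Combining these two facts with the hypothesis, I obtain
\[
\norm{u^\alpha}_{L^{n/(n-1),q}} \le \liminf_{\lambda \to \infty} \norm{\partial^\alpha u_\lambda}_{L^{n/(n-1),q}} \le C \liminf_{\lambda \to \infty} \norm{A(D) u_\lambda}_{L^1} < \infty,
\]
forcing $u^\alpha \equiv 0$ for every $\alpha$ with $\abs{\alpha} = k-1$. From here the rest of the argument copies the tail of the proof of proposition~\ref{propositionCancelingNecessary} verbatim: the pointwise estimate $\abs{\partial^\alpha u_\lambda(x)} \le C \abs{x}^{-(n-1)}$ together with Lebesgue's dominated convergence theorem gives $D^{k-1} u_\lambda \to 0$ in $L^1_{\mathrm{loc}}(\R^n)$; integrating by parts yields $\lim_\lambda \int_{\R^n} \zeta\, A(D) u_\lambda = 0$ for all $\zeta \in C^\infty_c(\R^n)$; and comparison with $\lim_\lambda \int_{\R^n} \zeta\, A(D) u_\lambda = \zeta(0) e$ forces $e = 0$.

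The real obstacle is conceptual rather than technical: the restriction $q < \infty$ is sharp and cannot be relaxed, precisely because a nonzero function homogeneous of degree $-(n-1)$ does belong to weak $L^{n/(n-1)} = L^{n/(n-1),\infty}$, so no contradiction can be extracted at the endpoint. This is consistent with the remark after theorem~\ref{theoremLorentz} that at $q = \infty$ the ellipticity alone is already necessary and sufficient, and it mirrors the analogous gap at $q=\infty$ in the Besov case (proposition~\ref{propositionLimitingBesov}).
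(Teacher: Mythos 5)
Your proof is correct and follows essentially the same route as the paper: the author explicitly indicates that the argument should mirror Proposition~\ref{propositionCancelingNecessaryBesov} (hence Proposition~\ref{propositionCancelingNecessary}), using the Fatou property for Lorentz quasinorms and the non-existence of nonzero functions homogeneous of degree $-(n-1)$ in $L^{n/(n-1),q}$ for $q<\infty$. You have merely fleshed out the two Lorentz-specific facts that the paper states without proof — the explicit distribution-function computation showing the quasinorm of a nonzero homogeneous function diverges, and the a.e.~Fatou property of the Lorentz quasinorm — and then imported the tail of the argument from Proposition~\ref{propositionCancelingNecessary} verbatim, exactly as intended.
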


This only follows from proposition~\ref{propositionCancelingNecessary} when \(q \le \frac{n}{n-1}\). The proof is similar to that of proposition~\ref{propositionCancelingNecessaryBesov}, using the Fatou property for Lorentz spaces, and the fact that for \( q \in [1, \infty)\), there are no nonzero homogeneous functions. 

Again the restriction \(q < \infty\) is optimal, as one has

\begin{proposition}
\label{propositionLimitingLorentz}
Let \(A(D)\) be a linear homogeneous elliptic operator of order \(k\) on \(\R^n\) from \(V\) to \(E\). For every \(u \in C^\infty_c(\R^n; V)\), 
\[
 \norm{D^{k-1}u}_{L^{\frac{n}{n-1}, \infty}} \le C \norm{A(D)u}_{L^1}.
\]
\end{proposition}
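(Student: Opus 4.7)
The plan is to mimic the Besov-space argument of proposition~\ref{propositionLimitingBesov}, but now in the weak Lebesgue (Lorentz) framework. The mechanism is identical: ellipticity alone lets us express $D^{k-1}u$ as the convolution of $A(D)u$ against a kernel that is positively homogeneous of degree $-(n-1)$, and such kernels belong to $L^{n/(n-1),\infty}$; Young's convolution inequality in Lorentz spaces then closes the argument. Note that in contrast with proposition~\ref{propositionSufficient}, no cancellation hypothesis is needed, because the weak-$L^{n/(n-1)}$ endpoint is compatible with convolution against a kernel that is merely homogeneous, rather than in $L^{n/(n-1)}$ itself.

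Concretely, for each multi-index $\alpha \in \N^n$ with $\abs{\alpha}=k-1$, I would introduce a convolution kernel $G^\alpha : \R^n \setminus \{0\} \to \Lin(E; V)$ whose Fourier transform equals
\[
  \widehat{G^\alpha}(\xi) = (2\pi i)^{-1}\,\xi^\alpha\, \bigl(A(\xi)^* \circ A(\xi)\bigr)^{-1}\circ A(\xi)^*, \qquad \xi \in \R^n \setminus \{0\}.
\]
Ellipticity of $A(D)$ makes this expression well-defined and smooth on $\R^n\setminus\{0\}$, and a degree count shows $\widehat{G^\alpha}$ is positively homogeneous of degree $-1$. Standard theory of homogeneous tempered distributions then identifies $G^\alpha$ with a function on $\R^n\setminus\{0\}$ that is smooth and positively homogeneous of degree $-(n-1)$, so $\abs{G^\alpha(x)}\le C\abs{x}^{-(n-1)}$ for every $x \ne 0$. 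A direct computation of the distribution function of $G^\alpha$ then gives
\[
  \sup_{\lambda > 0}\lambda\, \bigabs{\{x \in \R^n \st \abs{G^\alpha(x)} > \lambda\}}^{(n-1)/n} < \infty,
\]
and hence $G^\alpha \in L^{n/(n-1),\infty}(\R^n;\Lin(E;V))$.

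Given $u \in C^\infty_c(\R^n;V)$, the identity $\partial^\alpha u = G^\alpha * A(D)u$ holds as tempered distributions by construction of $\widehat{G^\alpha}$. Applying Young's convolution inequality for Lorentz spaces yields
\[
  \norm{\partial^\alpha u}_{L^{n/(n-1),\infty}} \le C\,\norm{G^\alpha}_{L^{n/(n-1),\infty}}\,\norm{A(D)u}_{L^1},
\]
and summing the finitely many such estimates over $\abs{\alpha}=k-1$ gives the desired inequality.

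The main subtlety I expect is justifying that the inverse Fourier transform of $\widehat{G^\alpha}$ genuinely coincides away from the origin with a homogeneous function of degree $-(n-1)$, rather than merely being homogeneous as a tempered distribution modulo polynomial or logarithmic corrections. For $n \ge 2$ this is standard: the homogeneity degrees $-1$ and $-(n-1)$ avoid the obstruction class where log terms appear, and local integrability at the origin follows from $n-1 < n$, so $G^\alpha$ is indeed an honest locally integrable function. The degenerate case $n=1$ identifies $L^{n/(n-1),\infty}$ with $L^\infty$; ellipticity forces $A(D)u = A_0 u^{(k)}$ with $A_0$ injective, and the fundamental theorem of calculus then gives the estimate directly.
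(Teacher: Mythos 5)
Your proof is correct and takes essentially the same approach as the paper, which simply defers to the proof of proposition~\ref{propositionLimitingBesov}: one inverts $A(D)$ by a Fourier multiplier homogeneous of degree $-1$, whose kernel is therefore homogeneous of degree $-(n-1)$ and lies in $L^{n/(n-1),\infty}$, and then convolves. The convexity argument used in the Besov proof (Minkowski's integral inequality for a translation-invariant norm, valid since $L^{n/(n-1),\infty}$ admits an equivalent norm when $n \ge 2$) is just a reformulation of the $L^1 * L^{p,\infty} \subseteq L^{p,\infty}$ Young inequality you invoke.
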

\begin{proof}
The proof is similar to the proof of proposition~\ref{propositionLimitingBesov}; an alternate proof would start from a weak \(L^1\) estimate for the elliptic operator together with Sobolev embeddings in the framework of Marcinkiewicz spaces.
\end{proof}

\section{Strong Bourgain--Brezis estimates}

\label{sectionBB}

If \(A(D)\) is a linear homogeneous differential operator of order
\(k\) on \(\R^n\), one has the estimates
\begin{equation}
\label{ineqBBL1}
 \norm{D^{k-1}u }_{L^{\frac{n}{n-1}}} \le C\norm{A(D) u}_{L^1}
\end{equation}
and
\begin{equation}
\label{ineqBBW1n}
 \norm{D^{k-1}u }_{L^{\frac{n}{n-1}}} \le C\norm{A(D) u}_{\dot{W}^{-1, n/(n-1)}}.
\end{equation}
In view of these estimates, one can wonder whether one can obtain a stronger statement using a weaker norm \(\norm{A(D) u}_{L^1+\dot{W}^{-1, n/(n-1)}}\). 

J.\thinspace Bourgain and H.\thinspace Brezis \citelist{\cite{BB2002}*{(8)}\cite{BB2003}*{lemma 1}\cite{BB2004}*{remark 6}\cite{BB2007}*{corollary 12}} have obtained such results for the gradient and the exterior derivative. Relying on their abstract results, we prove a similar counterpart of proposition~\ref{propositionSufficient} in which a weaker norm of \(A(D)u\) is taken.

\begin{theorem}
\label{theoremOrderkBB}
Let \(A(D)\) be a linear homogeneous differential operator of order
\(k\) on \(\R^n\) from \(V\) to \(E\). If \(A(D)\) is elliptic and canceling, then for every
\(u \in C^\infty_c(\R^n; V)\),
\[
  \norm{D^{k-1}u }_{L^{\frac{n}{n-1}}} \le C\norm{A(D) u}_{L^1+\dot{W}^{-1, n/(n-1)}}.
\]
\end{theorem}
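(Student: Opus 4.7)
The plan is to adapt the proof of proposition~\ref{propositionSufficient}, replacing the appeal to theorem~\ref{theoremCocanceling} by a sum--space strengthening obtained via the tools of J.\thinspace Bourgain and H.\thinspace Brezis. By proposition~\ref{propositionConstructionL}, the ellipticity and cancellation of \(A(D)\) produce a cocanceling homogeneous linear differential operator \(L(D)\) of some order \(k'\) on \(\R^n\) from \(E\) to a finite-dimensional space \(F\), with \(\ker L(\xi) = A(\xi)[V]\) for every \(\xi \ne 0\); in particular \(L(D)\bigl(A(D)u\bigr) = 0\) for every \(u \in C^\infty_c(\R^n;V)\). Combined with the classical elliptic estimate of proposition~\ref{propCaldZygmundWk1p},
\[
 \norm{D^{k-1} u}_{L^{n/(n-1)}} \le C \norm{A(D)u}_{\dot{W}^{-1, n/(n-1)}},
\]
the theorem reduces to the following strong cocanceling statement: for a cocanceling homogeneous operator \(L(D)\) of order \(k'\), every \(f \in L^1(\R^n;E) + \dot{W}^{-1, n/(n-1)}(\R^n;E)\) with \(L(D)f = 0\) belongs to \(\dot{W}^{-1, n/(n-1)}(\R^n;E)\), with
\[
 \norm{f}_{\dot{W}^{-1, n/(n-1)}} \le C \norm{f}_{L^1 + \dot{W}^{-1, n/(n-1)}}.
\]

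To establish this I would then mimic the proof of proposition~\ref{propCocancelingSufficient}: apply lemma~\ref{lemmaCharKalpha} to obtain linear maps \(K_\alpha \in \Lin(F;E)\) with \(\sum_{\abs{\alpha} = k'} K_\alpha \circ L_\alpha = \id_E\); setting \(f_\alpha := L_\alpha(f)\) produces the decomposition \(f = \sum_{\abs{\alpha}=k'} K_\alpha(f_\alpha)\) and the scalar constraint \(\sum_{\abs{\alpha}=k'} \partial^\alpha f_\alpha = 0\). Since each \(L_\alpha\) is a bounded linear map between finite-dimensional spaces, every \(f_\alpha\) inherits a splitting in \(L^1(\R^n) + \dot{W}^{-1, n/(n-1)}(\R^n)\) with controlled sum norm; boundedness of the \(K_\alpha\) on \(\dot{W}^{-1, n/(n-1)}\) then shows that the whole estimate follows from the scalar assertion that any family \((f_\alpha)_{\abs{\alpha}=k'} \subset L^1(\R^n) + \dot{W}^{-1, n/(n-1)}(\R^n)\) with \(\sum_\alpha \partial^\alpha f_\alpha = 0\) must satisfy each \(f_\alpha \in \dot{W}^{-1, n/(n-1)}(\R^n)\), with norm controlled by the sum--space norms of the family.

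The main obstacle is this scalar sum--space estimate; it cannot be deduced from the slicing argument underlying proposition~\ref{propVSHigherOrder}, which handles only pure \(L^1\) data, and is precisely the step requiring the Bourgain--Brezis machinery of \cite{BB2007}. The relevant input is their construction, established for \(k'=1\), of a decomposition of a test function \(\varphi \in C^\infty_c(\R^n)\) into a piece of controlled \(L^\infty\) size and a piece of controlled \(\dot{W}^{1, n}\) size, cut in a way compatible with integration by parts against the constraint \(\sum_\alpha \partial^\alpha f_\alpha = 0\). Once such a decomposition is available (and extended to higher order \(k'\) by iterated integration), pairing a generic splitting \(f_\alpha = g_\alpha + h_\alpha\) (with \(g_\alpha \in L^1\) and \(h_\alpha \in \dot{W}^{-1, n/(n-1)}\)) against the two halves of the decomposition of \(\varphi\) via H\"older's inequality gives the desired bound and completes the proof.
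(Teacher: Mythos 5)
Your proposal follows essentially the same route as the paper: combine propositions~\ref{propositionConstructionL} and~\ref{propCaldZygmundWk1p} to reduce to a strengthened cocanceling estimate, then reduce the latter to a scalar sum-space bound via lemma~\ref{lemmaCharKalpha} and invoke the Bourgain--Brezis machinery. Your sum-space reformulation of the cocanceling step is equivalent to the paper's theorem~\ref{theoremVectorL1BB} (the paper applies it to the \(L^1\) part of a near-optimal decomposition of \(A(D)u\)), and the scalar input you describe is precisely the strengthened version of proposition~\ref{propVSHigherOrder} that the paper cites from \cite{VS2008}*{theorem 9}.
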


These estimates are not a consequence of \eqref{ineqBBL1} and \eqref{ineqBBW1n}. Indeed, from the definition of  \(\norm{A(D) u}_{L^1+\dot{W}^{-1, n/(n-1)}}\), there exists \(f \in C^\infty_c(\R^n; E)\) such that 
\(A(D)u-f \in \dot{W}^{-1, \frac{n}{n-1}}(\R^n; E)\) and 
\[
 \norm{f}_{L^1}+\norm{A(D)u-f}_{\dot{W}^{-1, \frac{n}{n-1}}} \le 2 \norm{A(D)u}_{L^1+\dot{W}^{-1,\frac{n}{n-1}}}.
\]
but nothing says that \(f\) can be written as \(f= A(D) w\) with \(w \in C^\infty_c(\R^n; V)\) with the useful estimates.

It is not known whether theorem~\ref{theoremOrderkBB} holds in any other Sobolev space  \cite{BB2007}*{open problem 2}, that is, whether, given \( s \ne 1\) and \(p \in (1, \infty)\) such that
\(
\frac{1}{p}-\frac{s}{n}=1-\frac{k}{n},
\)
if \(A(D)\) is elliptic and canceling, 
one has for every \(u \in C^\infty_c(\R^n; V)\), 
\[
  \norm{u }_{\dot{W}^{s, p}} \le C\norm{A(D) u}_{L^1+\dot{W}^{k-s, p}}.
\]

The main ingredient in the proof of theorem~\ref{theoremOrderkBB} is the following variant on theorem~\ref{theoremCocanceling}

\begin{theorem}
\label{theoremVectorL1BB}
Let \(L(D)\) be a linear homogeneous differential operator of order \(k\) on \(\R^n\) from \(E\) to \(F\).
If  \(L(D)\) is cocanceling, then for every \(f \in L^1(\R^n; E)\), one has 
\(f \in\dot{W}^{-1,\frac{n}{n-1}}(\R^n; E)\) if and only if \(L(D)f\in \dot{W}^{-1-k,\frac{n}{n-1}}(\R^n; F)
\).
Moreover, if \(f \in L^1(\R^n; E)\) and \(L(D)f \in\dot{W}^{-1-k,\frac{n}{n-1}}(\R^n; F)\), one has
\[
  \norm{f}_{\dot{W}^{-1, n/(n-1)}} \le C \bigl(\norm{f}_{L^1} + \norm{L(D)f}_{\dot{W}^{-1-k, n/(n-1)}}\bigr),
\]
\end{theorem}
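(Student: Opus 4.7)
The forward direction (``only if'') is immediate: since $L(D)$ has a homogeneous symbol of degree $k$, the Mikhlin--H\"ormander multiplier theorem shows that $L(D)$ is bounded from $\dot{W}^{-1, n/(n-1)}(\R^n; E)$ into $\dot{W}^{-1-k, n/(n-1)}(\R^n; F)$, which yields both the membership statement and the companion norm inequality $\norm{L(D)f}_{\dot{W}^{-1-k, n/(n-1)}} \le C \norm{f}_{\dot{W}^{-1, n/(n-1)}}$.

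For the converse and the main estimate, the plan is to leverage the algebraic structure of cocanceling operators to reduce everything to a scalar Bourgain--Brezis-type estimate.  Since $L(D)$ is cocanceling, lemma~\ref{lemmaCharKalpha} produces linear maps $K_\alpha \in \Lin(F; E)$ with $\sum_{\abs{\alpha}=k} K_\alpha \circ L_\alpha = \id_E$.  Setting $g_\alpha := L_\alpha(f) \in L^1(\R^n; F)$ for each multi-index $\alpha$ with $\abs{\alpha}=k$, one obtains the simultaneous identities
\[
  f = \sum_{\abs{\alpha}=k} K_\alpha(g_\alpha),
  \qquad
  \sum_{\abs{\alpha}=k} \partial^\alpha g_\alpha = L(D) f,
\]
with $\sum_{\abs{\alpha}=k} \norm{g_\alpha}_{L^1} \le C \norm{f}_{L^1}$.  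Because each $K_\alpha$ is a bounded linear map between finite-dimensional spaces and therefore acts continuously on $\dot{W}^{-1, n/(n-1)}$, the theorem reduces to the scalar claim that
\[
  \sum_{\abs{\alpha}=k} \norm{g_\alpha}_{\dot{W}^{-1, n/(n-1)}}
  \le C \Bigl(
  \sum_{\abs{\alpha}=k} \norm{g_\alpha}_{L^1}
  + \Bigl\lVert \sum_{\abs{\alpha}=k} \partial^\alpha g_\alpha \Bigr\rVert_{\dot{W}^{-1-k, n/(n-1)}}
  \Bigr)
\]
for arbitrary collections $g_\alpha \in L^1(\R^n; F)$ with $\abs{\alpha}=k$; substituting $g_\alpha = L_\alpha(f)$ and $\sum_\alpha \partial^\alpha g_\alpha = L(D)f$ and re-expanding via the $K_\alpha$'s then delivers the claimed inequality for $f$.

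This scalar inequality is precisely the Bourgain--Brezis strengthening of proposition~\ref{propVSHigherOrder} alluded to after its statement, and amounts to theorem~\ref{theoremVectorL1BB} applied to the scalar higher-order cocanceling operator $\sum_\alpha \partial^\alpha$ of proposition~\ref{propositionCocancelingHigherOrder}.  My plan for its proof is to argue by duality: for $\varphi \in C^\infty_c(\R^n; F)$ with $\norm{D\varphi}_{L^n} \le 1$, one constructs a Littlewood--Paley two-scale decomposition $\varphi = \psi + \chi$ in the spirit of \cite{BB2004}*{proof of theorem 2} and \cite{BB2007}*{theorem 1}, where $\psi \in L^\infty \cap \dot{W}^{1, n}$ is uniformly bounded and $\chi \in \dot{W}^{1+k, n}$ absorbs the remaining regularity.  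The pairing $\int g_\alpha \psi$ is controlled directly by $\norm{g_\alpha}_{L^1} \norm{\psi}_{L^\infty}$, while the pairing $\int g_\alpha \chi$ is rewritten, via repeated integration by parts together with the identity $\sum_\alpha \partial^\alpha g_\alpha = h$, as a pairing of $h$ with a function bounded in $\dot{W}^{1+k, n}$, producing the required control by $\norm{h}_{\dot{W}^{-1-k, n/(n-1)}}$.

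The principal obstacle is this scalar Bourgain--Brezis estimate: the slicing argument that proves proposition~\ref{propVSHigherOrder} offers no mechanism for absorbing the remainder $\norm{\sum_\alpha \partial^\alpha g_\alpha}_{\dot{W}^{-1-k, n/(n-1)}}$, so the derivative redistribution between $\psi$ and $\chi$ has to be executed so that every term of the form $\int g_\alpha \chi$ can be rigorously recast against $h$.  This requires the full multiplier/Littlewood--Paley machinery of \cite{BB2007} rather than the elementary tools of \cite{VS2008}.  Once the scalar estimate is in hand, the algebraic reduction above immediately yields theorem~\ref{theoremVectorL1BB}.
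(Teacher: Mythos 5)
Your proposal follows essentially the same route as the paper: both reduce the theorem, via the algebraic identity $\sum_\alpha K_\alpha \circ L_\alpha = \id$ from lemma~\ref{lemmaCharKalpha} (exactly as in the proof of proposition~\ref{propCocancelingSufficient}), to the Bourgain--Brezis-strengthened higher-order scalar estimate, which the paper simply cites as \cite{VS2008}*{theorem~9} rather than re-deriving. The one point of imprecision in your write-up is the contrast you draw between ``the elementary tools of \cite{VS2008}'' and ``the full machinery of \cite{BB2007}'': the scalar ingredient the paper invokes is already \cite{VS2008}*{theorem~9}, which is itself proved by the Bourgain--Brezis methods, so that reference contains both the slicing argument for proposition~\ref{propVSHigherOrder} (theorem~4 there) and the stronger statement you need (theorem~9 there); otherwise your reduction, including the forward direction by multiplier theory and the bookkeeping $g_\alpha = L_\alpha(f)$, $f = \sum_\alpha K_\alpha(g_\alpha)$, $\sum_\alpha \partial^\alpha g_\alpha = L(D)f$, is exactly the paper's argument.
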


\begin{proof}
The proof follows the lines of the proof of proposition~\ref{propCocancelingSufficient}, it relies on a strengthened version of proposition~\ref{propVSHigherOrder} \cite{VS2008}*{theorem 9}.
\end{proof}

Whereas the sufficiency part of theorem~\ref{theoremVectorL1BB} is much stronger than theorem~\ref{theoremCocanceling}, its proof relies on a difficult construction of J.\thinspace Bourgain and H.\thinspace Brezis \cite{BB2007} while theorem~\ref{theoremCocanceling} relies on proposition~\ref{propVSHigherOrder} that is proved by elementary methods. As it was mentioned for theorem~\ref{theoremVectorL1BB}, the result of J.\thinspace Bourgain and H.\thinspace Brezis has not been extended to other critical Sobolev spaces. 

We can now prove theorem~\ref{theoremOrderkBB}

\begin{proof}[Proof of theorem~\ref{theoremOrderkBB}]
The necessity part follows from theorem~\ref{theoremOrderk}.

For the sufficiency part, choose \(f \in C^\infty_c(\R^n)\) such that 
\[
  \norm{f}_{L^1} + \norm{A(D)u-f}_{\dot{W}^{-1, \frac{n}{n-1}}} \le 2 \norm{A(D) u}_{L^1+\dot{W}^{-1, n/(n-1)}}.
\]
Let \(L(D)\) be the homogeneous differential operator of order \(\ell\) given by proposition~\ref{propositionConstructionL}. Since \(A(D)\) is canceling, \(L(D)\) is cocanceling. In view of theorem~\ref{theoremVectorL1BB}, since \(L(D)f=L(D)\bigl(f-A(D)u\bigr)\), 
\[
\begin{split}
  \norm{f}_{\dot{W}^{-1, \frac{n}{n-1}}} &\le C\bigl(\norm{f}_{L^1}+ \norm{L(D)\bigl(f-A(D)u\bigr)}_{\dot{W}^{-1-\ell, \frac{n}{n-1}}}\bigr)\\
&\le C'\bigl(\norm{f}_{L^1}+ \norm{f-A(D)u}_{\dot{W}^{-1, \frac{n}{n-1}}}\bigr).
\end{split}
\]
We have thus
\[
  \norm{A(D)u}_{\dot{W}^{-1, \frac{n}{n-1}}}  \le C'' \norm{A(D) u}_{L^1+\dot{W}^{-1, n/(n-1)}}.
\]
We conclude by proposition~\ref{propCaldZygmundWk1p} as in the proof of theorem~\ref{theoremOrderk}.
\end{proof}

\section*{Acknowledgment}

The author thanks Ha\"\i m Brezis who has made suggestions on the exposition of this paper and the referee who has read very carefully the manuscript.

\begin{bibdiv}
\begin{biblist}

\bib{Adams}{book}{
   author={Adams, Robert A.},
   title={Sobolev spaces},
   series={Pure and Applied Mathematics}, 
   volume={65},
   publisher={Academic Press}, 
   address={New York~-- London},
   date={1975},
   pages={xviii+268},
}

\bib{Agmon1959}{article}{
   author={Agmon, Shmuel},
   title={The $L_{p}$ approach to the Dirichlet problem. I. Regularity
   theorems},
   journal={Ann. Scuola Norm. Sup. Pisa (3)},
   volume={13},
   date={1959},
   pages={405--448},
}
	
\bib{Agmon}{book}{
   author={Agmon, Shmuel},
   title={Lectures on elliptic boundary value problems},
   series={Van Nostrand Mathematical Studies,
   No. 2},
   publisher={Van Nostrand}, 
   address={Princeton, N.J. -- Toronto -- London},
   date={1965},
   pages={v+291},
}

\bib{Alvino1977}{article}{
   author={Alvino, Angelo},
   title={Sulla diseguaglianza di Sobolev in spazi di Lorentz},
   journal={Boll. Un. Mat. Ital. A (5)},
   volume={14},
   date={1977},
   number={1},
   pages={148--156},
}

\bib{BW}{book}{
   author={Becker, Thomas},
   author={Weispfenning, Volker},
   title={Gr\"obner bases},
   subtitle={A computational approach to commutative algebra},
   publisher={Springer}, 
   address={New York -- Berlin -- Heidelberg},
   date={1993},
  }

\bib{BB2002}{article}{
   author={Bourgain, Jean},
   author={Brezis, Ha{\"{\i}}m},
   title={Sur l'\'equation ${\rm div}\,u=f$},
   journal={C. R. Math. Acad. Sci. Paris},
   volume={334},
   date={2002},
   number={11},
   pages={973--976},
   issn={1631-073X},
}

\bib{BB2003}{article}{
   author={Bourgain, Jean},
   author={Brezis, Ha{\"{\i}}m},
   title={On the equation ${\rm div}\, Y=f$ and application to control of
   phases},
   journal={J. Amer. Math. Soc.},
   volume={16},
   date={2003},
   number={2},
   pages={393--426},
   issn={0894-0347},
}

\bib{BB2004}{article}{
   author={Bourgain, Jean},
   author={Brezis, Ha{\"{\i}}m},
   title={New estimates for the Laplacian, the div-curl, and related Hodge
   systems},
   journal={C. R. Math. Acad. Sci. Paris},
   volume={338},
   date={2004},
   number={7},
   pages={539--543},
   issn={1631-073X},
}

\bib{BB2007}{article}{
   author={Bourgain, Jean},
   author={Brezis, Ha{\"{\i}}m},
   title={New estimates for elliptic equations and Hodge type systems},
   journal={J. Eur. Math. Soc. (JEMS)},
   volume={9},
   date={2007},
   number={2},
   pages={277--315},
   issn={1435-9855},
}

\bib{BourgainBrezisMironescu}{article}{
   author={Bourgain, Jean},
   author={Brezis, Haim},
   author={Mironescu, Petru},
   title={$H^{1/2}$ maps with values into the circle: minimal
   connections, lifting, and the Ginzburg-Landau equation},
   journal={Publ. Math. Inst. Hautes \'Etudes Sci.},
   number={99},
   date={2004},
   pages={1--115},
   issn={0073-8301},
}

\bib{BVS2007}{article}{
   author={Brezis, Ha{\"{\i}}m},
   author={Van Schaftingen, Jean},
   title={Boundary estimates for elliptic systems with \(L^1\)-data},
   journal={Calc. Var. Partial Differential Equations},
   volume={30},
   date={2007},
   number={3},
   pages={369--388},
   issn={0944-2669},
}

\bib{BC}{article}{
   author={Briane, Marc},
   author={Casado-Diaz, Juan},
   title={Estimate of the pressure when its gradient is the divergence of measure. Applications},
   journal={ESAIM Control Optim. Calc. Var.},
   date={October 28 2010},
 doi={10.1051/cocv/2010037},
}

\bib{CZ1952}{article}{
   author={Calder\'on, A. P.},
   author={Zygmund, A.},
   title={On the existence of certain singular integrals},
   journal={Acta Math.},
   volume={88},
   date={1952},
   pages={85--139},
}

\bib{CVS2009}{article}{
   author={Chanillo, Sagun},
   author={Van Schaftingen, Jean},
   title={Subelliptic Bourgain-Brezis estimates on groups},
   journal={Math. Res. Lett.},
   volume={16},
   date={2009},
   number={3},
   pages={487--501},
   issn={1073-2780},
}

\bib{CDDD}{article}{
   author={Cohen, Albert},
   author={Dahmen, Wolfgang},
   author={Daubechies, Ingrid},
   author={DeVore, Ronald},
   title={Harmonic analysis of the space BV},
   journal={Rev. Mat. Iberoamericana},
   volume={19},
   date={2003},
   number={1},
   pages={235--263},
   issn={0213-2230},
}

\bib{CFM2005}{article}{
   author={Conti, Sergio},
   author={Faraco, Daniel},
   author={Maggi, Francesco},
   title={A new approach to counterexamples to \(L^1\) estimates: Korn's
   inequality, geometric rigidity, and regularity for gradients of
   separately convex functions},
   journal={Arch. Ration. Mech. Anal.},
   volume={175},
   date={2005},
   number={2},
   pages={287--300},
   issn={0003-9527},
}

\bib{Ehrenpreis}{article}{
   author={Ehrenpreis, Leon},
   title={A fundamental principle for systems of linear differential
   equations with constant coefficients, and some of its applications},
   conference={
      title={Proc. Internat. Sympos. Linear Spaces},
      address={Jerusalem},
      date={1960},
   },
   book={
      publisher={Jerusalem Academic Press},
      place={Jerusalem},
   },
   date={1961},
   pages={161--174},
}

\bib{F1963}{article}{
  author={de Figueiredo, Djairo Guedes},
  title={The coerciveness problem for forms over vector valued functions},
  journal={Comm. Pure Appl. Math.},
  volume={16}, 
  date={1963},
  pages={63-94},
}

\bib{Franke}{article}{
   author={Franke, Jens},
   title={On the spaces ${\bf F}_{pq}^s$ of Triebel-Lizorkin type:
   pointwise multipliers and spaces on domains},
   journal={Math. Nachr.},
   volume={125},
   date={1986},
   pages={29--68},
   issn={0025-584X},
}

\bib{Gagliardo}{article}{
   author={Gagliardo, Emilio},
   title={Propriet\`a di alcune classi di funzioni in pi\`u variabili},
   journal={Ricerche Mat.},
   volume={7},
   date={1958},
   pages={102--137},
   issn={0035-5038},
}

\bib{Hormander1958}{article}{
   author={H{\"o}rmander, Lars},
   title={Differentiability properties of solutions of systems of
   differential equations},
   journal={Ark. Mat.},
   volume={3},
   date={1958},
   pages={527--535},
   issn={0004-2080},
}

\bib{CKCRAS}{article}{
   author={Kirchheim, Bernd},
   author={Kristensen, Jen},
   title={Automatic convexity of rank--1 convex functions},   
   journal={C. R. Math. Acad. Sci. Paris},
   volume={349},
   date={2011},
   number={7--8},
   pages={407--409},
   issn={1631-073X},
}

\bib{CKPaper}{unpublished}{
   author={Kirchheim, Bernd},
   author={Kristensen, Jen},
   title={On rank one convex functions that are homogeneous of degree one},
   note={in preparation},
}

\bib{Kolyada}{article}{
   author={Kolyada, V. I.},
   title={On the embedding of Sobolev spaces},
   language={Russian},
   journal={Mat. Zametki},
   volume={54},
   date={1993},
   number={3},
   pages={48--71, 158},
   issn={0025-567X},
   translation={
      journal={Math. Notes},
      volume={54},
      date={1993},
      number={3-4},
      pages={908--922 (1994)},
      issn={0001-4346},
   },
}

\bib{Komatsu}{article}{
   author={Komatsu, Hikosaburo},
   title={Resolutions by hyperfunctions of sheaves of solutions of
   differential equations with constant coefficients},
   journal={Math. Ann.},
   volume={176},
   date={1968},
   pages={77--86},
   issn={0025-5831},
}

\bib{LS2005}{article}{
   author={Lanzani, Loredana},
   author={Stein, Elias M.},
   title={A note on div curl inequalities},
   journal={Math. Res. Lett.},
   volume={12},
   date={2005},
   number={1},
   pages={57--61},
   issn={1073-2780},
}

\bib{Mironescu2010}{article}{
   author={Mironescu, Petru},
   title={On some inequalities of Bourgain, Brezis, Maz'ya, and Shaposhnikova
related to \(L^1\) vector fields},
   journal={C. R. Math. Acad. Sci. Paris},
   volume={348},
   date={2010},
   number={9--10},
   pages={513--515},
   issn={1631-073X},
}

\bib{MM2009}{article}{
   author={Mitrea, Irina},
   author={Mitrea, Marius},
   title={A remark on the regularity of the div-curl system},
   journal={Proc. Amer. Math. Soc.},
   volume={137},
   date={2009},
   number={5},
   pages={1729--1733},
   issn={0002-9939},
}

\bib{Nirenberg1959}{article}{
      author={Nirenberg, L.},
       title={On elliptic partial differential equations},
        date={1959},
     journal={Ann. Scuola Norm. Sup. Pisa (3)},
      volume={13},
       pages={115\ndash 162},
}

\bib{Ornstein1962}{article}{
   author={Ornstein, Donald},
   title={A non-equality for differential operators in the \(L_{1}\) norm},
   journal={Arch. Rational Mech. Anal.},
   volume={11},
   date={1962},
   pages={40--49},
   issn={0003-9527},
}

\bib{RunstSickel}{book}{
   author={Runst, Thomas},
   author={Sickel, Winfried},
   title={Sobolev spaces of fractional order, Nemytskij operators, and
   nonlinear partial differential equations},
   series={de Gruyter Series in Nonlinear Analysis and Applications},
   volume={3},
   publisher={Walter de Gruyter \& Co.},
   place={Berlin},
   date={1996},
   pages={x+547},
   isbn={3-11-015113-8},
}

\bib{SchmittWinkler}{thesis}{
  author={Schmitt, B.J.},
  author={Winkler, M.},
  title={On embeddings between $BV$ and $\dot{W}^{s,p}$},
  type={Preprint no. 6},
  date={2000-03-15},
  organization={Lehrstuhl~I f\"ur Mathematik, RWTH Aachen},
}

\bib{Sharafutdinov}{book}{
   author={Sharafutdinov, V. A.},
   title={Integral geometry of tensor fields},
   series={Inverse and Ill-posed Problems Series},
   publisher={VSP},
   place={Utrecht},
   date={1994},
   pages={271},
   isbn={90-6764-165-0},
}

\bib{SickelTriebel}{article}{
   author={Sickel, W.},
   author={Triebel, H.},
   title={H\"older inequalities and sharp embeddings in function spaces of
   $B^s_{pq}$ and $F^s_{pq}$ type},
   journal={Z. Anal. Anwendungen},
   volume={14},
   date={1995},
   number={1},
   pages={105--140},
   issn={0232-2064},
}

\bib{Solonnikov1975}{article}{
   author={Solonnikov, V.A.},
   title={Inequalities for functions of the classes $\dot{W}^{\vec{m}}_p(\R^n)$},
   journal={Zapiski Nauchnykh Seminarov Leningradskogo Otdeleniya Matematicheskogo Instituta im. V. A. Steklova Akademii Nauk SSSR},
   number={27},
   year={1972},
  language={Russian},
   pages={194-210},
   translation={
      journal={J. Sov. Math.},
      volume={3},
      date={1975},
      pages={ 549-564},
   },
}

\bib{Spencer}{article}{
   author={Spencer, D. C.},
   title={Overdetermined systems of linear partial differential equations},
   journal={Bull. Amer. Math. Soc.},
   volume={75},
   date={1969},
   pages={179--239},
   issn={0002-9904},
}

\bib{Stein1970SIDPF}{book}{
   author={Stein, Elias M.},
   title={Singular integrals and differentiability properties of functions},
   series={Princeton Mathematical Series, No. 30},
   publisher={Princeton University Press},
   place={Princeton, N.J.},
   date={1970},
   pages={xiv+290},
}

\bib{Strauss1973}{article}{
   author={Strauss, Monty J.},
   title={Variations of Korn's and Sobolev's equalities},
   conference={
      title={Partial differential equations},
      address={Univ. California, Berkeley, Calif.},
      date={1971}
   },	
   book={
      editor={D. C. Spencer},
      publisher={Amer. Math. Soc.},
      series={Proc. Sympos. Pure Math.},
      volume={23}, 
      place={Providence, R.I.},
   },
   date={1973},
   pages={207--214},
}

\bib{Triebel}{book}{
   author={Triebel, Hans},
   title={Theory of function spaces},
   series={Monographs in Mathematics},
   volume={78},
   publisher={Birkh\"auser Verlag},
   place={Basel},
   date={1983},
   pages={284},
   isbn={3-7643-1381-1},
}

\bib{VS2004BBM}{article}{
   author={Van Schaftingen, Jean},
   title={A simple proof of an inequality of {B}ourgain, {B}rezis and
  {M}ironescu},
   journal={C. R. Math. Acad. Sci. Paris},
   volume={338},
   date={2004},
   number={1},
   pages={23--26},
   issn={1631-073X},
}

\bib{VS2004Divf}{article}{
   author={Van Schaftingen, Jean},
   title={Estimates for \(L^1\)-vector fields},
   journal={C. R. Math. Acad. Sci. Paris},
   volume={339},
   date={2004},
   number={3},
   pages={181--186},
   issn={1631-073X},
}

\bib{VS2004ARB}{article}{
   author={Van Schaftingen, Jean},
   title={Estimates for \(L^1\) vector fields with a second order
   condition},
   journal={Acad. Roy. Belg. Bull. Cl. Sci. (6)},
   volume={15},
   date={2004},
   number={1-6},
   pages={103--112},
   issn={0001-4141},
}

\bib{VS2006BMO}{article}{
   author={Van Schaftingen, Jean},
   title={Function spaces between BMO and critical Sobolev spaces},
   journal={J. Funct. Anal.},
   volume={236},
   date={2006},
   number={2},
   pages={490--516},
}

\bib{VS2008}{article}{
   author={Van Schaftingen, Jean},
   title={Estimates for \(L^1\) vector fields under higher-order
   differential conditions},
   journal={J. Eur. Math. Soc. (JEMS)},
   volume={10},
   date={2008},
   number={4},
   pages={867--882},
   issn={1435-9855},
}

\bib{VS2010}{article}{
   author={Van Schaftingen, Jean},
   title={Limiting fractional and Lorentz space estimates of differential
   forms},
   journal={Proc. Amer. Math. Soc.},
   volume={138},
   date={2010},
   number={1},
   pages={235--240},
   issn={0002-9939},
}

\bib{Yung2010}{article}{
   author={Yung, Po-Lam},
   title={Sobolev inequalities for \((0,q)\) forms on CR manifolds of finite
   type},
   journal={Math. Res. Lett.},
   volume={17},
   date={2010},
   number={1},
   pages={177--196},
   issn={1073-2780},
}

\end{biblist}
\end{bibdiv}

\end{document}